\def\RR{{\bf R}}
\def\ZZ{{\bf Z}}
\def\lgate{{\langle\langle}}
\def\rgate{{\rangle\rangle}}
\def\bigmid {\ \left|{\Large \strut}\right.}
\numberwithin{equation}{section}
\newtheorem{Thm}{Theorem}[section]
\newtheorem{Prop}[Thm]{Proposition}
\newtheorem{Lem}[Thm]{Lemma}
\theoremstyle{definition}
\newtheorem{Rem}[Thm]{Remark}
\newtheorem{Ex}[Thm]{Example}
\newcommand{\Conv}{\mathop{\rm Conv} }
\newcommand{\dom}{\mathop{\rm dom} }
\title{L-convexity on graph structures}
\author{Hiroshi HIRAI \\
Department of Mathematical Informatics, \\
Graduate School of Information Science and Technology,   \\
University of Tokyo, Tokyo, 113-8656, Japan.\\
\texttt{\normalsize hirai@mist.i.u-tokyo.ac.jp}}
\begin{document}

\maketitle

\begin{abstract}
In this paper, 
we study classes of discrete convex functions: 
submodular functions on modular semilattices and 
L-convex functions on oriented modular graphs. 
They were introduced by the author in complexity classification of minimum 0-extension problems.
We clarify the relationship to other discrete convex functions, such as 
$k$-submodular functions, skew-bisubmodular functions, L$^\natural$-convex functions, 
tree submodular functions, and UJ-convex functions.
We show that they are actually viewed
as submodular/L-convex functions in our sense.
We also prove a sharp iteration bound of the steepest descent algorithm 
for minimizing our L-convex functions.
The underlying structures, modular semilattices and oriented modular graphs, 
have rich connections to projective and polar spaces, Euclidean building, 
and metric spaces of global nonpositive curvature (CAT(0) spaces).
By utilizing these connections, 
we formulate an analogue of the Lov\'asz extension, 
introduce well-behaved subclasses of submodular/L-convex functions, 
and show that these classes
can be characterized by the convexity of the Lov\'asz extension.
We demonstrate applications of our theory to combinatorial optimization problems that include 
multicommodity flow, multiway cut, and related labeling problems: 
these problems have been outside the scope of discrete convex analysis so far.  
\end{abstract}

Keywords: Combinatorial optimization, discrete convex analysis, submodular function, L$^\natural$-convex function, weakly modular graph, CAT(0) space

\section{Introduction} 
{\em Discrete Convex Analysis (DCA)} is 
a theory of ``convex" functions on integer lattice $\ZZ^n$, 
developed by K. Murota~\cite{Murota98,MurotaBook} 
and his collaborators (including S. Fujishige, A. Shioura, and A. Tamura), 
and aims to provide 
a unified theoretical framework to 
well-solvable combinatorial optimization problems related to network flow and matroid/submodular optimization; see also \cite[Chapter VII]{FujiBook}.
In DCA, two classes of discrete convex functions, 
{\em M-convex functions} and {\em L-convex functions},  
play primary roles; the former originates from the base exchange property of matroids,
the latter abstracts potential energy in electric circuits,  
and they are in relation of conjugacy.
They generalize several known concepts in matroid/submodular optimization 
by means of analogy of convex analysis in continuous optimization.
Besides its fundamental position
in combinatorial optimization and operations research, 
the scope of DCA has been enlarging over past 20 years, and 
several DCA ideas and concepts have been successfully 
and unexpectedly applied to other areas of applied mathematics that include 
mathematical economics, game theory~\cite{TamuraBook}, inventory theory~\cite{Logistics} and so on;
see recent survey~\cite{MurotaDevelop}.

The present article addresses a new emerging direction of DCA, which might be called 
a theory of discrete convex functions on graph structures, 
or {\em DCA beyond~$\ZZ^n$}.
Our central subjects are graph-theoretic generalizations of 
L$^\natural$-convex function.
An {\em L$^\natural$-convex function}~\cite{FM00} is an essentially equivalent variant of L-convex function, 
and is defined as a function $g$ on $\ZZ^n$ 
satisfying a discrete version of the convexity inequality, 
called the {\em discrete midpoint convexity}: 
\begin{equation}\label{eqn:midpoint_original}
g(x) + g(y) \geq g(\lfloor (x + y)/2\rfloor) + g(\lceil (x + y)/2\rceil)
\quad (x,y \in \ZZ^n),
\end{equation}
where $\lfloor \cdot \rfloor$ (resp. $\lceil \cdot \rceil$) 
denotes an operation on $\RR^n$ 
that rounds down (resp. up) the fractional part of each component.
L$^\natural$-convex functions have 
several fascinating properties for 
optimization and algorithm design; 
see \cite[Chapters 7 and 10]{MurotaBook}.
They are submodular functions on each cube $x+ \{0,1\}^n$ $(x \in \ZZ^n)$,
and are extended to convex functions on $\RR^n$ via the Lov\'asz extension.
The global optimality is guaranteed by the local optimality 
({\em L-optimality criterion}), which is checked by submodular function minimization (SFM).
This fact brings about a simple descent algorithm, 
called the {\em steepest descent algorithm (SDA)}, to minimize L$^\natural$-convex functions through successive application of an SFM algorithm.
The number of iterations (= calls of the SFM algorithm) of SDA is estimated by the $l_\infty$-diameter of domain~\cite{KS09,MurotaShioura14}.

Observe that the discrete midpoint convexity (\ref{eqn:midpoint_original}) 
is still definable if $\ZZ^n$ is replaced by the Cartesian product $P^n$ of 
$n$ directed paths $P$, and L$^\natural$-convex functions are also definable on $P^n$.
Starting from this observation, 
Kolmogorov~\cite{Kolmogorov11} considered a generalization of L$^\natural$-convex functions defined on 
the product of rooted binary trees, and discussed an SDA framework, 
where  SFM is replaced by bisubmodular function minimization.
One may ask:
	{\em Can we define
		analogues of L$^\natural$-convex functions on more general graph structures,
		and develop a similar algorithmic framework to  
		attack combinatorial optimization problems beyond the current scope of DCA?}

This question was answered affirmatively in the study~\cite{HH150ext} 
of {\em the minimum 0-extension problem} (alias {\em multifacility location problem})~\cite{Kar98a}---the problem of 
finding locations $x_1,x_2,\ldots,x_n$ of $n$ facilities
in a graph $\varGamma$ such that 
the weighed sum 
\[
\sum_{v} \sum_{i} b_{vi} d_{\varGamma}(v,x_i) + \sum_{i,j} c_{ij} d_{\varGamma}(x_i,x_j)
\]
 of their distances 
is minimum. 
This problem is viewed as 
an optimization over the
graph $\varGamma \times \varGamma \times \cdots \times \varGamma$, and motivates us to consider ``convex" functions on graphs.
The 0-extension problem was known to be NP-hard 
unless $\varGamma$ is an {\em orientable modular graph}, 
and known to be polynomial time solvable for special subclasses of orientable modular graphs~\cite{Chepoi96,Kar98a,Kar04a}. 
In~\cite{HH150ext}, we revealed several intriguing structural properties of orientable modular graphs: 
they are amalgamations of modular lattices 
and {\em modular semilattices} in the sense of \cite{BVV93}.
On the basis of the structure, 
we introduced two new classes of discrete convex functions: 
{\em submodular functions} on modular semilattices
and {\em L-convex functions} 
on oriented modular graphs (called {\em modular complexes} in \cite{HH150ext}).
Here an {\em oriented modular graph} is 
an orientable modular graph endowed with a specific edge-orientation.
We established analogues of local submodularity,  
L-optimality criterion and SDA 
for the new L-convex functions, 
and proved the VCSP-tractability~\cite{KTZ13,TZ12FOCS,ZivnyBook} of the new submodular functions.
Finally we showed that the 0-extension problem on an orientable modular graph 
can be formulated as an L-convex function minimization
on an oriented modular graph, and is solvable in polynomial time by SDA.
This completes the complexity classification of the 0-extension problem.
In the subsequent work~\cite{HH14extendable,HH15node_multi}, 
by developing analogous theories of L-convex functions on certain graph structures,
we succeeded in designing efficient combinatorial polynomial algorithms for some classes of multicommodity flow problems for which such algorithms had not been known.

Although these discrete convex functions on graph structures 
brought algorithmic developments,
their relations and connections to other discrete convex functions
(in particular, original L$^\natural$-convex functions) were not made clear.
In this paper, we continue to study submodular functions on 
modular semilattices and L-convex functions on oriented modular graphs. 
The main objectives are (i) to clarify
their relations to other classes of discrete convex functions,  
(ii) to introduce several new concepts and pursue further L-convexity properties,  
and (iii) to present their occurrences and applications in 
actual combinatorial optimization problems.

A larger part of our investigation 
is built on {\em Metric Graph Theory (MGT)}~\cite{BandeltChepoi}, 
which studies graph classes from distance properties
and provides a suitable language 
for analysis if orientable modular graphs and, more generally, {\em weakly modular graphs}.
Recently MGT~\cite{CCHO14, Chepoi00} explored rich connections among 
weakly modular graphs, 
incidence geometries~\cite{Foundation}, 
Euclidean building~\cite{BuildingBook,TitsBuilding}, 
and metric spaces of nonpositive curvature~\cite{BrHa}.
To formulate and prove our results, 
we will utilize some of the results and concepts from 
these fields of mathematics, 
previously unrelated to combinatorial optimization.
We hope that this
will provide fruitful interactions among these fields, 
explore new perspective, and trigger further evolution of DCA.

The rest of the paper is organized as follows.
In Section~\ref{sec:preliminaries}, we introduce
basic notation, and summarize some of the concepts 
and results from MGT~\cite{CCHO14} that we will use.
We introduce modular semilattices, {\em polar spaces}, orientable modular graphs, {\em sweakly modular graphs} ({\em swm-graphs})~\cite{CCHO14}, and {\em Euclidean building of type C}.
They are underlying structures of discrete convex functions considered in the subsequent sections.
In particular, polar spaces turn out to be appropriate generalizations of underlying structures of bisubmodular and {\em $k$-submodular functions}~\cite{HK12}. 
A Euclidean building is a generalization of tree, and is a simplicial complex having 
an amalgamated structure of Euclidean spaces, which
naturally admits an analogue of discrete midpoint operators.
We introduce continuous spaces, {\em CAT(0) metric spaces}~\cite{BrHa} and {\em orthoscheme complexes}~\cite{BM10}.
They play roles of continuous relaxations of orientable modular graphs, 
analogous to continuous relaxation $\RR^n$ of $\ZZ^n$, 
and enable us to formulate 
the Lov\'asz extension and its convexity for our discrete convex functions.

In Section~\ref{sec:submodular}, 
we study submodular functions on modular semilattices. 
After reviewing their definition and basic properties,  
we focus on submodular functions on polar spaces, 
and establish their relationship 
to $k$-submodular functions 
and {\em skew-bisubmodular functions}~\cite{HKP14}, and 
also show that submodular functions on polar spaces are characterized by the convexity of the Lov\'asz extension.

In Section~\ref{sec:L-convex}, 
we study L-convex functions on oriented modular graphs.
We give definition and basics established by \cite{HH150ext}.
We then present a sharp $l_{\infty}$-iteration bound of the steepest descent algorithm.
We introduce notions of {\em L-extendability} and {\em L-convex relaxations}, 
extending those considered by~\cite{HH14extendable} for the product of trees.
We study L-convex functions on 
a Euclidean building (of type C). 
After establishing a characterization by the discrete midpoint convexity  and the convexity of the Lov\'asz extension, 
we explain how our framework captures original L$^\natural$-convex functions, {\em UJ-convex functions}~\cite{Fujishige14}, 
{\em strongly-tree submodular functions}~\cite{Kolmogorov11}, 
and {\em alternating L-convex functions}~\cite{HH14extendable}.  

In Section~\ref{sec:applications}, 
we present applications of our theory to combinatorial optimization problems including multicommodity flow, multiway cut, and related labeling problems. We see that our L-convex/submodular functions 
arise as the dual objective functions of several multiflow problems 
or half-integral relaxations of multiway cut problems.  

The beginning of Sections~\ref{sec:submodular}, \ref{sec:L-convex}, and \ref{sec:applications} includes a more detailed summary of results.
The proofs of all results in Sections~\ref{sec:submodular} and \ref{sec:L-convex} 
are given in Section~\ref{sec:proofs}.
Some of the results which we present in this paper
were announced in \cite{HH13HJ, HH14extendable, HH15node_multi, HH150ext}.

\section{Preliminaries}\label{sec:preliminaries}

\subsection{Basic notation}\label{subsec:notation}
Let $\RR$, $\RR_+$, $\ZZ$, and $\ZZ_+$ 
denote the sets of reals, nonnegative reals,  integers, and nonnegative integers, respectively. 
The $i$th unit vector in $\RR^n$ is denoted by $e_i$.
Let $\infty$ denote the infinity element treated 
as $a + \infty = \infty$, $a < \infty$ for $a \in \RR$, and $\infty + \infty = \infty$.
Let $\overline{\RR} := \RR \cup \{\infty\}$.
For a function $f: X \to \overline\RR$, 
let $\dom f$ denote the set of elements $x$ with $f(x) < \infty$.

By a graph $G$ we mean a connected simple undirected graph. 
An edge joining vertices $x$ and $y$ is denoted by $xy$.
We do not assume that $G$ is a finite graph.
If every vertex of $G$ has a finite degree, then $G$ is called {\em locally-finite}.
For notational simplicity,
the vertex set $V(G)$ of $G$ is also denoted by $G$.
A path or cycle is written by a sequence $(x_1,x_2,\ldots,x_k)$ of vertices.
If $G$ has an edge-orientation, then $G$ is called an {\em oriented} graph, 
whereas paths, cycles, or distances are considered in the underlying graph. 
We write $x \to y$ if edge $xy$ is oriented from $x$ to $y$.
The {\em (Cartesian) product} of two graphs $G,G'$
is the graph on $V(G) \times V(G')$ with an edge given between $(x,x')$ and $(y,y')$ if 
$x = y$ and $x'y'$ is an edge of $G'$, or $x' = y'$ and $xy$ is an edge of $G$.
The product of $G,G'$ is denoted by $G \times G'$. 
In the case where both $G$ and $G'$ have edge-orientations,
the edge-orientation of $G \times G'$ is defined by
$(x,x') \to (y,y')$ if $x \to y$ and $x' = y'$, or 
$x' \to y'$ and $x = y$.

We will use the standard terminology of posets (partially ordered sets), 
which basically follows~\cite[Section 2.1.3]{CCHO14}.
The partial order of a poset ${\cal P}$ is denoted by $\preceq$, 
where $p \prec q$ means $p \preceq q$ and $p \neq q$.
The join and meet of elements $p,q$, if they exist, 
are denoted by $p \vee q$ and $p \wedge q$, respectively.
The rank function of a meet-semilattice is denoted by $r$.
An element of rank 1 is called an {\em atom}.
Lattices and semilattices are supposed to have finite rank.
The minimum element of a meet-semilattice is denoted by $0$.
For $p \preceq q$,
define the {\em interval} $[p,q]$ by 
$[p,q]:= \{u \in {\cal P} \mid p \preceq u \preceq q \}$.
The {\em principal filter} ${\cal F}_p$ of $p \in {\cal P}$ 
is the set of elements $u$ with $p \preceq u$, and is regarded as a poset by the restriction of $\preceq$.
The {\em principal ideal} ${\cal I}_p$ of $p$ is the set of elements $u$ with $u \preceq p$, and  regarded as a poset by the reverse of the restriction of $\preceq$ (so that $p$ is the minimum element).
We say that $p$ {\em covers} $q$ if 
$p \succ q$ and there is no $u$ with $p \succ u \succ q$.
In this case, we write $p \rightarrow q$.
The {\em covering graph} of poset  ${\cal P}$ is a graph on ${\cal P}$ 
such that elements $p,q$ are adjacent if and only if $p$ covers $q$ or $q$ covers $p$.
The covering graph is naturally oriented as $p \to q$ if $p$ covers $q$.

\subsection{Modular semilattices and polar spaces}\label{subsec:lattice}
In this section, we introduce modular semilattices and polar spaces, which are underlying structures of submodular functions in Section~\ref{sec:submodular}.
A lattice ${\cal L}$ is called {\em modular} 
if for every $x,y,z \in {\cal L}$ with $x \succeq z$ it holds $x \wedge (y \vee z) = (x \wedge y) \vee z$.
A semilattice ${\cal L}$ is called {\em modular}~\cite{BVV93}  if every principal ideal is a modular lattice, 
and for every $x,y,z \in {\cal L}$ 
the join $x \vee y \vee z$ exists provided 
$x \vee y$, $y \vee z$, and $z \vee x$ exist.
A {\em complemented} modular (semi)lattice is a modular (semi)lattice 
such that every element is the join of atoms.
\begin{Ex}[${{\cal S}_k}^n$]\label{ex:S_k}
		For nonnegative integer $k \geq 0$, 
		let ${\cal S}_{k}$ denote the $k+1$ element set containing a special element $0$.
		The partial order on ${\cal S}_k$ is given by 
		$0 \leftarrow u$ for $u \in {\cal S}_{k} \setminus \{0\}$; other pairs are incomparable. 
		Clearly, ${\cal S}_k$ is a complemented modular semilattice, 
		and so is the product ${{\cal S}_k}^n$.
	   Notice that ${{\cal S}_2}^n = \{-1,0,1\}^n$ 
		is the domain of bisubmodular functions.
		More generally,   
		${{\cal S}_k}^n$ is the domain of $k$-submodular functions.
\end{Ex}

\begin{Ex}[totally isotropic subspaces]
	Let $V$ be a finite dimensional vector space. 
	It is well-known that the poset of all subspaces (ordered by inclusion) 
	is a complemented modular lattice. 
	Suppose further that $V$ has a bilinear form $B: V \times V \to \RR$.
	A {\em totally isotropic subspace} is a vector subspace $X$ of $V$ 
	such that $B(x,y) = 0$ for $x,y \in X$.
	Then the poset ${\cal L}$ of all totally isotropic subspaces of $V$ 
	(ordered by inclusion) is a complemented modular semilattice.
	Indeed, for a totally isotropic subspace $X$, any subspace of $X$ is again totally isotropic.
	This implies that ${\cal L}$ is a meet-semilattice with $\wedge = \cap$, and
	every principal ideal is a complemented modular lattice.
	In addition, the join of $X,Y$, if it exists, is equal to $X+Y$.
	Suppose that $X$, $Y$, and $Z$ are totally isotropic subspaces 
	such that $X+Y$, $Y+Z$, and $X+Z$ are totally isotropic.
	Then $X+Y+Z$ is totally isotropic, 
	Indeed, for $x_1,x_1' \in X$, $x_2,x_2' \in Y$, and $x_3,x_3' \in Z$, 
	$B(x_1+x_2+x_3,x_1'+ x_2'+x_3') = \sum_{i,j} B(x_i,x_j') = 0$. 	
\end{Ex}
A canonical example of a modular semilattice is a polar space.
A {\em polar space} ${\cal L}$ of rank $n$ 
is a semilattice that is the union of subsemilattices, 
called {\em polar frames}, satisfying the following axiom.\footnote{Our definition of a polar space is not standard; see \cite{Foundation} 
for the definition of a polar space as an incidence geometry.
It is known that polar spaces and spherical buildings of type C
constitute the same object~\cite{TitsBuilding}.
Our conditions reformulate the axiom that 
the order complex of ${\cal L} \setminus \{0\}$
is a spherical building of type C, 
where the order complex of ${{\cal S}_2}^n \setminus \{0\}$ 
is a Coxeter complex of type C, and
apartments are subcomplexes corresponding to polar frames.}
	\begin{description}
		\item[P0:] Each polar frame is isomorphic to ${{\cal S}_2}^n = \{-1,0,1\}^n$.
		\item[P1:] 
		For two chains $C,D$ in ${\cal L}$, 
		there is a polar frame ${\cal F}$ containing them.
		\item[P2:] If polar frames ${\cal F},{\cal F}'$ 
		both contain two chains $C,D$, then there is an isomorphism ${\cal F} \to {\cal F}'$ being identity on $C$ and $D$.
	\end{description}
\begin{Prop}[{\cite[Lemma 2.19]{CCHO14}}]
	A polar space is a complemented modular semilattice.
\end{Prop}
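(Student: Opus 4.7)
The plan is to reduce every order-theoretic question about ${\cal L}$ to the corresponding question inside a single polar frame ${\cal F} \cong {{\cal S}_2}^n = \{-1,0,1\}^n$, which is itself a complemented modular semilattice by Example~\ref{ex:S_k}. Axiom P1 will be invoked to locate a frame containing all the data relevant to a given computation, while P2, via isomorphisms that are identity on the prescribed chains, will ensure that the outcome is independent of the frame. A representative instance is the meet-semilattice structure: for $x,y \in {\cal L}$ I apply P1 to the singleton chains $\{x\}, \{y\}$ to obtain a frame ${\cal F}$ containing both, set $x \wedge y := x \wedge_{{\cal F}} y$, and use P2 (applied to any other frame ${\cal F}'$ containing $x,y$) to conclude that the definition is independent of~${\cal F}$. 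That $x \wedge y$ is the global meet then follows by applying P1 to the chains $\{z,x\}, \{z,y\}$ for any common lower bound $z$ and observing $z \preceq x \wedge y$ inside the resulting frame.

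Next, each principal ideal ${\cal I}_p$ is shown to be a modular lattice. For $u,v \preceq p$ the chains $\{u,p\}, \{v,p\}$ lie in a common frame ${\cal F}$ by P1, and since the principal ideal of $p$ in ${\cal F}$ is a Boolean cube, the join $u \vee_{\cal F} v \preceq p$ exists; the same P2 argument shows that it is the join in ${\cal I}_p$. Modularity is inherited from the Boolean cube once the triple $u,v,w$ (with $u \succeq w$) is embedded in a single frame via the chains $\{w,u,p\}, \{v,p\}$. Complementedness is equally direct: applying P1 to the chain $\{p\}$ places $p$ in some frame, in which $p$ is visibly the join of the atoms below it, and by the preceding step this frame-join equals the join in ${\cal L}$.

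The remaining and main obstacle is the defining semilattice condition: if $x \vee y$, $y \vee z$, and $x \vee z$ all exist, then $x \vee y \vee z$ exists. The strategy is to apply P1 to the two chains $\{x, x \vee y\}$ and $\{y, y \vee z\}$, which places $x, y, x \vee y, y \vee z$ into a common frame ${\cal F} \cong \{-1,0,1\}^n$, and then to argue coordinate-wise that the join $(x \vee y) \vee_{\cal F} (y \vee z)$ exists in ${\cal F}$. A coordinate-wise conflict between $x \vee y$ and $y \vee z$ in ${\cal F}$ would, after a case analysis using $x,y \in {\cal F}$ and $y \preceq y \vee z$, force a specific value of $z$ at the matching coordinate in an auxiliary frame. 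Such auxiliary frames are then produced by P1 from chains such as $\{y, y \vee z\}, \{z, y \vee z\}$ or $\{x\}, \{z, y \vee z\}$, and identified with one another and with ${\cal F}$ via P2 along their common chains, which transports the conflict into a violation of the existence of $x \vee z$ or $y \vee z$ in the corresponding frame. Executing this coordinate chase while carefully tracking the isomorphisms supplied by P2 is where I expect essentially all of the technical work to lie.
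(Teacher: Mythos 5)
The paper gives no proof of this proposition: it is imported wholesale from \cite[Lemma 2.19]{CCHO14}, so there is nothing internal to compare your argument against. Your overall strategy --- reduce every computation to a single polar frame ${\cal F}\simeq{{\cal S}_2}^n$ via P1 and use P2 to make the answer frame-independent --- is the natural one and is in the spirit of how such statements are proved for buildings. (Note also that the paper's definition already stipulates that ${\cal L}$ is a semilattice and that frames are subsemilattices, so the meet and the partial order are global from the start; your construction of $\wedge$ is not needed.)

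That said, the proposal has a genuine gap, and you have in effect flagged it yourself. The heart of the proposition is the semilattice axiom that $x\vee y\vee z$ exists whenever the three pairwise joins do, and for this step you only describe a strategy (``a coordinate chase \ldots\ is where I expect essentially all of the technical work to lie'') without carrying it out. The difficulty is not mere bookkeeping: the frame supplied by P1 for the chains $\{x,x\vee y\}$ and $\{y,y\vee z\}$ need not contain $z$ or $x\vee z$ at all, and the ``coordinates'' of $z$ in an auxiliary frame are related to those of ${\cal F}$ only through a P2-isomorphism that is pinned down on the two shared chains, not on the whole intersection of the two frames. The same subtlety already affects your earlier frame-independence claims: P2 gives an isomorphism $\phi:{\cal F}\to{\cal F}'$ with $\phi(u)=u$ and $\phi(v)=v$, hence $\phi(u\vee_{\cal F}v)=u\vee_{{\cal F}'}v$, but this does not by itself show that $u\vee_{\cal F}v$ and $u\vee_{{\cal F}'}v$ are the \emph{same element of ${\cal L}$}. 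To close this you need an intrinsic characterization of the frame-join --- for instance, since $\wedge$, $\preceq$ and the rank are global, use that in ${{\cal S}_2}^n$ an upper bound $w$ of $u,v$ equals $u\vee v$ exactly when $r(w)=r(u)+r(v)-r(u\wedge v)$ --- or you must feed the candidate join itself into one of the chains handed to P1/P2. Until the ternary-join step is executed with these identifications under control, what you have is an outline rather than a proof.
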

In Section~\ref{sec:submodular}, 
we define submodular functions on polar spaces, 
which turn out to be appropriate generalizations of bisubmodular and $k$-submodular functions.
\begin{Ex}\label{ex:S_kl}
	For nonnegative integer $k \geq 2$, 
	${\cal S}_{k}$ is a polar space of rank $1$, 
	where polar frames are $\{0, a, b\} \simeq {\cal S}_2$ for distinct $a,b \in {\cal S}_k \setminus \{0\}$.
	The product ${{\cal S}_{k}}^n$ is also a polar space (of rank $n$).
	We give another example.
	For nonnegative integers $k,l \geq 2$, 
	let  ${\cal S}_{k,l}$ denote the poset on ${\cal S}_{k} \times {\cal S}_{l}$
	with partial order: $(a,0) \to (a,b) \leftarrow (0,b)$ 
	and $(a,b) \to (0,0)$ for $a \in {\cal S}_k \setminus \{0\}$ 
	and $b \in {\cal S}_{l} \setminus \{0\}$.
	Notice that this partial order is different from the one in the direct product.
	Then ${\cal S}_{k,l}$  is a polar space of rank $2$, 
	where polar frames take the form of $\{ 0, a,a'\} \times \{0, b,b'\}$ 
	with $a \neq a'$ and $b \neq b'$. 
	%
\end{Ex}

\subsection{Orientable modular graphs and swm-graphs}\label{subsec:modular}
In this section,  we introduce orientable modular graphs and swm-graphs, 
which are the underlying structures
of L-convex and L-extendable functions in Section~\ref{sec:L-convex}.
For a graph $G$, let $d = d_{G}$ 
denote the shortest path metric 
with respect to a specified uniform positive edge-length of $G$.
The {\em metric interval} $I(x,y)$ of vertices $x,y$ is 
the set of points $z$ satisfying $d(x,z) + d(z,y) = d(x,y)$.
We regard $G$ as a metric space with this metric $d$.
A nonempty subset $Y$ of vertices is said to be {\em $d$-convex}
if $I(x,y) \subseteq Y$ for every $x,y \in Y$

\subsubsection{Modular graphs and orientable modular graphs}
A {\em modular graph} is a graph such that 
for every triple $x,y,z$ of vertices, 
the intersection $I(x,y) \cap I(y,z) \cap I(z,x)$ is nonempty.
A canonical example of a modular graph is the covering graph of a modular lattice.
More generally, the following is known.
\begin{Thm}[{\cite[Theorem 5.4]{BVV93}}]
	A semilattice is modular if and only if its covering graph is modular.
\end{Thm}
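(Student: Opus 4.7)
The plan is to prove both implications via the median construction $m := (x \wedge y) \vee (y \wedge z) \vee (z \wedge x)$, the familiar ``majority'' formula from modular lattice theory. For the forward direction I would show that $m$ exists in ${\cal L}$ and serves as a common geodesic witness; for the reverse direction, modularity of the covering graph furnishes medians from which both the modular-lattice axiom in each principal ideal and the three-join axiom can be extracted.

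For ($\Rightarrow$), set $a := x \wedge y$, $b := y \wedge z$, $c := z \wedge x$. Since $a, b \preceq y$ the join $a \vee b$ exists in the modular lattice ${\cal I}_y$; similarly $b \vee c \preceq z$ and $c \vee a \preceq x$. The three-join axiom then produces $m := a \vee b \vee c$. Working inside ${\cal I}_m$, which contains $b$ and $a \vee c$, one applies the modular law to obtain $m \wedge x = (a \vee c) \vee (b \wedge x)$; since $b \wedge x = x \wedge y \wedge z \preceq a$, this collapses to $a \vee c$. Granting the modular-semilattice distance formula $d(u,v) = r(u) + r(v) - 2 r(u \wedge v)$ together with the standard rank identity in the relevant modular lattices, a direct arithmetic then shows $d(x,m) + d(m,y) = d(x,y)$, placing $m \in I(x,y)$; symmetry handles the remaining two memberships.

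For ($\Leftarrow$), the principal ideal ${\cal I}_p$ is immediately a lattice (joins of $u,v \preceq p$ exist as meets of their common upper bounds in ${\cal I}_p$). To obtain modularity, take $u \succeq w$ and arbitrary $v$ in ${\cal I}_p$, apply modularity of the covering graph to the triple $(u,v,w)$, and identify the resulting median with $u \wedge (v \vee w) = (u \wedge v) \vee w$ via the geodesic equalities and a rank count. To derive the three-join axiom, assume $x \vee y$, $y \vee z$, $z \vee x$ all exist, take the median of this triple in the covering graph, and verify that it dominates each of $x,y,z$, hence produces $x \vee y \vee z$ in the relevant principal ideal. The main obstacle is the forward direction's modular-law step: the elements $x$, $b$, and $m$ need not sit in any common principal ideal (precisely because $x \vee y \vee z$ may fail to exist), so the identity $m \wedge x = (a \vee c) \vee (b \wedge x)$ must be derived inside ${\cal I}_m$ applied to $m \wedge x$ rather than $x$ itself. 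I expect this to rely on the amalgamated modular-lattice structure of modular semilattices established in \cite{BVV93}.
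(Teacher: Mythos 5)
The paper does not prove this statement; it is quoted verbatim from \cite[Theorem 5.4]{BVV93}, so there is no in-paper argument to compare against. Judged on its own terms, your forward direction is essentially sound, and the obstacle you flag is resolvable exactly as you hint: $x\wedge m$ lies in ${\cal I}_m$ and satisfies $x\wedge m\succeq a\vee c$, so the modular law inside ${\cal I}_m$ gives $x\wedge m=(x\wedge m)\wedge\bigl(b\vee(a\vee c)\bigr)=\bigl((x\wedge m)\wedge b\bigr)\vee(a\vee c)=a\vee c$, since $(x\wedge m)\wedge b=x\wedge y\wedge z\preceq a$. Together with $(a\vee b)\wedge(a\vee c)=a$ (again the modular law in ${\cal I}_m$) and the rank identity $r(a\vee b)+r(a\vee c)=r(m)+r(a)$, the distance formula $d(u,v)=r(u)+r(v)-2r(u\wedge v)$ yields $m\in I(x,y)$, and symmetrically for the other two intervals.

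The backward direction, however, contains a step that fails as stated. You propose to obtain the three-join axiom by taking the median of $(x,y,z)$ and ``verifying that it dominates each of $x,y,z$.'' It does not: in the Boolean lattice $\{0,1\}^3$ with $x,y,z$ the three atoms, all pairwise joins exist, yet $I(x,y)\cap I(y,z)\cap I(z,x)=\{0\}$, so the only median is the bottom element; a median of a triple is generically the \emph{lower} median, not an upper bound. The repair is to take a median $m$ of the triple of pairwise joins $(x\vee y,\ y\vee z,\ z\vee x)$: once the principal ideals are known to be modular, the distance formula forces every point of $I(u,v)$ to lie above $u\wedge v$ (equality in the submodular rank inequality applied in ${\cal I}_m$ forces $u\wedge m\wedge v=u\wedge v$), whence $m\succeq(x\vee y)\wedge(y\vee z)\succeq y$ and symmetrically $m\succeq x$ and $m\succeq z$, so $x\vee y\vee z$ exists. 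Your derivation of the modular law in ${\cal I}_p$ is also too thin: you must explain why the median of your triple stays inside ${\cal I}_p$ and why it equals the asserted lattice element. A safer route is to prove upper semimodularity (median of $(u,v,p)$ for $u,v$ covering $u\wedge v$, using that the covering graph is graded so $d(s,t)\geq|r(s)-r(t)|$ with equality only along chains) and lower semimodularity (median of $(u,v,0)$), and then invoke the standard fact that a finite-length lattice which is both upper and lower semimodular is modular.
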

An edge-orientation of a modular graph is called {\em admissible}~\cite{Kar98a} 
if for every 4-cycle $(x_1, x_2, x_3, x_4)$,
$x_1 \to  x_2$ implies $x_4 \to x_3$. 
A modular graph may or may not have an admissible orientation.
A modular graph is said to be {\em orientable}~\cite{Kar98a} if it has
an admissible orientation.
Observe that the covering graph of a modular semilattice is 
orientable with respect to the covering relation. 
A modular graph endowed with an admissible orientation is called 
an {\em oriented modular graph}.
Any admissible orientation is acyclic~\cite{HH150ext}, and induces a partial order~$\preceq$ on the set of vertices.
In this way, an oriented modular graph is also viewed as a poset.
Define a binary relation $\sqsubseteq$ by: 
$p \sqsubseteq q$ if $p \preceq q$ and $[p,q]$ 
is a complemented modular lattice.
Notice that $\sqsubseteq$ is not a partial order 
but $p \sqsubseteq q$ and $p \preceq p' \preceq q' \preceq q$ 
imply $p' \sqsubseteq q'$ 
(since any interval of a complemented modular lattice 
is a complemented modular lattice).
For a vertex $p$, the {\em principal $\sqsubseteq$-ideal} 
${\cal I}'_p$ (resp. {\em  $\sqsubseteq$-filter} ${\cal F}'_p$) 
are the set of vertices $q$ with  $p \sqsupseteq q$
(resp.  $p \sqsubseteq q$).
In particular, 
${\cal I}'_p \subseteq {\cal I}_p$ and ${\cal F}'_p \subseteq {\cal F}_p$ hold.
\begin{Prop}[{\cite[Proposition 4.1, Theorem 4.2]{HH150ext}}]\label{prop:ideal}
	Let $\varGamma$ be an oriented modular graph.
	\begin{description}
		\item[{\rm (1)}] Every principal ideal (filter) is a modular semilattice and $d$-convex. 
		In particular, every interval is a modular lattice.	
		\item[{\rm (2)}] Every principal $\sqsubseteq$-ideal (filter) is a complemented modular semilattice and $d$-convex.
	\end{description}
\end{Prop}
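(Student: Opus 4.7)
The proof proceeds by verifying the ideal claims in (1) and (2), with the filter claims following by reversing the orientation. My first step is the \emph{monotone geodesic lemma}: whenever $u \preceq p$, every shortest $u$--$p$ path in $\varGamma$ is a monotone directed path, so the metric interval $I(u,p)$ coincides with the poset interval $[u,p]$. Fix a directed witness of $u \preceq p$; take any geodesic $Q$ from $u$ to $p$; at each place where $Q$ disagrees with the orientation, the modular-graph condition produces a median that closes a 4-cycle, and admissibility forces the two parallel edges of that 4-cycle to be cooriented, allowing $Q$ to be re-routed locally into a monotone geodesic of the same length. Combined with the classical fact that every interval in a modular graph is a modular lattice under the median operation, this yields $[u,p] = I(u,p)$ as a modular lattice whose order agrees with $\preceq$, proving the interval clause of~(1).

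\textbf{Semilattice and $d$-convexity of $\mathcal{I}_p$.} For $u,v \in \mathcal{I}_p$, the modular-graph property provides $m \in I(u,v) \cap I(v,p) \cap I(u,p)$; the previous step forces $u,v \preceq m \preceq p$, so $u,v$ admit a common $\preceq$-upper bound inside $\mathcal{I}_p$, and minimizing inside the modular lattice $[u,p]$ gives the semilattice meet (that is, the $\preceq$-join) of $u,v$ in $\mathcal{I}_p$. The modular-semilattice triple-join axiom then reduces to three applications of the median property and a lattice manipulation inside the resulting principal interval. For $d$-convexity: if $z \in I(u,v)$ with $u,v \preceq p$, apply the median property to the triples $z,u,p$ and $z,v,p$ to obtain $m'$ with $z \preceq m' \preceq p$, hence $z \in \mathcal{I}_p$.

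\textbf{Complemented $\sqsubseteq$-ideals and main obstacle.} For (2), complementation of $[q,p]$ is an interval-local condition inherited by subintervals of complemented modular lattices, so intervals lying inside $\mathcal{I}'_p$ are automatically complemented modular; $d$-convexity of $\mathcal{I}'_p$ then follows from $d$-convexity of $\mathcal{I}_p$ together with this inheritance. What remains is to check that the semilattice meet constructed above, when applied to $u,v \in \mathcal{I}'_p$, lands back in $\mathcal{I}'_p$ — i.e., that the interval from the resulting median up to $p$ is still complemented — and this follows from the median lying in $[u,p] \cap [v,p]$ and the sublattice structure of complemented modular lattices. The \emph{main obstacle} throughout is the monotone-geodesic lemma: converting the purely local admissibility rule on 4-cycles into the global identification $I(u,p)=[u,p]$. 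Once that is secured, the modular-graph and modular-semilattice languages align and the remaining assertions reduce to lattice bookkeeping.
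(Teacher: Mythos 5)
This proposition is not proved in the present paper at all: it is imported verbatim from \cite[Proposition 4.1, Theorem 4.2]{HH150ext}, so I am judging your argument on its own merits. Your skeleton --- reduce everything to the identification of the metric interval $I(u,p)$ with the order interval $[u,p]$ for comparable $u\preceq p$, then manufacture joins in ${\cal I}_p$ from medians --- is indeed the right one (the identification is exactly \cite[Lemmas 4.13--4.14]{HH150ext}, quoted later in this paper as Lemma~\ref{lem:oriented123}(2)). The problem is that your proof of this key lemma does not go through as written. The quadrangle condition QC only produces a common neighbor when you have two vertices \emph{equidistant} from a reference vertex $z$ with a common neighbor one step farther from $z$; consecutive vertices on a geodesic from $u$ to $p$ are never equidistant from $u$ or from $p$, so at a step ``where $Q$ disagrees with the orientation'' no 4-cycle is produced by the stated mechanism. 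Moreover, even if a local rerouting were available, it would only exhibit \emph{one} monotone geodesic; the equality $I(u,p)=[u,p]$ needs both that \emph{every} geodesic vertex is order-between $u$ and $p$, and the reverse inclusion $[u,p]\subseteq I(u,p)$, which amounts to gradedness of the poset ($d(u,z)=r(z)-r(u)$ for $u\preceq z$) and is nowhere addressed.

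Two of the downstream steps also have unjustified leaps. For $d$-convexity of ${\cal I}_p$ you take $z\in I(u,v)$ and claim a median $m'$ of $z,u,p$ satisfies $z\preceq m'$; but a median only lies in $I(z,u)\cap I(z,p)\cap I(u,p)$, which forces $u\preceq m'\preceq p$ and says nothing about comparability of $z$ and $m'$ (in $K_{2,3}$ oriented $a\to x,y,z\to b$, the vertex $b\in I(x,y)$ is not on any $I(x,a)\cup I(a,y)$, so the reduction to medians genuinely loses points of $I(u,v)$). For the semilattice meet, ``minimizing inside the modular lattice $[u,p]$'' does not obviously stay above $v$: the meet in $[u,p]$ of two common upper bounds $w_1,w_2$ of $u,v$ is only guaranteed to dominate lower bounds lying \emph{in} $[u,p]$, and $v$ need not be one of them. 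Also note that medians in modular graphs are not unique, so ``the median'' must be handled with care. These are exactly the points where the cited proof in \cite{HH150ext} does real work, so as it stands the proposal is an outline with the hard steps missing rather than a proof.
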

An oriented modular graph is said to be {\em well-oriented}
if $\preceq$ and $\sqsubseteq$ are the same, i.e., 
every interval is a complemented modular lattice.
If $\varGamma$ is well-oriented, 
then ${\cal I}'_p = {\cal I}_p$ and ${\cal F}'_p = {\cal F}_p$.
\begin{Ex}[linear and alternating orientation of grid]
		The set $\ZZ$ of integers is regarded as a graph (path) 
		by adding an edge to each pair $x,y \in \ZZ$ with $|x - y| = 1$.
		Any orientation of $\ZZ$ is admissible. 
		There are two canonical orientations.
		The {\em linear orientation} 
		is the edge-orientation such that $x \leftarrow y$ if $y = x + 1$.
		Let $\vec{\ZZ}$ denote the oriented modular graph  
		$\ZZ$ endowed with the linear orientation.
		The {\em  alternating orientation} of $\ZZ$ is the edge-orientation  
		such that $x \to y$ if $x$ is even. 
		Let $\check{\ZZ}$ denote the oriented modular graph $\ZZ$ endowed with the alternating orientation.
		Observe that $\vec\ZZ$ is not well-oriented and $\check{\ZZ}$ is well-oriented.  
		The products $\vec\ZZ^n$ and $\check{\ZZ}^{n}$ 
		are oriented modular graphs, where ${\check{\ZZ}}^n$ is well-oriented.
		In $\vec{\ZZ}^n$, 
		each principal filter is isomorphic to $\ZZ_+^n$ (with respect to the vector order) and each principal $\sqsubseteq$-filter is isomorphic to  
		${{\cal S}_1}^n = \{0,1\}^n$.
		In $\check{\ZZ}^{n}$, 
		the principal filter of vector $p$ is isomorphic to 
		${{\cal S}_{2}}^k = \{-1,0,1\}^k$, where $k$ is 
		the number of odd components of $p$.
\end{Ex}
\subsubsection{Weakly modular graphs and swm-graphs}
A {\em weakly modular graph} is a connected graph satisfying 
the following conditions:  
\begin{description}
	\item[TC:] For every triple of vertices $x,y,z$ with 
	$d(x,y) = 1$ and $d(x,z) = d(y,z)$, 
	there exists a common neighbor $u$ of $x,y$ with $d(u,z) = d(x,z)-1$.
	\item[QC:] For every quadruple of vertices $x,y,w,z$ with $d(x,y) = 2$,
	$d(w,y) = d(w,x) = 1$ and $d(w,z) - 1 = d(x,z) = d(y,z)$, there 
	exists a common neighbor $u$ of $x,y$ with $d(u,z) = d(x,z)-1$.
\end{description}
Modular graphs are precisely 
bipartite weakly modular graphs.

In a graph $G$,
a nonempty subset $X$ of vertices 
is said to be {\em $d$-gated} (or {\em gated})
if for every vertex $x$ there is (unique) $y \in X$
such that $d(x,z) = d(x,y) + d(y,z)$ for every $z \in X$.
A $d$-gated set is always $d$-convex.
Instead of this definition, 
the following simple characterization of gated sets may be regarded as the definition.
\begin{Lem}[{Chepoi's lemma; see \cite[Lemma 2.2]{CCHO14}}]\label{lem:chepoi}
	For a weakly modular graph $G$, 
	a nonempty subset $X$ is $d$-gated if and only if
	$X$ induces a connected subgraph and 
	for $x,y \in X$ every common neighbor of $x,y$ belongs to $X$.
	If $G$ is modular, then $d$-gated sets and $d$-convex sets are the same.
\end{Lem}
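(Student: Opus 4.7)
The plan is to prove the two directions of the characterization separately, then derive the modular addendum.

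For the forward direction, suppose $X$ is $d$-gated with gate map $\pi$. I first show $X$ is $d$-convex (and hence connected through its intervals): for $x,y \in X$ and $z \in I(x,y)$, setting $g := \pi(z)$ and summing the gate identities $d(z,x) = d(z,g) + d(g,x)$ and $d(z,y) = d(z,g) + d(g,y)$ gives $d(x,y) = d(x,z) + d(z,y) \geq d(x,g) + d(g,y) + 2d(z,g) \geq d(x,y) + 2d(z,g)$, forcing $z = g \in X$. For the common-neighbor closure, let distinct $x,y \in X$ share a neighbor $u$ and set $g := \pi(u)$. Then $d(u,g) + d(g,x) = 1 = d(u,g) + d(g,y)$; the alternative $d(u,g) = 1$ would force $g = x = y$, so $d(u,g) = 0$ and $u = g \in X$.

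For the reverse direction, assume $X$ is connected and closed under common neighbors of distinct pairs, and construct a gate for an arbitrary $v$ by induction on $k := d(v, X)$. The base $k=0$ is trivial. For $k \geq 1$, pick a neighbor $v'$ of $v$ with $d(v', X) = k-1$; by induction $v'$ has a gate $g \in X$, and the triangle inequality pins $d(v,g) = k$. The upper bound $d(v,y) \leq 1 + d(v',y) = d(v,g) + d(g,y)$ for every $y \in X$ is immediate, so the real task is the matching lower bound. Uniqueness of the gate then comes for free: two gates $g_1, g_2$ of $v$ satisfy $d(v,g_2) = d(v,g_1) + d(g_1,g_2)$ and symmetrically, forcing $g_1 = g_2$.

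The main obstacle is the lower bound, and it is where weak modularity enters. Suppose $d(v,y) < k + d(g,y)$ for some $y \in X$. Because $|d(v,y) - d(v',y)| \leq 1$ and $d(v',y) = (k-1) + d(g,y)$, the violation forces $d(v,y) \in \{d(v',y),\ d(v',y)-1\}$. In the first case, apply condition \textbf{TC} to the triple $(v, v', y)$ to produce a common neighbor $u$ of $v$ and $v'$ with $d(u,y) = d(v',y)-1$. In the second case, let $w$ be the first vertex after $v$ on a shortest $v$-$y$ path, verify from the gate identity for $v'$ that $d(w,g) = d(v',g) = k-1$ and $d(v,g) = k$, then apply condition \textbf{QC} to $(w, v', v, g)$ to produce a common neighbor $u$ of $w$ and $v'$ at distance $k-2$ from $g$. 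In each case, the closure of $X$ under common neighbors of distinct vertices places into $X$ a vertex that is strictly closer to $y$ (respectively to $g$) than any element of $X$ available from the inductive hypothesis, contradicting either minimality of $k$ or the gate property of $g$ for $v'$. The delicate bookkeeping is to check that the pair whose common neighbor is taken consists of distinct vertices that actually lie in $X$; connectedness of $X$ is used here to reach such a pair along paths inside $X$ when the immediate pair produced by \textbf{TC}/\textbf{QC} involves $v$ itself rather than two elements of $X$.

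For the modular addendum, gated sets are $d$-convex in any graph by the first paragraph. Conversely, a $d$-convex set $X$ in a modular (hence bipartite) graph $G$ is connected via its intervals, and any common neighbor $u$ of distinct $x,y \in X$ forces $d(x,y) = 2$ (bipartiteness rules out $d(x,y) = 1$), so $u \in I(x,y) \subseteq X$. The hypotheses of the main characterization are met, and $X$ is gated.
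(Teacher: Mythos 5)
First, note that the paper does not prove this lemma at all: it is imported verbatim from \cite[Lemma 2.2]{CCHO14} (originally due to Chepoi), so there is no internal proof to compare against. Judged on its own terms, your forward direction and the modular addendum are correct and complete: the summing of the two gate identities to get $d$-convexity, the $d(u,g)\in\{0,1\}$ dichotomy for a common neighbor, and the bipartiteness argument reducing $d$-convexity to common-neighbor closure in the modular case are all fine.

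The reverse direction, however, has a genuine gap exactly at its crux. Your inductive setup (take a neighbor $v'$ of $v$ with $d(v',X)=k-1$, let $g$ be its gate, reduce to the lower bound $d(v,y)\geq k+d(g,y)$) is the standard and correct strategy, but the contradiction is not actually derived. The common neighbor $u$ produced by \textbf{TC} in your first case is a common neighbor of $v$ and $v'$, and for $k\geq 2$ neither of these lies in $X$; so the closure of $X$ under common neighbors says nothing about $u$, and $u$ is not ``strictly closer to $y$ than any element of $X$'' in any sense that contradicts $d(v,X)=k$ or the gate property of $g$ (indeed $d(u,y)=k-2+d(g,y)\geq k-1$, which is perfectly consistent with both). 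The sentence deferring this to ``connectedness of $X$ \ldots to reach such a pair along paths inside $X$'' is not an argument. What actually happens here is more involved: one must check whether $d(u,X)=k-1$, apply the induction hypothesis to $u$ to get a second gate $h$ adjacent to $g$ with $d(v,h)=k$, and only then apply \textbf{TC} to the pair $g,h\in X$ and the point $v$ to produce a common neighbor of $g$ and $h$ (these \emph{are} in $X$) at distance $k-1$ from $v$, contradicting $d(v,X)=k$; and the branch $d(u,X)=k$ needs separate treatment. Your second case has the same problem, compounded by unverified hypotheses for \textbf{QC}: you need $d(w,v')=2$ (it could be $1$, since $w$ and $v'$ are both neighbors of $v$) and $d(w,g)=k-1$, neither of which follows from ``the gate identity for $v'$'' as claimed. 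So the hard half of the lemma remains unproved; I would either carry out the nested induction sketched above in full, or simply cite \cite[Lemma 2.2]{CCHO14} as the paper does.
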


A {\em sweakly modular graph} ({\em swm-graph} for short)~\cite{CCHO14} 
is a weakly modular graph having no
induced $K_4^-$-subgraph and isometric $K_{3,3}^-$-subgraph, 
where $K_4^-$ and $K_{3,3}^-$ are graphs obtained from $K_4$ and $K_{3,3}$, respectively, by removing one edge. 
Since $K_{3,3}^-$ is not orientable, 
an orientable modular graph cannot have induced $K_{3,3}^-$.
Therefore, any orientable modular graph is an swm-graph. 
A canonical example of an swm-graph is a {\em dual polar space}~\cite{Cameron82}, 
which is a graph such that vertices are maximal elements of a polar space 
and two vertices $p,q$ are adjacent 
if $p \rightarrow p \wedge q \leftarrow q$.
\begin{Thm}[{\cite[Theorem 5.2]{CCHO14}}]\label{thm:dual_polar}
	A graph is a dual polar space if and only if it is a thick swm-graph of finite diameter. 
\end{Thm}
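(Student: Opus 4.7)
The plan is to prove the two implications separately.

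For the forward direction, assume ${\cal L}$ is a polar space of rank $n$ with dual polar space $G$. First I would establish the distance formula $d_G(p,q) = n - r(p \wedge q)$ for maximal $p,q\in{\cal L}$, by invoking axiom P1 to find a polar frame ${\cal F}\simeq\{-1,0,1\}^n$ containing the chains $\{0 \prec p\}$ and $\{0 \prec q\}$; the corresponding subgraph of $G$ is the $n$-cube on $\{-1,1\}^n$, where the formula is immediate. This gives $\diam G = n < \infty$. Thickness follows from axiom P0 together with the existence of multiple polar frames through each rank-$(n-1)$ element, each contributing at least two maximal extensions. For conditions TC and QC and the absence of induced $K_4^-$ and isometric $K_{3,3}^-$, since each configuration involves a bounded number of vertices and chains, axiom P1 reduces the check to inside a single polar frame, where the dual polar space is a hypercube and the verification is routine.

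For the backward direction, let $G$ be a thick swm-graph of finite diameter $n$. I would construct ${\cal L}$ whose elements are certain $d$-gated subsets of $G$ identified via Chepoi's lemma (Lemma~\ref{lem:chepoi}). The maximal elements of ${\cal L}$ should be the vertices of $G$, and the polar frames should correspond to ``apartments'' --- isometrically embedded $n$-hypercubes of $G$ spanned between antipodal pairs of vertices. The partial order on ${\cal L}$ is reverse inclusion, so that meets are recovered from unions of gated sets re-closed to a gated set. The swm-hypothesis combined with Chepoi's lemma forces these apartments to be locally cube-like, and thickness ensures that every rank-$(n-1)$ element lies in sufficiently many apartments to produce the required atoms in its filter.

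The main obstacle will be verifying axioms P1 and P2 for this construction: P1 requires that every pair of chains lies in a common apartment, and P2 requires that any two apartments sharing such a pair are isomorphic via a map fixing them. These are precisely the axioms that promote a chamber graph to a spherical building of type $C_n$, so the argument must essentially reconstruct such a building from $G$. I would proceed by induction on $n$: QC supplies the coordinate extensions needed to grow partial apartments; the absence of induced $K_4^-$ and isometric $K_{3,3}^-$ pins down the extensions and rules out ambiguous coordinatisations; and thickness prevents degeneration --- without it, $G$ could itself be a single $n$-cube (``thin apartment''), which corresponds to one polar frame rather than a genuine rank-$n$ polar space. This Tits-style reconstruction, carried out inside the weakly modular graph setting, is the substantive combinatorial heart of the theorem.
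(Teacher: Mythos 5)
First, note that the paper does not prove this statement at all: it is imported verbatim as \cite[Theorem 5.2]{CCHO14}, so there is no in-paper proof to compare against, and your proposal has to stand on its own as a reconstruction of that external result.

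As it stands there is a concrete gap in your forward direction. Axiom P1 supplies a polar frame containing two \emph{chains}, and among the maximal elements of ${\cal L}$ every chain is a single vertex; so P1 can only force \emph{two} of the vertices of a given configuration into a common frame, not three or four. Your claim that TC, QC, and the exclusion of induced $K_4^-$ and isometric $K_{3,3}^-$ ``reduce to inside a single polar frame'' therefore fails, and it fails for a structural reason: the maximal elements of a frame induce the hypercube $\{-1,1\}^n$, which is bipartite, whereas a \emph{thick} dual polar space contains triangles (any rank-$(n-1)$ element lies below at least three maximal elements, and these are pairwise adjacent). So the very configurations you need to control in TC and in the $K_4^-$ check can never be placed inside one frame, and the verification genuinely requires the polar-space incidence axioms (the ``one-or-all'' property), not just P0--P2 applied frame by frame. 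Your thickness argument is also slightly misdirected: once the distance formula $d_G(p,q)=n-r(p\wedge q)$ is in hand, a single frame already provides the two common neighbors at mutual distance $2$; no appeal to multiple frames through a rank-$(n-1)$ element is needed.

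For the backward direction you have written a plan rather than a proof: verifying P1 and P2 for your poset of gated sets is exactly the content of the theorem, and you defer it to an unexecuted ``Tits-style reconstruction.'' The argument in \cite{CCHO14} does not go that way; it verifies Cameron's axiomatic characterization of dual polar graphs (finite diameter, gatedness of balls, and the inductive structure of intervals in thick swm-graphs), which avoids having to rebuild the spherical building from scratch. If you want to complete your route, you would at minimum need to (i) prove the global distance formula in the forward direction rather than assert it, (ii) replace the single-frame reduction by an argument using the polar-space axioms directly, and (iii) either carry out the building reconstruction or switch to Cameron's criterion for the converse.
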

Here a weakly modular graph is said to be {\em thick} 
if for every pair of vertices $x,y$ with $d(x,y) = 2$ 
there are two common neighbors $z,w$ of $x,y$ with $d(z,w) = 2$.

For an swm-graph $G$, a nonempty set $X$ of 
vertices is called a {\em Boolean-gated} 
if $X$ is gated and induces a connected thick subgraph.
In a weakly modular graph 
the subgraph induced by a gated set is weakly modular.
Thus, by Theorem~\ref{thm:dual_polar}, 
Boolean-gated sets are precisely
gated sets inducing dual polar spaces.
Also the nonempty intersection of Boolean-gated sets is again Boolean-gated.
In this way, an swm-graph is viewed as an amalgamation of dual polar graphs.
The set of all Boolean-gated sets of $G$ is denoted by ${\cal B}(G)$.
The partial order on ${\cal B}(G)$ is defined as the reverse inclusion order.
Let $G^*$ denote the covering graph of ${\cal B}(G)$, called the {\em barycentric subdivision} of~$G$.
The edge-length of $G^*$ is defined as one half of the edge-length of $G$.

The barycentric subdivision $G^*$ serves as  
a {\em half-integral relaxation} of (problems on) $G$, and 
is used to define the notion of L-extendability in Section~\ref{sec:L-convex}.
	\begin{figure}[t]
		\begin{center}
			\includegraphics[scale=0.7]{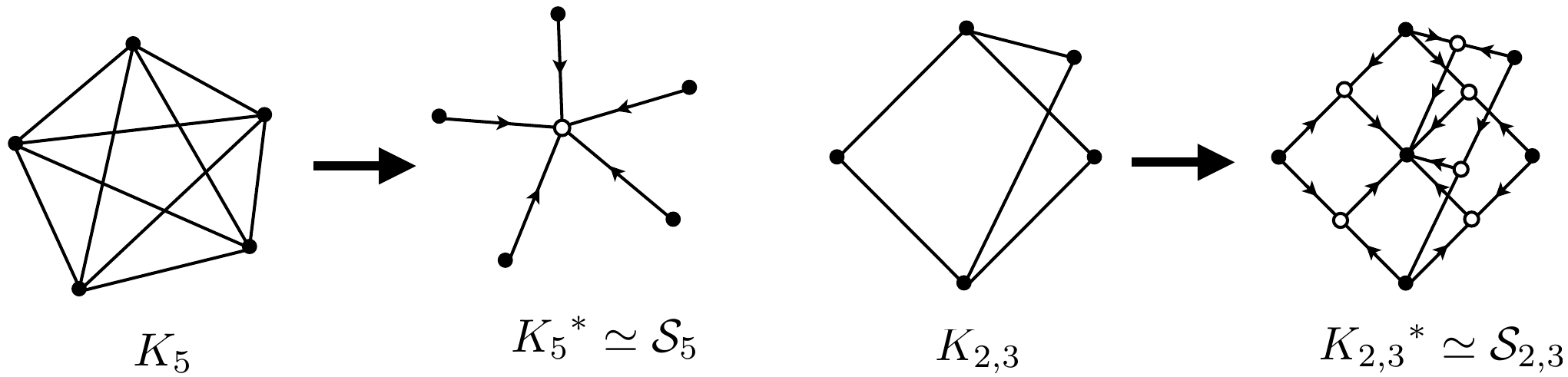}
			\caption{Barycentric subdivision}
			\label{fig:subdivision}
		\end{center}
	\end{figure}\noindent

	\begin{Ex}\label{ex:grid}
		Consider the path $\ZZ$.
		The barycentric subdivision $\ZZ^*$ 
		is naturally identified with the set $\ZZ/2$ of half-integers with edge $xy$
		given if $|x - y| = 1/2$ and oriented $x \leftarrow y$ if $y \in \ZZ$. 
		The product $\ZZ^n$ is the $n$-dimensional grid graph.
		A Boolean-gated set is exactly a vertex subset inducing a cube-subgraph, 
		which is equal to $\{z \in \ZZ^n \mid x_i \leq z_i \leq y_i, i=1,2,\ldots,n\}$ 
		for some $x,y \in \ZZ^n$ with $|x_i - y_i| \leq 1$ $(i=1,2,\ldots,n)$.
		The barycentric subdivision $(\ZZ^*)^n$ is isomorphic to the grid graph on 
		the half-integral lattice $(\ZZ/2)^n$ with the alternating orientation. 
		Note that $\check{\ZZ}^n$ is isomorphic to 
		the half-integer grid $(\ZZ^*)^n$ by $x \mapsto x/2$.
	\end{Ex}
	\begin{Ex}\label{ex:subdivision}
		Trees, cubes, complete graphs, and complete bipartite graphs are all swm-graphs. 
		The last three are dual polar spaces.
		In the case of a tree, Boolean-gated sets are 
		all the singleton, and 
		all the edges (pairs of vertices inducing edges).
		Hence the barycentric subdivision is 
		just the edge-subdivision, 
		where the original vertices are sources (i.e., have no entering edges). 
		Consider the case of a cube.
		Boolean-gated sets are vertex subsets inducing cube-subgraphs.
		Therefore the barycentric subdivision is 
		the facial subdivision of the cube, and is equal to ${{\cal S}_2}^n$.
		Suppose that $G$ is a complete graph $K_k$ of vertices $v_1,v_2,\ldots, v_k$.
		Boolean-gated sets are $\{v_1\}, \{v_2\}, \ldots, \{v_k\}$, and $\{v_1,v_2,\ldots, v_k\}$.
		Hence $G^*$ is a star with $k$ leaves, 
		and is isomorphic to polar space ${\cal S}_k$ (Example~\ref{ex:S_k}); 
		see the left of Figure~\ref{fig:subdivision}.
		Suppose that $G$ is the $n$-product ${K_k}^n$ of complete graph $K_k$. In this case, $G^*$ is isomorphic to ${{\cal S}_k}^n$.
		Suppose that $G$ is a complete bipartite graph $K_{k,l}$.
		Boolean-gated sets are all the singletons, all the edges, and the whole set of vertices.
		Therefore $G^*$ is obtained by 
		subdividing each edge $e = xy$ into a series $xv_e, v_ey$ and 
		joining $v_e$ to a new vertex corresponding to the whole set.
		Then $G^*$ is isomorphic to the polar space ${\cal S}_{k,l}$; see the right of Figure~\ref{fig:subdivision}.
	\end{Ex}
	Basic properties of Boolean-gated sets and the barycentric subdivision are summarized as follows.
\begin{Lem}[{\cite[Lemma 6.7]{CCHO14}}]\label{lem:om_Bgated}
	For an oriented modular graph $\varGamma$,
	a vertex set $X$ is Boolean-gated if and only if $X = [x,y]$ 
	for some vertices $x,y$ with $x \sqsubseteq y$.
	Hence $\varGamma^*$ is the poset of all intervals $[x,y]$ with $x \sqsubseteq y$, 
	where the partial order is the reverse inclusion order.
\end{Lem}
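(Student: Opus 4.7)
The plan is to prove the two implications separately, using Proposition~\ref{prop:ideal}, Chepoi's lemma (Lemma~\ref{lem:chepoi}), and Theorem~\ref{thm:dual_polar} as the principal tools.

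For the ``if'' direction, suppose $x \sqsubseteq y$, so that $[x,y]$ is a complemented modular lattice. The monotonicity of $\sqsubseteq$ noted in Section~\ref{subsec:modular} gives $[x,y] = {\cal F}'_x \cap {\cal I}'_y$; by Proposition~\ref{prop:ideal}(2) each factor is $d$-convex, so the intersection is $d$-convex, and Lemma~\ref{lem:chepoi} applied in the modular graph $\varGamma$ yields that $[x,y]$ is gated. It remains to show that the induced subgraph on $[x,y]$—which coincides with the covering graph of the complemented modular lattice $[x,y]$—is a thick swm-graph. This graph is bipartite (so contains no induced $K_4^-$) and orientable via the covering relation (so contains no isometric $K_{3,3}^-$); thickness follows because every rank-$2$ interval of a complemented modular lattice contains at least two atoms (otherwise the unique atom would have no complement), giving a pair of common neighbors of the endpoints at mutual distance~$2$. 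By Theorem~\ref{thm:dual_polar} the induced subgraph is a dual polar space, so $[x,y]$ is Boolean-gated.

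For the ``only if'' direction, let $X$ be Boolean-gated. Since $X$ is gated in the modular graph $\varGamma$, it is $d$-convex, and the admissible orientation of $\varGamma$ restricts to an admissible orientation on the induced subgraph (any 4-cycle of the subgraph is a 4-cycle of $\varGamma$). Thus $X$ is simultaneously an oriented modular graph and a dual polar space. To exhibit $x,y \in X$ with $x \sqsubseteq y$ and $X = [x,y]$, I would fix an arbitrary $z \in X$ and exploit Proposition~\ref{prop:ideal}(2) for the principal $\sqsubseteq$-filter ${\cal F}'_z$ and $\sqsubseteq$-ideal ${\cal I}'_z$—both complemented modular semilattices—together with the dual-polar-space structure of $X$ and the finiteness of its diameter, to identify a unique $\preceq$-minimum $x$ and $\preceq$-maximum $y$ of $X$ for which the $\varGamma$-interval $[x,y]$ equals $X$. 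The complemented modular lattice structure on $[x,y]$ is then inherited from the polar space indexing $X$ via Theorem~\ref{thm:dual_polar}, yielding $x \sqsubseteq y$.

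The ``hence'' clause follows immediately: $\varGamma^* = {\cal B}(\varGamma)$ ordered by reverse inclusion, and the map $(x,y) \mapsto \{u \in \varGamma : x \preceq u \preceq y\}$ is an order-preserving bijection between the pairs with $x \sqsubseteq y$ and the Boolean-gated sets. The main obstacle is the structural step in the ``only if'' direction—extracting a unique $\preceq$-extremal pair together with a complemented modular lattice structure from an admissibly oriented dual polar space. I would approach this by induction on the rank/diameter of the dual polar space, using the amalgamation description of swm-graphs sketched in Section~\ref{subsec:modular} together with the correspondence between polar spaces and dual polar spaces supplied by Theorem~\ref{thm:dual_polar}.
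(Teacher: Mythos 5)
This lemma is imported by the paper from \cite[Lemma 6.7]{CCHO14} without proof, so there is no internal argument to compare against; I can only judge your attempt on its own terms. Your ``if'' direction is essentially sound: the identity $[x,y]={\cal F}'_x\cap{\cal I}'_y$ follows from the monotonicity of $\sqsubseteq$, Proposition~\ref{prop:ideal}(2) plus Lemma~\ref{lem:chepoi} give gatedness, and your thickness argument via complements is the right idea (though you should also treat the case of two \emph{incomparable} vertices at distance $2$, where the common neighbors realizing thickness are their meet and join, not the atoms of a rank-$2$ interval).

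The ``only if'' direction, however, is a statement of intent rather than a proof, and that is where the entire content of the lemma sits. You correctly reduce to: an admissibly oriented dual polar graph $X$ inside $\varGamma$ must have a $\preceq$-minimum $x$, a $\preceq$-maximum $y$, satisfy $X=[x,y]$, and carry the structure of a complemented modular lattice. But you give no argument for any of these claims --- no reason why the restricted orientation has a unique source and sink, why the induced poset is a lattice at all, or why it is complemented modular; you only say you ``would'' extract these by induction on the diameter using the amalgamation picture and Theorem~\ref{thm:dual_polar}. Nothing in the paper hands you this: Theorem~\ref{thm:dual_polar} identifies $X$ with the maximal elements of some abstract polar space, but it does not relate that polar space to the order $\preceq$ inherited from $\varGamma$, and Proposition~\ref{prop:ideal}(2) only describes $\sqsubseteq$-filters and $\sqsubseteq$-ideals of a single vertex, not arbitrary thick gated sets. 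Until this step is carried out (it is precisely the work done in \cite{CCHO14}), the proof is incomplete.
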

In this way, an oriented modular graph is viewed 
as an amalgamation of complemented modular lattices.

Any singleton $\{v\}$ of a vertex $v$ is Boolean-gated.
Thus we can regard $G \subseteq G^*$. Moreover $G$ is an isometric subspace of $G^*$.
\begin{Thm}[{\cite[Theorem 6.9]{CCHO14} and \cite[Proposition 4.5]{HH150ext}}]\label{thm:G->G*}
	For an swm-graph $G$, the barycentric subdivision $G^*$ is oriented modular.
	Moreover, $G$ is isometrically embedded into $G^*$ by $x \mapsto \{x\}$, i.e., $d_G(x,y) = d_{G^*}(\{x\},\{y\})$.
	In addition, if $G$ is an oriented modular, then it holds
	\[
	d_{G^*}([p,q],[p',q']) = ( d_{G}(p,p') + d_{G}(q,q'))/2 \quad ([p,q],[p',q'] \in G^*).
	\] 
\end{Thm}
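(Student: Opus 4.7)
The plan is to reduce all three claims to the internal polar-space structure on each Boolean-gated set of $G$ and then amalgamate using the swm-structure of $G$. Concretely, for each $X \in {\cal B}(G)$, Theorem~\ref{thm:dual_polar} presents $X$ as a dual polar space and furnishes a polar space ${\cal P}_X$ (a complemented modular semilattice) whose maximal elements are the vertices of $X$. I would first identify the subposet $\{Y \in {\cal B}(G) : Y \subseteq X\}$, ordered by reverse inclusion, with ${\cal P}_X$: the subset $X$ itself corresponds to $\hat 0$, and shrinking a Boolean-gated subset corresponds to moving upward in ${\cal P}_X$. This suggests the canonical orientation on $G^*$: set $X \to Y$ whenever $Y$ is a maximal proper Boolean-gated subset of $X$, which globally agrees with the covering orientation inside each ${\cal P}_X$.

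To prove $G^*$ is modular, I would take any three vertices $X_1, X_2, X_3 \in {\cal B}(G)$ and construct a common point $M$ in the three pairwise $d_{G^*}$-intervals. When the $X_i$ share a common refinement $Z \in {\cal B}(G)$ with $Z \subseteq X_1 \cap X_2 \cap X_3$, the polar-space median inside ${\cal P}_Z$ gives $M$ directly. Otherwise I would iteratively project each $X_i$ onto the others using the gate map (Lemma~\ref{lem:chepoi}) and the TC/QC axioms of $G$, reducing to the common-refinement case. Admissibility of the orientation on any $4$-cycle of $G^*$ then reduces to a $4$-cycle inside some polar frame ${\cal S}_2^n$, which is checked by direct inspection of the covering orientation.

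For the isometric embedding in the second assertion, given adjacent $x, y \in G$, the intersection of all Boolean-gated sets containing $\{x, y\}$ is itself a Boolean-gated set $B_{xy}$; inside the polar space ${\cal P}_{B_{xy}}$, the singletons $\{x\}$ and $\{y\}$ are adjacent maximal elements, hence at covering-graph distance $2$ in ${\cal P}_{B_{xy}}$, which becomes $1 = d_G(x,y)$ after the edge-length halving. Concatenating along a shortest $G$-path $x = x_0, \ldots, x_k = y$ then gives $d_{G^*}(\{x\}, \{y\}) \leq k$, and the matching lower bound follows by projecting any $G^*$-path down to a $G$-walk via successive gates. For the distance formula in the third assertion, Lemma~\ref{lem:om_Bgated} lets me write each element of $G^*$ as an interval $[p,q]$ with $p \sqsubseteq q$, and each covering edge of $G^*$ changes exactly one of the two endpoints to a $G$-neighbor while fixing the other; consequently the $G^*$-length of any path is at least $(d_G(p,p') + d_G(q,q'))/2$. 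For the matching upper bound I would first move $p$ to $p'$ along a shortest $G$-path lying inside the $d$-convex $\sqsubseteq$-filter ${\cal F}'_q$ (Proposition~\ref{prop:ideal}), keeping $q$ fixed, and then move $q$ to $q'$ analogously.

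The main obstacle will be the modularity step, especially when the three Boolean-gated sets share no common refinement: the amalgamation structure of swm-graphs must be invoked carefully, likely by induction on a combined rank parameter, and the TC/QC axioms must be deployed to move between locally polar-space regions. A secondary technical point is ensuring the endpoint-routing for the distance formula remains within a $\sqsubseteq$-convex region so that every intermediate interval $[p_i, q]$ is a valid Boolean-gated set; this is precisely where the oriented-modular hypothesis on $G$ is essential.
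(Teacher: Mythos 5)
This theorem is not proved in the paper at all: it is imported verbatim from \cite[Theorem 6.9]{CCHO14} and \cite[Proposition 4.5]{HH150ext}, so there is no in-paper proof to match your argument against. Judged on its own merits, your proposal has the right skeleton for the easy parts but leaves the genuinely hard part unproved and contains one concretely broken step.

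The hard part is modularity of $G^*$, and your treatment of it is not a mechanism but a hope. The ``common refinement'' case you reduce to is essentially vacuous: for three Boolean-gated sets $X_1,X_2,X_3$ that are mutually far apart in $G^*$, there is no $Z\in{\cal B}(G)$ with $Z\subseteq X_1\cap X_2\cap X_3$ (the triple intersection is typically empty), and the median vertex of $G^*$ is some fourth Boolean-gated set not contained in any $X_i$. Saying one will ``iteratively project each $X_i$ onto the others using the gate map and TC/QC, by induction on a combined rank parameter'' does not identify the invariant that makes such an induction close; this is exactly where the substantial work of \cite[Theorem 6.9]{CCHO14} lives (a distance formula on ${\cal B}(G)$ in terms of gates, plus a careful quadrangle/triangle analysis), and it is absent here.

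The concretely wrong step is the upper bound in the distance formula. You propose to move $p$ all the way to $p'$ along a shortest $G$-path while keeping $q$ fixed. But every intermediate vertex of $G^*$ must be an interval $[p_i,q]$ with $p_i\sqsubseteq q$, i.e.\ $p_i$ must lie in the principal $\sqsubseteq$-ideal ${\cal I}'_q$ (not the filter ${\cal F}'_q$ you cite), which is a set of diameter at most the rank of a complemented modular lattice; for $[p,q]$ and $[p',q']$ far apart, $p'\notin{\cal I}'_q$ and the routing is impossible. Already in $\vec{\ZZ}$ with $[p,q]=[0,1]$ and $[p',q']=[5,6]$ there is no vertex $[2,1]$. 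The correct construction must interleave unit steps on the two endpoints (advance $q$ one step, then $p$ one step, and so on), at each stage verifying $p_i\sqsubseteq q_j$; this is where Proposition~\ref{prop:ideal} actually enters. The lower bound via ``each $G^*$-edge changes one endpoint by one $G$-step'' is fine, as is the outline of the isometric embedding of $G$ into $G^*$.
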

The barycentric subdivision $G^*$ of a dual polar graph $G$ 
is identical with the covering graph of the polar space corresponding to $G$. 
Thus the barycentric subdivision of a general swm-graph is obtained 
by replacing each dual polar subgraph induced by a Boolean-gate set  
with the covering graph of the corresponding polar space.
\begin{Lem}[{\cite[Proposition 6.10]{CCHO14}}]\label{lem:well-oriented}
	For an swm-graph $G$, 
	the principal filter of every vertex of $G^*$ is a polar space. 
	Hence $G^*$ is well-oriented.
\end{Lem}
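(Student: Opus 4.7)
The plan is to identify the principal filter of each vertex of $G^*$ with a polar space, by reducing to the dual polar case. Fix a vertex $X \in G^*$; by definition $X$ is a Boolean-gated subset of $V(G)$, and under the reverse-inclusion order the principal filter ${\cal F}_X$ in $G^*$ consists of all Boolean-gated sets $Y$ of $G$ with $Y \subseteq X$. The first key step is to show that ${\cal F}_X$, viewed as a poset, coincides with ${\cal B}(G[X])$ ordered by reverse inclusion, where $G[X]$ denotes the subgraph of $G$ induced by $X$.

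For this identification, since $X$ is gated in $G$, the subgraph $G[X]$ is isometric, and a subset $Y \subseteq X$ is gated in $G$ if and only if it is gated in $G[X]$: one direction uses the gate of $X$ in $G$ to transport external gates to $G[X]$, and the other is immediate since distances agree on $X$. Connectedness and thickness of the subgraph induced by $Y$ are intrinsic to $Y$, so the Boolean-gated sets of $G[X]$ are precisely the Boolean-gated sets of $G$ contained in $X$, giving ${\cal F}_X = {\cal B}(G[X])$ as posets. Since $X$ is Boolean-gated, $G[X]$ is gated, connected, and thick; gated subgraphs of swm-graphs are swm (a standard fact from \cite{CCHO14}), so $G[X]$ is a thick swm-graph, of finite diameter thanks to the finite-rank convention in place. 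By Theorem \ref{thm:dual_polar}, $G[X]$ is a dual polar space corresponding to some polar space ${\cal L}_X$, and by the description following Theorem \ref{thm:G->G*} its barycentric subdivision is the covering graph of ${\cal L}_X$. Hence ${\cal F}_X \cong {\cal L}_X$ is a polar space, establishing the first assertion.

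For the ``hence'' part, it suffices to show every interval $[p,q]$ of $G^*$ is a complemented modular lattice. By Theorem \ref{thm:G->G*} the graph $G^*$ is oriented modular, and by Proposition \ref{prop:ideal}(1) every $[p,q]$ is already a modular lattice; it remains to show complementedness. But $q \in {\cal F}_p$ and ${\cal F}_p$ is a polar space by the above, so $[p,q]$ is the principal ideal of $q$ in ${\cal F}_p$, which by axioms P0--P2 embeds into a polar frame $\{-1,0,1\}^n$ as a Boolean sublattice, hence is complemented modular. Therefore $\preceq$ coincides with $\sqsubseteq$ on $G^*$, i.e., $G^*$ is well-oriented.

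The main obstacle I anticipate is the careful gate-tracking in the identification ${\cal F}_X = {\cal B}(G[X])$, namely ensuring that Boolean-gatedness is faithfully preserved when passing between $G$ and $G[X]$ (including the compatibility of the covering relations, so that the poset structure is genuinely the same). Once this is in place, the rest is essentially an unwinding of Theorem \ref{thm:dual_polar} together with the polar-space axioms and Proposition \ref{prop:ideal}.
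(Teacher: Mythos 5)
This lemma is imported verbatim from \cite[Proposition 6.10]{CCHO14}; the paper gives no proof of its own, so your proposal can only be judged on its own merits. Your overall strategy --- identify the principal filter ${\cal F}_X$ with ${\cal B}(G[X])$ for the induced subgraph on the Boolean-gated set $X$, observe that $G[X]$ is a thick swm-graph and hence a dual polar space by Theorem~\ref{thm:dual_polar}, and read off the corresponding polar space --- is sound and is essentially the intended reduction. The gate-tracking step you flag as the main obstacle does go through: by Chepoi's lemma a subset $Y\subseteq X$ is gated in $G$ iff it is connected and closed under common neighbors, and since $X$ itself is closed under common neighbors of its elements, these conditions are equivalent whether read in $G$ or in $G[X]$; connectedness and thickness are intrinsic, so ${\cal F}_X={\cal B}(G[X])$ as posets.

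There is, however, one step whose justification is wrong as written. You assert that the interval $[p,q]$, being the principal ideal of $q$ in the polar space ${\cal F}_p$, ``embeds into a polar frame $\{-1,0,1\}^n$ as a Boolean sublattice.'' Neither claim holds in general: in the polar space of totally isotropic subspaces, the principal ideal of an element $q$ of rank $k$ is the full subspace lattice of a $k$-dimensional space --- a projective-geometry lattice that is not Boolean for $k\geq 2$ --- and it is not contained in any single polar frame (a frame only sees the subspaces spanned by subsets of one fixed basis). Axioms P0--P2 give you a frame through any two chains, not through a whole ideal. The conclusion you need is nevertheless immediate from Proposition~2.2: a polar space is a complemented modular semilattice, so each principal ideal is a modular lattice in which every element is a join of atoms (the atoms of ${\cal L}$ below $q$ all lie in ${\cal I}_q$), hence a complemented modular lattice. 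With that substitution the ``hence'' part of the lemma, and the proof as a whole, is correct.
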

The product of swm-graphs (oriented modular graphs) 
is an swm-graph (oriented modular graph)~\cite[Proposition 2.16]{CCHO14}.
Also the product operation commutes with the barycentric subdivision.
\begin{Lem}\label{lem:product_subdivision}
\begin{description}
	\item[{\rm (1)}] For swm-graphs $G, H$, 
	it holds $(G \times H)^* = G^* \times H^*$.
	\item[{\rm (2) \cite[Lemma 4.6]{HH150ext}}] For oriented modular graphs $\varGamma, \varGamma'$, 
	it holds $(x,x') \sqsubseteq (y,y')$ in $\varGamma \times \varGamma'$ 
	if and only if it holds $x \sqsubseteq y$ in $\varGamma$ and $x' \sqsubseteq y'$ in $\varGamma'$.
\end{description}	
\end{Lem}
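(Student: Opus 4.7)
My plan is to derive part (1) from a structural characterization of Boolean-gated sets in a product: they coincide with Cartesian products of Boolean-gated sets of the factors. Part (2) is quoted directly from \cite{HH150ext} and requires no further argument, beyond noting that the interval $[(x,x'),(y,y')]$ in $\varGamma \times \varGamma'$ is the product of the intervals $[x,y]$ and $[x',y']$, and the class of complemented modular lattices is closed under, and reflected by, products.

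The first step is the standard fact that gated subsets of $G \times H$ are exactly the product sets $X \times Y$ with $X$ gated in $G$ and $Y$ gated in $H$. One direction uses that the gate of $(u,v)$ into $X \times Y$ is $(g_X(u), g_Y(v))$, where $g_X, g_Y$ denote gate maps. For the other direction, given a gated $Z \subseteq V(G) \times V(H)$, the projections are gated and uniqueness of gates coordinate-wise forces $Z = \pi_G(Z) \times \pi_H(Z)$. This can alternatively be read off from Chepoi's lemma (Lemma~\ref{lem:chepoi}), since the common-neighbor condition in a product of weakly modular graphs factors into coordinate conditions.

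The second step is to show that a gated product $X \times Y$ is Boolean-gated in $G \times H$ if and only if $X$ and $Y$ are Boolean-gated in their respective factors. Connectedness is automatic for products of connected graphs, so the content is thickness. If $X$ and $Y$ are thick, any pair of vertices in $X \times Y$ at distance $2$ takes one of three forms: $\{(a,b),(a',b)\}$ with $d_G(a,a')=2$, $\{(a,b),(a,b')\}$ with $d_H(b,b')=2$, or $\{(a,b),(a',b')\}$ with $d_G(a,a')=d_H(b,b')=1$. The first two cases import two distance-$2$ common neighbors from thickness of $X$ or $Y$, while the third case yields $(a,b')$ and $(a',b)$ as automatic common neighbors at distance $2$. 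Conversely, given $X \times Y$ thick and a distance-$2$ pair in $X$, lift it by fixing any $b \in Y$; then the distance equations for a common neighbor $(z,w)$ in $G \times H$ force $w = b$ and $z$ to be a common neighbor in $G$, so thickness descends to $X$, and symmetrically to $Y$. Singleton factors, being vacuously Boolean-gated, are absorbed by the same case analysis.

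Combining the two steps, ${\cal B}(G \times H)$ equals the set of products $X \times Y$ with $X \in {\cal B}(G)$ and $Y \in {\cal B}(H)$; under reverse inclusion this is precisely the Cartesian product of the posets ${\cal B}(G)$ and ${\cal B}(H)$. Consequently, a covering pair in the product poset must fix one coordinate and cover in the other, and the covering graph of the product poset is the graph-theoretic product of the two covering graphs with matching edge-orientation. This yields $(G \times H)^* = G^* \times H^*$ as oriented graphs. The main obstacle I expect is the thickness analysis of step two, in particular ensuring that the distance-$2$ case split is exhaustive and that common neighbors in the product really decompose as claimed, especially in the degenerate case where one factor is a singleton.
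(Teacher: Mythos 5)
Your proof is correct and follows essentially the same route as the paper: the crux in both is that a (Boolean-)gated set $Z$ in $G\times H$ must equal the product of its coordinate projections, which the paper gets from $d$-convexity of gated sets together with the factorization of metric intervals in a Cartesian product, exactly the content of your Step 1. Your Step 2 merely spells out the thickness/connectedness verification that the paper dismisses as ``easy to see,'' and your case analysis there is exhaustive and sound.
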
 
\begin{proof}
	(1). For a Boolean-gated set $X$ in $G$ and Boolean-gated set $Y$ in $H$, 
	it is easy to see that $X \times Y$ is Boolean-gated in $G \times H$.
	Hence it suffices to show that every Boolean-gated set $Z$ in $G \times H$
	is represented in this way.
	Let $X := \{ x \in  G \mid \exists y \in H: (x,y) \in Z\}$ 
	 and $Y := \{ y \in H \mid \exists y \in G: (x,y) \in Z\}$.
	 Obviously $Z \subseteq X \times Y$.
	 We show that the equality holds.
	  Pick $(x,y) \in X \times Y$. 
	  There are $x' \in G$, $y' \in H$ with $(x,y'),(x',y) \in Z$.
	 	Then $(x,y)$ belongs to the metric interval of $(x,y')$ and $(x',y)$ in $G \times H$.
	 	A Boolean-gated set is a special gated set, and is always
	 	$d_{G \times H}$-convex. 
	 	Thus $(x,y)$ belongs to $Z$, and $Z = X \times Y$.
	 	It is easy to see from the definition that
	 	$X$ is Boolean-gated in $G$ and $Y$ is Boolean-gated in $H$.
\end{proof}

\subsubsection{Thickening of swm-graph}\label{subsub:thickening}
The {\em thickening} $G^\varDelta$ of an swm-graph $G$ 
is the graph obtained from $G$ by joining all pairs of vertices 
belonging to a common Boolean-gated set~\cite[Section 6.5]{CCHO14}. 
The shortest path metric with respect to $G^{\varDelta}$
is denoted by $d^{\varDelta} (:= d_{G^{\varDelta}})$, 
where the edge-length of new edges are the same as in $G$. 
A path in $G^{\varDelta}$ is called a {\em $\varDelta$-path}.
In Section~\ref{sec:L-convex}, 
the metric $d^{\varDelta}$ is used to estimate the number of iterations of
the steepest descent algorithm (SDA).
For a vertex $x$ and a nonnegative integer $r$, 
let $B^{\varDelta}_r(x)$ denote the set of vertices $y$ with $d^{\varDelta}(x,y) \leq r$, 
i.e., the ball of center $x$ and radius $r$ in $G^{\varDelta}$.
\begin{Lem}[{\cite[Proposition 6.15]{CCHO14}}]\label{lem:ball_convex} 
	$B^{\varDelta}_r(x)$ is $d$-gated.	
\end{Lem}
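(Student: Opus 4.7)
The plan is to invoke Chepoi's lemma (Lemma~\ref{lem:chepoi}): since $G$ is weakly modular, $B^{\varDelta}_r(x)$ is $d$-gated provided it induces a connected subgraph of $G$ and is closed under taking common $G$-neighbors of any two of its vertices.

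Connectedness is straightforward. For $y \in B^{\varDelta}_r(x)$ with $k := d^{\varDelta}(x,y) \le r$, fix a $\varDelta$-geodesic $x = v_0, v_1, \ldots, v_k = y$. Each edge $v_{i-1}v_i$ of $G^{\varDelta}$ arises from a Boolean-gated set $X_i \ni v_{i-1}, v_i$, which induces a connected subgraph of $G$, so I may substitute a $G$-path from $v_{i-1}$ to $v_i$ lying in $X_i$. Every intermediate vertex $w$ of such a sub-path lies in $X_i$, hence $d^{\varDelta}(v_{i-1}, w) \le 1$, and therefore $d^{\varDelta}(x,w) \le i \le r$. The concatenated $G$-path lies inside $B^{\varDelta}_r(x)$.

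For closure under common $G$-neighbors I would proceed by induction on $r$, the case $r = 0$ being trivial. Let $y, z \in B^{\varDelta}_r(x)$ be distinct and $u$ a common $G$-neighbor. If $\min\{d^{\varDelta}(x,y), d^{\varDelta}(x,z)\} \le r - 1$, then $d^{\varDelta}(x,u) \le r$ is immediate from the triangle inequality. Otherwise $d^{\varDelta}(x,y) = d^{\varDelta}(x,z) = r$, and the argument requires the structural properties of the swm-graph $G$: the $d$-gated hull $X'$ of $\{y, z\}$ already contains $u$, being closed under common $G$-neighbors by Chepoi's lemma applied inside $X'$. Using the $d$-gate $p$ of $x$ in $X'$ (so that $d(x, w) = d(x,p) + d(p, w)$ for all $w \in X'$), together with the $\varDelta$-diameter-$1$ structure of each Boolean-gated subset of $X'$ (Theorem~\ref{thm:dual_polar}) and the inductive hypothesis applied to $B^{\varDelta}_{r-1}(x)$, one should produce a vertex $\varDelta$-adjacent to $u$ that already lies in $B^{\varDelta}_{r-1}(x)$, yielding $d^{\varDelta}(x,u) \le r$.

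The main obstacle is precisely this final bound, and it is where the full force of the swm-axioms (no induced $K_4^-$, no isometric $K_{3,3}^-$) is used. One cannot hope to argue that the $d$-gate $p$ is automatically a $\varDelta$-gate: the example $G = \ZZ^2$, whose thickening is the king's graph, already shows that a single Boolean-gated set can contain two vertices equidistant to $x$ in $d^{\varDelta}$ while only one of them is the $d$-gate. Consequently the bound must be extracted by coupling the gate equation in the $d$-metric with the very coarse structure of $d^{\varDelta}$ on each Boolean-gated piece of $X'$, and then transporting the inductive hypothesis along a carefully chosen penultimate vertex of a $\varDelta$-geodesic $x\to y$ (or $x \to z$) --- a step whose verification relies essentially on the swm-axioms ruling out the pathological local configurations.
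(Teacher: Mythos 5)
This lemma is not proved in the paper at all: it is quoted verbatim from \cite[Proposition 6.15]{CCHO14}, so there is no in-paper argument to measure your attempt against. Judged on its own, your reduction is the natural one --- Chepoi's lemma turns gatedness into ``connected plus closed under common neighbors'' --- and your connectivity argument is correct (replacing each $\varDelta$-edge by a $G$-path inside the witnessing Boolean-gated set, with the distance bookkeeping $d^{\varDelta}(x,w)\le (i-1)+1\le r$). The ``easy'' case of the closure condition also works, though it silently uses that every edge of $G$ lies in a Boolean-gated set (so that $G$-adjacency implies $\varDelta$-adjacency); this holds in swm-graphs but deserves a sentence.

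The genuine gap is that the only case carrying real content --- $y,z$ both at $\varDelta$-distance exactly $r$ from $x$ with a common $G$-neighbor $u$ --- is never actually proved. You describe the ingredients you would like to combine (the gated hull of $\{y,z\}$, the $d$-gate of $x$ in it, the $\varDelta$-diameter-$1$ structure of Boolean-gated sets, the inductive hypothesis) and then state that ``one should produce a vertex $\varDelta$-adjacent to $u$ that already lies in $B^{\varDelta}_{r-1}(x)$,'' which is precisely the conclusion to be established, not a step toward it. Your own $\ZZ^2$ example shows why no soft argument closes this: the $d$-gate of $x$ in the hull need not control $d^{\varDelta}$, so the coupling between the two metrics has to be extracted from the swm-axioms (in \cite{CCHO14} this is done through the $\varDelta$-gate machinery of Lemma 6.17/Proposition 6.18, i.e.\ Proposition~\ref{prop:delta-gate} here, whose proof is itself intertwined with that of the present statement). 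As it stands the proposal is an accurate plan plus an honest acknowledgement that its decisive step is missing; it is not a proof.
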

The thickening is a kind of $l_{\infty}$-metrization.
The relation between $G$ and $G^\varDelta$ 
is similar to that between $l_{1}$- and $l_{\infty}$-metrics.
\begin{Ex}\label{ex:thickening}
	For a grid graph $G = \ZZ^n$,
	the thickening $G^{\varDelta}$ is obtained from $G$ 
	by joining each pair of vertices $x,y$ with $\|x-y\|_{\infty} \leq 1$.
	The distances of $G$ and $G^{\varDelta}$ are given as
	$d(x,y) = \| x - y\|_1$ and 
	$d^{\varDelta}(x,y) = \|x - y \|_{\infty}$.
	 Here $d$-convex sets ($d$-gated sets) in $G$ are precisely box subsets.
	 Any $l_{\infty}$-ball $B^{\varDelta}_r(x)$ is a box subset, and is $d$-convex.
\end{Ex}
For two graphs $G$ and $H$, the {\em strong product} $G \otimes H$ is 
the graph on $V(G) \times V(H)$ such that vertices $(x,y)$ and $(x',y')$
are adjacent if $x$ and $x'$ are adjacent or equal 
and $y$ and $y'$ are adjacent or equal.
\begin{Lem}\label{lem:product_thickening}
	For two swm-graphs $G$ and $H$, 
	it holds $(G \times H)^{\varDelta} = G^{\varDelta} \otimes H^{\varDelta}$.
\end{Lem}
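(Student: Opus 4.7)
The plan is to reduce the statement to Lemma~\ref{lem:product_subdivision}(1), which characterizes the Boolean-gated sets of $G \times H$ as products of Boolean-gated sets, together with the trivial observation that every singleton is Boolean-gated.

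First I would unfold the definitions. Both $(G \times H)^{\varDelta}$ and $G^{\varDelta} \otimes H^{\varDelta}$ have vertex set $V(G) \times V(H)$, so it suffices to compare edges. By the definition of thickening, distinct vertices $(x,y)$ and $(x',y')$ form an edge of $(G \times H)^{\varDelta}$ if and only if there is a Boolean-gated set $Z$ of $G \times H$ containing both. By Lemma~\ref{lem:product_subdivision}(1), $Z$ must have the form $X \times Y$ with $X$ Boolean-gated in $G$ and $Y$ Boolean-gated in $H$, and any such product is Boolean-gated. Hence the adjacency condition on the left-hand side is equivalent to the existence of Boolean-gated sets $X \ni x, x'$ in $G$ and $Y \ni y, y'$ in $H$.

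Next I would translate the adjacency condition on the right. By the definition of the strong product, $(x,y)$ and $(x',y')$ are adjacent in $G^{\varDelta} \otimes H^{\varDelta}$ iff they are distinct and, in each coordinate, either the two entries coincide or they are adjacent in the corresponding thickening. Here the key step is the observation that for any vertex $x$ of $G$ the singleton $\{x\}$ is Boolean-gated (it is gated and induces the trivial connected thick subgraph on one vertex). Consequently, the coordinate condition ``$x = x'$ or $x$ and $x'$ are adjacent in $G^{\varDelta}$'' is equivalent to ``$x$ and $x'$ lie in a common Boolean-gated set of $G$'', and similarly for the second coordinate.

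Combining these two translations, both adjacencies reduce to the same statement: $(x,y) \neq (x',y')$ and there exist Boolean-gated sets $X$ in $G$ and $Y$ in $H$ with $x,x' \in X$ and $y,y' \in Y$. Hence the two graphs coincide. The proof is essentially a direct verification; there is no genuine obstacle, and the only point requiring a moment's attention is the use of singletons to absorb the diagonal cases in the strong-product definition.
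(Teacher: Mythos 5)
Your proof is correct and follows essentially the same route as the paper: the paper's own argument is exactly the reduction via Lemma~\ref{lem:product_subdivision}(1) to the statement that $(x,y)$ and $(x',y')$ share a Boolean-gated set in $G \times H$ if and only if $x,x'$ and $y,y'$ do so coordinatewise. Your write-up merely spells out the translation to strong-product adjacency (using that singletons are Boolean-gated, which the paper also records), so there is nothing to add.
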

\begin{proof}
	By Lemma~\ref{lem:product_subdivision},
	$(x,y)$ and $(x',y')$ belong to a common Boolean-gated set in $G \times H$ if 
	and only if $x$ and $x'$ belong to a common Boolean-gated set in $G$ and
	$y$ and $y'$ belong to a common Boolean-gated set in $H$.
	The claim follows from this fact.
\end{proof}

\subsection{CAT$(0)$ space and orthoscheme complex}\label{subsec:CAT(0)}
Here we introduce continuous spaces/relaxations 
into which the discrete structures introduced in Sections~\ref{subsec:lattice} and \ref{subsec:modular}
are embedded, analogously to 
$\ZZ^n \hookrightarrow \RR^n$.
Let $X$ be a metric space with metric $d: X \times X \to \RR_+$.
A {\em path} in $X$ is a continuous map $\gamma$ from $[0,1]$ to $X$.
The length of a path $\gamma$ is defined as
$
\sup \sum_{i=0}^{n-1} d(\gamma(t_i), \gamma(t_{i+1}))$,
where the supremum is taken over all sequences $t_0,t_1,\ldots,t_n$ in $[0,1]$
with $0=t_0 < t_1 < t_2 < \cdots < t_n = 1$ $(n > 0)$.
A path $\gamma$ {\em connects} $x,y \in X$ if $\gamma(0) = x$ and $\gamma(1) = y$.
A {\em geodesic} is a path $\gamma$ satisfying 
$d(\gamma(s), \gamma(t)) = d(\gamma(0), \gamma(1)) |s - t|$ for every $s,t \in [0,1]$.
If every pair of points in $X$ can be connected by a geodesic, 
then $X$ is called a {\em geodesic metric space}.
If every pair of points can be connected by a unique geodesic,
then $X$ is said to be {\em uniquely geodesic}.

\subsubsection{CAT$(0)$ space}
We introduce a class of uniquely-geodesic metric spaces, called CAT(0) spaces; see~\cite{BacakBook,BrHa}.
Let $X$ be a geodesic metric space.
For points $x,y$ in $X$, let $[x,y]$ 
denote the image of some geodesic $\gamma$ connecting $x,y$
(though such a geodesic is not unique).
For $t \in [0,1]$, the point $p$ on $[x,y]$ 
with $d(x,p)/d(x,y) = t$ is denoted by the formal sum $(1-t)x + t y$.
A {\em geodesic triangle} of vertices $x,y,z \in X$ is 
the union of $[x,y]$, $[y,z]$, and $[z,x]$.
A {\em comparison triangle} with $x,y,z \in X$ is 
the union of three segments $[\bar x, \bar y]$, $[\bar y, \bar z]$, and $[\bar z,\bar x]$
in the Euclidean plane $\RR^2$ 
such that $d(x,y) = \|\bar x - \bar y\|_2$, 
$d(y,z) = \|\bar y - \bar z\|_2$, and
$d(z,x) = \|\bar z - \bar x\|_2$.
For $p \in [x,y]$, the {\em comparison point} $\bar p$ is 
the point on $[\bar x, \bar y]$ with $d(x,p) = \| \bar x - \bar p \|_2$.

A geodesic metric space is called CAT$(0)$ if 
for every geodesic triangle $\Delta = [x,y] \cup [y,z] \cup [z,x]$ and every $p,q \in \Delta$, it holds
$
d(p,q) \leq \| \bar p - \bar q\|_{2}. 
$
Intuitively this says that triangles in $X$ are thinner than Euclidean plane triangles.
\begin{Prop}[{\cite[Proposition 1.4]{BrHa}}]\label{prop:uniquely-geodesic}
	A {\rm CAT}$(0)$ space is uniquely geodesic.
\end{Prop}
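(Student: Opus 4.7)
The plan is to show that any two geodesics joining the same pair of points must coincide, by applying the CAT$(0)$ inequality to a \emph{degenerate} geodesic triangle. Suppose $\gamma_1,\gamma_2 : [0,1] \to X$ are both geodesics from $x$ to $y$. The first step is to form the geodesic triangle with vertices $x, y, y$, whose three sides are $\gamma_1$, $\gamma_2$, and the constant path at $y$ (which is a geodesic since its length is zero). The corresponding Euclidean comparison triangle has side-lengths $d(x,y)$, $d(x,y)$, $0$, so it collapses to a single segment $[\bar x,\bar y]$ of length $d(x,y)$, with the two comparison vertices for $y$ identified.

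The key observation is then that for each $t \in [0,1]$ the points $p_t := \gamma_1(t)$ and $q_t := \gamma_2(t)$ share a common comparison point: since both satisfy $d(x,p_t) = d(x,q_t) = t\,d(x,y)$, their comparison points each sit at Euclidean distance $t\,d(x,y)$ from $\bar x$ along $[\bar x,\bar y]$, hence $\bar{p_t} = \bar{q_t}$. The defining CAT$(0)$ inequality immediately gives
\[
d(p_t,q_t) \;\leq\; \|\bar{p_t} - \bar{q_t}\|_2 \;=\; 0,
\]
so $\gamma_1(t) = \gamma_2(t)$ for every $t$, and the two geodesics coincide.

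The only delicate point, and the step I expect to justify carefully, is the use of the degenerate triangle $(x,y,y)$, which requires accepting the constant map at $y$ as a (trivial) geodesic $[y,y]$. The text's definition of geodesic accommodates this, since $d(\gamma(s),\gamma(t)) = 0 = d(\gamma(0),\gamma(1))|s-t|$ holds when $\gamma$ is constant. If one prefers to avoid any degenerate triangle, an equivalent route is to extract from the CAT$(0)$ inequality the semi-parallelogram (CN) estimate
\[
d(z,m)^2 \;\leq\; \tfrac{1}{2}d(z,x)^2 + \tfrac{1}{2}d(z,y)^2 - \tfrac{1}{4}d(x,y)^2
\]
for any midpoint $m$ of $\{x,y\}$ and any $z\in X$, and then substitute $z$ to be a second midpoint $m'$ to force $d(m,m') = 0$. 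Uniqueness of midpoints combined with continuity propagates along the dyadic parameters to uniqueness of the entire geodesic, giving the claim.
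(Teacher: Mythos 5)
The paper does not prove this statement; it is quoted directly from Bridson--Haefliger \cite{BrHa}, and your argument is exactly the classical proof given there: apply the CAT$(0)$ comparison inequality to the degenerate triangle $(x,y,y)$, whose comparison triangle collapses to a segment, forcing $d(\gamma_1(t),\gamma_2(t))\leq 0$ for all $t$. The argument is correct (the constant path at $y$ does satisfy the paper's definition of geodesic, so the degenerate triangle is legitimate), and your alternative route via the CN inequality and uniqueness of midpoints is also sound.
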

By this property, several convexity concepts are  
defined naturally in CAT(0) spaces.
Let  $X$ be a {\rm CAT}$(0)$ space. 
A subset $C$ of $X$ is said to be {\em convex} if
for arbitrary $x,y \in C$ the geodesic $[x,y]$ is contained in $C$. 
A function $f: X \to \overline\RR$ on $X$ is said to {\em convex}
if for every $x,y \in X$ and $t \in [0,1]$ it holds
\begin{equation}\label{eqn:convexity}
(1-t) f(x) + t f(y) \geq f((1-t) x + t y).
\end{equation}
%
The Euclidean space is an obvious example of a CAT$(0)$ space.
Other examples include a {\em metric tree} 
($1$-dimensional contractible complex endowed with the length metric), and the product of metric trees.

\subsubsection{Orthoscheme complex and Lov\'asz extension}\label{subsub:orthoscheme}
An $n$-dimensional {\em orthoscheme} is a simplex in $\RR^n$ with vertices
\[
0, e_1, e_1 + e_2, e_1 + e_2 + e_3, \ldots, e_1 + e_2 + \cdots + e_n,
\]
where $e_i$ is the $i$th unite vector.
An orthoscheme complex, introduced by Brady and McCammond~\cite{BM10} in the context of geometric group theory,  
is a metric simplicial complex obtained by gluing orthoschemes as in Figure~\ref{fig:ortho}.
In our view, an orthoscheme complex is a generalization of the well-known simplicial subdivision of a cube $[0,1]^n$ on which the Lov\'asz extension of $f:\{0,1\}^n \to \RR$ is defined via the piecewise interpolation; 
see Example~\ref{ex:Boolean} below.
	\begin{figure}[t]
		\begin{center}
			\includegraphics[scale=0.7]{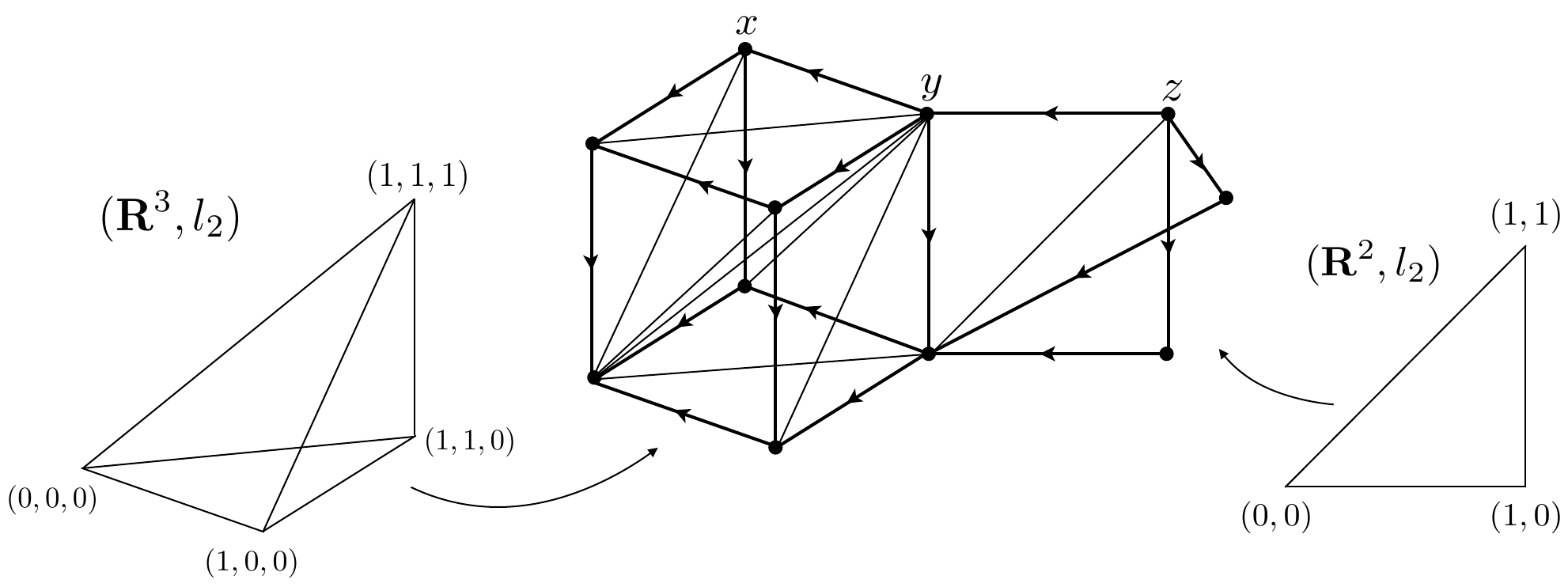}
			\caption{Orthosheme complex}
			\label{fig:ortho}
		\end{center}		
	\end{figure}\noindent

Let us introduce formally.
Let ${\cal P}$ be a {\em graded} poset,  that is,  
a poset having a function 
$r: {\cal P} \to \ZZ$ with $r(q) = r(p) + 1$ whenever $q \to p$. 
Let $K({\cal P})$ denote the set of 
all formal convex combinations $\sum_{p \in {\cal P}} \lambda(p) p$ of elements in ${\cal P}$ 
such that the nonzero support $\{ p \mid \lambda(p) > 0 \}$ forms a chain.
The set of all formal combinations of some chain $C$
is called a {\em simplex} of $K({\cal P})$.
For a simplex $\sigma$ corresponding to a chain $C = p_0 \prec p_1 \prec \cdots \prec p_k$, 
define a map $\varphi_{\sigma}$ from $\sigma$ to 
the $(r(p_k)- r(p_0))$-dimensional orthoscheme
by
\begin{equation}\label{eqn:phi_sigma}
\varphi_{\sigma}(x) = \sum_{i=1}^{k} \lambda_i (e_1 + e_2 + \cdots + e_{r(p_i) - r(p_0)}), 
\end{equation}
where $x = \sum_{i=0}^k \lambda_k p_k \in \sigma$.
For each simplex $\sigma$, 
a metric $d_{\sigma}$ on $\sigma$ is defined as
\begin{equation*}
d_{\sigma}(x,y) := \| \varphi_\sigma (x) - \varphi_\sigma (y) \|_2 \quad (x,y \in \sigma). 
\end{equation*}
The length of a path $\gamma$ in $K({\cal P})$
is defined as $\sup \sum_{i=0}^{m-1} d_{\sigma_i}(\gamma(t_i),\gamma(t_{i+1}))$, 
where $\sup$ is taken over all
$0 = t_0 < t_1 < t_2 < \cdots < t_m = 1$ $(m \geq 1)$ 
such that $\gamma([t_i,t_{i+1}])$ belongs to a simplex $\sigma_i$ for each $i$.
Then the metric on $K({\cal P})$ is (well-)defined as above.
The resulting metric space $K({\cal P})$ 
is called the {\em orthoscheme complex} of ${\cal P}$~\cite{BM10}.
If the lengths of chains are uniformly bounded,
then $K({\cal P})$ is a (complete) 
geodesic metric space~\cite[Theorem 7.19]{BrHa}.

By considering the orthoscheme complex,
we can define an analogue of the Lov\'asz extension 
for a function defined on any graded poset~${\cal L}$.
For a function $f:{\cal L} \to \overline{\RR}$, 
the {\em Lov\'asz extension} $\overline f$ of $f$ is a function on the orthoscheme complex $K({\cal L})$
defined by
\begin{equation}
\overline f (x) := \sum_{i} \lambda_i f(p_i) \quad  \left(x = \sum_{i} \lambda_i p_i \in K({\cal L}) \right).
\end{equation}
In the case where $K({\cal L})$ is {\rm CAT}$(0)$, 
we can discuss the convexity property of $\overline f$
with respect to the CAT$(0)$-metric.
The following examples of $K({\cal L})$ show 
that our Lov\'asz extension actually 
generalizes the Lov\'asz extension for functions on $\{0,1\}^n$ and on $\{-1,0,1\}^n$.
%
\begin{Ex}\label{ex:Boolean}
	Let ${\cal L}$ be a Boolean lattice with atoms $a_1,a_2,\ldots,a_n$. 
	Then $K({\cal L})$ consists of points 
	$p = \sum_{k=0}^n \lambda_k 
	(a_{i_1} \vee a_{i_2} \vee \cdots \vee a_{i_k})$ 
	for some permutation $(i_1,i_2,\ldots,i_n)$ of $\{1,2,\ldots,n\}$ and nonnegative coefficients $\lambda_k$ whose sum is equal to one.
	The map $p = \sum_{k=0}^n \lambda_k 
	(a_{i_1} \vee a_{i_2} \vee \cdots \vee a_{i_k}) \mapsto \sum_{k=0}^n \lambda_k 
	(e_{i_1} + e_{i_2} + \cdots + e_{i_k})$ is an isometry 
	from $K({\cal L})$ to cube $[0,1]^n$ 
	with respect to $l_2$-metric~\cite[Lemma 7.7]{CCHO14}.
	In particular, $K({\cal L})$ is viewed as the well-known simplicial subdivision of cube $[0,1]^n$
	consisting of vertices  
	$
	 0, e_{i_1}, e_{i_1} + e_{i_2}, \ldots, e_{i_1} + e_{i_2} + \cdots + e_{i_n}
	$
	for all permutations $(i_1,i_2,\ldots,i_n)$.
	Let ${\cal L}$ be a distributive lattice of rank $n$.
	By the Birkhoff representation theorem, ${\cal L}$ is 
	the poset of principal ideals of a poset ${\cal P}$ of $n$ elements $a_1,a_2,\ldots,a_n$.
	Then $K({\cal L})$ is isometric to the well-known simplicial subdivision of the {\em order polytope}
    $\{ x \in [0,1]^n \mid x_i \geq x_j \ (i,j: a_i \prec a_j )\}$
    into simplices of the above form~\cite[Proposition 7.7]{CCHO14}.
\end{Ex}	
\begin{Ex}
	Consider ${\cal S}_2 = \{-1,0,1\}$ and the product ${{\cal S}_2}^n$ (Example~\ref{ex:S_k}).
	Then $K({{\cal S}_2}^n)$ is CAT(0), isomorphic to the simplicial subdivision of cube $[-1,1]^n$
	consisting of simplices of vertices 
	$
	0, s_{i_1}, s_{i_1} + s_{i_2}, \ldots, s_{i_1} + s_{i_2} + \cdots + s_{i_n}$,
	where $(i_1,i_2,\ldots,i_n)$ is a permutation of $\{1,2,\ldots,n\}$ 
	and $s_{i} \in \{e_i, - e_i \}$ for $i=1,2,\ldots,n$.
\end{Ex}	
In these examples, $K({\cal L})$ 
is isometric to a convex polytope in the Euclidean space, 
and obviously is CAT(0). 
In the case of a modular lattice ${\cal L}$, 
the orthoscheme complex $K({\cal L})$
cannot be realized as a convex polytope of a Euclidean space.
However $K({\cal L})$ still has the CAT(0) property.

\begin{Thm}[{\cite[Theorem 7.2]{CCHO14}}]\label{thm:modularlattice}
	The orthoscheme complex of a modular lattice is {\rm CAT}$(0)$.
\end{Thm}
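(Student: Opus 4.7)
The plan is to apply Gromov's link condition in Bridson's piecewise Euclidean form: a complex built from finitely many isometry classes of Euclidean simplices is CAT$(0)$ if and only if it is simply connected and the link of every vertex is CAT$(1)$. I would then induct on the rank $n$ of $\mathcal{L}$, taking the cases $n \leq 1$ as trivial (a point or a single edge).

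For simple connectivity I would observe that $K(\mathcal{L})$ is in fact contractible. Since $\mathcal{L}$ has a minimum $0$ and every chain can be extended by $0$ at its bottom, the linear homotopy $H_t(x) := (1-t)x + t \cdot 0$ is well defined on formal convex combinations, stays inside simplices of $K(\mathcal{L})$, is continuous in the orthoscheme metric, and deformation-retracts $K(\mathcal{L})$ onto the vertex $0$.

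For the link condition, fix $p \in \mathcal{L}$ and consider any maximal chain $p_0 \prec \cdots \prec p_i = p \prec \cdots \prec p_k$ through $p$. In the ambient orthoscheme, the tangent vectors contributed by the descending part $p_0 \prec \cdots \prec p$ lie in the span of $\{-e_1,\dots,-e_i\}$, while those from the ascending part $p \prec \cdots \prec p_k$ lie in the span of $\{e_{i+1},\dots,e_k\}$, so the two families of directions are mutually orthogonal. The link at $p$ therefore decomposes as a spherical join
\[
\mathrm{Lk}(p) \;=\; \mathrm{Lk}^{\downarrow}(p)\,*\,\mathrm{Lk}^{\uparrow}(p),
\]
where $\mathrm{Lk}^{\uparrow}(p)$ is naturally identified with the unit tangent sphere of $K(\mathcal{F}_p)$ at $p$ and $\mathrm{Lk}^{\downarrow}(p)$ with the unit tangent sphere of $K(\mathcal{I}_p)$ at $p$ (viewed with the reversed orientation). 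Because $\mathcal{F}_p$ and $\mathcal{I}_p$ are themselves modular lattices of rank at most $n-1$ whenever $p$ is not extremal, the inductive hypothesis yields CAT$(0)$-ness of $K(\mathcal{F}_p)$ and $K(\mathcal{I}_p)$, and hence CAT$(1)$-ness of their tangent spheres at $p$; since a spherical join of CAT$(1)$ spaces is CAT$(1)$, the link $\mathrm{Lk}(p)$ is CAT$(1)$, closing the induction at interior vertices.

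The main obstacles are twofold. At an extreme vertex $p=0$ (or the maximum, if it exists) one of the join factors collapses and the induction does not reduce the rank; here I would argue directly, using that modularity implies the Jordan--Dedekind chain condition and hence that the ascending link at $0$ is an \emph{all-right} spherical simplicial complex, whose CAT$(1)$-ness reduces to the combinatorial flag condition (every pairwise-compatible clique spans a simplex). This flag condition follows from the existence of joins in $\mathcal{L}$: a finite collection of atoms that is pairwise compatible automatically has a common join. The second and more delicate issue is verifying that the join decomposition of the link is an isometry rather than merely a simplicial isomorphism; this requires checking that orthoschemes sharing an ascending or descending subchain glue along faces with matching dihedral angles, and it is precisely here that the rank identity $r(a\vee b)+r(a\wedge b)=r(a)+r(b)$ enters, ensuring that adjacent orthoscheme dimensions and angles are compatible and that the spherical metric on the link extends consistently across simplices.
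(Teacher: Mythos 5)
First, a caveat on the comparison: the paper does not prove this statement itself --- Theorem~\ref{thm:modularlattice} is imported verbatim from \cite[Theorem 7.2]{CCHO14} --- so there is no in-paper proof to measure you against, and I can only assess your argument on its own terms.

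Your overall architecture (Cartan--Hadamard plus Bridson's link condition, contractibility via the linear retraction onto $0$, the orthogonal splitting of each vertex link as $\mathrm{Lk}^{\downarrow}(p) * \mathrm{Lk}^{\uparrow}(p)$, and induction on rank at interior vertices) is sound and standard. The genuine gap is the base case $p=0$ (dually $p=1$), and that is where the entire content of the theorem lives: your induction reduces every link to the ascending link at the bottom element of some filter, so nothing is proved until that case is settled. The proposed resolution --- that the ascending link at $0$ is an all-right spherical complex, so Gromov's flag criterion applies --- is false. In the orthoscheme metric the unit direction at $0$ toward $q$ is $(e_1+\cdots+e_{r(q)})/\sqrt{r(q)}$, so the edge of $\mathrm{Lk}(0)$ joining the directions toward $q \prec q'$ has length $\arccos\sqrt{r(q)/r(q')}$, which is $\pi/4$ already for $r(q)=1$, $r(q')=2$; the link is not all-right and Gromov's lemma does not apply. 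A second warning sign: the flag condition you invoke is automatic, because $\mathrm{Lk}(0)$ is the order complex of $\mathcal{L}\setminus\{0\}$ and order complexes of posets are always flag (a set of pairwise comparable elements is a chain). Your base-case argument therefore uses no lattice-theoretic hypothesis at all and would apply verbatim to any graded lattice --- in particular it would instantly settle the Brady--McCammond problem for noncrossing partition lattices, which was a hard open question precisely because no such shortcut exists. Proving that this non-all-right spherical order complex contains no locally geodesic loop of length less than $2\pi$ is the real work behind \cite[Theorem 7.2]{CCHO14}.

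For orientation, the route actually taken in \cite{CCHO14} (and echoed in how the present paper proves Theorems~\ref{thm:Lovasz_modular_lattice} and \ref{thm:Lovasz_polar_space}) is building-theoretic rather than link-theoretic: any two maximal chains of a modular lattice lie in a common distributive sublattice, the subcomplexes $K({\cal D})$ for such sublattices are isometric to Euclidean order polytopes and are convex in $K({\cal L})$, and nonexpansive retractions onto these ``apartments'' yield the CAT$(0)$ comparison in the style of Bruhat--Tits. If you want to keep a link-based proof, you would have to establish CAT$(1)$-ness of the metrized order complex of $\mathcal{L}\setminus\{0\}$ directly, using modularity in an essential way; as it stands, modularity enters your argument only through the peripheral angle-compatibility remark, which is a sign that the curvature bound has not actually been proved.
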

This property holds for every polar space.
\begin{Thm}[{\cite[Proposition 7.4]{CCHO14}}]\label{thm:polar_CAT(0)}
	The orthoscheme complex of a polar space is {\rm CAT}$(0)$.
\end{Thm}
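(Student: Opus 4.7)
My plan is to apply the Cartan--Hadamard theorem (a complete geodesic metric space is $\mathrm{CAT}(0)$ iff it is simply connected and locally $\mathrm{CAT}(0)$) and induct on the rank $n$ of the polar space ${\cal L}$. The base cases $n\le 1$ are immediate, as $K({\cal S}_k)$ is a star graph, hence $\mathrm{CAT}(0)$. For the inductive step, simple connectedness of $K({\cal L})$ is clear: the minimum $0\in {\cal L}$ is a combinatorial cone point, since every chain extends to one through $0$, so $K({\cal L})$ deformation-retracts to $\{0\}$ and is in particular contractible.

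For local $\mathrm{CAT}(0)$-ness, Gromov's link criterion reduces the task to verifying that the link $\mathrm{lk}(p,K({\cal L}))$ at every vertex $p$ is $\mathrm{CAT}(1)$. A defining feature of the orthoscheme metric is that at a poset vertex $p$, descending tangent directions (into chains ending at $p$) are orthogonal to ascending directions (into chains starting at $p$), giving the spherical join decomposition
\[
\mathrm{lk}(p,K({\cal L}))\;=\;\mathrm{lk}(p,K({\cal I}_p))\ast \mathrm{lk}(p,K({\cal F}_p)).
\]
For $p\neq 0$ I handle the two factors separately. Since ${\cal L}$ is a complemented modular semilattice (Proposition~2.16), the principal ideal ${\cal I}_p$ is by definition a modular lattice, so Theorem~\ref{thm:modularlattice} gives that $K({\cal I}_p)$ is $\mathrm{CAT}(0)$, whence its link at the top vertex $p$ is $\mathrm{CAT}(1)$. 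The principal filter ${\cal F}_p$ inherits the polar-space axioms P0--P2 from ${\cal L}$ (polar frames of ${\cal F}_p$ being the upper sets of polar frames of ${\cal L}$ through $p$, each isomorphic to ${{\cal S}_2}^{n-r(p)}$) and has strictly smaller rank, so by the inductive hypothesis $K({\cal F}_p)$ is $\mathrm{CAT}(0)$ and its link at the bottom vertex $p$ is $\mathrm{CAT}(1)$. Since spherical joins of $\mathrm{CAT}(1)$ spaces are $\mathrm{CAT}(1)$ (a standard fact from \cite{BrHa}), this case closes.

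The main obstacle is the vertex $p=0$, where ${\cal F}_0={\cal L}$ and the induction does not reduce rank. Here the descending link is empty, so $\mathrm{lk}(0,K({\cal L}))$ coincides with the spherical realization of the order complex of ${\cal L}\setminus\{0\}$: each polar frame ${\cal F}\cong {{\cal S}_2}^n$ contributes a small sphere inside $K({\cal F})=[-1,1]^n$, which is a round $(n-1)$-sphere subdivided in the type $C_n$ Coxeter pattern, and frame-to-frame gluing is governed by axioms P1--P2, which are exactly the building axioms. As indicated by the footnote to Section~\ref{subsec:lattice}, this identifies $\mathrm{lk}(0,K({\cal L}))$ with the standard spherical realization of the spherical building of type $C$ associated with ${\cal L}$, which is $\mathrm{CAT}(1)$ by the classical theorem of Davis (also due to Charney--Davis and Moussong). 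Combining both cases, Gromov's link condition is satisfied, and the Cartan--Hadamard theorem yields that $K({\cal L})$ is $\mathrm{CAT}(0)$. The hardest step is the apex case $p=0$, and the key input needed beyond the induction and the modular-lattice result (Theorem~\ref{thm:modularlattice}) is the identification of the spherical link at the minimum with a spherical building of type $C$.
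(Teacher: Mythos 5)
Your argument is correct in outline, but note first that this paper does not prove Theorem~\ref{thm:polar_CAT(0)} at all: it is quoted from \cite{CCHO14}, and the proof there goes by the classical Bruhat--Tits/Davis route of constructing, for each polar frame ${\cal F}$, a nonexpansive retraction of $K({\cal L})$ onto the Euclidean subcomplex $K({\cal F})\cong[-1,1]^n$ (the paper alludes to exactly this retraction again as fact (2) in the proof of Theorem~\ref{thm:Lovasz_polar_space}); since any two points lie in a common $K({\cal F})$ and retractions do not increase distances, the CAT(0) comparison inequality is verified inside a single flat apartment. Your route --- Cartan--Hadamard plus Gromov's vertex-link condition, with $\mathrm{lk}(p)$ split as the spherical join of the descending link (handled by Theorem~\ref{thm:modularlattice} applied to the modular lattice ${\cal I}_p$) and the ascending link (handled by induction on rank, using that the residue ${\cal F}_p$ is again a polar space of rank $n-r(p)$), and the apex link at $0$ identified with the standard metric realization of the type-$C_n$ spherical building --- is a genuinely different and viable proof; all the ingredients you invoke (the orthogonality of ascending and descending directions in an orthoscheme, stability of CAT(1) under spherical joins, the residue argument via P1--P2) are sound. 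The trade-off is that your proof outsources its hardest step, the CAT(1) property of spherical building realizations, to the Davis/Moussong theorem, a substantial black box that the retraction proof avoids entirely and that does essentially all the work at the vertex $0$; conversely, your decomposition makes transparent exactly where modularity of ideals and where the building axioms enter. Two small repairs: your reference ``Proposition 2.16'' for complementedness should point to the proposition quoting \cite[Lemma 2.19]{CCHO14}, and before applying Cartan--Hadamard and the link criterion you should record that $K({\cal L})$ is a complete geodesic space, which holds because a rank-$n$ polar space has chains of uniformly bounded length and hence $K({\cal L})$ has only finitely many isometry types of cells.
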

By using these theorems, in Section~\ref{sec:submodular}, we characterize 
a submodular function on a modular lattice/polar space ${\cal L}$ as a function on ${\cal L}$ such that its Lov\'asz extension 
is convex on $K({\cal L})$.

An oriented modular graph $\varGamma$ is graded (as a poset)~\cite[Theorem 6.2]{CCHO14}.
Thus we can consider $K(\varGamma)$.
A special interest lies in
the subcomplex $K'(\varGamma)$ of $K(\varGamma)$ 
consisting of simplices that correspond to
a chain $p_0 \prec p_1 \prec \cdots \prec p_k$ 
with $p_0 \sqsubseteq p_k$. 
Figure~\ref{fig:ortho} depicts $K'(\varGamma)$,
where $x \prec y \prec z$ but $x \not \sqsubseteq z$, 
hence the simplex corresponding to $\{x,y,z\}$ does not exist.
If $\varGamma$ has a uniform edge-length $s$, 
we multiply $\varphi_{\sigma}$ by $s$ in~(\ref{eqn:phi_sigma}).
	\begin{figure}[t]
		\begin{center}
			\includegraphics[scale=0.6]{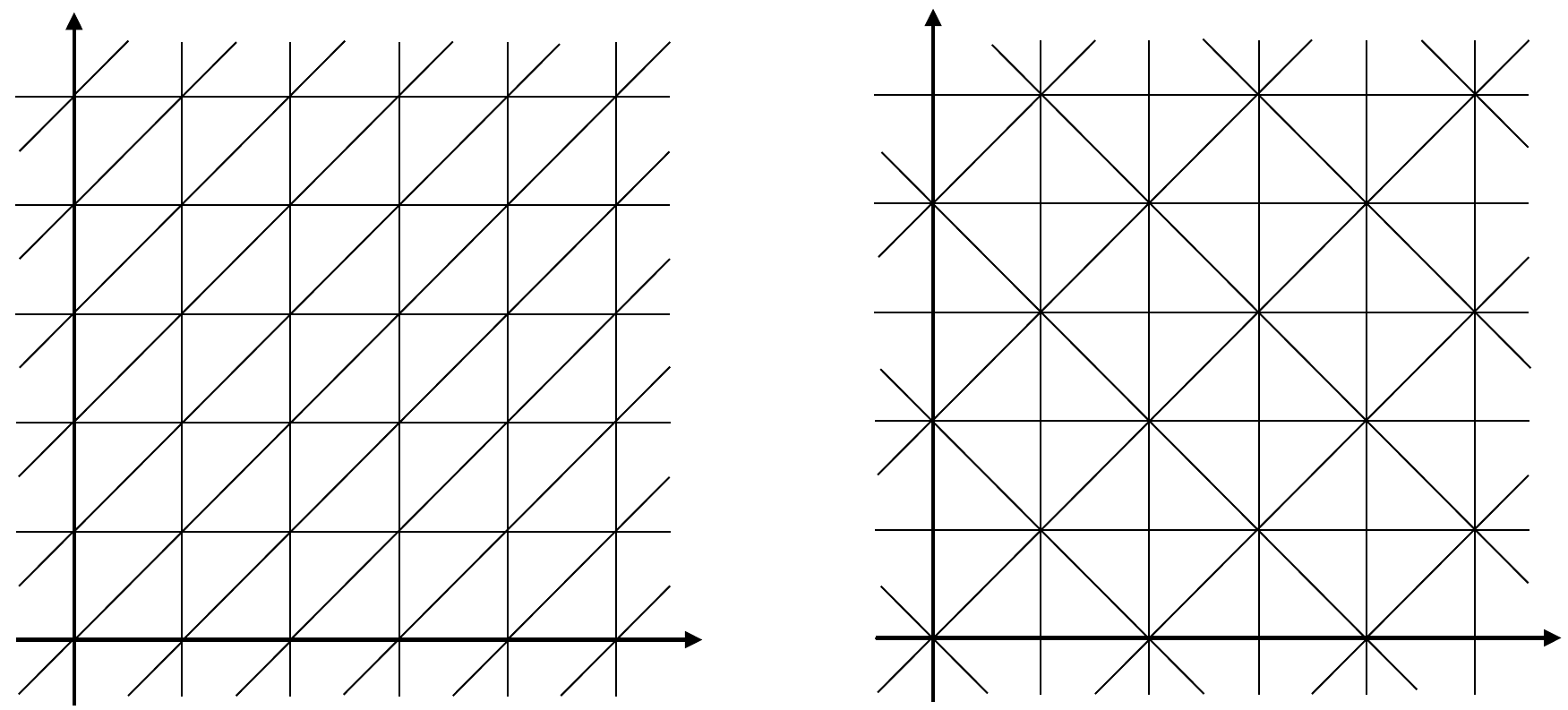}
			\caption{$K'(\vec{\ZZ}^2)$ (left) and $K(\check{\ZZ}^2)$ (right)}
			\label{fig:UJ}
		\end{center}
	\end{figure}\noindent
\begin{Ex}\label{ex:UJ}
    Consider linearly-oriented grid graph $\vec{\ZZ}^n$ (Example~\ref{ex:grid}).
	Then $x \sqsubseteq y$ 
	if and only if $x \leq y$ and $\|x - y\|_{\infty} \leq 1$.
	Therefore the orthoscheme complex 
	$K'(\vec\ZZ^n)$ is the simplicial subdivision of $\RR^n$ into simplices of vertices 
	$
	x, x+ e_{i_1}, x+ e_{i_1} + e_{i_2}, \ldots, x+ e_{i_1} + e_{i_2} + \cdots + e_{i_n}
	$
	over all $x \in \ZZ^n$ and all permutations $(i_1,i_2,\ldots,i_n)$ of $\{1,2,\ldots,n\}$; see the left of Figure~\ref{fig:UJ}.
	Next consider $\check{\ZZ}^{n}$.
	Then $x \sqsubseteq y$ if and only if 
	$\|x - y\|_{\infty}  \leq 1$ and $x_i \not \in 2\ZZ$ implies $x_i = y_i$.
	The orthoscheme complex 
	$K'(\check{\ZZ}^n) = K(\check{\ZZ}^n)$ 
	is the simplicial subdivision of $\RR^n$ into simplices of vertices 
	$
	x, x+ s_{i_1}, x+ s_{i_1} + s_{i_2}, \ldots, x+ s_{i_1} + s_{i_2} + \cdots + s_{i_n}
	$
	over all $x \in (2 \ZZ)^n$,  all permutations $(i_1,i_2,\ldots,i_n)$ of $\{1,2,\ldots,n\}$, and $s_{i} \in \{e_i, - e_i \}$ for $i=1,2,\ldots,n$; 
	see the right of Figure~\ref{fig:UJ}.
	The simplicial complex $K(\check{\ZZ}^n)$
	is known as the 
	{\em Euclidean Coxeter complex of type C} 
	in building theory~\cite{BuildingBook}, and is called the {\em Union-Jack division} in~\cite{Fujishige14}.
\end{Ex}
The above example is easily generalized to 
the product of oriented trees, 
where the orthoscheme complex is 
a simplicial subdivision of the product of metric trees, and is CAT$(0)$. 
A {\em Euclidean building (of type C)} is a further generalization, 
which is defined as a simplicial complex $K$ 
that is the union of subcomplexes, called {\em apartments}, 
satisfying the following axiom (see \cite[Definitions 4.1 and 11.1]{BuildingBook}):
\begin{description}
	\item[B0:] Each apartment is isomorphic to $K(\check{\ZZ}^n)$.
	\item[B1:] For any two simplices $A, B$ in $K$,
	there is an apartment $\varSigma$ containing them.
	\item[B2:] If $\varSigma$ and $\varSigma'$ are apartments containing two simplices $A, B$,
	then there is an isomorphism $\varSigma \to \varSigma'$ being identity on $A$ and $B$.
\end{description}
A Euclidean building naturally gives rise to an oriented modular graph and swm-graph
as follows.
A {\em labeling} is a map $\phi$ from the vertex set of $K$ to $\{0,1,2,\ldots,n\}$
such that two vertices $x,y$ in any simplex 
have different labels $\phi(x) \neq \phi(y)$.
A building has a labeling.
A labeling is uniquely determined 
from its (arbitrarily specified) values on any fixed maximal simplex $A$.
Consider an arbitrary apartment $\varSigma$.
By $\varSigma \simeq K(\check{\ZZ}^n)$, 
the vertex set of $\varSigma$ is identified with $\ZZ^n$.
Define a labeling $\phi$ on $\varSigma$ by
\[
\ZZ^n \ni x = (x_1,x_2,\ldots,x_n) \mapsto  \mbox{the number of $i$ with $x_i$ odd} \in \{0,1,2,\ldots,n\}.
\]
This labeling is uniquely extended to a global labeling of $K$.
Let $\varGamma(K)$ be the subgraph of the $1$-skeleton of $K$ such that
edges $xy$ are given and oriented as $x \to y$ if 
$\phi(x) = \phi(y) - 1$.
Let $H(K)$ denote the graph on the vertices with label $0$ obtained by joining
each pair of vertices $x, y$ 
having a common neighbor (of label 1) in $\varGamma(K)$.
\begin{Thm}[{\cite[Theorem 6.23]{CCHO14}}]\label{thm:building_CAT(0)}
	Let $K$ be a Euclidean building. Then
	$H(K)$ is an swm-graph,  $\varGamma(K)$ is an oriented modular graph, $\varGamma(K) \simeq H^*(K)$, and $K \simeq K(\varGamma(K))$.
\end{Thm}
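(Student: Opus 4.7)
The plan is to reduce every assertion to a direct computation in a single apartment $\varSigma \simeq K(\check{\ZZ}^n)$ and then promote the local facts to global ones via the apartment axioms B1 and B2. In $\varSigma$, the vertices are in bijection with $\ZZ^n$ (Example~\ref{ex:UJ}), the labeling $\phi$ is the number-of-odd-coordinates function, and the label-$0$ vertices are $(2\ZZ)^n$. A short computation identifies $\varGamma(\varSigma)$ with the covering graph of $\check{\ZZ}^n$ equipped with its natural orientation, and $H(\varSigma)$ with the grid graph on $(2\ZZ)^n$, isomorphic to $\ZZ^n$ via $x \mapsto x/2$. Consequently $H(\varSigma)$ is modular (in fact an swm-graph, being a product of oriented paths), and by Example~\ref{ex:grid} one has $H(\varSigma)^{*} \simeq \check{\ZZ}^n \simeq \varGamma(\varSigma)$, while by definition $K(\varGamma(\varSigma)) = K(\check{\ZZ}^n) = \varSigma$. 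Thus all four conclusions of the theorem hold inside a single apartment.

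To globalize, I would first invoke the standard building-theoretic fact (a consequence of B1 and B2 via a label-preserving retraction $K \to \varSigma$) that every apartment is isometrically embedded in $\varGamma(K)$ (and hence in $H(K)$), and that any finite family of simplices lies in a common apartment. Given this, the triple-intersection property for modularity of $\varGamma(K)$ and the conditions TC, QC for weak modularity of $H(K)$ reduce to assertions about finitely many vertices, which by B1 sit inside some $\varSigma$, where the corresponding assertion has just been verified. Admissibility of the orientation on $\varGamma(K)$ is likewise checked on $4$-cycles inside $\varGamma(\varSigma) \simeq \check{\ZZ}^n$. The forbidden-subgraph conditions in the definition of an swm-graph are handled the same way: a putative induced $K_4^-$ or isometric $K_{3,3}^-$ in $H(K)$ would, by B1 together with the isometric-apartment property, embed in some $H(\varSigma) \simeq \ZZ^n$, contradicting the absence of such configurations in grid graphs.

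The identification $\varGamma(K) \simeq H^{*}(K)$ is then the main structural step. Lemma~\ref{lem:om_Bgated} characterizes Boolean-gated sets of $\varGamma(\varSigma)$ as intervals $[x,y]$ with $x \sqsubseteq y$; translating through the labeling yields a bijection between vertices of $\varGamma(\varSigma)$ and Boolean-gated sets of $H(\varSigma)$ that preserves the covering relation. I would assemble these apartment-wise bijections into a global map $\varGamma(K) \to H^{*}(K)$; B2 guarantees consistency on overlapping apartments. Finally, for $K \simeq K(\varGamma(K))$, every simplex of $K$ lies in an apartment by B1 and corresponds there to a $\prec$-chain of $\varGamma(\varSigma)$, hence to a simplex of $K(\varGamma(K))$; conversely any chain in $\varGamma(K)$ lies in a common apartment by B1 and spans a simplex of $K$. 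The orthoscheme metric on $K(\check{\ZZ}^n)$ matches the apartment metric by the very definition of $K(\,\cdot\,)$, so the isomorphism is isometric.

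The principal obstacle is the isometric-apartment step: verifying that the graph distances in $\varGamma(K)$ and $H(K)$ between vertices of a common apartment coincide with their apartment-internal counterparts. This is where the full strength of the building axioms is used, through a label-preserving retraction folding $K$ onto a chosen apartment and checking that such a retraction is $1$-Lipschitz with respect to the relevant $1$-skeleton metric. Once this is in hand, every remaining claim is a transcription of the explicit computation in $K(\check{\ZZ}^n)$.
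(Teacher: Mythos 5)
First, note that the paper does not prove this statement at all: it is imported verbatim as \cite[Theorem 6.23]{CCHO14}, so there is no in-paper proof to compare against. Judged on its own merits, your single-apartment computation is correct and useful ($\varGamma(\varSigma)\simeq\check{\ZZ}^n$, $H(\varSigma)\simeq(2\ZZ)^n\simeq\ZZ^n$, $H(\varSigma)^*\simeq\check{\ZZ}^n$, $K(\varGamma(\varSigma))=\varSigma$), but the globalization step contains a decisive gap.

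The gap is your appeal to the claim that ``any finite family of simplices lies in a common apartment.'' Axiom B1 guarantees this only for \emph{two} simplices, and the stronger statement is false already for the simplest buildings: in a tree (rank $1$), apartments are bi-infinite paths, and three leaves of a tripod lie on no common path. Every condition you propose to check apartment-wise is a condition on three or more vertices --- modularity of $\varGamma(K)$ is a triple condition, TC and QC involve three and four vertices, and the forbidden $K_4^-$ and isometric $K_{3,3}^-$ involve four and six --- so none of them can be relocated into a single apartment by B1, and the reduction collapses exactly where the content of the theorem lies. (The same problem infects the $\varGamma(K)\simeq H^*(K)$ step: Boolean-gatedness in $H(K)$ is a condition on the global metric, not an apartment-internal one, so gluing apartment-wise bijections via B2 does not by itself show that the intervals you produce are gated in $H(K)$.) The actual proof in \cite{CCHO14} takes a different route: it uses their local-to-global characterization of (sw)weakly modular graphs, verifying the local conditions in links/residues (which are spherical buildings, i.e., polar spaces) and obtaining the global statement from simple connectivity of the associated triangle-square complex, which in turn follows from contractibility (CAT(0)-ness) of the building. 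Your isometric-retraction observation is correct and is indeed used to show apartments are convex, but it cannot substitute for that local-to-global machinery.
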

%
The orthoscheme complex $K(\varGamma(K)) \simeq K$ 
is the same as the standard metrization of $K$, and is known to be CAT$(0)$; see \cite[Section 11]{BuildingBook}.
\begin{Thm}[{see \cite[Theorem 11.16]{BuildingBook}}]
		Let $K$ be a Euclidean building. The orthoscheme complex $K(\varGamma(K))$ is CAT$(0)$. 
\end{Thm}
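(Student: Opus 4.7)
The plan is to apply the standard local-to-global criterion for piecewise Euclidean complexes (see \cite{BrHa}): a connected piecewise Euclidean simplicial complex with finitely many isometry types of cells is CAT$(0)$ if and only if it is simply connected and the link of every vertex, endowed with its induced piecewise-spherical metric, is CAT$(1)$. By Theorem~\ref{thm:building_CAT(0)}, $K \simeq K(\varGamma(K))$ as metric simplicial complexes, so I would work directly with the building $K$ and exploit its apartment decomposition.

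First, by Example~\ref{ex:UJ}, each apartment $\varSigma \simeq K(\check{\ZZ}^n)$ is the Union-Jack subdivision of $\RR^n$ into Euclidean orthoschemes, and the intrinsic orthoscheme metric agrees with the restriction of the standard $l_2$-metric on $\RR^n$; thus every apartment is isometric to a flat $\RR^n$, hence CAT$(0)$. Second, $K$ is simply connected: the image of any loop is compact and meets only finitely many simplices, and by iterated application of B1 combined with the gluing isomorphisms supplied by B2, the loop can be homotoped into a single apartment, which is contractible by the first step. This is the standard building-theoretic amalgamation argument.

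The hard step is verifying the CAT$(1)$ condition on vertex links. The link of a vertex $v \in K$ inherits the structure of a spherical building of type C: its apartments are the links of $v$ inside the Euclidean apartments through $v$, each of which is a spherical Coxeter complex of type C (realized geometrically as the link of the origin in $K(\check{\ZZ}^n)$). In the combinatorial language of this paper, such a link is the piecewise-spherical order complex of the principal filter of $v$ in $\varGamma(K)$, which by Lemma~\ref{lem:well-oriented} is a polar space. What is needed is then the spherical counterpart of Theorem~\ref{thm:polar_CAT(0)}: the piecewise-spherical order complex of a polar space is CAT$(1)$. This is classical for spherical buildings and is packaged in \cite[Theorem 11.16]{BuildingBook}; the passage from CAT$(0)$ of the orthoscheme complex $K({\cal L})$ to CAT$(1)$ of its spherical analogue can be viewed through the standard cone/link relationship between the flat and spherical realizations of a Coxeter complex. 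Feeding the CAT$(1)$ links back into the local-to-global criterion completes the proof. The main obstacle is precisely this imported spherical-building CAT$(1)$ result, whose proof lies outside the present paper.
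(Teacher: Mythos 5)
First, note that the paper does not actually prove this statement: it is imported verbatim from \cite[Theorem 11.16]{BuildingBook}, and the only content the paper supplies is the identification of $K(\varGamma(K))$ with the standard metric realization of $K$ (via Theorem~\ref{thm:building_CAT(0)} and the fact that on each apartment the orthoscheme metric is the Euclidean metric, cf.\ Example~\ref{ex:UJ}). The proof in the cited source follows the Bruhat--Tits route: one defines the metric through apartments, shows the chamber-based retractions $\rho_{\varSigma,C}$ of the building onto an apartment are distance non-increasing, and deduces the CAT$(0)$ (CN) inequality by comparison with the flat apartment. Your proposal --- the Cartan--Hadamard local-to-global criterion plus CAT$(1)$ vertex links --- is a genuinely different and also standard strategy, but it does not lighten the logical load: the CAT$(1)$ property of spherical buildings that you invoke at the end is a theorem of essentially the same depth, proved in the same reference, so you have traded one imported black box for another.

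As written there are also two concrete gaps. (i) Simple connectivity: axiom B1 only places \emph{two} simplices in a common apartment, so ``iterated application of B1 and B2'' does not by itself homotope an arbitrary loop into one apartment; the standard argument uses the retraction $\rho_{\varSigma,C}$ (or the contractibility of non-spherical buildings) and you would need to set that up. (ii) The link of a vertex $v$ in $K(\varGamma(K))$ is \emph{not} in general the order complex of the principal filter of $v$: it is the spherical join of the order complexes of ${\cal F}_v\setminus\{v\}$ and ${\cal I}_v\setminus\{v\}$ (combinatorially, a spherical building of type $C_k\times C_{n-k}$), each factor being a polar space. The argument survives because a spherical join of CAT$(1)$ spaces is CAT$(1)$ \cite{BrHa} and each factor is handled by the spherical analogue of Theorem~\ref{thm:polar_CAT(0)}, but this step must be stated; only for vertices of label $0$ or $n$ does your description of the link as a single polar-space complex apply. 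One must also check that the piecewise-spherical metric induced on the link by the orthoscheme shapes is the canonical Coxeter metric of the spherical building, which is what the imported CAT$(1)$ theorem refers to; this holds because the link of a vertex in $K(\check{\ZZ}^n)$ is the round sphere triangulated by the type-$C$ reflection arrangement.
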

By using this theorem, 
in Section~\ref{sec:L-convex}, 
we characterize L-convex functions 
on a Euclidean building
by means of the convexity of the Lov\'asz extension.


\section{Submodular functions on modular semilattices}\label{sec:submodular}
In this section, we study submodular functions on modular semilattices.
We quickly review  the (rather complicated) definition 
and basic results established in \cite{HH150ext}.
Then we focus on submodular functions on polar spaces (Section~\ref{subsec:polar-submo}).
In a polar space, the defining inequality 
for submodularity is quite simple (Theorems~\ref{thm:frac_join_of_polar_space} and \ref{thm:polar_submo}). 
We show that the submodularity is characterized by 
the convexity of the Lov\'asz extension 
(Section~\ref{subsub:polar-Lov}), 
and establish the relation to $k$-submodular and $\alpha$-bisubmodular functions (Sections~\ref{subsub:k-submo} and \ref{subsub:alpha_bisub}).

\subsection{Basics}
\subsubsection{Definition}
Let ${\cal L}$ be a modular semilattice and $d$ denote the path metric of the covering graph of ${\cal L}$.
To define ``join" of $p,q \in {\cal L}$, 
the previous work~\cite{HH150ext} introduced the following construction.\footnote{The argument in~\cite{HH150ext} works even if $|{\cal L}| = \infty$.}
Figure~\ref{fig:ConvIpq} is  a conceptual diagram.
\begin{figure}[t]
	\begin{center}
		\includegraphics[scale=0.75]{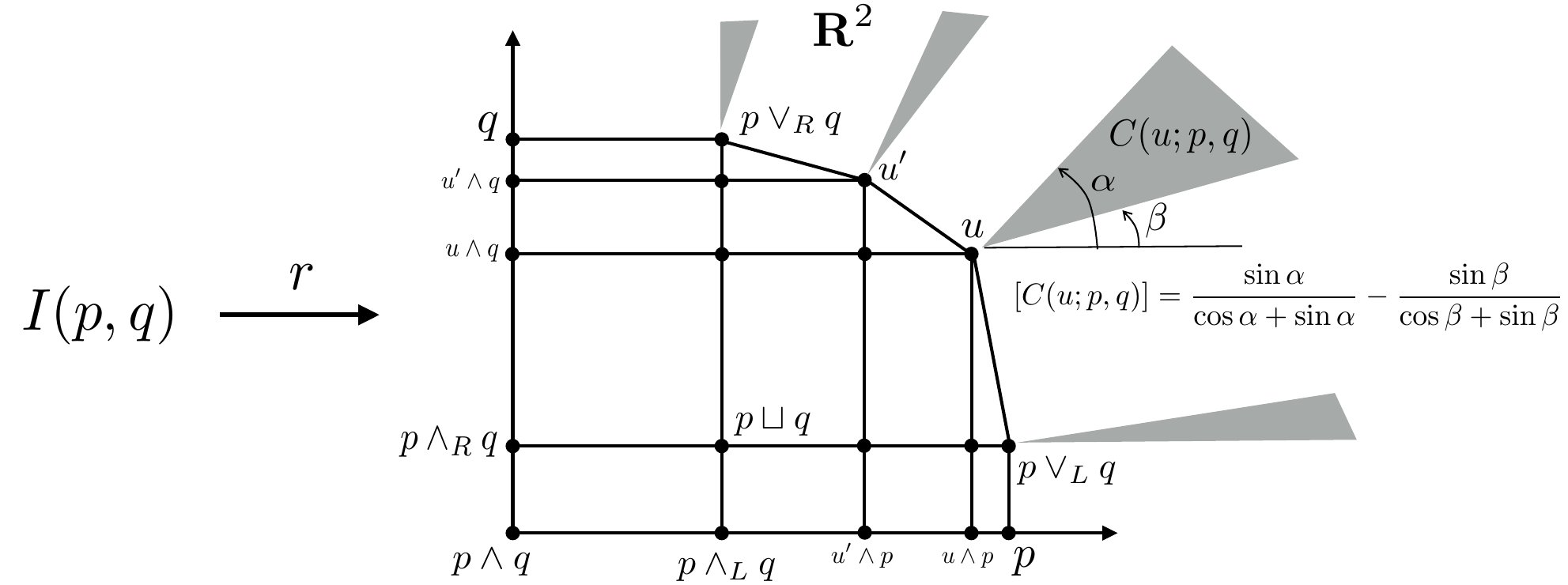}
		\caption{Construction of the fractional join}
			\label{fig:ConvIpq}
	\end{center}
\end{figure}\noindent

\begin{description}
	\item[{\rm (i)}] The metric interval $I(p,q)$ is equal to
	the set of elements $u$ that is represented as $u = a \vee b$ 
	for some $(a,b) \in [p \wedge q,p] \times [p \wedge q,q]$, where
	such a representation is unique, and $(a,b)$ equals $(u \wedge p, u \wedge q)$~\cite[Lemma 2.15]{HH150ext}.
	\item[{\rm (ii)}] For $u \in I(p,q)$, let $r(u;p,q)$ be the vector in $\RR^2_+$ defined by
	\begin{equation}\label{eqn:r}
	r(u; p,q) = (r(u \wedge p)- r(p \wedge q), r(u \wedge q) - r(p \wedge q)).
	\end{equation}
	\item[{\rm (iii)}] Let $\Conv I(p,q)$ denote 
	the convex hull of vectors $r(u;p,q)$ for all $u \in I(p,q)$.
	\item[{\rm (iv)}] Let ${\cal E}(p,q)$ be the set of elements $u$ in $I(p,q)$
	such that $r(u;p,q)$ is a maximal extreme point of $\Conv I(p,q)$.
	Then ${\cal E}(p,q) \ni u \mapsto r(u;p,q)$ is injective~\cite[Lemma 3.1]{HH150ext}.
	\item[{\rm (vi)}] For $u \in {\cal E}(p,q)$, 
	let $C(u;p,q)$ denote 
	the nonnegative normal cone at $r(u; p,q)$:
	\[
	C(u;p,q) := \left\{ w \in \RR_+^2 \bigmid \langle w, r(u;p \wedge q) \rangle = \max_{x \in \Conv I(p,q)} \langle w, x \rangle \right\}.
	\]
	\item[{\rm (vii)}] For a convex cone $C$ in $\RR_+^2$ represented as
	$
	C = \{(x,y) \in \RR_+^2 \mid y \cos \alpha \leq x \sin \alpha, y \cos \beta \geq x \sin \beta \}
	$
	for some $0 \leq \beta \leq \alpha \leq \pi/2$, 
	let 
	\[
	[C] := \frac{\sin \alpha}{\sin \alpha + \cos \alpha} - 
	\frac{\sin \beta}{\sin \beta + \cos \beta}.
	\]
	\item[{\rm (viii)}] The {\em fractional join} of $p,q \in {\cal L}$ is defined as 
	the formal sum $\displaystyle \sum_{u \in {\cal E}(p,q)} [C(u;p,q)] u.$
\end{description}
We use an alternative form of the fractional join.
Let ${\cal E}({\cal L})$ denote the set of 
all binary operations 
$\theta: {\cal L} \times {\cal L} \to {\cal L}$ such that
(i) $\theta(p,q)$ belongs to ${\cal E}(p,q)$ for $p,q \in {\cal L}$ and (ii) the cone
	\[
	C(\theta) := \bigcap_{p,q \in {\cal L}} C(\theta(p,q);p,q)
	\]
	has an interior point, i.e., $[C(\theta)] > 0$.
Then it is shown in \cite[Proposition 3.3]{HH150ext} 
that the following equality holds:
\begin{equation}\label{eqn:frac_join}
\sum_{\theta \in {\cal E}({\cal L})} [C(\theta)] \theta(p,q) = \sum_{u \in {\cal E}(p,q)} [C(u;p,q)] u \quad (p,q \in {\cal L}).	
\end{equation}
	The {\em fractional join operation} of ${\cal L}$, 
	denoted by the formal sum  
	$\sum_{\theta \in {\cal E}({\cal L})} [C(\theta)] \theta$, 
	is a function on ${\cal L} \times {\cal L}$ defined by $(p,q) \mapsto \sum_{\theta \in {\cal E}({\cal L})} [C(\theta)] \theta(p,q)$.

We are now ready to define submodular functions on ${\cal L}$.
A function $f:{\cal L} \to \overline\RR$ is called {\em submodular} if
it satisfies 
\begin{equation}\label{eqn:submodular}
f(p) + f(q) \geq f(p \wedge q) + \sum_{\theta \in {\cal E}({\cal L)}}[C(\theta)] f(\theta(p,q))
\quad (p,q \in {\cal L}),
\end{equation}
where $\sum_{\theta \in {\cal E}({\cal L)}}[C(\theta)] f(\theta(p,q))$
may be replaced by $\sum_{u \in {\cal E}(p,q)} [C(u;p,q)] f(u)$.
If ${\cal L}$ is a (modular) lattice, then 
the fractional join is equal to the join $\vee$, 
and our definition of submodularity coincides with the usual definition.
The class of our submodular functions includes all constant functions, 
and is closed under nonnegative combinations and direct sums~\cite[Lemma 3.7]{HH150ext}. Here
the {\em direct sum} of two functions $g:X \to \overline{\RR}$ and $h:Y \to \overline{\RR}$
is the function on $X \times Y$ defined by $(x,y) \mapsto g(x) + h(y)$.
A canonical example of submodular functions is 
the distance function of a modular semilattice.
\begin{Prop}[{\cite[Theorem 3.6]{HH150ext}}]
	Let ${\cal L}$ be a modular semilattice.
	The distance function $d$
	is submodular on ${\cal L} \times {\cal L}$.
\end{Prop}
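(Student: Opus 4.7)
The starting point is the rank formula
\[
d(p,q) \;=\; r(p) + r(q) - 2 r(p \wedge q),
\]
valid in every modular semilattice because $p \wedge q$ lies on a geodesic from $p$ to $q$. Writing $P=(p_1,p_2)$ and $Q=(q_1,q_2)$, the submodularity inequality (\ref{eqn:submodular}) for $d$ on ${\cal L} \times {\cal L}$ decomposes via $d = L - 2N$, with $L(p,q) := r(p)+r(q)$ (rank-linear) and $N(p,q) := r(p \wedge q)$ (cross-meet).

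The linear piece is controlled by exploiting the product decomposition $I(P,Q) = I(p_1,q_1) \times I(p_2,q_2)$ together with the coordinatewise additivity of the rank vector (\ref{eqn:r}): this identifies $\Conv I(P,Q)$ as the Minkowski sum $\Conv I(p_1,q_1) + \Conv I(p_2,q_2)$, whose extreme points are sums of extreme points and whose normal cones are intersections of the summand cones. Since the cones $\{C(u;p_j,q_j)\}_{u \in {\cal E}(p_j,q_j)}$ partition $\RR_+^2$ for each $j$, the weights telescope across one coordinate and yield, for $i = 1,2$, the projection identity
\[
\sum_{U \in {\cal E}(P,Q)} [C(U;P,Q)]\, r(U_i) \;=\; \sum_{u \in {\cal E}(p_i,q_i)} [C(u;p_i,q_i)]\, r(u).
\]
Consequently the contribution of $L$ to the submodularity slack of $d$ equals $\epsilon_1^r + \epsilon_2^r$, where $\epsilon_i^r \geq 0$ is the submodular slack of the rank function in ${\cal L}$ applied to the pair $(p_i,q_i)$.

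The core of the proof is the nonlinear piece. In the modular lattice case this reduces (using the modular identity $r(x)+r(y)=r(x \vee y)+r(x \wedge y)$ and $\epsilon_i^r = 0$) to
\[
r(p_1 \wedge p_2) + r(q_1 \wedge q_2) \;\leq\; r(p_1 \wedge q_1 \wedge p_2 \wedge q_2) + r((p_1 \vee q_1) \wedge (p_2 \vee q_2)),
\]
which follows from $(p_1 \wedge p_2) \vee (q_1 \wedge q_2) \preceq (p_1 \vee q_1) \wedge (p_2 \vee q_2)$ combined with the modular expansion of $r((p_1 \wedge p_2) \vee (q_1 \wedge q_2))$. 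Hence $-2N$ also contributes nonnegative slack in the lattice case.

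For a general modular semilattice, I would localize: each extreme $U=(U_1,U_2) \in {\cal E}(P,Q)$ admits the canonical representation $U_i = A_i \vee B_i$ with $A_i \in [p_i \wedge q_i, p_i]$ and $B_i \in [p_i \wedge q_i, q_i]$, which places $U_i$ inside genuine modular sublattices. Apply the lattice inequality of the preceding paragraph inside a suitable principal ideal containing the relevant elements to obtain a local bound on $r(U_1 \wedge U_2)$, then integrate against the weights $[C(U;P,Q)]$. The main obstacle is the bookkeeping in this integration: one must verify that the cone-weighted averages of the local meet-quantities recombine, using $\sum_U [C(U;P,Q)] = 1$ and the projection identity, into the global meets $r(p_1 \wedge p_2)$ and $r(q_1 \wedge q_2)$, with the residual slack absorbed exactly by $\epsilon_1^r + \epsilon_2^r$. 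This matching of cone measures with rank slacks is the technical heart of the argument, and is where the combinatorial machinery of \cite{HH150ext} is fully deployed.
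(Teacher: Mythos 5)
Your reduction is set up sensibly: the rank formula $d(p,q)=r(p)+r(q)-2r(p\wedge q)$ is valid (since $p\wedge q\in I(p,q)$ by the unique-representation property of the metric interval), the identification of $\Conv I(P,Q)$ with the Minkowski sum $\Conv I(p_1,q_1)+\Conv I(p_2,q_2)$ is correct, and the projection identity follows from the additivity of $[\,\cdot\,]$ over the subdivision $\{C(u;p_i,q_i)\}_u$ of $\RR_+^2$. The modular-lattice case is then genuinely disposed of by $(p_1\wedge p_2)\vee(q_1\wedge q_2)\preceq(p_1\vee q_1)\wedge(p_2\vee q_2)$ together with the modular rank equality. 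But the general semilattice case -- which is the entire content of the theorem, since the paper's framework is built precisely to handle the absence of joins -- is left as an unexecuted plan. Your own text concedes that the "matching of cone measures with rank slacks is the technical heart of the argument" and defers it to "the combinatorial machinery of \cite{HH150ext}". That is not a proof; it is a restatement of the difficulty followed by a citation to the result being proved (the Proposition is exactly \cite[Theorem 3.6]{HH150ext}).

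Concretely, two things go wrong in the sketched final step. First, the localization strategy does not carry over: the lattice argument hinges on the element $(p_1\wedge p_2)\vee(q_1\wedge q_2)$ and the upper bound $(p_1\vee q_1)\wedge(p_2\vee q_2)$, neither of which need exist in a semilattice, and the meets $U_1\wedge U_2$ for the various $U\in{\cal E}(P,Q)$ do not all sit inside one principal ideal where a lattice inequality could be applied uniformly. Second, the quantitative matching is not checked: writing $d=L-2N$, you need $\mathrm{slack}(L)\geq 2\,\mathrm{slack}(N)$, and the factor $2$ matters. For example, in ${\cal S}_2\times{\cal S}_2$ with $P=(1,1)$, $Q=(-1,1)$ one computes $\mathrm{slack}(L)=1$ and $\mathrm{slack}(N)=1/2$ (here $N$ is not supermodular), so the submodularity inequality for $d$ holds with equality. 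Since the target inequality is tight, there is no slack to absorb an imprecise estimate; the "bookkeeping" you postpone is exactly where the proof lives, and it has not been done.
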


Consider the important case where a modular semilattice ${\cal L}$ in question is 
the product of (smaller) $n$ modular semilattices ${\cal L}_1,{\cal L}_2, \ldots, {\cal L}_n$.
For binary operations $\theta_i: {\cal L}_i \times {\cal L}_i \to {\cal L}_i$ 
for $i=1,2,\ldots,n$, the {\em componentwise extension} 
$(\theta_1, \theta_2,\ldots, \theta_n)$ is the binary operation ${\cal L} \times {\cal L} \to {\cal L}$ 
defined by 
\[
(\theta_1, \theta_2,\ldots, \theta_n)(p,q) := 
(\theta_1(p_1,q_1), \theta_2(p_2,q_2), \ldots, \theta_n(p_n,q_n))
\]
for $p = (p_1,p_2,\ldots,p_n), q = (q_1,q_2,\ldots,q_n) \in 
{\cal L} = {\cal L}_1 \times {\cal L}_2 \times \cdots \times {\cal L}_n$.
Then the fractional join of ${\cal L}$ is decomposed as follows.
\begin{Prop}[{\cite[Proposition 3.4]{HH150ext}}]\label{prop:frac_join_prod} 
	\[
	\sum_{\theta \in {\cal E}({\cal L})} [C(\theta)] \theta =
	\sum_{\theta_1,\theta_2,\ldots,\theta_n}  [C(\theta_1) \cap C(\theta_2) \cap \cdots \cap C(\theta_n)] (\theta_1,\theta_2,\ldots, \theta_n),
	\]
	where the summation over $\theta_i$ is taken over all binary operations in ${\cal E}({\cal L}_i)$ for $i=1,2,\ldots,n$.
	Moreover, 
	if ${\cal L}_i = {\cal L}_j$ and $\theta_i \neq \theta_j$ for some $i,j$, 
	then  $[C(\theta_1) \cap C(\theta_2) \cap \cdots \cap C(\theta_n)] = 0$.
\end{Prop}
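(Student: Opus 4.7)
The plan is to reduce everything to Minkowski-sum geometry on the rank vectors $r(\cdot\,;p,q)$. First I would observe that for $p=(p_1,\ldots,p_n)$ and $q=(q_1,\ldots,q_n)$ in the product semilattice ${\cal L}$, the meet is coordinatewise, and by item (i) the metric interval $I(p,q)$ is precisely $I(p_1,q_1)\times\cdots\times I(p_n,q_n)$. Since the rank function of ${\cal L}$ is the sum of the coordinate ranks, the formula~(\ref{eqn:r}) gives
\begin{equation*}
r(u;p,q) \;=\; \sum_{i=1}^n r(u_i;p_i,q_i)
\qquad (u=(u_1,\ldots,u_n)\in I(p,q)),
\end{equation*}
and hence $\Conv I(p,q)=\sum_{i=1}^n \Conv I(p_i,q_i)$ as a Minkowski sum in $\RR^2_+$.

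Next I would use the standard fact that a point of a Minkowski sum is a maximal extreme point (in the componentwise order on $\RR^2_+$) iff it decomposes as a sum of maximal extreme points of the summands sharing a common strictly positive normal direction. Concretely, $u\in{\cal E}(p,q)$ exactly when each $u_i\in{\cal E}(p_i,q_i)$ and the normal cones $C(u_i;p_i,q_i)$ have a common interior point, and in that case a direct computation of $\langle w,\cdot\rangle$ on a Minkowski sum yields
\begin{equation*}
C(u;p,q) \;=\; \bigcap_{i=1}^n C(u_i;p_i,q_i).
\end{equation*}

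Now I would translate this into the language of binary operations using the alternative form~(\ref{eqn:frac_join}). Pick any $\theta\in{\cal E}({\cal L})$ and an interior point $w\in C(\theta)$. For every $(p,q)$, the $i$th coordinate $\theta(p,q)_i$ must be the unique element of ${\cal E}(p_i,q_i)$ whose rank vector maximizes $\langle w,\cdot\rangle$ on $\Conv I(p_i,q_i)$; in particular $\theta(p,q)_i$ depends only on $(p_i,q_i)$. Hence $\theta=(\theta_1,\ldots,\theta_n)$ for some $\theta_i\in{\cal E}({\cal L}_i)$, and the cone decomposition above gives $C(\theta)=\bigcap_i C(\theta_i)$. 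Conversely, any tuple $(\theta_1,\ldots,\theta_n)$ whose cone intersection has nonempty interior defines an element of ${\cal E}({\cal L})$. Evaluating both sides at a fixed $(p,q)$ and matching terms gives the claimed identity.

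For the moreover clause, suppose ${\cal L}_i={\cal L}_j$ but $\theta_i\neq\theta_j$; then there exist $(p,q)$ with $\theta_i(p,q)\neq\theta_j(p,q)$, and by the injectivity asserted in (iv) their rank vectors in $\RR^2_+$ differ. A common interior point $w$ of $C(\theta_i)$ and $C(\theta_j)$ would force these two distinct rank vectors to both be the unique maximizer of $\langle w,\cdot\rangle$ on $\Conv I(p,q)$, which is impossible; so the intersection has empty interior and $[C(\theta_1)\cap\cdots\cap C(\theta_n)]=0$. The delicate point I anticipate is the identification of maximal extreme points of the Minkowski sum: one must carefully track that normal cones are taken inside $\RR^2_+$ rather than all of $\RR^2$, and that the measure $[\cdot]$ sees only the interior of these cones, so that partial-dimensional overlaps legitimately disappear from both sides of the identity. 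Once this normal-cone bookkeeping is set up cleanly, the rest is a direct comparison.
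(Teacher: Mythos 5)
This proposition is quoted from \cite[Proposition 3.4]{HH150ext}; the present paper contains no proof of it, so I can only judge your argument on its own terms. It is correct, and it is the natural (and surely the intended) route: $\Conv I(p,q)$ is the Minkowski sum of the $\Conv I(p_i,q_i)$, maximal extreme points of a Minkowski sum decompose coordinatewise with normal cones given by intersection, and the positivity of $[C(\theta)]$ forces any $\theta\in{\cal E}({\cal L})$ to act coordinatewise. The one place where your wording is looser than the argument needs is the repeated appeal to ``the unique maximizer of $\langle w,\cdot\rangle$'': a point $w$ in the interior of $C(\theta)$ (or of $C(\theta_i)\cap C(\theta_j)$) lies in each individual cone $C(\theta(p,q)_i;p_i,q_i)$ but need not lie in its \emph{interior}, so for that particular $w$ the maximizer over $\Conv I(p_i,q_i)$ need not be unique. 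This is harmless: the relevant sets are open and nonempty, the boundaries of the finitely many normal cones in play are unions of rays and hence negligible, so a generic $w'$ in the same open cone is interior to every $N_{P_i}(v_i)$ it meets, and your uniqueness claims then hold for $w'$. The same perturbation also cleanly settles the ``moreover'' clause (two distinct vertices of a polygon cannot both be unique maximizers for a common generic positive direction, so the intersection of their nonnegative normal cones has empty interior and $[\cdot]=0$). With that genericity remark inserted, the proof is complete.
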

\subsubsection{Left join $\vee_L$, right join $\vee_R$, and pseudo join $\sqcup$}
We introduce three canonical operations.
For $p,q \in {\cal L}$, there exists a unique maximal element $u \in [p \wedge q, q]$
such that $p$ and $u$ have the join $p \vee u$.
Indeed, suppose that for $u,u' \in [p \wedge q, q]$
both joins $u \vee p$ and $u' \vee p$ exist.
By the definition of a modular semilattice, $u \vee u' \vee p$ exists.
Define the {\em left join} $p \vee_L q$ as the join of $p$ and the maximal element $u$.
Also define the {\em right join} $p \vee_R q$ as the join of $q$ and the unique maximal element 
$v \in [p \wedge q, p]$ with $v \vee q \in {\cal L}$. 
Define the {\em pseudo join} $\sqcup$ as the meet of the left join and the right join:
\begin{equation}\label{eqn:sqcup}
p \sqcup q := (p \vee_L q) \wedge (p \vee_R q) \quad (p,q \in {\cal L}).
\end{equation}
\begin{Ex}[$\vee_L$, $\vee_R$, $\sqcup$ in ${{\cal S}_k}^n$]\label{ex:op-S_k^n}
In 	${{\cal S}_{k}}^{n}$, 
the left join $p \vee_L q$ is obtained from $p$ 
by replacing $p_i$ with $q_i$ for each $i$ with $q_i \neq 0 = p_i$. 
Then $p \sqcup q$ is given by
\begin{equation}
(p \sqcup q)_i := \left\{ 
\begin{array}{ll}
p_i \vee q_i & {\rm if}\ \mbox{$p_i \preceq q_i$ or $p_i \preceq q_i$}, \\
0 & {\rm if}\  0 \neq p_i \neq q_i \neq 0.
\end{array} \right.
\end{equation}
Hence $\sqcup$ here equals the $\sqcup$ used in~\cite{HK12}, 
where $\wedge$ here equals the $\sqcap$ there.
The left and right joins are represented as
$p \vee_L q = (p \sqcup q) \sqcup p$ and
$p \vee_R q = (p \sqcup q) \sqcup q$. 
\end{Ex}

\subsubsection{Submodularity with respect to valuation}\label{subsub:valuation}

A {\em valuation} of a modular semilattice ${\cal L}$
is a function $v: {\cal L} \to \RR$ such that
\begin{description}
	\item[{\rm (1)}] $v(p) < v(q)$ for $p,q \in {\cal L}$ with $p \prec q$, and
	\item[{\rm (2)}] $v(p) + v(q) = v(p \wedge q) + v(p \vee q)$ 
	for $p,q \in {\cal L}$ with $p \vee q \in {\cal L}$.
\end{description}
The rank function $r$ is a valuation.

For a valuation $v$, let $v(u; p,q)$ be defined by 
replacing $r$ with $v$ in $r(u; p,q)$ in (\ref{eqn:r}).
Then $\Conv I(p,q)$, ${\cal E}(p,q)$, $C(u; p,q)$, 
$C(\theta)$, and ${\cal E}({\cal L})$
are defined by replacing $r(u;p,q)$ with $v(u;p,q)$.
In this setting with a general valuation $v$,
the fractional join and the fractional join operation are also defined, and (\ref{eqn:frac_join}) holds.
The corresponding submodular functions are called submodular functions
{\em with respect to valuation $v$}.

Suppose that ${\cal L}$ is the product of modular semilattices ${\cal L}_i$ for $i=1,2,\ldots,n$ and $v$ is a valuation of ${\cal L}$.
Then there exist valuations $v_i$ on ${\cal L}_i$ for $i=1,2,\ldots,n$ 
such that $v$  is represented as
$v(x) = v_1(x_1) + v_2 (x_2) + \cdots + v_n(x_n)$ for 
$x = (x_1,x_2,\ldots,x_n) \in {\cal L}$.
Then Proposition~\ref{prop:frac_join_prod} holds, 
where $C(\theta_i)$ and ${\cal E}({\cal L}_i)$ are defined 
with respect to  the valuation $v_i$, and by ${\cal L}_i = {\cal L}_j$ 
we mean that ${\cal L}_i = {\cal L}_j$ (as a poset) and $v_i = v_j$.

\subsubsection{Minimization} 
Here we consider the problem of minimizing submodular functions 
on the product of finite modular semilattices ${\cal L}_1, {\cal L}_2, \ldots, {\cal L}_n$.
We do not know whether this problem is tractable in the value-oracle model.
We consider the VCSP situation, where submodular function in question is given 
by the sum of submodular functions with a small number of variables.
Let us formulate the {\em Valued Constraint Satisfaction Problem (VCSP)} more precisely.
An instance/input of the problem consists of
finite sets (domains) $D_1, D_2, \ldots, D_n$, 
functions 
$f_i: D_{i_1} \times D_{i_2} \times \cdots \times D_{i_{k_i}} \to \overline{\RR}$
with $i=1,2,\ldots,m$ and $1 \leq i_1 < i_2 < \cdots < i_{k_i} \leq n$, 
where each $f_i$ is given as the table of all function values. 
The objective is to find $p = (p_1,p_2,\ldots, p_n) \in D_1 \times D_2 \times \cdots \times D_n$
that minimizes
$
\sum_{i=1}^m f_i (p_{i_1}, p_{i_2},\ldots,p_{i_{k_i}}).
$
In this situation, the size of the input is $O( n N + m N^k)$, 
where $N := \max_{i} |D_i|$ and $k := \max_{i} k_i$.
As a consequence of a tractability criterion of VCSP 
by Thapper and \v{Z}ivn\'y~\cite{TZ12FOCS} (see also \cite{KTZ13}), we have:
\begin{Thm}[{\cite[Theorem 3.9]{HH150ext}}]\label{thm:vscp}
	Suppose that each $D_i$ is a modular semilattice and $f_i$ is submodular on 
	$D_{i_1} \times D_{i_2} \times \cdots \times D_{i_{k_i}}$.
   Then VCSP is solved in strongly polynomial time.
\end{Thm}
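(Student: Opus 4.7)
The plan is to reformulate the submodularity inequality (\ref{eqn:submodular}) as a symmetric binary fractional polymorphism identity, and then invoke the general-valued VCSP tractability criterion of Kolmogorov--Thapper--\v{Z}ivn\'y~\cite{KTZ13} (refining \cite{TZ12FOCS}), which guarantees strongly polynomial-time solvability for any valued language admitting such a fractional polymorphism.

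First I would verify that the weights $[C(\theta)]$ for $\theta \in {\cal E}({\cal L})$ form a probability distribution, i.e. $\sum_{\theta \in {\cal E}({\cal L})} [C(\theta)] = 1$. This is because the maximal cones $C(\theta)$ tile the positive quadrant $\RR_+^2$, and the functional $[\,\cdot\,]$ measures angular width along the segment $\{x+y=1, x,y \geq 0\}$, whose total length under this parametrization is $1$. Consequently, defining a distribution $\omega$ on binary operations $D^2 \to D$ on a single modular semilattice $D = D_i$ by $\omega(\wedge) := 1/2$ and $\omega(\theta) := [C(\theta)]/2$ for $\theta \in {\cal E}(D)$, the submodularity inequality becomes precisely the fractional polymorphism identity
\[
\tfrac{1}{2}(f(p) + f(q)) \ \geq\ \sum_{g} \omega(g)\, f(g(p,q)) \qquad (p,q \in D).
\]
Next I would verify that $\omega$ is symmetric in the Thapper--\v{Z}ivn\'y sense: the meet $\wedge$ is commutative, and for each $\theta \in {\cal E}(D)$ the swap $\theta^{*}(p,q) := \theta(q,p)$ again lies in ${\cal E}(D)$, with $C(\theta^{*})$ being the reflection of $C(\theta)$ across the diagonal $\{x=y\}$; hence $[C(\theta^{*})] = [C(\theta)]$, so $\omega$ is invariant under $g \mapsto g^{*}$.

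To handle the product domain $\prod_i D_i$ and the sum $\sum_i f_i$, I would take the componentwise product of these fractional polymorphisms. Proposition~\ref{prop:frac_join_prod} shows that the fractional join operation on ${\cal L}_1 \times \cdots \times {\cal L}_n$ is the componentwise extension of those on the factors, and the closure properties recalled after (\ref{eqn:submodular}) (closure under direct sums and nonnegative combinations) ensure that every objective function $\sum_i f_i(p_{i_1}, \ldots, p_{i_{k_i}})$ built from submodular $f_i$'s is again submodular on the full product, hence invariant under the product fractional polymorphism. Finally, applying \cite[Theorem]{KTZ13} (or \cite{TZ12FOCS} when all $f_i$ are finite-valued) yields a strongly polynomial-time algorithm -- essentially the BLP+AIP algorithm -- whose running time is polynomial in $nN + mN^k$.

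The main obstacle is the verification that the pair $(\wedge,\, \sum_\theta [C(\theta)]\theta)$ genuinely yields a fractional polymorphism that satisfies the precise hypothesis of the \cite{KTZ13} tractability theorem, rather than merely the formal inequality: one must confirm the symmetry condition on the support of $\omega$ and check that ${\cal E}({\cal L})$ is closed under the swap $\theta \mapsto \theta^{*}$ with equal weights, and then ensure that the combinatorial bookkeeping of cones and componentwise decompositions survives on arbitrary products. The convexity-type identity (\ref{eqn:frac_join}) is what makes the rewriting as a symmetric polymorphism possible; once this is established, the strongly polynomial bound follows directly from the cited VCSP algorithm.
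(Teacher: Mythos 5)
Your overall strategy coincides with the one the paper (via \cite{HH150ext}) actually uses: normalize the submodular inequality (\ref{eqn:submodular}) into a binary fractional polymorphism $\tfrac{1}{2}\wedge+\tfrac{1}{2}\sum_{\theta}[C(\theta)]\,\theta$ (your check that $\sum_{\theta}[C(\theta)]=1$, via the telescoping of the normal-cone tiling of $\RR_+^2$, is correct and necessary), handle product domains by Proposition~\ref{prop:frac_join_prod}, and invoke the Thapper--\v{Z}ivn\'y tractability criterion for the basic LP relaxation.

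However, the step where you verify the hypothesis of that criterion has a genuine gap. ``Symmetric'' in \cite{TZ12FOCS,KTZ13} means that every operation $g$ in the support is itself commutative, $g(p,q)=g(q,p)$ for all $p,q$; it does not mean that the weight distribution $\omega$ is invariant under the swap $g\mapsto g^{*}$. The latter condition is vacuous: every valued language admits the swap-invariant binary fractional polymorphism $\tfrac{1}{2}L+\tfrac{1}{2}R$ supported on the two projections, so swap-invariance cannot certify tractability. Moreover the operations in ${\cal E}({\cal L})$ are genuinely non-commutative in general --- on a polar space the support is $\{\wedge,\vee_L,\vee_R\}$ and $p\vee_L q=q\vee_R p\neq q\vee_L p$ --- so the fractional polymorphism you construct is \emph{not} symmetric in the required sense, and the symmetric-fractional-polymorphism theorem cannot be applied as stated. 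What rescues the argument, and what \cite{HH150ext} actually uses, is the sufficient condition from \cite{KTZ13}: BLP solves a general-valued language whenever it admits a binary fractional polymorphism whose support generates (in particular, contains) a semilattice operation. The meet $\wedge$ sits in your support with weight $1/2>0$ and is idempotent, commutative and associative, so this criterion applies immediately; no symmetry of the remaining operations $\theta$ is needed. With that substitution your plan goes through, and the strongly polynomial bound is inherited from the cited algorithm.
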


\begin{Rem}
	Well-known oracle-minimizable classes of our submodular functions are those on ${\cal L} = {{\cal S}_1}^n$
	(corresponding to the ordinary submodular functions) and on
	${{\cal S}_2}^n$ (corresponding to bisubmodular functions).
	Fujishige, Tanigawa and Yoshida~\cite{FTY14} and
	Huber and Krokhin~\cite{HK14} showed 
	the oracle tractability of $\alpha$-bisubmodular functions 
	($=$ submodular functions on ${{\cal S}_2}^n$ with valuations); 
	see Section~\ref{subsub:alpha_bisub}. 
	Consider submodular functions on the product ${\cal L}$ of complemented modular lattices of rank 2 ({\em diamonds}). 
	Following a pioneering work of Kuivinen~\cite{Kuivinen11} on NP $\cap$ co-NP characterization,  
	Fujishige, Kir\'aly, Makino, Takazawa, and Tanigawa~\cite{FKMTT14} 
	proved the oracle-tractability of this class of submodular functions. 
\end{Rem}

\subsection{Submodular functions on polar spaces}\label{subsec:polar-submo}
Here we consider submodular functions on polar spaces; 
see Section~\ref{subsec:lattice} for polar spaces.
It turns out that they are natural generalization of bisubmodular functions. 
We first show the explicit formula of the fractional join in a polar space.
\begin{Thm}\label{thm:frac_join_of_polar_space}
	In a polar space, 
	the fractional join operation is equal to
		$(1/2)\vee_L + (1/2)\vee_R$. 
\end{Thm}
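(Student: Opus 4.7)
The plan is to reduce the identity to a calculation inside a single polar frame ${\cal F}\cong{{\cal S}_2}^n$, where it follows from the product rule of Proposition~\ref{prop:frac_join_prod} combined with a one-coordinate check, and then to argue that the frame-wise computation agrees with the one intrinsic to ${\cal L}$. Axiom P1 applied to the two chains $\{p\wedge q, p\}$ and $\{p\wedge q, q\}$ produces a polar frame ${\cal F}\cong\{-1,0,1\}^n$ containing $p, q$; I partition the coordinate set of ${\cal F}$ by whether $(\bar p_i,\bar q_i)$ is equal, has only $\bar p_i$ nonzero, only $\bar q_i$ nonzero, or has $\bar p_i=-\bar q_i\ne 0$. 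For a single factor ${\cal S}_2$ a direct check shows that for the conflict pair $(1,-1)$ one has ${\cal E}(1,-1)=\{1,-1\}$ with rank vectors $(1,0),(0,1)$ and symmetric normal cones $\{(w_1,w_2)\in\RR_+^2:w_1\ge w_2\}$ and $\{(w_1,w_2)\in\RR_+^2:w_2\ge w_1\}$, each of $[\,\cdot\,]$-measure $1/2$; in all other one-coordinate cases the operation is forced. Proposition~\ref{prop:frac_join_prod} then collapses the sum to the two coordinatewise-homogeneous operations, whose outputs a coordinate-by-coordinate check identifies with $\bar p\vee_L\bar q$ and $\bar p\vee_R\bar q$ computed inside ${\cal F}$.

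The second step is to transfer these computations from ${\cal F}$ to ${\cal L}$. The element $\hat q\in[p\wedge q,q]$ (the maximum element with $p\vee\hat q\in{\cal L}$) is well defined: for any two candidates $b_1,b_2$, the join $b_1\vee b_2$ exists in the modular lattice ${\cal I}_q$, and the pairwise-joins axiom then ensures $p\vee b_1\vee b_2\in{\cal L}$, so the candidate set is closed under join. To show $\hat q$ equals its frame-wise counterpart $\hat q_{\cal F}$, any strict witness $\hat q_{\cal F}\prec x\preceq \hat q$ could be embedded by P1 (applied to $\{p\wedge q,p\}$ and $\{p\wedge q,x,q\}$) into another frame ${\cal F}_1$; axiom P2 then gives an isomorphism ${\cal F}\cong{\cal F}_1$ identical on $\{p\wedge q,p\}$ and $\{p\wedge q,q\}$ under which $x$ would correspond to an element $\preceq\hat q_{\cal F}$, contradicting $x\succ\hat q_{\cal F}$. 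Dually $\hat p\in{\cal F}$, so $p\vee_L q=p\vee\hat q$ and $p\vee_R q=\hat p\vee q$ both lie in ${\cal F}$ and agree with the frame-wise values of step~1.

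The third step, which I expect to be the main obstacle, is to rule out additional maximal extreme points of $\Conv I(p,q)$ arising from elements $u\in I_{\cal L}(p,q)\setminus{\cal F}$. Using the unique decomposition $u=a\vee b$ with $a=u\wedge p$, $b=u\wedge q$, $a\wedge b=p\wedge q$ from step~(i) of the construction, together with the modular rank identity $r(a\vee b)=r(a)+r(b)-r(p\wedge q)$, this reduces to the uniform rank bound $r(u)\le r(p\vee_L q)=r(p\vee_R q)$ for every $u\in I(p,q)$, the equality of the two right-hand sides being immediate from step~1 inside ${\cal F}$. I would prove this bound by applying P1 to the chains $\{p\wedge q,u\}$ and $\{p\wedge q,p\vee_L q\}$ to place $u$ and $p\vee_L q$ in a common frame ${\cal F}_u$ and invoking the straightforward ${{\cal S}_2}^n$-bound there; the subtlety is that ${\cal F}_u$ need not contain $p$ or $q$, so the rank comparison must then be transferred back via a P2 identification of frames on the relevant chains.

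Finally, once ${\cal E}(p,q)=\{p\vee_L q, p\vee_R q\}$ is established and the equal defect $r(p)-r(\hat p)=r(q)-r(\hat q)$ is in force (again from step~1), the Pareto frontier of $\Conv I(p,q)$ joining the two maximal extreme points has slope $-1$, so the two normal cones are precisely $\{(w_1,w_2)\in\RR_+^2:w_1\ge w_2\}$ and $\{(w_1,w_2)\in\RR_+^2:w_2\ge w_1\}$, and $[C(p\vee_L q;p,q)]=[C(p\vee_R q;p,q)]=1/2$ by direct evaluation of the $[\,\cdot\,]$-formula in~(vii). Substituting into~(\ref{eqn:frac_join}) yields $\sum_{u\in{\cal E}(p,q)}[C(u;p,q)]\,u=\tfrac12(p\vee_L q)+\tfrac12(p\vee_R q)$, which is the claimed fractional join operation.
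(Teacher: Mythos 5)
Your overall strategy --- compute everything inside a polar frame ${\cal F}\simeq{{\cal S}_2}^n$ supplied by P1 and transfer the answer to ${\cal L}$ via P2 --- is the same as the paper's, and your step~1 is correct: the paper determines $\Conv I(p,q)$ and ${\cal E}(p,q)$ in ${{\cal S}_2}^n$ directly, while you route through Proposition~\ref{prop:frac_join_prod} and a one-coordinate check, and both give $(1/2)\vee_L+(1/2)\vee_R$ there. The genuine gap is in the transfer, i.e., in your steps~2 and~3, and it is exactly the point you flag as ``the main obstacle''. In step~3 you apply P1 to the chains $\{p\wedge q,u\}$ and $\{p\wedge q,p\vee_L q\}$; the resulting frame ${\cal F}_u$ carries no information about $p$ and $q$, so there is no ``straightforward ${{\cal S}_2}^n$-bound'' available in it, and no P2 identification back to ${\cal F}$ can be set up because the two frames need not share the chains one would have to fix. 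Step~2 has the same problem in miniature: to conclude that the image $x'$ of your witness $x$ under the isomorphism ${\cal F}_1\to{\cal F}$ satisfies $x'\preceq\hat q_{\cal F}$, you need to know that the join $p\vee x$, which exists in ${\cal L}$, is actually realized \emph{inside} ${\cal F}_1$ (and, dually, that joins formed inside a frame agree with joins in ${\cal L}$); this is not automatic for a subsemilattice and is nowhere established in your argument.

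The paper closes both gaps with a different choice of frame plus a short auxiliary argument. Given $u\in I(p,q)$, write $u=p'\vee q'$ with $p'=u\wedge p\in[p\wedge q,p]$ and $q'=u\wedge q\in[p\wedge q,q]$, and apply P1 to the \emph{three-element} chains $p\wedge q\preceq p'\preceq p$ and $p\wedge q\preceq q'\preceq q$, obtaining a frame ${\cal F}'$ that contains $p,q,p',q'$ simultaneously. The auxiliary step shows $u\in{\cal F}'$: take a further frame ${\cal F}''$ containing the chains $p\wedge q\preceq p'\preceq u$ and $p\wedge q\preceq q'\preceq u$, and apply the P2 isomorphism ${\cal F}''\to{\cal F}'$ fixing $p\wedge q,p',q'$; since $u$ is the join of $p'$ and $q'$ in ${\cal F}''$, its image is the join of $p'$ and $q'$ in ${\cal F}'$, which must equal $u$. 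Finally the P2 isomorphism ${\cal F}'\to{\cal F}$ fixing the chains $\{p\wedge q,p\}$ and $\{p\wedge q,q\}$ preserves ranks, so $r(u;p,q)$ already occurs among the vectors computed inside ${\cal F}$. This shows in one stroke that $\Conv I(p,q)$ computed in ${\cal L}$ equals the one computed in ${\cal F}$ and that $\vee_L,\vee_R$ agree with their frame-wise versions, making your separate rank bound $r(u)\leq r(p\vee_L q)$ unnecessary; with this substitution your steps~1 and~4 complete the proof.
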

Thus submodular functions on polar space ${\cal L}$
are functions satisfying
\begin{equation}\label{eqn:submo_veeLveeR}
f(p) + f(q) \geq f(p \wedge q) + \frac{1}{2} f(p \vee_L q) + \frac{1}{2} f(p \vee_R q) \quad (p,q \in {\cal L}). 
\end{equation}
We present an alternative characterization of submodularity using pseudo join $\sqcup$.
\begin{Thm}\label{thm:polar_submo}
	Let ${\cal L}$ be a polar space. 
	For a function $f: {\cal L} \to \overline{\RR}$, the following conditions are equivalent:
	\begin{description}
		\item[{\rm (1)}] $f$ is submodular on ${\cal L}$.
		\item[{\rm (2)}] $f(p) + f(q) \geq f(p \wedge q) + f(p \sqcup q)$ holds for $p,q \in {\cal L}$.
		\item[{\rm (3)}] $f$ is bisubmodular on each polar frame 
		${\cal F} \simeq {{\cal S}_{2}}^n$.
	\end{description}	
\end{Thm}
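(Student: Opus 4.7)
The plan is to reduce the three-way equivalence to a single polar frame ${\cal F}\simeq{{\cal S}_2}^n$ and then prove the resulting statements by componentwise calculation. For $p,q\in{\cal L}$, axiom P1 applied to the singleton chains $\{p\}$, $\{q\}$ produces a polar frame ${\cal F}$ containing both. A standard P1/P2 argument then shows that the ambient operations $\wedge, \vee_L, \vee_R, \sqcup$ and the fractional join agree with those computed inside ${\cal F}$: any auxiliary element (e.g.\ $p\wedge_{\cal L}q$, or the maximal $u\in[p\wedge q,q]$ defining $\vee_L$) can be placed together with $p$ and $q$ in a further polar frame via P1, and the P2-isomorphism fixing $p,q$ transports this back into ${\cal F}$. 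By Theorem~\ref{thm:frac_join_of_polar_space}, condition (1) then reads
\[
f(p)+f(q)\geq f(p\wedge q)+\tfrac12 f(p\vee_L q)+\tfrac12 f(p\vee_R q) \qquad (p,q\in{\cal L}).
\]
Example~\ref{ex:op-S_k^n} identifies $\wedge$ and $\sqcup$ on ${{\cal S}_2}^n$ with the standard bisubmodular meet and join, so (2) restricted to any frame is literally bisubmodularity on ${{\cal S}_2}^n$; combined with the reduction this gives (2)$\Leftrightarrow$(3), leaving (1)$\Leftrightarrow$(2) on a single copy of ${{\cal S}_2}^n$ as the remaining task.

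For (1)$\Rightarrow$(2), I apply the rewritten inequality (1) to the pair $(p\vee_L q,\,p\vee_R q)$. A short componentwise check shows that $(p\vee_L q)\wedge(p\vee_R q)=p\sqcup q$, and that each of $p\vee_L q$, $p\vee_R q$ equals its own left (resp.\ right) join with the other: any coordinate that is zero in $p\vee_L q$ must have $p_i=q_i=0$, and so the same coordinate is zero in $p\vee_R q$, leaving no room for a nontrivial replacement. Hence (1) for this pair collapses to $\tfrac12[f(p\vee_L q)+f(p\vee_R q)]\geq f(p\sqcup q)$, and chaining with (1) for $(p,q)$ yields (2).

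The converse (2)$\Rightarrow$(1) is the main obstacle and requires a careful combination of four bisubmodular inequalities. Let $p^{\ast}$ (resp.\ $q^{\ast}$) denote the element obtained from $p$ (resp.\ $q$) by zeroing every coordinate on which $p$ and $q$ conflict (both nonzero and distinct). A direct case analysis in ${{\cal S}_2}^n$ yields
\[
p\wedge(p\sqcup q)=p^{\ast},\ \ p\sqcup(p\sqcup q)=p\vee_L q,\ \ q\wedge(p\sqcup q)=q^{\ast},\ \ q\sqcup(p\sqcup q)=p\vee_R q,
\]
as well as $p^{\ast}\wedge q^{\ast}=p\wedge q$ and $p^{\ast}\sqcup q^{\ast}=p\sqcup q$. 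Applying (2) to the four pairs $(p,p\sqcup q)$, $(q,p\sqcup q)$, $(p^{\ast},q^{\ast})$, and $(p,q)$, then summing the first two and substituting the third to lower-bound $f(p^{\ast})+f(q^{\ast})$ by $f(p\wedge q)+f(p\sqcup q)$, produces
\[
f(p)+f(q)-f(p\wedge q)\geq f(p\vee_L q)+f(p\vee_R q)-f(p\sqcup q);
\]
averaging this with the fourth inequality cancels the $-f(p\sqcup q)$ term and yields exactly (1). The real difficulty is guessing the auxiliary elements $p^{\ast},q^{\ast}$ and the correct linear combination; once those are in hand, verifying the componentwise identities and performing the combination are both short exercises.
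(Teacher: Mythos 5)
Your proof is correct and follows essentially the same route as the paper: reduce to a polar frame via Theorem~\ref{thm:frac_join_of_polar_space}, derive (1)$\Rightarrow$(2) by applying the submodular inequality to $(p\vee_L q,\,p\vee_R q)$, and derive (2)$\Rightarrow$(1) from the same three auxiliary instances of (2) --- your $p^{\ast},q^{\ast}$ are exactly the paper's $p\wedge(p\vee_R q)$ and $q\wedge(p\vee_L q)$. The only cosmetic differences are that you verify the operation identities componentwise in ${{\cal S}_2}^n$ where the paper uses its abstract Lemma~\ref{lem:1-6}, and your final averaging step uses (2) on $(p,q)$ instead of on $(p\vee_L q,\,p\vee_R q)$.
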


\subsubsection{Convexity of the Lov\'asz extension}\label{subsub:polar-Lov}
We discuss the convexity property of Lov\'asz extensions 
with respect to the metric on the orthoscheme complex.
As seen in Example~\ref{ex:Boolean}, 
the orthoscheme complex of a distributive lattice is isometric 
to the order polytope in $[0,1]^n$.
It is well-known that a submodular function on a distributive lattice 
can be characterized by the convexity of its Lov\'asz extension on 
the order polytope.
\begin{Thm}[{\cite{Lovasz83}; see \cite{FujiBook}}]\label{thm:Lovasz}
	Let ${\cal L}$ be a distributive lattice.
	A function $f:{\cal L} \to \overline{\RR}$ is submodular 
	if and only if the Lov\'asz extension $\overline f: K({\cal L}) \to  \overline{\RR}$ is convex.
\end{Thm}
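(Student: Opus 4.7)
The plan is to reduce the statement to a well-known computation on the order polytope via Birkhoff's representation. First, I would write ${\cal L}$ as the lattice of order ideals of a finite poset ${\cal P} = \{a_1,\dots,a_n\}$ with $n = \rank{\cal L}$, and invoke Example~\ref{ex:Boolean} to identify $K({\cal L})$ isometrically with the order polytope $P({\cal P}) = \{x \in [0,1]^n : x_i \geq x_j \text{ whenever } a_j \prec a_i\}$, equipped with the standard simplicial subdivision whose maximal simplices are indexed by linear extensions of ${\cal P}$. Under this identification each ideal $I \in {\cal L}$ corresponds to its indicator vector $\chi_I$, and $\overline f$ becomes the unique piecewise-linear function that equals $f$ at each such vertex and is affine on every maximal simplex.

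For necessity, I would apply midpoint convexity of $\overline f$ to $\chi_I$ and $\chi_J$ for arbitrary ideals $I,J \in {\cal L}$. The midpoint $m := (\chi_I + \chi_J)/2$ takes value $1$ on $I \cap J$, $1/2$ on $I \triangle J$, and $0$ elsewhere. Choosing a linear extension that first exhausts $I \cap J$ and then $I \cup J$ (both are ideals, since ${\cal L}$ is distributive), one sees that $m$ lies in the corresponding simplex with barycentric weights $1/2$ on the vertex $\chi_{I \cap J}$ and $1/2$ on $\chi_{I \cup J}$, so $\overline f(m) = \tfrac{1}{2} f(I \cap J) + \tfrac{1}{2} f(I \cup J)$. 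Midpoint convexity then yields $f(I) + f(J) \geq f(I \cap J) + f(I \cup J)$, which is the submodular inequality.

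For sufficiency, I would exploit that $\overline f$ is piecewise linear on this triangulation, so its convexity on $P({\cal P})$ reduces to convexity across each codimension-$1$ interior face. Such a face lies in a hyperplane $x_i = x_j$ with $a_i,a_j$ incomparable in ${\cal P}$, and the two adjacent maximal simplices correspond to linear extensions that differ precisely by transposing $a_i,a_j$ at consecutive positions $k, k{+}1$. Writing $I$ for the common prefix ideal of length $k-1$, a barycentric-coordinate expansion of $\overline f$ on each of the two orthoschemes shows that the gradient jump across the face, measured in the direction $e_i - e_j$, is exactly
\[
f(I \cup \{a_i\}) + f(I \cup \{a_j\}) - f(I) - f(I \cup \{a_i,a_j\}),
\]
i.e.\ the submodular defect on the diamond $[I, I \cup \{a_i,a_j\}]$. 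Submodularity of $f$ therefore forces convexity of $\overline f$ across every codim-$1$ face, and a standard induction on $r(p \vee q) - r(p \wedge q)$ using distributivity shows that submodularity on all such diamonds is equivalent to the full submodular inequality on~${\cal L}$.

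The main obstacle is the gradient-jump calculation in the sufficiency step: one must fix compatible barycentric coordinates on the two adjacent orthoschemes so that the directional derivative across $x_i = x_j$ reads out exactly the submodular defect on the relevant diamond, with the correct sign. Once this local dictionary is in place, the global equivalence follows cleanly from the piecewise-linear structure of $\overline f$ and Birkhoff's duality, and the strategy generalizes naturally to the modular lattice and polar space settings via Theorems~\ref{thm:modularlattice} and~\ref{thm:polar_CAT(0)}.
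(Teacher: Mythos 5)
The paper does not prove this theorem; it is imported as a classical result (Lov\'asz, Fujishige) and used as a black box in the proofs of Theorems~\ref{thm:Lovasz_modular_lattice} and \ref{thm:Lovasz_polar_space}. Your reconstruction is the standard argument and is essentially correct: the identification of $K({\cal L})$ with the triangulated order polytope via Birkhoff, the midpoint computation $(\chi_I+\chi_J)/2=\tfrac12\chi_{I\cap J}+\tfrac12\chi_{I\cup J}$ for necessity, and for sufficiency the reduction of convexity of a piecewise-linear function to convexity across interior codimension-one walls $x_i=x_j$ (walls coming from comparable pairs $a_i\prec a_j$ lie on the boundary of the polytope and need no check), where the gradient jump is exactly the submodular defect on the diamond $[I,I\cup\{a_i,a_j\}]$. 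Two small remarks. First, the closing clause about an induction showing diamond submodularity implies full submodularity is not needed for the stated equivalence: in the sufficiency direction you assume the full inequality, of which the diamond inequalities are special cases, and in the necessity direction you derive the full inequality directly from midpoint convexity. Second, since $f$ is $\overline{\RR}$-valued, the gradient-jump argument should be accompanied by the observation that submodularity forces $\dom f$ to be a sublattice, so that $\dom\overline f$ is the orthoscheme subcomplex of a sublattice and is itself convex; with that noted, the piecewise-linear argument on the effective domain goes through.
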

We first establish an analogy of this characterization 
for submodular functions on modular lattices.
Recall Theorem~\ref{thm:modularlattice} that 
the orthoscheme complex of a modular lattice is CAT$(0)$. 
\begin{Thm}\label{thm:Lovasz_modular_lattice}
	Let ${\cal L}$ be a modular lattice.
	A function $f: {\cal L} \to \overline{\RR}$ is submodular if and only if
	the Lov\'asz extension $\overline f: K({\cal L}) \to \overline{\RR}$ 
	is convex.
\end{Thm}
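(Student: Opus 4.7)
The plan is to ground both directions in a single \emph{midpoint identity}: for every $p, q \in {\cal L}$, the midpoint of the unique geodesic from $p$ to $q$ in $K({\cal L})$ (which is CAT(0) by Theorem \ref{thm:modularlattice} and uniquely geodesic by Proposition \ref{prop:uniquely-geodesic}) ought to coincide with the formal sum
\[
m(p,q) := \tfrac{1}{2}(p \wedge q) + \tfrac{1}{2}(p \vee q),
\]
which is a genuine point of $K({\cal L})$ on the $1$-simplex of the chain $p \wedge q \preceq p \vee q$, and satisfies $\overline{f}(m(p,q)) = (f(p \wedge q) + f(p \vee q))/2$ by the definition of the Lov\'asz extension.

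Granted this identity, the necessity direction is immediate: if $\overline{f}$ is convex, then CAT(0) midpoint convexity yields $\overline{f}(p) + \overline{f}(q) \geq 2\overline{f}(m(p,q))$, which rewrites as the submodular inequality $f(p) + f(q) \geq f(p \wedge q) + f(p \vee q)$. To establish the midpoint identity I would first restrict attention to $K([p \wedge q, p \vee q])$, the orthoscheme complex of the modular interval $[p \wedge q, p \vee q]$---itself CAT(0) by Theorem \ref{thm:modularlattice}---which is a convex subcomplex of $K({\cal L})$; inside it, $p$ and $q$ form a pair of complements. The modular rank equality $r(p) + r(q) = r(p \wedge q) + r(p \vee q)$ then supplies the metric symmetries $d(p, p \wedge q) = d(q, p \vee q)$ and $d(q, p \wedge q) = d(p, p \vee q)$, and combining these with the explicit Euclidean form of each constituent orthoscheme should force $m(p,q)$ to lie on the $p$--$q$ geodesic at equal distance from both endpoints. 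Example \ref{ex:Boolean} is the concrete model situation, where $K({\cal L})$ is realized as an order polytope and $m(p,q)$ is literally the Euclidean midpoint; the extension to general modular ${\cal L}$ proceeds by CAT(0) comparison.

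For sufficiency I would assume submodularity of $f$ and show that $\overline{f}$ is convex along every geodesic of the CAT(0) space $K({\cal L})$. Since $\overline{f}$ is piecewise linear and continuous and $K({\cal L})$ is uniquely geodesic, it suffices to prove midpoint convexity $\overline{f}(x) + \overline{f}(y) \geq 2\overline{f}(z)$ whenever $z$ is the geodesic midpoint of $x, y \in K({\cal L})$. The natural plan is to write $x = \sum_i \lambda_i p_i$ and $y = \sum_j \mu_j q_j$ as formal convex combinations supported on chains, express $z$ as a compatible combination involving the pairwise meets $p_i \wedge q_j$ and joins $p_i \vee q_j$ via the midpoint identity applied pairwise, and then combine the lattice-level submodular inequalities $f(p_i) + f(q_j) \geq f(p_i \wedge q_j) + f(p_i \vee q_j)$ with nonnegative coefficients into the required inequality for $\overline{f}$. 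An alternative route is to select, for each pair $(x, y)$, a distributive sublattice of ${\cal L}$ whose orthoscheme complex is a convex subcomplex of $K({\cal L})$ containing $x$ and $y$, and invoke the classical Theorem \ref{thm:Lovasz} inside that convex polytope.

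The hard part, I expect, is the sufficiency direction---specifically, pinning down the geodesic midpoint of an arbitrary pair in $K({\cal L})$ combinatorially. Geodesics can weave through many orthoschemes, and matching the midpoint to a nonnegative combination of lattice meets and joins with coefficients dictated by $\overline{f}(x) + \overline{f}(y)$ requires careful bookkeeping that genuinely uses modularity; without it, chains from different orthoschemes need not ``align'' to support a common meet/join decomposition. I expect the proof ultimately to lean on amalgamation and link-condition lemmas from \cite{CCHO14} that underlie the CAT(0) theorem for modular lattices (Theorem \ref{thm:modularlattice}).
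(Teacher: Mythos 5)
Your ``alternative route'' is, in fact, the paper's entire proof, and in both directions, not just for sufficiency. The paper uses three facts: (i) any two maximal chains of a modular lattice generate a distributive sublattice (Gr\"atzer \cite[Theorem 363]{Gratzer}); (ii) for a rank-$n$ distributive sublattice ${\cal D}$, the subcomplex $K({\cal D})$ is convex in $K({\cal L})$ \cite[Lemma 7.13(4)]{CCHO14}; (iii) $f$ is submodular on ${\cal L}$ iff it is submodular on every distributive sublattice (since $p,q,p\wedge q,p\vee q$ always lie in a common one). Both implications then reduce to the classical Theorem~\ref{thm:Lovasz}: for necessity, restrict $\overline f$ to each convex $K({\cal D})$; for sufficiency, any two points $x,y$ of $K({\cal L})$ are carried by two maximal chains, hence lie in a convex $K({\cal D})$ containing the geodesic $[x,y]$, on which $\overline f$ is convex. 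So the workable version of your proposal coincides with the paper's argument.

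Your primary route has two gaps. First, the midpoint identity is \emph{true}, but your sketch does not establish it: the rank equality only yields $d(p,p\wedge q)=d(q,p\vee q)$ and its mate, which is consistent with the geodesic midpoint lying off the segment $[p\wedge q,\,p\vee q]$, and ``the explicit Euclidean form of each constituent orthoscheme'' does not apply directly because the geodesic $[p,q]$ typically traverses several orthoschemes that share only the chain $p\wedge q\prec p\vee q$. The clean proof of the identity is again via (i) and (ii): in the order-polytope realization of the distributive sublattice generated by $p$ and $q$ one has $\chi_p+\chi_q=\chi_{p\wedge q}+\chi_{p\vee q}$, so the Euclidean midpoint of $p,q$ inside the convex subcomplex is $\tfrac12(p\wedge q)+\tfrac12(p\vee q)$ --- but at that point you have already assembled the paper's proof, and the identity buys you only the necessity direction. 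Second, for sufficiency you correctly note that midpoint convexity must be verified at \emph{arbitrary} pairs $x=\sum_i\lambda_ip_i$, $y=\sum_j\mu_jq_j$, not just at lattice vertices; the proposed decomposition of the geodesic midpoint through pairwise meets and joins $p_i\wedge q_j$, $p_i\vee q_j$ is exactly the ``careful bookkeeping'' you flag as hard, and it is not carried out --- the supports of $x$ and $y$ are chains from different orthoschemes, and there is no given mechanism aligning them into a common meet/join decomposition. The device that resolves this is precisely (i)+(ii): pass to a distributive sublattice containing both supporting chains and invoke Theorem~\ref{thm:Lovasz} there.
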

Consider the case where ${\cal L}$ is ${{\cal S}_2}^n = \{-1,0,1\}^n$.
In this case, submodular functions on ${{\cal S}_2}^n$ in our sense 
are bisubmodular functions; see Section~\ref{subsub:k-submo}. 
Also $K({{\cal S}_2}^n)$ is isometric to $[-1,1]^n$.
Thus the Lov\'asz extension of a function $f:{{\cal S}_2}^n \to \overline\RR$
is the same as that given by Qi~\cite{Qi88}.
\begin{Thm}[\cite{Qi88}]\label{thm:Qi88}
	A function $f:{{\cal S}_2}^n \to \overline\RR$ is bisubmodular if and only if
	$\overline f:K({{\cal S}_2}^n) \to \overline{\RR}$ is convex.
\end{Thm}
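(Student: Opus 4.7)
Since $K({{\cal S}_2}^n)$ is isometric to the Euclidean cube $[-1,1]^n$ and $\overline{f}$ is continuous and piecewise affine on the orthoscheme subdivision, the plan is to reduce convexity of $\overline{f}$ to midpoint convexity on a well-chosen family of vertex pairs. By Theorem~\ref{thm:polar_submo}, bisubmodularity coincides with our submodularity on the polar space ${{\cal S}_2}^n$, i.e., the inequality $f(p)+f(q) \geq f(p \wedge q) + f(p \sqcup q)$ for all $p,q \in {{\cal S}_2}^n$. The key combinatorial ingredient linking the discrete and continuous sides is the coordinate-wise identity $p + q = (p \wedge q) + (p \sqcup q)$ in $\RR^n$, verified by inspecting the four cases of $(p_i, q_i) \in \{-1,0,1\}^2$ via the formula for $\sqcup$ in Example~\ref{ex:op-S_k^n}. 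Furthermore $p \wedge q \preceq p \sqcup q$ always holds, so the segment between them lies in a single simplex of $K({{\cal S}_2}^n)$ on which $\overline{f}$ is affine.

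\textbf{Necessity.} Suppose $\overline{f}$ is convex. For any $p, q \in {{\cal S}_2}^n$, the identity together with affineness on the chain $p \wedge q \preceq p \sqcup q$ gives
\[
\overline{f}\!\left(\tfrac{p + q}{2}\right) \;=\; \overline{f}\!\left(\tfrac{(p \wedge q) + (p \sqcup q)}{2}\right) \;=\; \tfrac{1}{2} f(p \wedge q) + \tfrac{1}{2} f(p \sqcup q).
\]
Midpoint convexity $\overline{f}(\tfrac{1}{2}(p+q)) \leq \tfrac{1}{2}(f(p) + f(q))$ then yields bisubmodularity.

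\textbf{Sufficiency and main obstacle.} Assume $f$ is bisubmodular. Continuity and piecewise-affineness reduce global convexity of $\overline{f}$ to local convexity across every ridge shared by two adjacent maximal orthoschemes, which in turn reduces to midpoint convexity for finitely many vertex pairs of the subdivision. For each such pair, the identity and chain-affineness above convert the required inequality into the bisubmodular inequality applied to an appropriate $p,q$. The main obstacle is a careful enumeration of these ridge inequalities. A cleaner backup I would pursue is to partition $[-1, 1]^n$ into the $2^n$ sign-orthants $\prod_i [0, \epsilon_i]$ with $\epsilon \in \{-1,+1\}^n$: on each orthant $\overline{f}$ coincides with the classical Lov\'asz extension of the restriction $f|_{\{0,\epsilon_1\} \times \cdots \times \{0,\epsilon_n\}}$, which is submodular by bisubmodularity and hence has convex Lov\'asz extension by Theorem~\ref{thm:Lovasz}; cross-orthant convexity then reduces to midpoint convexity for pairs $p,q$ having some coordinates of opposite signs, which follows from the bisubmodular inequality via the master identity.
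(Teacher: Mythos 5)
The paper does not prove this statement itself --- it is quoted from Qi~\cite{Qi88} and used as a black box (e.g.\ in the proofs of Theorems~\ref{thm:Lovasz_polar_space} and~\ref{thm:building}) --- so there is no in-paper proof to compare against; I assess your argument on its own. Your necessity direction is correct and complete: the coordinatewise identity $p+q=(p\wedge q)+(p\sqcup q)$ in $\RR^n$ does hold (check the cases $(a,a)$, $(a,0)$, $(1,-1)$), $p\wedge q\preceq p\sqcup q$ is a chain, so under the isometry $K({{\cal S}_2}^n)\simeq[-1,1]^n$ the Euclidean midpoint $(p+q)/2$ is the formal combination $\tfrac12(p\wedge q)+\tfrac12(p\sqcup q)$, on which $\overline f$ evaluates to $\tfrac12 f(p\wedge q)+\tfrac12 f(p\sqcup q)$; midpoint convexity then gives exactly the inequality of Theorem~\ref{thm:polar_submo}~(2).

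The gap is in sufficiency. For a continuous piecewise-affine function subordinate to a simplicial subdivision of a convex set, global convexity is equivalent to convexity on the union $\sigma\cup\sigma'$ of each pair of maximal simplices sharing a facet; it is \emph{not} implied by ``orthant-wise convexity plus midpoint convexity at vertex pairs with opposite-sign coordinates,'' which is what your backup route asserts without justification. Your primary route names the correct criterion (the ridge check) but then claims without proof that each ridge inequality ``converts into the bisubmodular inequality applied to an appropriate $p,q$''; this enumeration is precisely the content of the theorem and must be carried out. It does work, and here is the missing bookkeeping. Maximal simplices of $K({{\cal S}_2}^n)$ have vertex chains $0\prec v_1\prec\cdots\prec v_n$ with $v_k=s_{i_1}+\cdots+s_{i_k}$, and two of them share a facet in exactly two ways: (i) same signs, permutations differing by an adjacent transposition, with opposite vertices $v_j$ and $v_j'=v_{j-1}+s_{i_{j+1}}$; here $v_j\wedge v_j'=v_{j-1}$, $v_j\sqcup v_j'=v_{j+1}$, the midpoint of $[v_j,v_j']$ lies on the chain segment $[v_{j-1},v_{j+1}]$ inside the shared facet, and the ridge condition is $f(v_j)+f(v_j')\ge f(v_{j-1})+f(v_{j+1})$; (ii) same permutation, sign of the \emph{last} added coordinate flipped, with opposite vertices $u=v_{n-1}+s_{i_n}$, $u'=v_{n-1}-s_{i_n}$; here $u\wedge u'=u\sqcup u'=v_{n-1}$ and the ridge condition is $f(u)+f(u')\ge 2f(v_{n-1})$. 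Both are instances of $f(p)+f(q)\ge f(p\wedge q)+f(p\sqcup q)$, and since the affine functions on $\sigma$ and $\sigma'$ agree on the shared facet, checking the midpoint inequality along the single segment $[v_j,v_j']$ (resp.\ $[u,u']$) does determine the sign of their difference on all of $\sigma\cup\sigma'$, which is what legitimizes the reduction. You would also need to record the standard local-to-global fact for piecewise-affine functions (and say a word about $\infty$ values, which your sketch ignores). As written, the ``if'' direction is an outline with the decisive step asserted rather than proved.
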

Recall Example~\ref{ex:S_k} 
that ${{\cal S}_k}^n$ is a polar space.
By Theorem~\ref{thm:polar_CAT(0)}, 
the orthoscheme complex of a polar space is CAT$(0)$.
The generalization of Theorem~\ref{thm:Qi88} is the following.
\begin{Thm}\label{thm:Lovasz_polar_space}
	Let ${\cal L}$ be a polar space.
	A function $f: {\cal L} \to \overline{\RR}$ is submodular if and only if
	the Lov\'asz extension $\overline{f}: K({\cal L}) \to \overline{\RR}$ 
	is convex.	
\end{Thm}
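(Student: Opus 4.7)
The plan is to reduce both directions to the cube case (Qi's theorem, Theorem~\ref{thm:Qi88}) via the polar-frame covering of ${\cal L}$, using the characterization in Theorem~\ref{thm:polar_submo} as the combinatorial ingredient and the CAT(0) property of $K({\cal L})$ (Theorem~\ref{thm:polar_CAT(0)}) to glue the local statements. First I would record the geometric facts about apartments. For each polar frame ${\cal F} \simeq {{\cal S}_2}^n$, the orthoscheme complex $K({\cal F})$ is isometric to the Euclidean cube $[-1,1]^n$ with its standard triangulation (by the same $\sum \lambda_k \mapsto \sum \lambda_k(s_{i_1}+\cdots+s_{i_k})$ identification used in Example~\ref{ex:Boolean}). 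The key geometric claim is that the natural inclusion $K({\cal F}) \hookrightarrow K({\cal L})$ is an isometric embedding onto a (geodesically) convex subset; this should follow from a building-theoretic folding argument based on axiom P2: given any rectifiable path in $K({\cal L})$ between two points $x,y \in K({\cal F})$, one can subdivide it according to the simplices it traverses and apply the P2 isomorphisms (fixing the chains of $x$ and $y$) to successively retract each segment into $K({\cal F})$ without increasing its length.

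Granting this, the ``only if'' direction is immediate: if $\overline{f}$ is convex on the CAT(0) space $K({\cal L})$, its restriction to each convex subspace $K({\cal F}) \simeq [-1,1]^n$ is convex. That restriction is the Lov\'asz extension of $f|_{\cal F}$ on the cube, so by Theorem~\ref{thm:Qi88}, $f|_{\cal F}$ is bisubmodular. Since this holds on every polar frame, condition~(3) of Theorem~\ref{thm:polar_submo} yields submodularity of $f$ on~${\cal L}$.

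For the ``if'' direction, assume $f$ is submodular. Theorem~\ref{thm:polar_submo} gives that $f|_{\cal F}$ is bisubmodular for every polar frame ${\cal F}$, and Theorem~\ref{thm:Qi88} then shows that $\overline{f}|_{K({\cal F})}$ is convex on $K({\cal F})$. To verify convexity of $\overline{f}$ on all of $K({\cal L})$, pick arbitrary $x,y \in K({\cal L})$, lying in simplices corresponding to chains $C_x, C_y$ in ${\cal L}$. Axiom~P1 supplies a polar frame ${\cal F}$ with $C_x \cup C_y \subseteq {\cal F}$, so $x,y \in K({\cal F})$. By the convexity of $K({\cal F})$ in $K({\cal L})$ established above and the uniqueness of CAT(0) geodesics (Proposition~\ref{prop:uniquely-geodesic}), the geodesic $[x,y]$ in $K({\cal L})$ coincides with the Euclidean segment inside $K({\cal F})$, so every comparison point $(1-t)x + ty$ lies in $K({\cal F})$ and the required inequality $\overline{f}((1-t)x+ty) \leq (1-t)\overline{f}(x) + t\overline{f}(y)$ is inherited from the convexity of $\overline{f}|_{K({\cal F})}$. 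The main obstacle in the whole argument is the geometric input—that apartments $K({\cal F})$ sit isometrically and convexly inside $K({\cal L})$; once this is in hand (either by folding or by invoking the building-theoretic structure already set up in Section~\ref{subsec:CAT(0)}), the rest is a direct combination of Theorems~\ref{thm:polar_submo} and~\ref{thm:Qi88}.
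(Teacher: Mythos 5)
Your proof is correct and follows essentially the same route as the paper: reduce to polar frames via axiom P1, use that each $K({\cal F})$ is convex in $K({\cal L})$ (the paper cites the nonexpansive-retraction argument of \cite[Proposition 7.4]{CCHO14}, which is exactly your folding idea), and combine Theorem~\ref{thm:polar_submo} with Theorem~\ref{thm:Qi88}. The only quibble is that your ``if'' and ``only if'' labels are swapped relative to the statement, but both implications are proved.
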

It would be interesting to develop 
minimization algorithms based on this CAT(0) convexity.
Notice that submodularity and convexity 
are not equal in general, even if $K({\cal L})$ is CAT(0); 
consider a modular semilattice ${\cal L} = \{ 0, a, b, c, a'\}$
such that $a,b,a'$ are atoms, and $c$ covers $a,b$ and has no relation with $a'$.
%

\noindent
\subsubsection{$k$-submodular functions}\label{subsub:k-submo}
We discuss $k$-submodular functions of Huber and Kolmogorov~\cite{HK12} from our viewpoint.
Let $k,n$ be nonnegative integers with $k \geq 2$ and $n \geq 1$. 
Then ${{\cal S}_{k}}^n$ is a polar space.
Recall the operation $\sqcup$ (Example~\ref{ex:op-S_k^n}).
A function $f: {{\cal S}_{k}}^n \to \overline{\RR}$ 
is called {\em $k$-submodular}~\cite{HK12}
if  
\begin{equation}
f(p) + f(q) \geq f(p \wedge q) + f(p \sqcup q) \quad 
(p,q \in {{\cal S}_{k}}^n).
\end{equation}
By Theorem~\ref{thm:polar_submo} we have: 
\begin{Thm}\label{thm:k-sub}
	$f: {{\cal S}_{k}}^n \to \overline\RR$ is $k$-submodular 
	if and only if 
	$f$ is submodular on ${{\cal S}_{k}}^n$.
\end{Thm}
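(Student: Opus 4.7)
The plan is to derive this as an immediate consequence of Theorem~\ref{thm:polar_submo}. Since ${\cal S}_k$ is a polar space of rank~$1$ (Example~\ref{ex:S_kl}) and Cartesian products of polar spaces are again polar spaces (same example), the space ${{\cal S}_k}^n$ is a polar space, and so Theorem~\ref{thm:polar_submo} applies to ${\cal L} = {{\cal S}_k}^n$.

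The key point that I would verify is that the pseudo join $\sqcup$ appearing in the definition of $k$-submodularity coincides with the pseudo join defined in~(\ref{eqn:sqcup}) for ${{\cal S}_k}^n$ viewed as a polar space. This identification has essentially already been performed in Example~\ref{ex:op-S_k^n}: the explicit componentwise description of $p \vee_L q$, $p \vee_R q$, and $p \sqcup q = (p \vee_L q) \wedge (p \vee_R q)$ obtained there matches the operation used by Huber and Kolmogorov~\cite{HK12} (as the paper already notes in that example). Likewise the meet operation $\wedge$ on ${{\cal S}_k}^n$ is the componentwise meet, which agrees with the operation called $\sqcap$ in~\cite{HK12}.

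With both ingredients in place, the theorem is immediate: by the equivalence of conditions (1) and (2) in Theorem~\ref{thm:polar_submo} applied to ${\cal L} = {{\cal S}_k}^n$, a function $f$ is submodular on ${{\cal S}_k}^n$ if and only if
\[
f(p) + f(q) \geq f(p \wedge q) + f(p \sqcup q) \qquad (p,q \in {{\cal S}_k}^n),
\]
which is exactly the defining inequality of $k$-submodularity.

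I do not foresee any real obstacle: all the structural work (identifying ${{\cal S}_k}^n$ as a polar space, computing $\vee_L$, $\vee_R$, $\sqcup$ componentwise, and proving the polar-space submodularity characterization) has already been carried out in the preliminaries and in Theorem~\ref{thm:polar_submo}. The only task is to observe that Theorem~\ref{thm:polar_submo}(2) is literally the definition of $k$-submodularity, once the symbols $\wedge$ and $\sqcup$ are matched to those used in~\cite{HK12}.
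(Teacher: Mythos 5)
Your proposal is correct and is essentially the paper's own argument: the paper states this theorem as an immediate consequence of Theorem~\ref{thm:polar_submo} (the equivalence of its conditions (1) and (2)), relying on Example~\ref{ex:S_kl} for the polar-space structure of ${{\cal S}_k}^n$ and on Example~\ref{ex:op-S_k^n} for matching $\wedge$ and $\sqcup$ with the operations of Huber and Kolmogorov. Nothing further is needed.
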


\begin{Rem}\label{rem:kl-submo}
	Submodular functions on polar space ${{\cal S}_{k,l}}^n$ 
	(Example~\ref{ex:S_k}) will be a next natural class to be investigated, 
	and  called {\em $(k,l)$-submodular}.
	In Section~\ref{sec:applications}, 
	we see that a $(2,k)$-submodular function arises 
	from the node-multiway cut problem.
\end{Rem}

\subsubsection{$\pmb \alpha$-bisubmodular functions}\label{subsub:alpha_bisub}
Here we study $\alpha$-bisubmodular functions
introduced by Huber, Krokhin, and Powell~\cite{HKP14}, 
and their generalization by Fujishige, Tanigawa, and Yoshida~\cite{FTY14}.
Let ${\cal S}_2 = \{-,0,+\}$.
In addition to $\sqcup$,
define a new binary operation $\sqcup_+$ on ${\cal S}_2$ by
\begin{equation}
x \sqcup_+ y
=  \left\{
\begin{array}{ll}
+ & {\rm if}\ \{x,y\} = \{-,+\}, \\
x \sqcup y & {\rm otherwise,}
\end{array} \right. \quad 
\end{equation}
and extend it to an operation on ${{\cal S}_2}^n$ 
componentwise.
Let ${\pmb \alpha} = (\alpha_1, \alpha_2,\ldots,\alpha_n)$ 
be an $n$-tuple of positives satisfying 
$0 < \alpha_1 \leq \alpha_2 \leq \cdots \leq \alpha_n \leq 1$.
For $i=0,1,2,\ldots,n$, define operation $\sqcup^i$ by
$
\sqcup^i := (\underbrace{\sqcup_+, \sqcup_+,\ldots, \sqcup_+}_{i}, \sqcup, \sqcup, \ldots, \sqcup).
$
A function $f: {{\cal S}_2}^n \to \overline{\RR}$ 
is called {\em $\pmb \alpha$-bisubmodular}~\cite{FTY14} if 
\begin{equation}\label{eqn:alpha_bisub}
f(p) + f(q) \geq f(p \wedge q) + \sum_{i=0}^{n} (\alpha_{i+1} - \alpha_i) f(p \sqcup^i q)  \quad (p,q \in {{\cal S}_2}^n),
\end{equation}
where we let $\alpha_0 := 0$ and $\alpha_{n+1} := 1$.
Notice that $(\alpha, \alpha,\ldots, \alpha)$-bisubmodularity  
coincides with $\alpha$-bisubmodularity in the sense of \cite{HKP14}.

Our framework captures ${\pmb \alpha}$-bisubmodularity 
by using valuations; see Section~\ref{subsub:valuation}.
For $i=1,2,\ldots,n$, define a valuation $v_i$ on ${\cal S}_2$ by
$v_i(0) := 0$, $v_i(+) := 1$, and $v_i(-) := \alpha_i$.
Define a valuation $v_{\pmb \alpha}$ on ${{\cal S}_2}^n$ by
\[
v_{\pmb \alpha}(x) := v_1(x_1) + v_2 (x_2) + \cdots + v_n(x_n) \quad (x = (x_1,x_2,\ldots,x_n) \in {{\cal S}_2}^n ).
\]
\begin{Thm}\label{thm:alpha_bisub}
	$f:{{\cal S}_2}^n \to \overline\RR$ is $\pmb \alpha$-bisubmodular if and only 
	if $f$ is submodular on modular semilattice ${{\cal S}_2}^n$ 
	with respect to the valuation $v_{\pmb \alpha}$.
\end{Thm}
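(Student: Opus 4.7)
The plan is to compute the fractional join on the polar space $({{\cal S}_2}^n, v_{\pmb\alpha})$ in two stages---first on a single factor ${\cal S}_2$, then on the product via the valuation-version of Proposition~\ref{prop:frac_join_prod} (Section~\ref{subsub:valuation})---and then to identify the resulting submodular inequality with the $\pmb\alpha$-bisubmodular inequality~(\ref{eqn:alpha_bisub}).

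For the single factor ${\cal S}_2 = \{0,+,-\}$ with valuation $v_i$ (so $v_i(0)=0$, $v_i(+)=1$, $v_i(-)=\alpha_i$), the only nontrivial pair is $(+,-)$ (and its mirror). The convex hull $\Conv I(+,-) \subseteq \RR_+^2$ is the triangle with vertices $(0,0), (1,0), (0,\alpha_i)$, whose maximal extreme points give ${\cal E}(+,-) = \{+,-\}$. A case check on $(\theta(+,-),\theta(-,+)) \in \{+,-\}^2$ together with $\alpha_i \leq 1$ shows that ${\cal E}({\cal S}_2)$ contributes exactly three operations---$\vee_L$, $\vee_R$, $\sqcup_+$---with cones parametrised in the slope variable $s = w_2/w_1$ by $[0,\alpha_i]$, $[1/\alpha_i,\infty]$, and $[\alpha_i, 1/\alpha_i]$, whose bracket weights come out to $\frac{\alpha_i}{1+\alpha_i}$, $\frac{\alpha_i}{1+\alpha_i}$, and $\frac{1-\alpha_i}{1+\alpha_i}$ respectively. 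A direct algebraic manipulation confirms that for $n=1$ both the submodular and the $\pmb\alpha$-bisubmodular inequalities reduce to the identity $\alpha_i f(+) + f(-) \geq (1+\alpha_i) f(0)$.

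For the product, the valuation-version of Proposition~\ref{prop:frac_join_prod} expresses the fractional join on $({{\cal S}_2}^n, v_{\pmb\alpha})$ as $\sum [C(\theta_1) \cap \cdots \cap C(\theta_n)] (\theta_1,\ldots,\theta_n)$ with all intersections inside $\RR_+^2$. Using the ordering $\alpha_1 \leq \cdots \leq \alpha_n \leq 1$, the positive quadrant is partitioned in slope by the $2n$ endpoints $\alpha_1 \leq \cdots \leq \alpha_n \leq 1/\alpha_n \leq \cdots \leq 1/\alpha_1$, and the only tuples with nonempty interior are the ``monotone'' ones $(\sqcup_+^k, \vee_L^{n-k})$ for $k=0,\ldots,n-1$, $\sqcup_+^n$, and $(\sqcup_+^k, \vee_R^{n-k})$ for $k=0,\ldots,n-1$, with weights obtained by telescoping the function $s \mapsto s/(1+s)$ along consecutive endpoints. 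The hard part is the last step: the submodular inequality then involves values like $f((\sqcup_+^k, \vee_R^{n-k})(p,q))$, whose conflict-coordinate entries lie in $\{+,-\}$, whereas the $\pmb\alpha$-bisubmodular inequality involves $f(\sqcup^i(p,q))$, whose conflict-coordinate entries lie in $\{+,0\}$, so the equivalence cannot be read off pointwise from a single $(p,q)$. I would bridge the two by using the 1D $\alpha_j$-bisub inequality $\alpha_j f(+_j) + f(-_j) \geq (1+\alpha_j) f(0_j)$---implied by either condition through a coordinate restriction with the other coordinates frozen---to substitute between $f$-values containing $0_j$ and $f$-values containing $\pm_j$ along each conflict coordinate. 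The telescoping coefficients $\alpha_{k+1}-\alpha_k$ of $\pmb\alpha$-bisubmodularity are designed to match exactly the telescoping cone weights computed above once these substitutions are applied; the main obstacle is organising the bookkeeping cleanly across all conflict coordinates simultaneously, which I expect to handle by induction on $n$, reducing each inductive step to the already-established $n=1$ identity together with a single application of the one-coordinate $\alpha_j$-bisub inequality.
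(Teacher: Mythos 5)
Your first two stages are correct and coincide with the paper's Proposition~\ref{prop:frac_join_S_n^2}: the single-factor cones and weights you list ($C(\vee_L)$, $C(\vee_R)$, $C(\sqcup_+)$ with brackets $\frac{\alpha_i}{1+\alpha_i}$, $\frac{\alpha_i}{1+\alpha_i}$, $\frac{1-\alpha_i}{1+\alpha_i}$), the restriction to the monotone tuples $(\sqcup_+^k,\vee_L^{n-k})$, $(\sqcup_+^k,\vee_R^{n-k})$, $\sqcup_+^n$, and the telescoped weights $\frac{1}{1+\alpha_i}-\frac{1}{1+\alpha_{i+1}}$ all match the paper. The gap is in your bridging step. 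The one-coordinate inequality $\alpha_j f(+_j)+f(-_j)\geq(1+\alpha_j)f(0_j)$ bounds $f(0_j)$ from \emph{above}; this is usable when you want to replace the terms $f(p\vee_L^i q),f(p\vee_R^i q)$ (with $\pm$ in conflict coordinates) by $f(p\sqcup^i q)$ (with $0$'s), i.e.\ in the direction ``submodular $\Rightarrow$ $\pmb\alpha$-bisubmodular'', but it points the wrong way for the converse, where the $f(\sqcup^i)$ terms sit on the right of a $\geq$ and must be replaced by $f(\vee_L^i),f(\vee_R^i)$ terms. Already for $n=2$ and $p=(+,+)$, $q=(-,-)$, deriving the submodular inequality from $\pmb\alpha$-bisubmodularity requires the inequality $f(+,0)+f(0,-)\geq f(0,0)+f(+,-)$, which is the bisubmodular inequality for the derived pair $((+,0),(0,-))$ --- a pair that is neither the original one nor obtainable by freezing coordinates of $p$ or $q$, and which is \emph{not} a consequence of the one-dimensional restrictions (the function equal to $1$ at $(+,-)$ and $0$ elsewhere satisfies every coordinate-restricted inequality but violates it). So per-coordinate substitution cannot close the converse direction.

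The missing ingredient, in both directions, is to apply the full $n$-dimensional hypothesis to \emph{derived pairs} built from the operations themselves, using composition identities such as $\vee_L^i\wedge\vee_R^i=\sqcup^i$ and $\vee_L^i\vee_L^j\vee_R^i=\vee_L^{\max\{i,j\}}$ (the paper's Lemma~\ref{lem:vee_ij}). For the forward direction the paper applies the submodular inequality to $(p\vee_L^k q,\,p\vee_R^k q)$ and telescopes (the quantities $B_k$ in the proof); your ``induction on $n$'' can be repaired into exactly this, since that derived pair agrees in the first $k$ coordinates, but each inductive step then uses the full $(n-k)$-dimensional inequality for a new pair, not a single one-coordinate inequality. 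For the converse the paper must go further: it introduces auxiliary operations $\sqcup_L=\sqcup_+\wedge\vee_L$, $\sqcup_R=\sqcup_+\wedge\vee_R$, $\square_L^i$, $\bigcirc_L^i$ (and their $R$-counterparts), derives five families of inequalities by applying $\pmb\alpha$-bisubmodularity to pairs such as $(\square_L^i,\square_R^i)$, $(\sqcup^i,\bigcirc_L^i)$, $(\vee_L^{i-1},\vee_L^i)$, and combines them with weights $\frac{1}{1+\alpha_i}$ in a reverse induction. None of this machinery is present in your plan, and without it the converse direction does not go through.
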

Theorem~\ref{thm:alpha_bisub} can be
obtained from an explicit formula of the fractional join operation of ${{\cal S}_{2}}^n$.
For $i=0,1,2,\ldots,n$, define operations $\vee_L^i$ and $\vee_R^i$ by
\[
\vee_L^i :=  (\underbrace{\sqcup_+, \sqcup_+,\ldots, \sqcup_+}_{i}, \vee_L, \vee_L, \ldots, \vee_L), \ \vee_R^i :=  (\underbrace{\sqcup_+, \sqcup_+,\ldots, \sqcup_+}_{i}, \vee_R, \vee_R, \ldots, \vee_R).
\]
\begin{Prop}\label{prop:frac_join_S_n^2}
	The fractional join operation of ${{\cal S}_2}^n$ with respect to the valuation $v_{\pmb \alpha}$
	is equal to 
	\begin{equation}\label{eqn:frac_join_alpha}
	\sum_{i=0}^{n-1} \left( \frac{1}{1+ \alpha_{i}} - \frac{1}{1+ \alpha_{i+1}} \right) (\vee_L^i + \vee_R^i) + \frac{1 - \alpha_n}{1+ \alpha_{n}} \sqcup^n.
	\end{equation}
\end{Prop}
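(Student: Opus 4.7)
The plan is to reduce everything to the one-coordinate case and then assemble via the product formula (Proposition~\ref{prop:frac_join_prod}). Consider ${\cal S}_2=\{0,-,+\}$ equipped with a valuation $v$ with $v(0)=0,\ v(+)=1,\ v(-)=\alpha\in(0,1]$. The only non-trivial pair is $\{-,+\}$, whose convex hull $\Conv I(-,+)$ is the triangle with vertices $(0,0),(\alpha,0),(0,1)$. Its maximal extreme points are $(\alpha,0)$ (corresponding to $u=-$) and $(0,1)$ (corresponding to $u=+$), with normal cones $\{w_2\le\alpha w_1\}$ and $\{w_2\ge\alpha w_1\}$. Enumerating the three binary operations in ${\cal E}({\cal S}_2)$, I will show that they are exactly $\vee_L,\vee_R,\sqcup_+$ and compute
\[
C(\vee_L)=\{w_2\le\alpha w_1\},\ \ C(\sqcup_+)=\{\alpha w_1\le w_2\le w_1/\alpha\},\ \ C(\vee_R)=\{w_2\ge w_1/\alpha\},
\]
with respective sizes $\tfrac{\alpha}{1+\alpha},\tfrac{1-\alpha}{1+\alpha},\tfrac{\alpha}{1+\alpha}$ via the formula for $[\,\cdot\,]$. (This also handles the asymmetry between $(-,+)$ and $(+,-)$: although the weight on the element $u$ depends on the input pair, the choice function $\theta$ is consistent, yielding a well-defined cone.)

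Next I apply Proposition~\ref{prop:frac_join_prod} to ${{\cal S}_2}^n$ under the valuation $v_{\pmb\alpha}=\sum_i v_i$ (each coordinate having $\alpha=\alpha_i$). Every binary operation in ${\cal E}({{\cal S}_2}^n)$ is of the form $(\theta_1,\ldots,\theta_n)$ with $\theta_i\in\{\vee_L,\vee_R,\sqcup_+\}$, and its cone is $\bigcap_{i=1}^{n} C(\theta_i)\subseteq\RR_+^2$. The key observation is that the one-coordinate cones are angular sectors in $\RR_+^2$, so parameterizing a generic $w\in\RR_+^2$ by the angle $\phi\in(0,\pi/2)$ (with $\tan\phi=w_2/w_1$) uniquely determines the tuple $(\theta_1,\ldots,\theta_n)$ containing $w$. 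Using the monotonicity $\alpha_1\le\alpha_2\le\cdots\le\alpha_n\le 1$, I will partition $(0,\pi/2)$ by the $2n$ rays at angles $\arctan\alpha_1,\ldots,\arctan\alpha_n,\pi/2-\arctan\alpha_n,\ldots,\pi/2-\arctan\alpha_1$.

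For $\phi\in(\arctan\alpha_i,\arctan\alpha_{i+1})$ (with the convention $\alpha_0:=0$), the condition $\tan\phi<\alpha_j$ fails exactly when $j\le i$, so $\theta_j=\sqcup_+$ for $j\le i$ and $\theta_j=\vee_L$ for $j>i$; this tuple is $\vee_L^i$. Symmetrically, for $\phi\in(\pi/2-\arctan\alpha_{i+1},\pi/2-\arctan\alpha_i)$ the tuple is $\vee_R^i$. For $\phi\in(\arctan\alpha_n,\pi/2-\arctan\alpha_n)$ all coordinates take $\sqcup_+$, giving $\sqcup^n$. All other tuples have empty-interior intersection and drop out. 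The final step is a routine computation of $[C(\vee_L^i)],[C(\vee_R^i)],[C(\sqcup^n)]$ using the identity $\tfrac{\sin\arctan\alpha}{\sin\arctan\alpha+\cos\arctan\alpha}=\tfrac{\alpha}{1+\alpha}$, which gives $[C(\vee_L^i)]=[C(\vee_R^i)]=\tfrac{1}{1+\alpha_i}-\tfrac{1}{1+\alpha_{i+1}}$ and $[C(\sqcup^n)]=\tfrac{1-\alpha_n}{1+\alpha_n}$, matching (\ref{eqn:frac_join_alpha}).

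The main obstacle I anticipate is the bookkeeping for degenerate $\phi$-angles and for tuples whose cone intersection collapses to a lower-dimensional set: one must verify that Proposition~\ref{prop:frac_join_prod} allows us to discard these, i.e., that only tuples corresponding to $\vee_L^i$, $\vee_R^i$, or $\sqcup^n$ contribute, and that the identification between an angular sector and an operation $\theta$ is forced (not merely consistent). This is handled by checking that $C(\theta)$ is precisely the union of the sectors whose $w$'s select $\theta(p,q)$ coordinatewise as the unique maximizer over ${\cal E}(p,q)$; equivalently, by verifying that the full cone of each candidate $(\theta_1,\ldots,\theta_n)$ equals the intersection $\bigcap_i C(\theta_i)$, so no double-counting occurs. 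Once this is settled, equal coefficients for $\vee_L^i$ and $\vee_R^i$ follow from the reflection $\phi\mapsto\pi/2-\phi$, and summing the weights gives $1$, confirming internal consistency.
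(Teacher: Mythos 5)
Your proposal is correct and follows essentially the same route as the paper: determine the one-coordinate fractional join of ${\cal S}_2$ (the three operations $\vee_L,\vee_R,\sqcup_+$ with cones ${\rm Cone}(0,\alpha_i)$, ${\rm Cone}(1/\alpha_i,\infty)$, ${\rm Cone}(\alpha_i,1/\alpha_i)$), then invoke Proposition~\ref{prop:frac_join_prod} and use the monotonicity of the $\alpha_i$ to see that only the tuples $\vee_L^i$, $\vee_R^i$, $\sqcup^n$ have cones with nonempty interior. Your angular parameterization of $\RR^2_+$ and the reflection $\phi\mapsto\pi/2-\phi$ are just a geometric rephrasing of the paper's direct cone intersections, and the coefficient computations agree.
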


\section{L-convex functions on oriented modular graphs}\label{sec:L-convex}
In this section, we study L-convex functions 
on oriented modular graphs.
In Section~\ref{subsec:basics_L-convex}, 
we define L-convex functions and
establish some basic properties (closedness under nonnegative summation, local characterization, L-optimality criterion, steepest descent algorithm (SDA)).
A central result in this section is a sharp iteration bound 
of SDA (Theorem~\ref{thm:bound}).
Then we introduce L-extendable functions on swm-graphs via barycentric subdivision, and establish the persistency property (Theorem~\ref{thm:persistency}).
In Section~\ref{subsec:building_L-convex}, 
we study L-convex functions on 
(oriented modular graphs arising from) Euclidean buildings.
In a Euclidean building, the discrete midpoint operators are naturally defined.
We show in Theorem~\ref{thm:building} that L-convex functions are characterized by the discrete midpoint convexity as well as the convexity of the Lov\'asz extension.
Then we explain how our framework captures 
other discrete convex functions, such as 
L$^{\natural}$-convex functions, UJ-convex functions, strongly-tree submodular functions, and alternating L-convex functions.

\subsection{Basics}\label{subsec:basics_L-convex}
\subsubsection{Definition}
Let $\varGamma$ be an oriented modular graph.
We define L-convex functions on $\varGamma$, 
following basically~\cite{HH150ext}. 
Here we allow $\varGamma$ to be an infinite graph, 
and functions on  $\varGamma$ to take an infinite value,
where \cite{HH150ext} assumed that 
the domain is finite and function values are finite.
We utilize notions in Section~\ref{subsec:modular}, 
such as the relation $\sqsubseteq$, 
the principal ideal ${\cal I}_x$, 
the principal $\sqsubseteq$-ideal ${\cal I}_x'$, and the barycentric subdivision.

We first introduce a connectivity concept for subsets of vertices  in $\varGamma$.
A sequence $x = x_0,x_1,\ldots,x_m = y$ of vertices is called a {\em $\varDelta'$-path} if
$x_i \sqsubseteq x_{i+1}$ or $x_{i+1} \sqsubseteq x_{i}$ for $i=0,1,2,\ldots,m-1$.
A subset $X$ of vertices of $\varGamma$ is said to be 
{\em $\varDelta'$-connected} 
if every pair of distinct vertices in $X$ is connected by 
a $\varDelta'$-path in $X$.

Next we introduce the local structure at each vertex 
of $\varGamma$ via barycentric subdivision.
Let $\varGamma^* 
(= \{[x,y] \mid x,y \in \varGamma: x \sqsubseteq y\})$ be the barycentric subdivision. 
Recall that $\varGamma$ is isometrically 
embedded into $\varGamma^*$ by $x \mapsto \{x\}$ (Theorem~\ref{thm:G->G*}).
We regard $\varGamma \subseteq \varGamma^*$.
By Lemma~\ref{lem:well-oriented}, 
$\varGamma^*$ is well-oriented. 
For a vertex $x$ of $\varGamma$, 
the {\em neighborhood semilattice} ${\cal I}^*_x$ is defined by
\[
{\cal I}^*_x := {\cal I}_{\{x\}}(\varGamma^*)  = {\cal I}'_{\{x\}}(\varGamma^*).
\]
By Proposition~\ref{prop:ideal}, ${\cal I}^*_x$ is a (complemented) modular semilattice.

For a function $g: \varGamma \to \overline{\RR}$, define $g^*: \varGamma^* \to \overline{\RR}$ by
\begin{equation}
g^*([x,y]) := (g(x) + g(y))/2 \quad ([x,y] \in \varGamma^*).
\end{equation}
A function $g: \varGamma \to \overline{\RR}$ is called {\em L-convex} 
if $\dom g$ is $\varDelta'$-connected, 
and for every vertex $x$, the restriction of $g^*$ to ${\cal I}^*_x \subseteq \varGamma^*$ is submodular.

The class of L-convex functions includes all constant functions, 
and is closed under a direct sum; the $\varDelta'$-connectivity 
of the domain of direct sum follows from Lemma~\ref{lem:product_subdivision}~(2).
Here the closedness under nonnegative sum is nontrivial. 
\begin{Lem}\label{lem:sum_of_L}
	A nonnegative sum of two L-convex functions is L-convex.   
\end{Lem}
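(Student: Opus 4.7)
The plan is to verify both defining properties of L-convexity for $g := \alpha g_1 + \beta g_2$ with $\alpha, \beta \geq 0$. The easy half is the local submodularity: from the definition of $(\cdot)^*$ one has $g^* = \alpha g_1^* + \beta g_2^*$, and restriction to any ${\cal I}^*_x$ commutes with nonnegative linear combinations, so the restriction of $g^*$ to ${\cal I}^*_x$ is a nonnegative combination of submodular functions on the modular semilattice ${\cal I}^*_x$. By \cite[Lemma 3.7]{HH150ext} the class of submodular functions on a modular semilattice is closed under such combinations, giving submodularity of $g^*$ on every ${\cal I}^*_x$.

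The substantive task is to establish $\varDelta'$-connectivity of $\dom g = \dom g_1 \cap \dom g_2$. I would first prove a local closure lemma: for any L-convex function $h$ and any $x \in \dom h$, the set $\dom h^* \cap {\cal I}^*_x$ is a sub-meet-semilattice of ${\cal I}^*_x$ closed under the fractional join operation. This follows directly from the submodular inequality for $h^*$ on ${\cal I}^*_x$ applied to finite-valued $p,q$: the left-hand side is finite, so each term $h^*(p\wedge q)$ and $[C(\theta)]\, h^*(\theta(p,q))$ with $[C(\theta)]>0$ on the right must also be finite. Applying this to both $g_1$ and $g_2$ and intersecting, $\dom g^* \cap {\cal I}^*_x$ is itself a sub-meet-semilattice of ${\cal I}^*_x$ closed under fractional joins, and in particular its covering subgraph is connected within ${\cal I}^*_x$.

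To globalize, pick arbitrary $x,y \in \dom g$ and use the $\varDelta'$-connectivity of $\dom g_1$ to choose a $\varDelta'$-path $x = x_0, x_1, \ldots, x_m = y$ inside $\dom g_1$. For each $i$ with $x_i \notin \dom g_2$, the adjacent $x_{i-1}, x_{i+1}$ both lie in $\dom g_1$; by proceeding inductively from the endpoints one may assume $x_{i-1} \in \dom g_2$. Then the Boolean-gated sets realizing the $\varDelta'$-edges $x_{i-1} x_i$ and $x_i x_{i+1}$ both lie in ${\cal I}^*_{x_{i-1}}$, so the local subsemilattice structure of $\dom g_1^* \cap \dom g_2^* \cap {\cal I}^*_{x_{i-1}}$ supplies a $\varDelta'$-detour around $x_i$ through vertices of $\dom g_1 \cap \dom g_2$. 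Iterating on a well-founded measure (e.g., the number of bad vertices on the path) yields a $\varDelta'$-path entirely inside $\dom g$.

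I expect the repair step to be the main obstacle: one must describe the detour, keep it a $\varDelta'$-path, and ensure no new bad vertices appear. The tools I would use are Proposition~\ref{prop:ideal} (the $d$-convexity of principal ideals and $\sqsubseteq$-ideals) together with Lemma~\ref{lem:om_Bgated} (explicit description of Boolean-gated sets as intervals $[u,v]$ with $u \sqsubseteq v$). As a backup, an induction on $d_{\varGamma^*}(\{x\},\{y\})$ should also go through: the local closure lemma above guarantees the existence of a vertex of $\dom g^*$ in ${\cal I}^*_x$ that is $\sqsubseteq$-above $\{x\}$ and strictly decreases the $\varGamma^*$-distance to $\{y\}$, obtained by meeting an arbitrary Boolean-gated set lying on a shortest $\{x\}$-$\{y\}$ geodesic in $\varGamma^*$ with the local submodular structure at $x$.
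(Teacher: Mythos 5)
Your first half (submodularity of the restrictions of $g^*$ to each ${\cal I}^*_x$) is fine and matches the paper. The $\varDelta'$-connectivity of $\dom g_1 \cap \dom g_2$ is where the real content lies, and both of your strategies for it have genuine gaps. In the repair strategy, the claim that the Boolean-gated set realizing the edge $x_i x_{i+1}$ lies in ${\cal I}^*_{x_{i-1}}$ is false: ${\cal I}^*_{x_{i-1}}$ consists of the Boolean-gated sets \emph{containing} $x_{i-1}$, and $\lgate x_i, x_{i+1}\rgate$ need not contain $x_{i-1}$ (already in $\vec{\ZZ}$ with $x_{i-1}=0$, $x_i=1$, $x_{i+1}=2$). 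More fundamentally, the path was chosen inside $\dom g_1$ with no reference to $g_2$, and nothing in your local closure lemma forces the detour to re-enter $\dom g_2$: closure of $\dom g^*\cap {\cal I}^*_x$ under meet and fractional join produces no new elements from a single element, so that set may locally be just $\{x\}$. The same objection kills the backup: there is no reason an element of $\dom g^*$ above $\{x\}$ should exist in the direction of a $\varGamma^*$-geodesic toward $y$, since the $\varDelta'$-paths witnessing connectivity of $\dom g_1$ and of $\dom g_2$ may leave $x$ in entirely different directions.

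The missing idea is \emph{canonicity}. The paper invokes the unique \emph{normal} $\varDelta$-path between two vertices of an swm-graph (\cite[Theorem 6.20]{CCHO14}) and proves (Proposition~\ref{prop:belong}) that for \emph{any} L-extendable function $h$ and any $x,y\in\dom h$, this normal path lies in $\dom h$. The proof of that proposition takes a $\varDelta$-path $P=(x_0,\ldots,x_m)$ in $\dom h$ minimizing the weighted length $\sum_i i\cdot d(x_{i-1},x_i)$ and shows, using local submodularity of the relaxation to exchange $x_i$ for $x_i' = U \vee_L \{x_{i+1}\}$ when normality fails, that the minimizer is normal (an adaptation of the Abrams--Ghrist normal cube path argument). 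Because the normal path depends only on the endpoints and the graph, applying this to $g_1$ and to $g_2$ separately puts the \emph{same} path inside both domains, hence inside the intersection; inserting the meets $x_i\wedge x_{i+1}$ (which lie in the domain by submodularity on each interval) turns it into a $\varDelta'$-path. Without some device of this kind that pins down a single path determined by the endpoints alone, a repair-and-detour argument cannot close.
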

Two basic examples of L-convex functions are given.
The {\em indicator function} $[X]$ of a vertex subset $X$ 
is defined by $[X](x) = 0$ if $x \in X$ and  $[X](x) = \infty$ otherwise..
\begin{Lem}\label{lem:indicator}
	The indicator function of a $d_{\varGamma}$-convex set is L-convex on $\varGamma$.
\end{Lem}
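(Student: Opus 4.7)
The plan is to verify the two defining requirements of L-convexity for $g := [X]$: first that $\dom g = X$ is $\varDelta'$-connected, and second that for every vertex $x$ the restriction of $g^*$ to the neighborhood semilattice ${\cal I}^*_x$ is submodular in the sense of \eqref{eqn:submodular}.

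The connectivity is straightforward. Whenever $p \to q$ is an oriented edge of $\varGamma$ (so $p$ covers $q$), the interval $[q,p] = \{q,p\}$ is a two-element chain and hence a complemented modular lattice, so $q \sqsubseteq p$. Thus every edge of $\varGamma$ is already a $\varDelta'$-step. Since $X$ is a nonempty $d_\varGamma$-convex set, any two of its vertices are joined by a shortest path in $\varGamma$ lying entirely in $X$ (every vertex on such a path sits in $I_\varGamma(x,y) \subseteq X$), so $X$ is $\varDelta'$-connected.

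For the submodularity, fix $x$ and take $p=[y,z]$, $q=[y',z']$ in ${\cal I}^*_x$. If $g^*(p) = \infty$ or $g^*(q) = \infty$, then \eqref{eqn:submodular} is trivial, so assume $y,z,y',z' \in X$. Since the fractional-join coefficients $[C(\theta)]$ sum to $1$, it suffices to show that $g^*(u) = 0$ for every element $u = [y_u, z_u]$ appearing on the right-hand side. Both the meet $p \wedge q$ (computed in the modular semilattice ${\cal I}^*_x$) and each $\theta(p,q) \in {\cal E}(p,q)$ belong to the metric interval $I_{{\cal I}^*_x}(p,q)$, so it is enough to establish $g^*(u) = 0$ for every $u \in I_{{\cal I}^*_x}(p,q)$.

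The key step is a distance identification. By Proposition \ref{prop:ideal}, ${\cal I}^*_x$ is $d$-convex in $\varGamma^*$, hence $I_{{\cal I}^*_x}(p,q) = I_{\varGamma^*}(p,q)$. For $u \in I_{\varGamma^*}(p,q)$, the distance formula of Theorem \ref{thm:G->G*} rewrites $d_{\varGamma^*}(p,u) + d_{\varGamma^*}(u,q) = d_{\varGamma^*}(p,q)$ as
\[
\bigl(d_\varGamma(y,y_u) + d_\varGamma(y_u,y')\bigr) + \bigl(d_\varGamma(z,z_u) + d_\varGamma(z_u,z')\bigr) = d_\varGamma(y,y') + d_\varGamma(z,z').
\]
By the triangle inequality in $\varGamma$, each parenthesised sum dominates the corresponding right-hand side term, so equality is forced in both, yielding $y_u \in I_\varGamma(y,y')$ and $z_u \in I_\varGamma(z,z')$. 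The $d_\varGamma$-convexity of $X$ then gives $y_u, z_u \in X$, so $g^*(u) = 0$, and submodularity follows. The only subtle point in the argument is ensuring that the fractional-join elements $\theta(p,q)$, defined intrinsically inside the modular semilattice ${\cal I}^*_x$, really do sit inside $I_{\varGamma^*}(p,q)$ so that the ambient distance formula can be applied component-wise; once this identification is in place, the $d_\varGamma$-convexity of $X$ carries out the rest of the verification.
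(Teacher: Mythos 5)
Your proof is correct, and both you and the paper follow the same skeleton: establish $\varDelta'$-connectivity from the $d_\varGamma$-convexity of $X$, and reduce the submodularity of $[X]^*=[X^*]$ on ${\cal I}^*_x$ to the fact that $p\wedge q$ and every element of ${\cal E}(p,q)$ lie in the metric interval $I(p,q)$, which must then be contained in $X^*$. Where you genuinely diverge is in how that containment is proved. The paper first establishes the global statement that $X^*$ is $d_{\varGamma^*}$-convex by invoking Chepoi's lemma (Lemma~\ref{lem:chepoi}) and running a case analysis on the possible configurations of a common neighbor $[u,v]$ of two intervals $[p,q],[p',q']\in X^*$; it then intersects with the $d$-convex set ${\cal I}^*_x$. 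You instead apply the distance formula $d_{\varGamma^*}([p,q],[p',q'])=(d_\varGamma(p,p')+d_\varGamma(q,q'))/2$ of Theorem~\ref{thm:G->G*} to a point of the metric interval and let the triangle inequality force each coordinate into the corresponding $d_\varGamma$-interval. Your route is shorter, avoids the case analysis entirely, and in fact reproves the paper's intermediate claim that $X^*$ is $d_{\varGamma^*}$-convex; its only cost is that it leans on the metric identity of Theorem~\ref{thm:G->G*}, whereas the paper's Chepoi-lemma argument needs only the local (common-neighbor) characterization of gated sets. The one point you rightly flag as needing care --- that the covering-graph metric of the semilattice ${\cal I}^*_x$ agrees with the restriction of $d_{\varGamma^*}$, so that $I_{{\cal I}^*_x}(p,q)=I_{\varGamma^*}(p,q)$ --- holds because ${\cal I}^*_x$ is a principal ideal and is $d$-convex by Proposition~\ref{prop:ideal}; the paper glosses over the same point.
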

\begin{Thm}[{\cite[Theorem 4.8]{HH150ext}}]\label{thm:d_is_L-convex}
	The distance function $d_{\varGamma}$ 
	is L-convex on $\varGamma \times \varGamma$.
\end{Thm}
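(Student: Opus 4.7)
The plan is to verify the two conditions defining L-convexity for $d_\varGamma : \varGamma \times \varGamma \to \overline{\RR}$: $\varDelta'$-connectivity of its domain, and submodularity of $(d_\varGamma)^*$ restricted to every neighborhood semilattice ${\cal I}^*_{(p,q)}$.

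First, $\varGamma$ itself is $\varDelta'$-connected because every edge of $\varGamma$ is a rank-$1$ interval (trivially a complemented modular lattice), so adjacency in $\varGamma$ already realizes a $\sqsubseteq$-relation. By Lemma~\ref{lem:product_subdivision}(2) the relation $\sqsubseteq$ decomposes componentwise on the product, lifting $\varDelta'$-connectivity from $\varGamma$ to $\varGamma \times \varGamma$.

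For the local submodularity, I would fix $(p, q)$ and use Lemma~\ref{lem:product_subdivision}(1) to identify ${\cal I}^*_{(p,q)} = {\cal I}^*_p \times {\cal I}^*_q$, a product of complemented modular semilattices (in fact polar spaces, by Lemma~\ref{lem:well-oriented}). By Theorem~\ref{thm:G->G*} the restriction of $(d_\varGamma)^*$ to this product equals
\[
F\bigl([a,b], [c,d]\bigr) \;=\; \tfrac{1}{2}\bigl(d_\varGamma(a,c) + d_\varGamma(b,d)\bigr) \;=\; d_{\varGamma^*}\bigl([a,b], [c,d]\bigr).
\]
I would then invoke Proposition~\ref{prop:frac_join_prod} to decompose the fractional join on the product componentwise, and use the order-preserving injection $[a,b] \mapsto (a,b) \in {\cal I}_p \times {\cal F}_p$ to split $F$ into its two distance summands $\tfrac{1}{2}d_\varGamma(a, c)$ and $\tfrac{1}{2}d_\varGamma(b, d)$, which live on product semilattices ${\cal I}_p \times {\cal I}_q$ and ${\cal F}_p \times {\cal F}_q$ respectively. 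The submodularity of $F$ thereby reduces to ``cross-inequalities'' of the form
\[
d_\varGamma(a, c) + d_\varGamma(a', c') \;\ge\; d_\varGamma(a \wedge a', c \wedge c') + \sum_{\theta_1, \theta_2} [C(\theta_1) \cap C(\theta_2)] \, d_\varGamma\bigl(\theta_1(a, a'), \theta_2(c, c')\bigr),
\]
together with their $(b,d)$-analogues.

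The hard part will be establishing these cross-inequalities, since in general $a, a' \in {\cal I}_p$ and $c, c' \in {\cal I}_q$ do not sit inside a single modular semilattice, so the known submodularity of the distance function on a modular semilattice cannot be applied verbatim. My plan to resolve this is to exploit the $d$-convexity of principal ideals and $\sqsubseteq$-ideals (Proposition~\ref{prop:ideal}): the relevant geodesic configurations between $a, a', c, c'$ in $\varGamma$ are confined inside a common complemented modular semilattice obtained as a principal $\sqsubseteq$-ideal of an appropriate vertex, to which the cited submodularity of the distance function on modular semilattices applies. Combining the resulting inequalities for the $(a,c)$- and $(b,d)$-summands and averaging yields the submodularity of $F$, completing the verification of L-convexity.
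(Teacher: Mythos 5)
This theorem is not proved in the present paper at all: it is imported verbatim from \cite[Theorem~4.8]{HH150ext}, so there is no in-paper argument to compare against. Judged on its own merits, your proposal sets up the right framework (the $\varDelta'$-connectivity argument is fine, and the identification of $(d_{\varGamma})^*$ restricted to ${\cal I}^*_{(p,q)}$ with $d_{\varGamma^*}$ on ${\cal I}^*_p\times{\cal I}^*_q$ is a correct and tidy reformulation), but it stalls exactly where the theorem's entire content lies, and the idea you offer to get past that point does not work.

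The gap is in the ``cross-inequality'' step. You correctly observe that the known submodularity of the distance function of a \emph{single} modular semilattice does not apply, because $a,a'\in{\cal I}_p$ and $c,c'\in{\cal I}_q$ live in two different semilattices. Your proposed fix --- that the relevant geodesic configurations among $a,a',c,c'$ are ``confined inside a common complemented modular semilattice obtained as a principal $\sqsubseteq$-ideal of an appropriate vertex'' --- is false whenever $p$ and $q$ are far apart. A principal $\sqsubseteq$-ideal ${\cal I}'_v$ has diameter bounded by twice its rank and, more to the point, the $d$-convexity of ${\cal I}_p$ (Proposition~\ref{prop:ideal}) only controls geodesics \emph{between two points of} ${\cal I}_p$; it says nothing about a geodesic from $a\in{\cal I}_p$ to $c\in{\cal I}_q$, which has length about $d_{\varGamma}(p,q)$ and in general leaves every such ideal. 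Already for $\varGamma=\vec{\ZZ}$ with $p=0$ and $q=100$, the geodesic from $a\in\{-1,0\}$ to $c\in\{99,100\}$ passes through $1,2,\dots,98$, and no complemented modular semilattice of $\vec{\ZZ}$ (these all have diameter at most $1$) contains this configuration. So the inequality
\[
d_{\varGamma}(a,c)+d_{\varGamma}(a',c')\ \geq\ d_{\varGamma}(a\wedge a',c\wedge c')+\sum_{\theta_1,\theta_2}[C(\theta_1)\cap C(\theta_2)]\,d_{\varGamma}(\theta_1(a,a'),\theta_2(c,c'))
\]
cannot be obtained by a purely local lattice-theoretic argument; in \cite{HH150ext} it is established by a genuinely global induction on $d_{\varGamma}(p,q)$ that exploits the metric structure of modular graphs (the quadrangle condition, gatedness of metric intervals, and the behaviour of gates of ${\cal I}_q$ relative to ${\cal I}_p$). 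Without an argument of that kind, the proposal does not prove the theorem; it reduces it to an equivalent statement.
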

%
%

L-convex functions are locally submodular 
in the following sense. 
\begin{Prop}[{\cite[Lemma 4.10]{HH150ext}}]\label{prop:locally-submo}
Let $g$ be an L-convex function on $\varGamma$. 
For every vertex $x$, the restrictions of $g$ to 
the principal $\sqsubseteq$-filter ${\cal F}'_x$ and  to the principal $\sqsubseteq$-ideal  ${\cal I}'_x$ are submodular.
\end{Prop}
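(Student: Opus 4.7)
The plan is to derive the desired submodularity of $g$ restricted to ${\cal I}'_x$ (and to ${\cal F}'_x$) from the hypothesis that $g^*$ restricted to ${\cal I}^*_x$ is submodular, by pulling the inequality back along natural embeddings. Define $\iota\colon {\cal I}'_x \to {\cal I}^*_x$ by $\iota(p) := [p,x]$ and, dually, $\kappa\colon {\cal F}'_x \to {\cal I}^*_x$ by $\kappa(q) := [x,q]$. By Lemma~\ref{lem:om_Bgated}, both maps are well-defined and injective: for $p \in {\cal I}'_x$ one has $p \sqsubseteq x$, so $[p,x]$ is a complemented modular lattice and hence Boolean-gated, and it contains $x$; dually for $\kappa$.

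First I would verify that $\iota$ preserves meets and ranks in the modular-semilattice structures. For meet-preservation: in ${\cal I}^*_x$ (with its reversed order, so that $\{x\}$ is the minimum) the meet of two elements corresponds to the set-theoretic intersection of the underlying intervals, and $[p,x]\cap[q,x] = [p \vee_\varGamma q,\,x]$; the element $p \vee_\varGamma q$ exists because ${\cal I}_x$ is a modular semilattice (Proposition~\ref{prop:ideal}) and coincides with $p \wedge_{{\cal I}'_x} q$ under the reversed-order convention. For rank-preservation: the distance formula of Theorem~\ref{thm:G->G*} gives $d_{\varGamma^*}(\iota(p),\{x\}) = d_\varGamma(p,x)/2$, and building a saturated chain $\{x\}=[x,x] \to [y_1,x] \to \cdots \to [p,x]$ from any covering chain $x \succ y_1 \succ \cdots \succ p$ inside the complemented modular lattice $[p,x]$ shows that the rank of $\iota(p)$ in ${\cal I}^*_x$ equals $d_\varGamma(p,x)$, which is the rank of $p$ in ${\cal I}'_x$.

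Next I would identify the metric intervals. Theorem~\ref{thm:G->G*} forces any $[a,b] \in I_{\varGamma^*}(\iota(p),\iota(q))$ to satisfy $b=x$ and $a \in I_\varGamma(p,q)$; by the $d$-convexity of ${\cal I}'_x$ in $\varGamma$ (Proposition~\ref{prop:ideal}) each such $a$ lies in ${\cal I}'_x$, and by the $d$-convexity of ${\cal I}^*_x$ in $\varGamma^*$ the metric interval computed intrinsically in ${\cal I}^*_x$ agrees with the ambient one. Hence $I_{{\cal I}^*_x}(\iota(p),\iota(q)) = \iota(I_{{\cal I}'_x}(p,q))$. Combined with meet- and rank-preservation, the vectors $r(\iota(u);\iota(p),\iota(q))$ defining the fractional-join construction in ${\cal I}^*_x$ through (\ref{eqn:r}) coincide with the vectors $r(u;p,q)$ computed inside ${\cal I}'_x$. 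Consequently, $\Conv I$, the extreme-point set ${\cal E}$, the normal cones, and the coefficients $[C(\cdot)]$ all match, so the fractional join of $\iota(p),\iota(q)$ in ${\cal I}^*_x$ is exactly $\iota$ applied to the fractional join of $p,q$ in ${\cal I}'_x$.

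Finally I would apply the submodular inequality~(\ref{eqn:submodular}) for $g^*$ on ${\cal I}^*_x$ at $\iota(p)$ and $\iota(q)$ and expand via $g^*(\iota(u)) = (g(u)+g(x))/2$; after multiplying by $2$ and using that the fractional-join coefficients $[C(\theta)]$ sum to $1$, the $g(x)$ contributions cancel and the resulting inequality is exactly the submodular inequality for $g$ on ${\cal I}'_x$ at $p,q$. The ${\cal F}'_x$ case is handled by the dual computation with $\kappa$ in place of $\iota$: $\kappa$ is meet- and rank-preserving, carries metric intervals to metric intervals, and the same cancellation works. The main technical obstacle is the careful bookkeeping with the reversed-order convention on principal ideals and the identification of the fractional-join data inside ${\cal I}^*_x$ with that inside ${\cal I}'_x$ (respectively ${\cal F}'_x$); once these compatibilities are in hand, the rest is a direct translation of the inequality through $\iota$ (respectively $\kappa$).
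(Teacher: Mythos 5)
Your proposal is correct and follows essentially the route of the proof the paper invokes (from \cite[Lemma~4.10]{HH150ext}): embed ${\cal I}'_x$ and ${\cal F}'_x$ into the neighborhood semilattice ${\cal I}^*_x$ via $p\mapsto[p,x]$ and $q\mapsto[x,q]$, check that these embeddings preserve meets, ranks, and metric intervals (hence the whole fractional-join data), and then cancel the $g(x)$ contributions in the submodular inequality for $g^*$ using that the coefficients $[C(u;p,q)]$ sum to one. All the compatibility checks you flag (the reversed-order convention, $d$-convexity of ${\cal I}'_x$ and ${\cal I}^*_x$ to identify intrinsic with ambient metric intervals, and the distance formula of Theorem~\ref{thm:G->G*} forcing the second endpoint to stay at $x$) are exactly the right ones and they go through.
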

The proofs of the above two in \cite{HH150ext} 
work for our non-finite setting.
We show that the converse of Proposition~\ref{prop:locally-submo} 
holds if $\varGamma$ is well-oriented.
\begin{Prop}\label{prop:L-convex=submodular}
	Suppose that $\varGamma$ is well-oriented.
	Then $g: \varGamma \to \overline\RR$ is L-convex 
	if and only if $\dom g$ is $\varDelta'$-connected, 
	and  for every vertex $x$, 
	the restrictions of $g$ to the principal $\sqsubseteq$-filter ${\cal F}'_x$ and to the principal $\sqsubseteq$-ideal ${\cal I}'_x$ 
	are submodular.
\end{Prop}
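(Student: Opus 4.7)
The plan is to reduce L-convexity at each vertex $x$ to the hypothesized submodularity on ${\cal F}_x$ and ${\cal I}_x$ via a direct-product decomposition of the neighborhood semilattice ${\cal I}^*_x$. The ``only if'' direction is Proposition~\ref{prop:locally-submo} together with the fact that in the well-oriented case $\sqsubseteq$ coincides with $\preceq$, so ${\cal F}'_x = {\cal F}_x$ and ${\cal I}'_x = {\cal I}_x$; the $\varDelta'$-connectedness of $\dom g$ is part of the definition of L-convexity.

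For the ``if'' direction I fix a vertex $x$ and introduce the map
\[
\varphi\colon {\cal I}_x \times {\cal F}_x \to {\cal I}^*_x, \qquad (u,v) \mapsto [u,v].
\]
Well-definedness uses well-orientedness crucially: for $u \preceq x \preceq v$ we have $u \preceq v$, and well-orientedness upgrades $\preceq$ to $\sqsubseteq$, so $[u,v]$ is a genuine vertex of $\varGamma^*$ containing $x$. Bijectivity is immediate, and comparing the inclusion order on ${\cal I}^*_x$ with the product order (recall ${\cal I}_x$ carries the reverse of $\preceq_{\varGamma}$) shows that $\varphi$ is an order isomorphism. Moreover the meet on ${\cal I}^*_x$, namely $[u,v] \cap [u',v'] = [u \vee u', v \wedge v']$, corresponds to the componentwise meet on the product, and by Theorem~\ref{thm:G->G*} (together with the fact that edges of $\varGamma^*$ have half the edge length of $\varGamma$) the rank of $[u,v]$ in ${\cal I}^*_x$ equals $d_{\varGamma}(u,x) + d_{\varGamma}(x,v)$, which is exactly the sum of the ranks of $u$ and $v$ in ${\cal I}_x$ and ${\cal F}_x$. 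Thus $\varphi$ is an isomorphism of (complemented) modular semilattices. Under $\varphi$, the function $g^*$ pulls back to $(u,v) \mapsto g(u)/2 + g(v)/2$, i.e.\ the direct sum of the scaled restrictions $g|_{{\cal I}_x}/2$ and $g|_{{\cal F}_x}/2$. Each summand is submodular by hypothesis and closure under nonnegative scaling, and a direct sum of submodular functions is submodular by \cite[Lemma~3.7]{HH150ext}; hence $g^*$ is submodular on ${\cal I}^*_x$, so $g$ is L-convex.

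The main obstacle will be checking that $\varphi$ respects the full modular-semilattice structure and not only the order structure, so that submodularity really transports along $\varphi$. Concretely, one has to trace through the constructions (i)--(viii) of the fractional join to confirm that the meet-and-rank-preserving bijection $\varphi$ sends the fractional join operation of ${\cal I}^*_x$ to the product fractional join on ${\cal I}_x \times {\cal F}_x$ supplied by Proposition~\ref{prop:frac_join_prod}. Once that identification is established, the remainder of the argument is nothing more than the standard closure properties of submodular functions.
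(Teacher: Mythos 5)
Your proposal is correct and follows essentially the same route as the paper: the paper's proof also identifies ${\cal I}^*_x$ with the product ${\cal I}_x \times {\cal F}_x$ via $[q,q'] \mapsto (q,q')$ and observes that $g^*$ becomes the direct sum $(q,q') \mapsto (g(q)+g(q'))/2$, which is submodular by the closure of submodularity under direct sums. The ``obstacle'' you flag at the end is not really one: submodularity is defined entirely in terms of order-theoretic data (meet, rank, and the fractional join built from $\Conv I(p,q)$), so any order isomorphism of modular semilattices automatically transports it, and the product decomposition of the fractional join is exactly what Proposition~\ref{prop:frac_join_prod} and the direct-sum closure already encapsulate.
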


\subsubsection{L-optimality criterion and steepest descent algorithm}
We present a local-to-global optimality criterion for 
minimization of L-convex functions, 
which is a direct analogue of the L-optimality condition in DCA. 
Let $\varGamma$ be an oriented modular graph, and
let $g$ be an L-convex function on $\varGamma$. 
We assume that the following condition to cope with infiniteness:
\begin{description}
	\item[{\rm (F)}] $\varGamma$ is locally-finite or
	the image of $g$ is discrete in $\overline{\RR}$.\footnote{There is $\epsilon > 0$ 
		such that $[g(x) - \epsilon, g(x) + \epsilon] \cap g(\varGamma) = \{g(x)\} $ for every $x \in \varGamma$.}
\end{description} 
Under this condition (F), we have the following;
the finite case is in \cite{HH150ext}. 
\begin{Thm}\label{thm:L-optimality} 
	A vertex $x \in \dom g$ is a minimizer of $g$ over $\varGamma$
	if and only if $x$ is a minimizer of $g$ over the union of the principal $\sqsubseteq$-filter ${\cal F}'_x$ and  the principal $\sqsubseteq$-ideal ${\cal I}'_x$ of $x$.
\end{Thm}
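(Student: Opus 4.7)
The forward direction is immediate since ${\cal F}'_x \cup {\cal I}'_x \subseteq \varGamma$. For the converse, assume $x$ is locally optimal on ${\cal F}'_x \cup {\cal I}'_x$ and, for contradiction, suppose $g(y)<g(x)$ for some $y \in \varGamma$. Condition~(F) is precisely what is needed to secure a closest descent witness: in the locally-finite case every metric ball around $x$ is finite; in the discrete-image case we have $g(y) \leq g(x)-\epsilon$ uniformly for some $\epsilon>0$ while all vertex-distances are positive integer multiples of the edge length, so in either case the infimum $m := \inf\{d_\varGamma(x,y) : g(y)<g(x)\}$ is attained by some $y^* \in \varGamma$. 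Every neighbor of $x$ lies in ${\cal F}'_x \cup {\cal I}'_x$ (a single edge is a two-element complemented modular lattice), so local optimality forces $m \geq 2$.

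The heart of the proof is a descent step: I would construct $z \in \varGamma$ with $g(z)<g(x)$ and $d_\varGamma(x,z)<m$, contradicting the minimality of $m$. The plan is to work inside the neighborhood semilattice ${\cal I}^*_x \subseteq \varGamma^*$ and exploit the submodularity of $g^*$ there. Pick a neighbor $x_1 \in I(x,y^*)$ of $x$ on some $xy^*$-geodesic; then $x_1 \in {\cal F}'_x \cup {\cal I}'_x$ and local optimality gives $g(x_1) \geq g(x)$. After flipping orientation if necessary, assume $x \sqsubseteq x_1$. Choose a second vertex $x_2 \in I(x,y^*)$ lying in a maximal Boolean-gated interval $[x,x_1^+] \subseteq \varGamma$ extending $[x,x_1]$; the triangle-condition TC and Chepoi's Lemma~\ref{lem:chepoi}, combined with $y^* \notin {\cal F}'_x \cup {\cal I}'_x$, guarantee such a second direction exists.

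Applying submodularity of $g^*$ on ${\cal I}^*_x$ to the pair $[x,x_1],[x,x_2]$, expanding $g^*([p,q]) = (g(p)+g(q))/2$, and decomposing the fractional join via the polar-space formula of Theorem~\ref{thm:frac_join_of_polar_space}, the inequality becomes a linear relation among the $g$-values of several fractional-join vertices. By the amalgam description of $\varGamma$ via complemented modular lattices (Lemma~\ref{lem:om_Bgated}), these vertices lie either in ${\cal F}'_x \cup {\cal I}'_x$ (where local optimality pins their $g$-value to $\geq g(x)$) or on a strictly shortened $xy^*$-geodesic; the resulting midpoint-style inequality forces at least one of the latter to have $g$-value $<g(x)$, producing the desired $z$. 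The main obstacle is precisely the selection of $x_2$ and the verification that every fractional-join vertex lands either in the local neighborhood of $x$ or on a shortened geodesic --- this rests on the $d$-convexity of principal $\sqsubseteq$-filters/ideals (Proposition~\ref{prop:ideal}), the metric identity of the barycentric subdivision (Theorem~\ref{thm:G->G*}), and the polar-space structure of principal filters in $\varGamma^*$ (Lemma~\ref{lem:well-oriented}). The finite case is essentially~\cite[Theorem~4.8]{HH150ext}, and condition~(F) is precisely the extra ingredient needed to lift the argument to the non-finite setting by guaranteeing the existence of the closest witness $y^*$.
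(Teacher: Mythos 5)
There is a genuine gap in your descent step. You apply the submodularity of $g^*$ on the neighborhood semilattice ${\cal I}^*_x$ to the pair $[x,x_1],[x,x_2]$, where both $x_1$ and $x_2$ lie in Boolean-gated sets containing $x$. But every element of ${\cal I}^*_x$ is an interval $[p,q]$ with $p\preceq x\preceq q$ and $p\sqsubseteq q$, so every vertex appearing in that inequality --- the meet, the fractional joins, and their endpoints --- already lies in ${\cal F}'_x\cup{\cal I}'_x$, where local optimality forces $g\geq g(x)$. The value $g(y^*)$ never enters the inequality, so it cannot force any vertex on a shortened $x y^*$-geodesic to have value below $g(x)$; the inequality is vacuously consistent with local optimality and yields no contradiction. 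What is missing is a mechanism that transports the information $g(y^*)<g(x)$ from distance $m$ down to the neighborhood of $x$; in the L$^\natural$-convex case this is the \emph{global} discrete midpoint inequality applied to the pair $(x,y^*)$, but no such global inequality is available on a general oriented modular graph (it only reappears for Euclidean buildings, Theorem~\ref{thm:building}). A secondary point: condition (F) is not needed to produce a closest witness $y^*$ --- graph distances are discrete, so that infimum is always attained --- which is a sign that the nearest-witness induction is not where the infiniteness issue actually lives.

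The paper's argument runs along a different extremal object. After reducing to the well-oriented case via $\varGamma^*$ and $g^*$, it takes a $\varDelta'$-path $(x=p_0,p_1,\ldots,p_m)$ inside $\dom g$ with $g(p_m)<g(x)$ (such a path exists by $\varDelta'$-connectivity), and minimizes $\max_i g(p_i)$ over all such paths --- this is exactly where discreteness of the image of $g$ is used, to guarantee the minimum is attained --- and then minimizes the set of indices attaining the maximum. At an interior peak $p_i$ one has $p_{i-1},p_{i+1}$ in a common principal filter (or ideal) of $p_i$, and Lemma~\ref{lem:f(v_i)<f(v_i+1)}, i.e.\ submodularity of $g$ on ${\cal F}_{p_i}$, produces a replacement chain through $I(p_{i-1},p_{i+1})$ on which $g$ stays strictly below $g(p_i)$, contradicting minimality. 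Hence the only peak is $p_0=x$ itself, so $g(p_1)<g(x)$ with $p_1\in{\cal F}'_x\cup{\cal I}'_x$, contradicting local optimality. The locally-finite half of (F) is then handled by adding the indicator of a large finite ball $B^{\varDelta}_r(x)$ (L-convex by Lemmas~\ref{lem:ball_convex} and \ref{lem:sum_of_L}), which makes the image discrete and reduces to the first case. If you want to keep a nearest-witness scheme, you would essentially have to re-prove the path-rerouting lemma anyway, so I recommend adopting the min-max-over-paths argument.
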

Since $g$ is submodular on ${\cal F}'_x$ and on ${\cal I}'_x$, 
the optimality can be checked by submodular function minimization.
This leads us to the following descent algorithm, 
which is also a direct analogue of 
the {\em steepest descent algorithm} in DCA~\cite[Section 10.3.1]{MurotaBook}.
\begin{description}
	\item[Steepest Descent Algorithm (SDA)]
	\item[Input:] An L-convex function $g: \varGamma \to \overline\RR$ and a vertex $x$ in $\dom g$.
	\item[step 1:] Let $y$ be a minimizer of $g$ 
	over ${\cal F}'_{x} \cup {\cal I}'_{x}$.
	\item[step 2:] If $g(x) = g(y)$, then $x$ is minimizer, and stop.
	\item[step 3:] $x := y$, and go to step 1.
\end{description}
In many applications, the oriented modular graph in question is 
a product of small (locally-finite) oriented modular graphs 
$\varGamma_i$ $(i=1,2,\ldots, n)$, and
the L-convex function $g$ is a sum of L-convex functions $g_j$ 
of small number of variables.
In this case, the step 1 is reduced to VCSP for submodular functions, 
which can be solved efficiently by Theorem~\ref{thm:vscp}.

To obtain a complexity bound, 
we need to estimate the number of iterations of the algorithm. 
In the case of L$^{\natural}$-convex functions,
the number of iterations is bounded by 
the $l_{\infty}$-diameter of the effective domain~\cite{KS09}, 
and exactly equals a certain directed $l_{\infty}$-distance 
between the initial point and the minimizers~\cite{MurotaShioura14}.

We show that an analogous bound holds when $\varGamma$ is well-oriented.
Let $\varGamma^{\varDelta}$ be the thickening of $\varGamma$; see Section~\ref{subsub:thickening}.
By definition,
SDA yields a $\varDelta'$-path, which is also a $\varDelta$-path 
(a path in $\varGamma^{\varDelta}$).
Let ${\rm opt}(g)$ denote the set of minimizers of $g$.
Obviously 
the total number of the iterations 
is at least $d^{\varDelta}(x, {\rm opt}(g)) 
= \min_{y \in {\rm opt}(g)} d^{\varDelta} (x,y)$.
The next theorem says that this bound is tight; 
special cases are given in~\cite{HH14extendable,HH15node_multi}.
\begin{Thm}\label{thm:bound}
	The total number $N$ of the iterations 
	of SDA with initial vertex $x$ is at most 
	$d^{\varDelta}(x, {\rm opt}(g)) + 2$. 
	In addition, if $g(x) = \min_{y \in {\cal F}'_x} g(y)$ or $g(x)  = \min_{y \in {\cal I}'_x} g(y)$, 
	then $N$ is equal to $d^{\varDelta}(x, {\rm opt}(g))$.
\end{Thm}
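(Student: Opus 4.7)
The plan is to control the SDA trajectory by combining the $d$-gated structure of thickening balls (Lemma~\ref{lem:ball_convex}) with the local submodularity of L-convex functions (Proposition~\ref{prop:locally-submo}). Let $x_0 = x, x_1, \ldots, x_N$ denote the iterates produced by SDA. Since $x_{i+1}$ lies in ${\cal F}'_{x_i} \cup {\cal I}'_{x_i}$, we have $x_i \sqsubseteq x_{i+1}$ or $x_{i+1} \sqsubseteq x_i$, so $x_i$ and $x_{i+1}$ belong to a common Boolean-gated interval by Lemma~\ref{lem:om_Bgated}, yielding $d^\varDelta(x_i, x_{i+1}) \le 1$. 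The trajectory is thus a $\varDelta$-path of length at most $N$ ending at a minimizer, whence the easy direction $N \ge d^\varDelta(x_0, \opt(g))$ that was already noted.

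For the upper bound, the heart is a descent lemma: for each non-terminal iteration, there exists a minimizer $z' \in \opt(g)$ with
\[
d^\varDelta(x_{i+1}, z') \le d^\varDelta(x_i, \opt(g)) - 1.
\]
Fix $z \in \opt(g)$ realizing $d^\varDelta(x_i, \opt(g))$ and take $z^\sharp$ to be the $d$-gate of $z$ in the ball $B_1^\varDelta(x_i)$. Then $z^\sharp$ sits in some Boolean-gated interval $[a, b] \ni x_i$, which by Lemma~\ref{lem:om_Bgated} is a complemented modular lattice. Using the submodularity of $g^*$ on the neighborhood semilattice ${\cal I}^*_{x_i}$ (the defining L-convex inequality), one obtains $g(z^\sharp) \le g(z)$ and can perform a submodular exchange to transform $z^\sharp$ into a new minimizer $z'$ comparable with the SDA choice $x_{i+1}$, giving the descent.

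Iterating the descent lemma yields $N \le d^\varDelta(x_0, \opt(g)) + 2$; the $+2$ absorbs an initial-direction ambiguity (when ${\cal F}'_{x_0}$ and ${\cal I}'_{x_0}$ both contain improving directions, SDA may not initially align with the $d^\varDelta$-geodesic to $\opt(g)$) together with the terminal verification iteration. For the sharper equality under $g(x_0) = \min_{y \in {\cal F}'_{x_0}} g(y)$ (the symmetric case is analogous), this hypothesis forces the first SDA move into ${\cal I}'_{x_0}$ and aligns SDA with the $d^\varDelta$-geodesic to the closest minimizer; the descent lemma then applies without slack to every step, giving $N = d^\varDelta(x_0, \opt(g))$ exactly.

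The main obstacle is the descent lemma, specifically when the gate $z^\sharp$ falls in $B_1^\varDelta(x_i) \setminus ({\cal F}'_{x_i} \cup {\cal I}'_{x_i})$ --- which can genuinely occur because $B_1^\varDelta(x_i)$ covers every Boolean-gated interval around $x_i$, whereas ${\cal F}'_{x_i} \cup {\cal I}'_{x_i}$ only covers intervals emanating upward or downward from $x_i$. Passing from such a $z^\sharp$ to a minimizer compatible with SDA's restricted choice $x_{i+1}$ requires the full L-convex inequality --- the submodularity of $g^*$ across the filter-ideal gluing at $x_i$ --- rather than the submodularity on a single $\sqsubseteq$-filter or ideal.
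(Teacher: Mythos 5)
Your outline hinges entirely on the ``descent lemma'': that for \emph{every} minimizer $x_{i+1}$ of $g$ over ${\cal F}'_{x_i}\cup{\cal I}'_{x_i}$ there is some $z'\in\opt(g)$ with $d^{\varDelta}(x_{i+1},z')\le d^{\varDelta}(x_i,\opt(g))-1$. This is the entire content of the theorem, and you only assert it. The two supporting claims are not justified: there is no reason why the gate $z^\sharp$ of $z$ in $B^{\varDelta}_1(x_i)$ should satisfy $g(z^\sharp)\le g(z)$ (gates are a metric notion; relating function values along $I(x_i,z)$ requires the structural descent statement of Lemma~\ref{lem:f(v_i)<f(v_i+1)}, not a one-line appeal to submodularity on ${\cal I}^*_{x_i}$), and the ``submodular exchange'' producing a minimizer comparable with the \emph{arbitrary} SDA choice $x_{i+1}$ is exactly the hard step, which you flag as an obstacle but do not resolve. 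Your write-up is also internally inconsistent: if the descent lemma held unconditionally at every non-terminal iteration, the bound would be $d^{\varDelta}(x,\opt(g))+1$, yet you invoke an ``initial-direction ambiguity'' to explain the $+2$ --- which is precisely an admission that the lemma can fail at the first step(s), without saying what replaces it there. The exactness claim under the hypothesis $g(x)=\min_{y\in{\cal F}'_x}g(y)$ is likewise asserted (``applies without slack'') rather than derived.

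The paper's proof avoids this trap by inverting the viewpoint. Instead of tracking distance to $\opt(g)$, Lemma~\ref{lem:geodesic} shows (under the auxiliary hypothesis on the initial vertex, using well-orientedness so that the iterates alternate $x^0\succ x^1\prec x^2\succ\cdots$) that every improving point $z\in{\cal F}_{x^k}\cup{\cal I}_{x^k}$ satisfies $d^{\varDelta}(x,z)=k+1$, i.e.\ the trajectory recedes from the \emph{start} at unit $d^{\varDelta}$-speed. Its proof is a genuine induction combining Lemma~\ref{lem:f(v_i)<f(v_i+1)}, the $\varDelta$-gate and Proposition~\ref{prop:delta-gate}, and join computations showing the improving point and the $\varDelta$-gate of $x$ at $x^k$ cannot be $\varDelta$-adjacent. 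The upper bound then follows by running SDA on $g+[B^{\varDelta}_r(x)]$ with $r=d^{\varDelta}(x,\opt(g))$, which is L-convex by Lemmas~\ref{lem:ball_convex}, \ref{lem:indicator} and \ref{lem:sum_of_L} and confines the iterates to the ball; this ball-restriction device, which does the work your descent lemma was supposed to do, is absent from your proposal. As it stands the argument has a gap at its central step.
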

%
%
%
In the case where $\varGamma =\varGamma_1 \times \varGamma_2 \times \cdots \times \varGamma_n$, 
the number of iterations is bound by the maximum of diameters of $\varGamma_i$ (by Lemma~\ref{lem:product_thickening}).
%
\begin{Rem}
The assumption that $\varGamma$ is well-oriented is not restrictive.
Indeed, consider the barycentric subdivision $\varGamma^*$, 
and the L-convex function $g^*$ instead of $g$ (see Proposition~\ref{prop:L_is_L-extendable}).
Apply SDA to $g^*$.
Since $\varGamma^*$ is well-oriented, Theorem~\ref{thm:bound} is applicable.
For a minimizer $u= [x,y]$ of $g^*$, both $x$ and $y$ are minimizers of $g$. 
%
\end{Rem}
\subsubsection{L-extendable functions}\label{subsub:L-extendable}
Next we introduce the concept of L-extendability, 
which aims at capturing 
well-behaved NP-hard problems having half-integral relaxations or {\em $k$-submodular relaxations}~\cite{GK13, IWY14}.
L-extendable functions
have been introduced in \cite{HH14extendable}
for the product of trees.

Let $H$ be an swm-graph. 
Then the barycentric subdivision $H^*$ is 
a well-oriented modular graph (Lemma~\ref{lem:well-oriented}).
A function $h : H \to \overline{\RR}$ is called {\em L-extendable}
if there exists an L-convex function $g$ on $H^*$ 
such that the restriction of $g$ to $H$ coincides with $h$.
Then $g$ is called an {\em L-convex relaxation} of $h$.
An L-convex relaxation $g$ of $h$ 
is said to be {\em exact} if the minimum value of $g$ is equal to that of $h$.
Recall that an orientable modular graph $\varGamma$ is also an swm-graph.
The class of L-extendable functions 
contains all L-convex functions with regard to all admissible orientations of $\varGamma$.
\begin{Prop}\label{prop:L_is_L-extendable}
	Let $\varGamma$ be an oriented modular graph.
	Any L-convex function $g: \varGamma \to {\overline\RR}$ is L-extendable, 
	and $g^*: \varGamma^* \to {\overline{\RR}}$ is an exact L-convex relaxation of $g$.
\end{Prop}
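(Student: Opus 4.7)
The plan has three parts. First, the identity $g^*(\{x\}) = (g(x)+g(x))/2 = g(x)$ immediately gives the restriction $g^*|_{\varGamma} = g$. Second, since $g^*([x,y]) = (g(x)+g(y))/2$ is an average of two values at least $\min g$, we have $g^* \geq \min g$ throughout $\dom g^*$, with equality attained at $\{x_0\}$ for any minimizer $x_0$ of $g$, so $\min g^* = \min g$ (exactness). Third, and the main task, is to establish L-convexity of $g^*$ on $\varGamma^*$.

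Since $\varGamma^*$ is well-oriented (Lemma~\ref{lem:well-oriented}), Proposition~\ref{prop:L-convex=submodular} reduces L-convexity of $g^*$ to (a) $\varDelta'$-connectivity of $\dom g^*$ in $\varGamma^*$, and (b) submodularity of $g^*$ on the principal filter ${\cal F}_u(\varGamma^*)$ and principal ideal ${\cal I}_u(\varGamma^*)$ of every vertex $u$ of $\varGamma^*$. For (a), I would lift any $\varDelta'$-path $x_0, \ldots, x_m$ in $\dom g$ (available by L-convexity of $g$) to a $\varDelta'$-path in $\dom g^*$ by inserting the interval $[x_i, x_{i+1}]$ (or $[x_{i+1}, x_i]$, depending on the orientation) between consecutive singletons; each inserted interval has finite $g^*$-value as an average of finite $g$-values. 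Any non-singleton $[x,y] \in \dom g^*$ connects to the singleton part via the comparabilities $\{x\} \succeq [x,y] \preceq \{y\}$ in $\varGamma^*$.

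For the ideal case of (b), I observe that for $u = [a,b]$ the set ${\cal I}_u(\varGamma^*) = \{[p,q] : p \preceq a,\ b \preceq q\}$ coincides, as a modular semilattice, with the principal filter of $u$ in the modular semilattice ${\cal I}^*_a = {\cal I}_{\{a\}}(\varGamma^*)$. By the L-convexity of $g$, $g^*$ is submodular on ${\cal I}^*_a$. Submodularity on a modular semilattice restricts to every principal filter, because principal filters are closed under $\wedge$ and under the fractional join operation (the metric interval $I(p,q)$ between elements $p,q \succeq u$ lies entirely above $u$, so every support vertex of the fractional join stays in the filter). Hence $g^*$ is submodular on ${\cal I}_u(\varGamma^*)$.

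For the filter case of (b), ${\cal F}_u(\varGamma^*) = [a,b]^*$ is the barycentric subdivision of the complemented modular lattice $[a,b] \subseteq \varGamma$, and is a polar space by Lemma~\ref{lem:well-oriented}. By Proposition~\ref{prop:locally-submo}, $g|_{[a,b]}$ is submodular on $[a,b]$. The key lemma to establish is: submodularity of a function $h$ on a complemented modular lattice $L$ implies submodularity of $h^*$ on the polar space $L^*$. I would prove this via Theorem~\ref{thm:polar_submo} by checking bisubmodularity on each polar frame of $L^*$. Each polar frame has the form $L_F^* \cong {{\cal S}_2}^n$ for some Boolean sublattice $L_F \cong \{0,1\}^n$ of $L$ generated by a frame. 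Identifying each vertex $[x,y]$ of $L_F^*$ with the midpoint of $x$ and $y$ in $K(L_F)$ exhibits $K(L_F^*)$ as a simplicial refinement of $K(L_F)$ with the same underlying CAT$(0)$ metric, and under this identification $\overline{h^*}$ coincides with $\overline h$; hence convexity of $\overline h$ on $K(L_F)$ (Theorem~\ref{thm:Lovasz_modular_lattice}) transfers to convexity of $\overline{h^*}$ on $K(L_F^*)$, yielding bisubmodularity of $h^*$ on $L_F^*$ by Theorem~\ref{thm:Lovasz_polar_space}. The main obstacle is formalizing the identification of polar frames of $L^*$ with barycentric subdivisions of Boolean sublattices of $L$, which I expect to follow from the standard correspondence between apartments of the type-C spherical building of $L^*$ and frames of the underlying complemented modular lattice $L$.
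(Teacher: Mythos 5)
Your overall architecture agrees with the paper's: both reduce, via well-orientedness of $\varGamma^*$ and Proposition~\ref{prop:L-convex=submodular}, to $\varDelta'$-connectivity of $\dom g^*$ plus submodularity of $g^*$ on principal ideals and filters of $\varGamma^*$, and your treatment of exactness, of connectivity, and of the ideal case (restriction of the submodularity of $g^*$ on the neighborhood semilattice ${\cal I}^*_a$ to the principal filter of $[a,b]$ inside it) is correct and matches what the paper does. The genuine divergence is the filter case. The paper stays entirely inside the polar space of subintervals of the complemented modular lattice $[u,v]$: it derives the explicit formulas $[p,q]\wedge[p',q'] = [p\wedge p',\, q\vee q']$ and $[p,q]\sqcup[p',q'] = [(p\vee p')\wedge(q\wedge q'),\, (p\vee p')\vee(q\wedge q')]$, and verifies the inequality of Theorem~\ref{thm:polar_submo}~(2) by two applications of the lattice submodular inequality for $g$ on $[u,v]$ (once to the pair $(p\vee p', q\wedge q')$, once to the pairs $(p,p')$ and $(q,q')$). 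This is elementary and uses no metric geometry. You instead transfer convexity of Lov\'asz extensions, which is a genuinely different and more conceptual route.

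The one real gap in your route is the step you yourself flag: that every polar frame of $[u,v]^*$ is the interval poset of a frame-generated Boolean sublattice of $[u,v]$. This is true (it is the apartment correspondence between the type-A structure of a complemented modular lattice and the type-C structure of its interval poset), but nothing in the paper supplies it, and it is not a one-line observation. Fortunately you can bypass it and simplify your own argument: by \cite[Proposition~8.7]{CCHO14} (quoted in Section~\ref{subsec:proof_L-convex} as the isometry $K(\varGamma^*)\simeq K'(\varGamma)$, with $K'=K$ for a complemented modular lattice) together with Lemma~\ref{lem:g*m=gbar}, $K([u,v]^*)$ is isometric to $K([u,v])$ and $\overline{g^*}=\overline{g}$ under this isometry; $\overline{g}$ is convex on $K([u,v])$ by Theorem~\ref{thm:Lovasz_modular_lattice} since $g$ is submodular on $[u,v]$, and Theorem~\ref{thm:Lovasz_polar_space} applied to the polar space $[u,v]^*$ then gives submodularity of $g^*$ on the whole principal filter at once, with no frame-by-frame analysis. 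With that replacement your proof is complete; as written, the frame identification remains an unproven (though correct) assertion.
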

The next result, called {\em persistency}, 
says that there exists a minimizer of $h$ reasonably close to 
any minimizer of the relaxation $g$.
\begin{Thm}\label{thm:persistency}
Let $H$ be an swm-graph, $h: H \to \overline{\RR}$ an L-extendable function,
and $g:H^* \to \overline{\RR}$ an L-convex relaxation of $h$.
For any minimizer $x^*$ of $g$ (over $H^*$) 
there exists a minimizer of $h$ (over $H$) in $H \cap {\cal F}_{x^*}(H^*)$.
\end{Thm}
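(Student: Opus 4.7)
Via the isometric embedding $H \hookrightarrow H^*$ (Theorem~\ref{thm:G->G*}), the set $H \cap {\cal F}_{x^*}(H^*)$ is identified with the set of vertices $z \in H$ lying in the Boolean-gated subset $x^*$. Hence the theorem reduces to exhibiting some $z \in x^*$ with $h(z) = \min_{w \in H} h(w)$. Global optimality of $x^*$ in $H^*$ already yields $g(x^*) \le g(\{w\}) = h(w)$ for every $w \in H$, so $g(x^*) \le \min_{w \in H} h(w)$.

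My plan is to fix any global minimizer $w^* \in H$ of $h$ and project it onto $x^*$ via the gate. Since $x^*$ is Boolean-gated, hence $d_H$-gated by Lemma~\ref{lem:chepoi}, $w^*$ has a unique gate $z' \in x^*$ satisfying $d_H(w^*, y) = d_H(w^*, z') + d_H(z', y)$ for every $y \in x^*$. I will show the single key inequality $h(z') \le h(w^*)$; the minimality of $w^*$ then forces $h(z') = h(w^*)$, and $z := z' \in x^*$ is the desired minimizer.

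The inequality $h(z') \le h(w^*)$ I would establish by induction on $d_H(w^*, z')$. The inductive step picks a neighbor $w$ of $w^*$ on a shortest $w^*$-to-$z'$ path in $H$, notes that $z'$ is still the gate of $w$ onto $x^*$ by the gate identity, and reduces by the inductive hypothesis to the single-step claim $h(w) \le h(w^*)$. For this I would use the smallest Boolean-gated set $Y \in H^*$ containing both $w^*$ and $w$ (obtained as the intersection of all such Boolean-gated sets, which is itself Boolean-gated by the intersection property noted after Lemma~\ref{lem:chepoi}); then $\{w^*\}, \{w\} \in {\cal F}_Y(H^*)$, and since $H^*$ is well-oriented (Lemma~\ref{lem:well-oriented}), Proposition~\ref{prop:locally-submo} gives that $g$ is submodular on the polar-frame filter ${\cal I}_Y(H^*)$ (identified in Lemma~\ref{lem:om_Bgated}). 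Theorem~\ref{thm:frac_join_of_polar_space} then supplies the averaging inequality
\[
g(\{w^*\}) + g(\{w\}) \ge g(Y) + \tfrac{1}{2} g(\{w^*\} \vee_L \{w\}) + \tfrac{1}{2} g(\{w^*\} \vee_R \{w\}).
\]

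The hard part will be completing the single-step comparison from this averaging identity: by itself it does not separate $g(\{w^*\})$ from $g(\{w\})$, only constraining their average together with $g(Y)$ and the two join values. The strategy is to iterate the identity along a $\varDelta'$-path from $Y$ down to $x^*$ in $H^*$, applying Proposition~\ref{prop:locally-submo} at each intermediate vertex and the L-optimality criterion (Theorem~\ref{thm:L-optimality}) at $x^*$ to bound all auxiliary $g(Y)$-and-join terms from below by $g(x^*)$, thereby absorbing them into the final comparison $g(\{w\}) \le g(\{w^*\})$. A similar chaining underlies the persistency proofs in \cite{HH14extendable} for products of oriented trees and in \cite{HH15node_multi} in the multiway cut setting; the present generalization replaces the tree factors with polar frames of ${\cal F}_Y(H^*)$, and the fractional-join formula of Theorem~\ref{thm:frac_join_of_polar_space} supplies the required step-by-step averaging. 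Making this chain work uniformly across all swm-graphs is the technical crux.
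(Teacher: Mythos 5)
Your setup is fine (the identification of $H \cap {\cal F}_{x^*}(H^*)$ with the vertices of $H$ lying in the Boolean-gated set $x^*$, and the bound $g(x^*) \leq \min_H h$), but the proposal does not prove the theorem: the step you yourself flag as ``the technical crux'' is the entire content of the statement, and the plan you sketch for it cannot work as described. Concretely, when $\{w^*\}$ and $\{w\}$ are both singletons they are \emph{maximal} elements of the polar space ${\cal F}_Y(H^*)$, so $\{w^*\} \vee_L \{w\} = \{w^*\}$ and $\{w^*\} \vee_R \{w\} = \{w\}$, and your displayed inequality collapses to $\tfrac12\bigl(h(w^*) + h(w)\bigr) \geq g(Y)$. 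This is symmetric in $w^*$ and $w$ and carries no directional information, so no amount of chaining such inequalities along a $\varDelta'$-path toward $x^*$ can produce the one-sided comparison $h(w) \leq h(w^*)$; you would need an inequality in which a point of ${\cal F}_{x^*}$ (or a $\varDelta$-gate toward it) enters as a genuine third participant. Moreover, your induction targets the metric gate $z'$ of $w^*$ in $x^*$, a vertex determined purely by the metric of $H$; nothing in L-extendability forces $h$ to be non-increasing along a geodesic to that particular vertex, and the theorem only asserts the \emph{existence} of some minimizer in $x^*$, not that the gate is one.

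The paper's proof runs differently. It takes a minimizer $y$ of $h$ over $H$ that is $\varDelta$-closest to $x^*$, uses Lemma~\ref{lem:geodesic} (the $l_\infty$-type geodesic property of SDA sequences, which is where the connection to the minimizer $x^*$ of $g$ actually enters) to produce an element $w$ one $\varDelta$-step closer to $x^*$, and replaces $y$ by $y \vee_R w$ computed in the polar space ${\cal F}_{y \wedge w}$. The two ingredients your outline never reaches are: (i) the asymmetric use of the fractional join with this $w$, which does yield $h(y \vee_R w) \leq h(y)$ because $y \vee_L w = y$; and (ii) the rank equality \eqref{eqn:rank_eq} for polar spaces, which guarantees that $y \vee_R w$ has the same rank as $y$, hence is again a maximal element of the polar space and therefore a genuine vertex of $H$ rather than a point of $H^* \setminus H$. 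Without (ii) the improved point could fall outside $H$ and the descent argument would not close. As written, the proposal is an honest reduction of the problem to its hardest step, not a proof of it.
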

%
A canonical example of L-extendable functions
is the distance function, which is a direct consequence of 
Theorems~\ref{thm:G->G*} and \ref{thm:d_is_L-convex}.
\begin{Prop}\label{prop:d_is_L-extendable}
	Let $H$ be an swm-graph. 
	The distance function $d_H$ is L-extendable on $H \times H$, 
   and an L-convex relaxation is given by the distance function $d_{H^*}$ of $H^*$
\end{Prop}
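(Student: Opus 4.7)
The plan is to show that the proposition follows almost immediately from a triple application of previously established results, once the ambient spaces are correctly identified.

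First I would fix the ambient oriented modular graph on which the relaxation lives. By Lemma~\ref{lem:product_subdivision}(1), the barycentric subdivision commutes with products, so $(H \times H)^* = H^* \times H^*$. By Theorem~\ref{thm:G->G*} applied to $H$, the space $H^*$ is an oriented modular graph; and since products of oriented modular graphs are oriented modular (Section~\ref{subsec:modular}), $H^* \times H^*$ is an oriented modular graph. This is the target space of our candidate relaxation.

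Next I would verify the two requirements of L-extendability. For L-convexity, Theorem~\ref{thm:d_is_L-convex} applied with $\varGamma := H^*$ gives that $d_{H^*}$ is an L-convex function on $H^* \times H^* = (H \times H)^*$. For the extension property, I need to check that, under the canonical embedding $H \times H \hookrightarrow (H \times H)^*$ given by $(x,y) \mapsto \{(x,y)\}$, the restriction of $d_{H^*}$ to the image agrees with $d_H$. Under the identification of Lemma~\ref{lem:product_subdivision}(1), the vertex $\{(x,y)\} \in (H \times H)^*$ corresponds to the pair $(\{x\},\{y\}) \in H^* \times H^*$; then the isometry part of Theorem~\ref{thm:G->G*} gives
\[
d_{H^*}(\{x\},\{y\}) = d_H(x,y) \quad (x,y \in H),
\]
which is exactly the required equality of the restricted function with $d_H$.

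Combining these two facts shows that $d_{H^*}$ is an L-convex relaxation of $d_H$, hence $d_H$ is L-extendable on $H \times H$. Essentially no obstacle arises; the only point requiring care is the bookkeeping between the embeddings $H \hookrightarrow H^*$ and $H \times H \hookrightarrow (H \times H)^*$, which is handled cleanly by invoking Lemma~\ref{lem:product_subdivision}(1).
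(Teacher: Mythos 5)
Your proof is correct and follows exactly the route the paper intends: the paper simply declares the proposition a direct consequence of Theorems~\ref{thm:G->G*} and \ref{thm:d_is_L-convex}, and your write-up supplies the same argument with the additional (correct) bookkeeping via Lemma~\ref{lem:product_subdivision}(1) identifying $(H\times H)^*$ with $H^*\times H^*$.
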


\begin{Ex}\label{ex:k-subrel}
	A {\em $k$-submodular relaxation}~\cite{GK13,IWY14} of a function $h$ on $\{1,2,\ldots,k\}^n$
	is a $k$-submodular function $g$ on $\{0,1,2,\ldots,k\}^n \simeq {{\cal S}_k}^n$ 
	such that the restriction of $g$ to $\{1,2,\ldots,k\}^n$ is equal to $h$.
	Identify $\{1,2,\ldots,k\}^n$ with the $n$-product ${K_k}^n$ of a complete graphs ${K_k}$.
	Then the barycentric subdivision $({K_k}^n)^*$ is isomorphic to ${{\cal S}_k}^n$ (Example~\ref{ex:subdivision}). 
	L-convex functions and ($k$-)submodular functions are the same on $({K_k}^n)^* \simeq {{\cal S}_{k}}^n$.
	Thus L-extendable functions on ${K_k}^n$ are exactly those functions which admit $k$-submodular relaxations.
	See~\cite{HI15} for further study on $k$-submodular relaxation.
\end{Ex}

\subsection{L-convex functions on Euclidean building}\label{subsec:building_L-convex}
Here we consider L-convex functions on oriented modular graph 
$\varGamma(K)$ associated with a Euclidean building $K$; see Section~\ref{subsub:orthoscheme} for Euclidean building.
We first define the discrete midpoint operator.
Since $K$ is CAT(0) (Theorem~\ref{thm:building_CAT(0)}), 
any pair of points in $K$ can be joined by the unique geodesic (Proposition~\ref{prop:uniquely-geodesic}).
For two vertices $x,y$, 
there is an apartment $\varSigma$ containing them (by B1 in the definition).
Since the subcomplex $\varSigma$ 
is isometric (convex) in $K$~\cite[Theorem 11.16~(4)]{BuildingBook}, 
the unique geodesic $[x,y]$ belongs to $\varSigma$.
Notice that 
$\varSigma$ is isometric to Euclidean space $\RR^n$, 
and vertices $x,y$ are integral vectors in $\RR^n$.
The midpoint $(x+y)/2$ in the geodesic is a half-integral vector.
Consider the simplex of $\varSigma$ containing $(x+y)/2$ in its relative interior.
This simplex is the line segment between two integral vectors: one is obtained from $(x+y)/2$ by rounding each non-integral component to the nearest even integer, and other is obtained by rounding each non-integral component to the nearest odd integer.
The former is denoted by $\lceil (x + y)/2 \rceil$, 
and the latter is $\lfloor (x + y)/2\rfloor$.
Then L-convex functions 
are characterized as follows, where 
the property (3) is called the {\em discrete midpoint convexity}.
\begin{Thm}\label{thm:building}
	Let $K$ be a Euclidean building, and let $\varGamma(K)$ be the associated oriented modular graph.
	For a function $g: \varGamma(K) \to \overline\RR$, the following conditions are equivalent:
	\begin{description}
		\item[{\rm (1)}] $g$ is L-convex.
		\item[{\rm (2)}] Lov\'asz extension $\overline{g}: K \to \overline\RR$ is convex.
		\item[{\rm (3)}]  
		$g(x) + g(y) \geq g( \lfloor (x + y)/2 \rfloor) +  g(\lceil (x + y)/2 \rceil)$
		holds for any $x,y \in \varGamma(K)$.
	\end{description}
\end{Thm}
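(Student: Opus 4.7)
The plan is to prove the cyclic chain $(2) \Rightarrow (3) \Rightarrow (1) \Rightarrow (2)$, leveraging the CAT$(0)$ property of $K = K(\varGamma(K))$ (Theorem~\ref{thm:building_CAT(0)}), the submodularity--convexity equivalence on polar spaces (Theorem~\ref{thm:Lovasz_polar_space}), and apartment-based reduction through the building axioms B1--B2.

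For $(2) \Rightarrow (3)$, I would fix vertices $x, y \in \varGamma(K)$ and use B1 to place them in a common apartment $\varSigma \simeq K(\check{\ZZ}^n)$. Since apartments are convex isometric subspaces of the CAT$(0)$ space $K$ \cite[Theorem 11.16]{BuildingBook}, the unique geodesic $[x, y]$ in $K$ coincides with the Euclidean segment inside $\varSigma \simeq \RR^n$. By construction of $\lfloor \cdot \rfloor$ and $\lceil \cdot \rceil$, the midpoint $m = (x+y)/2$ is the barycenter of the 1-simplex of $\varSigma$ spanning $\lfloor m \rfloor$ and $\lceil m \rceil$; the piecewise-affine definition of the Lov\'asz extension thus gives $\overline g(m) = \frac{1}{2}(g(\lfloor m \rfloor) + g(\lceil m \rceil))$, and applying (2) at $t = 1/2$ along $[x, y]$ yields (3).

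For $(3) \Rightarrow (1)$, I would fix $x \in \varGamma(K)$ and verify that $g^*$ is submodular on the polar space ${\cal I}^*_x$. By Theorem~\ref{thm:polar_submo} this reduces to bisubmodularity on each polar frame ${\cal F} \simeq {{\cal S}_2}^k \subseteq {\cal I}^*_x$, and by B1 all intervals representing elements of ${\cal F}$ lie in a single apartment identified with $\check{\ZZ}^n$, where they arise as intervals $[a, b]$ obtained from $x$ by $\pm 1$ shifts in $k$ chosen coordinate directions. Expanding $g^*([a, b]) = \frac{1}{2}(g(a) + g(b))$, each bisubmodular inequality $g^*(P) + g^*(Q) \geq g^*(P \wedge Q) + g^*(P \sqcup Q)$ collapses, after cancellation of common $g$-terms, into a single instance $g(u) + g(v) \geq g(\lfloor (u+v)/2 \rfloor) + g(\lceil (u+v)/2 \rceil)$ of discrete midpoint convexity at appropriate lattice vertices $u, v \in \varSigma$, which holds by (3). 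Finally $(1) \Rightarrow (2)$: by Theorem~\ref{thm:Lovasz_polar_space}, L-convexity is locally equivalent to convexity of $\overline{g^*}$ on each orthoscheme complex $K({\cal I}^*_x)$, and via the isomorphism $K \simeq K(\varGamma(K))$ of Theorem~\ref{thm:building_CAT(0)} this subcomplex is identified with the star of $x$ in $K$. These local stars cover $K$, and continuity of $\overline g$ together with the CAT$(0)$ reducibility of convexity to midpoint convexity along geodesics (every geodesic lying in an apartment by B1) lifts local convexity to global convexity.

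The main obstacle lies in $(1) \Rightarrow (2)$: realizing the orthoscheme complex $K({\cal I}^*_x)$ (built from chains of intervals in $\varGamma^*$) as a geodesically convex star subcomplex of $K = K(\varGamma(K))$ (built from chains in $\varGamma$), and checking that the two Lov\'asz extensions $\overline{g^*}$ and $\overline g$ agree under this identification. This requires exploiting the well-oriented character of $\varGamma(K)$ (apartments being isomorphic to $\check{\ZZ}^n$, cf.\ Example~\ref{ex:grid}) together with the relationship $\varGamma(K) \simeq H(K)^*$ from Theorem~\ref{thm:building_CAT(0)}, and some case analysis on how a chain through $x$ in $\varGamma$ corresponds to a chain of intervals around $x$ in $\varGamma^*$. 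Once this identification is in place, lifting local convexity to global convexity is a standard CAT$(0)$ argument.
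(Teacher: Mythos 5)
Your overall strategy (apartment reduction via B1, CAT$(0)$ geometry, polar-space submodularity) matches the paper's, and your $(2)\Rightarrow(3)$ is essentially the paper's argument. But there are concrete gaps. First, in $(3)\Rightarrow(1)$ you never verify that $\dom g$ is $\varDelta'$-connected, which is part of the definition of L-convexity; the paper proves this by induction on $\|x-y\|_{\infty}$ inside an apartment, using that $\lfloor (x+y)/2\rfloor$ and $\lceil (x+y)/2\rceil$ stay in $\dom g$ and roughly halve the $l_\infty$-distance. Second, your claimed mechanism for $(3)\Rightarrow(1)$ --- that the bisubmodular inequality for $g^*$ on a polar frame of ${\cal I}^*_x$ ``collapses after cancellation into a single instance'' of discrete midpoint convexity --- is not correct: for intervals $P=[a,b]$ and $Q=[a',b']$ with distinct endpoints the inequality involves eight $g$-values with nothing to cancel, and one needs two successive applications of the midpoint inequality (compare the two-step chain of inequalities in the proof of Proposition~\ref{prop:oriented_tree}). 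The paper sidesteps this entirely: since $\varGamma(K)$ is well-oriented, Proposition~\ref{prop:L-convex=submodular} reduces L-convexity to submodularity on each principal ideal ${\cal I}_p$ and filter ${\cal F}_p$ separately, and there the identification with $\{-1,0,1\}^n$ gives $\lfloor (x+y)/2\rfloor = x\wedge y$ and $\lceil (x+y)/2\rceil = x\sqcup y$ exactly, so (3) literally is the inequality of Theorem~\ref{thm:polar_submo}~(2).

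The most serious gap is $(1)\Rightarrow(2)$, which you acknowledge but do not close. Local convexity on the closed stars of the vertices of $\varGamma(K)$ is not enough: a point lying deep inside a top-dimensional simplex, or on the common boundary of several stars, needs a convex neighborhood on which convexity of $\overline g$ is already established, and the identification of $K({\cal I}^*_x)$ with the star of $x$ together with the agreement of $\overline{g^*}$ and $\overline g$ there is precisely what you leave open. The paper's device is different and is the key idea you are missing: iterate the barycentric subdivision. One shows $\overline{g^{*m}}=\overline g$ for all $m$ (Lemma~\ref{lem:g*m=gbar}) and that $g^{*m}$ is L-convex on $\varGamma^{*m}$ (iterating Proposition~\ref{prop:L_is_L-extendable}); for $m$ large enough any given point of $\dom \overline g$ lies in the interior of $K({\cal F}_p)$ for some vertex $p$ of $\varGamma^{*m}$, where ${\cal F}_p\simeq {{\cal S}_2}^n$ and Theorem~\ref{thm:Qi88} gives convexity; the Tietze--Nakajima theorem then upgrades local convexity to global convexity on an apartment, and B1 reduces an arbitrary geodesic of $K$ to a single apartment. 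Without the subdivision step (or some other way to produce arbitrarily fine convex patches on which convexity is known), your local-to-global argument does not go through.
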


%
\begin{Ex}[UJ-convex function]\label{ex:UJ-convex}
Fujishige~\cite{Fujishige14} introduced 
a {\em UJ-convex function}, which is defined as a function $g$ on $\ZZ^n$ 
such that its Lov\'asz extension with respect to Union-Jack division $K(\check{\ZZ}^n)$
is convex. 
Here $K(\check{\ZZ}^n)$ itself is a building of a single apartment. 
By Theorem~\ref{thm:building} (1) $\Leftrightarrow$ (2), 
UJ-convex functions are the same as 
L-convex functions on  $\check{\ZZ}^n$ (in our sense).
\end{Ex}

\begin{Ex}[Alternating L-convex function]\label{ex:alt-L-convex}
	Let $T$ be an infinite tree without degree-one vertices (leaves).
	Regard $T$ as a bipartite graph.
	Let $B,W$ denote the color classes of $T$. 
	For $x,y \in T$, there is a unique pair $(u,v)$ of vertices 
	such that $d(x,y) = d(x,u) + d(u,v) + d(v,y)$, 
	$d(u,v) \leq 1$, and $d(x,u) = d(v,y)$.
	Define $(x \bullet y, x \circ y) := (u,v)$ if $(u,v) \in B \times W$, 
	$(v,u)$ if $(v,u) \in B \times W$, and $(u,u)$ if 
	$u = v$ (i.e., $u,v \in B$ or $u,v \in W$). 
	Consider the direct product $T^n$, and extend operations $\bullet$ and $\circ$ componentwise.	
   An {\em alternating L-convex function}~\cite{HH14extendable} is a function $g:T^n \to \overline\RR$ satisfying the inequality
   \[
   g(x) + g(y) \geq g(x \bullet y) + g(x \circ y) \quad    (x,y \in T^n).
   \]   
   Here $T^n$ becomes a Euclidean building with respect to the orientation: 
   $x \to y$ if $x \in B$ and $y \in W$. 
   This orientation is called the {\em zigzag orientation}.
   Apartments are given by $P_1 \times P_2 \times \cdots \times P_n$ 
   for all possible $n$ simple paths $P_1,P_2,\ldots,P_n$ of infinite length.
   Then points $(x \bullet y)$ and $(x \circ y)$ are equal to 
    $\lceil (x+y)/2 \rceil$ and $\lfloor (x+y)/2 \rfloor$, respectively, 
    when vertices in $B$ are associated with even integers.  
    Thus
    alternating L-convex functions coincide with 
    L-convex functions on Euclidean building $T^n$.
\end{Ex}
As seen in the examples above,
the product of zigzag oriented trees forms a Euclidean building.
We here consider a slightly general situation where orientations are arbitrary. 
Let $T_1,T_2,\ldots,T_n$ be trees, where each tree has an edge-orientation.
For two vertices $x,y$ in $T_i$, there uniquely exists a pair $(u,v)$ 
of vertices such that $d(x,y)= d(x,u) + d(u,v) + d(v, y)$,  $d(u,v) \leq 1$,
and $d(x,u) = d(v, y)$.
Define $(\lceil (x+ y)/2 \rceil, \lfloor (x+ y)/2 \rfloor) := (u,v)$ if $u \to v$, 
$(v,u)$ if $v \to u$, and $(u,u)$ if $u = v$.
Consider the product $\varGamma := T_1 \times T_2 \times \cdots \times T_n$, which
is an oriented modular graph. Extend operations 
$\lfloor (x+ y)/2 \rfloor$ and $\lceil (x+ y)/2 \rceil$ componentwise.
A variation of Theorem~\ref{thm:building} is the following. 
Since $\varGamma$ is not necessarily well-oriented, 
we consider $K'(\varGamma)$ (instead of $K(\varGamma)$);  
see Section~\ref{subsec:CAT(0)} for $K'(\varGamma)$.
\begin{Prop}\label{prop:oriented_tree}
	Let  $T_1,T_2,\ldots,T_n$ be oriented trees, 
	and let $\varGamma := T_1 \times T_2 \times \cdots \times T_n$,
	For a function $g: \varGamma \to \overline{\RR}$, the following conditions are equivalent:
	\begin{description}
		\item[{\rm (1)}] $g$ is L-convex.
		\item[{\rm (2)}] Lov\'asz extension 
		$\overline g: K'(\varGamma) \to \overline{\RR}$ is convex. 
		\item[{\rm (3)}]  $g(x) + g(y) \geq g( \lfloor (x + y)/2 \rfloor ) + g( \lceil (x + y)/2 \rceil)$ holds for any $x,y \in \varGamma$.
	%
	\end{description}
\end{Prop}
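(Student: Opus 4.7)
The plan is to establish the three-way equivalence by combining a local polar-space analysis (for (1)$\Leftrightarrow$(3)) with a direct comparison on the CAT(0) orthoscheme complex $K'(\varGamma)$ (for (2)$\Leftrightarrow$(3)), reducing to Theorem~\ref{thm:building} where convenient via the barycentric subdivision. As setup, by Lemma~\ref{lem:product_subdivision}~(1) one has $\varGamma^* = T_1^* \times \cdots \times T_n^*$, and by Example~\ref{ex:subdivision} each $T_i^*$ carries the zigzag orientation (original vertices as sources, edge-vertices as sinks). After embedding each $T_i^*$, if necessary, into an infinite leafless zigzag tree $\tilde T_i$ (leaving the local structure at vertices of $\varGamma$ intact), the product $\tilde T_1 \times \cdots \times \tilde T_n$ is a Euclidean building, and Theorem~\ref{thm:building} is applicable there.

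For (1)$\Leftrightarrow$(3), I would argue locally. The neighborhood semilattice ${\cal I}^*_x$ at $x \in \varGamma$ decomposes as the polar space ${\cal S}_{k_1} \times \cdots \times {\cal S}_{k_n}$, where $k_i$ is the degree of $x_i$ in $T_i$. By definition, L-convexity of $g$ at $x$ is the submodularity of $g^*$ on this polar space, which by Theorem~\ref{thm:polar_submo} is equivalent to $g^*(p) + g^*(q) \geq g^*(p \wedge q) + g^*(p \sqcup q)$ for $p, q \in {\cal I}^*_x$. Unwinding the definitions of $g^*$, $\wedge$, and $\sqcup$ reproduces the discrete midpoint inequality (3) for pairs $y, z$ in a common neighborhood of $x$; the general case of (3) then follows by induction on $d_\varGamma(y, z)$, chaining these local inequalities along a geodesic from $y$ to $z$.

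For (2)$\Leftrightarrow$(3), I would first verify that $K'(\varGamma)$ is isometric to the $l_2$-product of the metric trees $T_1, \ldots, T_n$ (each $K'(T_i)$ being just $T_i$ with its length metric), and is therefore CAT(0) as a product of CAT(0) spaces. Since $\overline g$ is affine on every orthoscheme simplex, convexity of $\overline g$ on $K'(\varGamma)$ is equivalent to midpoint convexity at the geodesic midpoint of any pair of vertices. Computing this midpoint componentwise in the product and expressing $\overline g$ there via the Lov\'asz formula produces exactly the inequality of (3), yielding (3)$\Rightarrow$(2); the converse follows by the standard dyadic halving argument.

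The hardest part, I expect, will be the (2)$\Leftrightarrow$(3) step, because the simplicial structure of $K'(\varGamma)$ differs from the Union-Jack structure of $K(\varGamma^*)$ relevant to Theorem~\ref{thm:building} (compare Example~\ref{ex:UJ}), so a direct appeal to the building case is unavailable. The argument must carefully pass through the metric-level identification $K'(\varGamma) \simeq T_1 \times \cdots \times T_n$ and verify that the value of $\overline g$ at the CAT(0) geodesic midpoint of vertices $x, y \in \varGamma$ equals $\frac{1}{2} g(\lfloor (x+y)/2 \rfloor) + \frac{1}{2} g(\lceil (x+y)/2 \rceil)$.
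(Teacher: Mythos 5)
Your high-level picture is right (the barycentric subdivision turns $\varGamma$ into a product of zigzag-oriented trees, $K'(\varGamma)\simeq K(\varGamma^*)$ is a Euclidean building, and the local structure at a vertex is a polar space), but both equivalences contain genuine gaps as argued. In (1)$\Leftrightarrow$(3), the passage from the local to the global midpoint inequality by ``chaining local inequalities along a geodesic'' does not work: the midpoint inequalities for consecutive pairs along a geodesic from $y$ to $z$ do not telescope to the inequality for $(y,z)$, because $\lceil (y+z)/2\rceil$ and $\lfloor (y+z)/2\rfloor$ are not obtained by composing the midpoints of subsegments. The paper obtains the global inequality (3) only \emph{through} (2): convexity of $\overline g$ on the CAT(0) complex gives $\overline g(x)+\overline g(y)\geq 2\,\overline g((x+y)/2)$, and the value $\overline g((x+y)/2)$ is then identified with $\{g(\lfloor (x+y)/2\rfloor)+g(\lceil (x+y)/2\rceil)\}/2$. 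Moreover, the converse ``unwinding'' is not a mere unwinding: the submodularity inequality for $p=[\underline p,\overline p]$, $q=[\underline q,\overline q]$ in ${\cal I}^*_x$ involves eight values of $g$, and deriving it from (3) requires applying (3) twice (first to $(\overline p,\overline q)$ and $(\underline p,\underline q)$, then to the resulting points) together with a case analysis identifying $p\sqcup q$ and $p\wedge q$ with iterated $\lceil\cdot\rceil,\lfloor\cdot\rfloor$ expressions (equation (\ref{eqn:circ-bullet}) in the paper); the pairs involved are at $d^{\varDelta}$-distance up to $2$, not confined to a single Boolean-gated set.

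In (2)$\Leftrightarrow$(3), the claim that, since $\overline g$ is affine on each orthoscheme, its convexity is equivalent to midpoint convexity at pairs of \emph{vertices} is false in general: convexity of a piecewise-affine function must be tested across faces, i.e., at pairs of points whose geodesic midpoints lie in the interiors of simplices, and these are not controlled by the vertex inequalities alone. That reduction is precisely the content of Qi's theorem (Theorem~\ref{thm:Qi88}) for bisubmodular functions on $[-1,1]^n$, which is what the paper actually invokes: (3)$\Rightarrow$(1) yields submodularity of $g^{*m}$ on each local polar frame, Theorem~\ref{thm:Qi88} then gives local convexity of $\overline g=\overline{g^{*m}}$, and the Tietze--Nakajima theorem upgrades local to global convexity. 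Finally, your concern that ``a direct appeal to the building case is unavailable'' is misplaced: since $K'(\varGamma)$ is isometric to $K(\varGamma^*)$ with $\overline g=\overline{g^*}$ (Lemma~\ref{lem:g*m=gbar}) and $\varGamma^*$ is a product of zigzag-oriented trees, applying Theorem~\ref{thm:building} to $g^*$ is exactly how the paper proves (1)$\Rightarrow$(2); you set this up in your first paragraph but then do not use it.
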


\begin{Ex}[L$^\natural$-convex function]\label{ex:Lnatural-convex}
	A linearly-oriented grid graph $\vec{\ZZ}^n$ is regarded as the product of directed paths (of infinite length).
	In this case, operations $\lceil \cdot \rceil$  and $\lfloor \cdot \rfloor$ on $\ZZ^n$
	coincide with the rounding up and down operators in (\ref{eqn:midpoint_original}), respectively.
Thus L$^\natural$-convex functions of \cite{MurotaBook} coincide with L-convex functions on $\vec{\ZZ}^n$.
\end{Ex}	
\begin{Ex}[Strongly tree-submodular function]\label{ex:tree-submo}
	Suppose that $T_1,T_2,\ldots, T_n$ are rooted trees, oriented from roots.
	In this case,  operations $\lceil \cdot \rceil$  and $\lfloor \cdot \rfloor$
	coincides with $\sqcup$ and $\sqcap$ in the sense of Kolmogorov~\cite{Kolmogorov11}.
	He introduced a {\em strongly-tree submodular function} as a function $g$ on 
	$\varGamma := T_1 \times T_2 \times \cdots \times T_n$ satisfying $g(x) + g(y) \geq g(x \sqcup y) + g(x \sqcap y)$ for $x,y \in \varGamma$.
	Thus strongly tree-submodular functions coincide with 
	L-convex functions on $\varGamma$ with respect to the rooted orientation.
\end{Ex}
\begin{Rem}
	In the case where $\varGamma$ is a Euclidean building or the product of oriented trees, thanks to the discrete midpoint convexity,
	Theorem~\ref{thm:L-optimality}
	holds without assumption (F). The proof is standard; 
	see the proof of \cite[Theorem 2.5]{HH14extendable}.
\end{Rem}

\section{Applications}\label{sec:applications}
In this section, we present applications 
of the results in previous sections to 
combinatorial optimization problems, such as multiflows, multiway cut, and related labeling problems.
We show that dual objective functions of several well-behaved multiflow problems~\cite{HH09JCTB,HH11folder,HH13tree_shaped,HH14bounded,HH15node_multi}
can be viewed as submodular/L-convex functions in suitable sense (Proposition~\ref{prop:multiflow_dual}).
This is a far-reaching generalization 
of a common knowledge in combinatorial optimization:
{\em the cut function, which is the dual objective of the max-flow problem, is submodular}.
We present an occurrence of L-extendable functions from the node-multiway cut problem  
(Example~\ref{ex:node_free_multi}).
Finally we apply the established iteration bound of SDA to
obtain strong polynomial time solvability of 
the 0-extension problems on orientable modular graphs (Theorem~\ref{thm:strong}), 
whereas the previous work \cite{HH150ext} showed only weak polynomiality.

\subsection{Multiflows}

An {\em undirected network} ${\cal N} = (V, E,c, S)$ consists of undirected graph $(V, E)$, 
an edge-capacity $c:E \to \RR_+$, and a specified set $S \subseteq V$ of nodes, called {\em terminals}.
Suppose that $V = \{1,2,\ldots,n\}$.
An {\em $S$-path} is a path connecting distinct terminals in $S$.
A {\em multiflow} is a pair $({\cal P}, \lambda)$ of a set ${\cal P}$ of $S$-paths and 
a nonnegative flow-value function $\lambda: {\cal P} \to \RR_+$ satisfying the capacity constraint:
$f(e) := \sum \{  \lambda(P) \mid P \in {\cal P}: \mbox{$P$ contains $e$} \} \leq c(e)$ for $e \in E$.

We first consider multiflow maximization problems, 
where the value of a multiflow is specified by a terminal weight.
Let $\mu$ be a nonnegative rational-valued function defined on the set of all distinct unordered pairs of terminals in $S$.
The {\em $\mu$-value} $\mu(f)$ of a multiflow $f$ is defined by
$
\mu(f) := \sum \mu(s,t) \lambda(P)$, where the sum 
is taken over all distinct $s,t \in S$ and all $(s,t)$-paths $P$.
Namely $\mu(s,t)$ is interpreted as the value of a unit $(s,t)$-flow.
The $\mu$-weighted maximum multiflow problem asks 
to find a multiflow of the maximum $\mu$-value.
The weight $\mu$ defines the class of problems.
For example, if $S = \{s,t\}$ and $\mu(s,t) = 1$, 
then the problem is the maximum flow problem.
There are several combinatorial min-max relations 
for special weights $\mu$, 
generalizing Ford-Fulkerson's max-flow min-cut theorem.
Examples include Hu's max-biflow min-cut theorem 
for the maximum 2-commodity flow problem ($\mu(s,t) = \mu(s',t') = 1$ and zero for other terminal pairs) 
and  the Lov\'asz-Cherkassky theorem 
for the maximum free multiflow problem ($\mu(s,t) = 1$ for $s,t \in S$).
See e.g., \cite{Kar89} and \cite[Section 73.3b]{SchrijverBook}
for further generalizations.
Continuing a pioneering work~\cite{Kar98a} by Karzanov, 
the author~\cite{HH09JCTB, HH11folder, HH14bounded} 
has developed a unified theory for these combinatorial dualities, 
which we describe below.

A {\em frame}~\cite{Kar98a} is an orientable modular graph 
without any isometric cycle of length greater than $4$.
An {\em oriented frame} is 
a frame endowed with an admissible orientation.
An oriented frame is exactly an oriented modular graph $\varGamma$
such that each pair $x,y$ of vertices with $x \sqsubseteq y$ has distance at most $2$, or equivalently, such that 
the corresponding orthoscheme complex $K'(\varGamma)$ is $2$-dimensional.
Such an orthoscheme complex is known as a {\em folder complex}~\cite{Chepoi00, HH11folder}.
Consider an oriented frame $\varGamma$.
A triple $(p,q,r)$ of distinct vertices is called a {\em triangle} 
if $p \leftarrow q \leftarrow r$ and $p \sqsubseteq r$ 
or $r \leftarrow q \leftarrow p$ and $r \sqsubseteq p$.
In an oriented frame $\varGamma$,
a vertex subset $X$ is said to be {\em normal}~\cite{HH11folder} if it satisfies:
\begin{description}
	\item[N1:] $X$ is $\varDelta'$-connected. 
	\item[N2:] For each triangle $(p,q,r)$, $\{p,q\} \subseteq X$ if and only if $\{q,r\} \subseteq X$.
	\item[N3:] For any distinct triangles $(p,q,r)$, $(p,q,r')$ sharing an edge $pq$, 
	if $\{p,r,r'\} \subseteq X$ then $q \in X$.
\end{description}

An {\em embedding} ${\cal E}$ of a terminal weight $\mu$ 
for ${\cal N} = (V, E, c, S)$ 
is a pair $(\varGamma, \{F_s\}_{s \in S})$ of an oriented frame $\varGamma$
and a family $\{F_s\}_{s \in S}$ of normal sets $F_s$ indexed by $s \in S$ 
such that
\begin{equation*}
\mu(s,t) = \min_{x \in F_s, y \in F_t} d_{\varGamma} (x,y) \quad (s,t \in S, s \neq t).
\end{equation*}
Define $\omega_{{\cal N}, {\cal E}}: \varGamma^n \to \overline{\RR}$ by
\[
x \mapsto \sum_{s \in S} [F_s](x_s) + \sum_{ij \in E} c(ij) d_{\varGamma}(x_i, x_j),
\] 
where $[F_s]$ is the indicator function of $F_s$.

\begin{Thm}[\cite{HH11folder}]\label{thm:embedding}
	Suppose that $\mu$ has an embedding ${\cal E} = (\varGamma, \{F_s\}_{s \in S})$.
	Then the maximum $\mu$-value of a multiflow in ${\cal N}$ is equal to the minimum of 
	$\omega_{{\cal N}, {\cal E}}(x)$ over all $x \in \varGamma^n$.
\end{Thm}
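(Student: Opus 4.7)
My plan is to prove Theorem~\ref{thm:embedding} via LP duality and a structural realizability argument.

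\textbf{LP duality setup.} The $\mu$-weighted maximum multiflow in ${\cal N}$ is the linear program $\max \sum_P \mu(s_P,t_P)\lambda_P$ subject to $\sum_{P\ni e}\lambda_P\le c(e)$ ($e\in E$) and $\lambda\ge 0$, where $s_P,t_P$ denote the endpoints of an $S$-path $P$. Its LP dual, after the standard shortest-path reformulation on dual edge weights, can be written as the metric program: minimize $\sum_{ij\in E} c(ij)\,\rho(i,j)$ over all semimetrics $\rho$ on $V$ satisfying $\rho(s,t)\ge \mu(s,t)$ for every pair $s\ne t$ in $S$.

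\textbf{Weak direction ($\le$).} For any $x\in\varGamma^n$ with $\omega_{{\cal N},{\cal E}}(x)<\infty$, necessarily $x_s\in F_s$ for every $s\in S$. The pullback $\rho_x(i,j):=d_\varGamma(x_i,x_j)$ is then a semimetric on $V$ with $\rho_x(s,t)\ge \min_{u\in F_s,\,v\in F_t}d_\varGamma(u,v)=\mu(s,t)$ and cost $\sum_{ij\in E}c(ij)\rho_x(i,j)=\omega_{{\cal N},{\cal E}}(x)$. Hence $\rho_x$ is dual-feasible with objective equal to $\omega_{{\cal N},{\cal E}}(x)$, and LP weak duality yields the bound $\max (\mu\text{-flow})\le\min_{x\in\varGamma^n}\omega_{{\cal N},{\cal E}}(x)$.

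\textbf{Strong direction ($\ge$).} For the reverse inequality I would first establish that $\omega_{{\cal N},{\cal E}}$ is L-convex on the oriented modular graph $\varGamma^n$: the terms $c(ij)d_\varGamma(x_i,x_j)$ are L-convex by Theorem~\ref{thm:d_is_L-convex}, each indicator $[F_s]$ is L-convex by Lemma~\ref{lem:indicator} once one verifies that the normality axioms N1--N3 force $F_s$ to be $d_\varGamma$-convex, and the sum is L-convex by Lemma~\ref{lem:sum_of_L}. A global minimizer $x^*$ is therefore obtainable, e.g.\ via SDA with the iteration bound of Theorem~\ref{thm:bound}. It then remains to produce an actual multiflow of value $\omega_{{\cal N},{\cal E}}(x^*)$: from the L-optimality criterion (Theorem~\ref{thm:L-optimality}) at $x^*$, one reads off a combinatorial complementary-slackness certificate, and uses it to pack $S$-paths in ${\cal N}$ whose $\varGamma$-images are $d_\varGamma$-geodesics from some $x^*_s\in F_s$ to some $x^*_t\in F_t$, with flow values saturating exactly the edges tight under $\rho_{x^*}$.

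\textbf{Main obstacle.} The crux is the primal construction in the strong direction --- realizing the dual value $\omega_{{\cal N},{\cal E}}(x^*)$ by an honest multiflow. Existence of the minimizer $x^*$ reduces to submodular minimization and is routine given the earlier framework, but converting it into packed $S$-paths uses the full geometric strength of the embedding hypothesis. The frame condition (no isometric cycle of length $>4$, so $K'(\varGamma)$ is a 2-dimensional CAT(0) folder complex) controls how $d_\varGamma$-geodesics branch and concatenate along edges of $E$, while the normality axioms N1--N3 ensure that the endpoint sets $F_s,F_t$ absorb fractional flow in a consistent way. These are precisely the geometric conditions that make $\rho_{x^*}$ sufficiently ``tight'' to be saturated; this realizability step is the combinatorial-geometric heart of the unified multiflow-duality framework of \cite{HH11folder}, generalizing Karzanov's classical theorem for orientable modular graphs, and is where the bulk of the proof resides.
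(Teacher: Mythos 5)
First, note that the paper does not prove Theorem~\ref{thm:embedding} at all: it is imported verbatim from \cite{HH11folder}, so there is no in-paper argument to compare against. Judged on its own terms, your proposal establishes only the easy half. The weak-duality direction is correct and essentially routine: any $x\in\varGamma^n$ with finite $\omega_{{\cal N},{\cal E}}(x)$ has $x_s\in F_s$, the pullback $\rho_x(i,j)=d_\varGamma(x_i,x_j)$ is a dual-feasible semimetric with $\rho_x(s,t)\ge\mu(s,t)$, and so $\max(\mu\text{-flow})\le\min_x\omega_{{\cal N},{\cal E}}(x)$. (Note that only the normality of the $F_s$ and the definition of an embedding are used here; none of the frame geometry enters.)

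The other inequality, which is the entire content of the theorem, is not proved in your proposal --- you name it as the ``main obstacle'' and defer it. Two specific problems with the plan you sketch. (i) L-convexity of $\omega_{{\cal N},{\cal E}}$ and the SDA machinery (Theorem~\ref{thm:bound}, Theorem~\ref{thm:L-optimality}) only help you \emph{find} a minimizer $x^*$ efficiently; they say nothing about whether $\min_x\omega_{{\cal N},{\cal E}}(x)$ equals the LP-dual optimum over \emph{all} feasible semimetrics, nor about primal realizability. Indeed, in this paper L-convexity of $\omega_{{\cal N},{\cal E}}$ is deduced (Proposition~\ref{prop:multiflow_dual}) \emph{after} and independently of Theorem~\ref{thm:embedding}, precisely because it is an algorithmic add-on, not an ingredient of the duality. (ii) The assertion that the L-optimality criterion at $x^*$ yields a ``combinatorial complementary-slackness certificate'' from which one can pack $S$-paths saturating the tight edges is exactly the statement to be proved, restated; no mechanism is given for why the restricted family of dual solutions $\{\rho_x\}_{x\in\varGamma^n}$ contains an optimal one, nor for why fractional path-packings achieving that value exist. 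The actual argument in \cite{HH11folder} goes through a continuous location problem on the $2$-dimensional folder complex $K'(\varGamma)$ (building on the tight-span duality of \cite{HH09JCTB}) together with a nontrivial discretization step showing the continuous optimum is attained at vertices; none of this is reconstructed or replaced in your proposal. As it stands, the proposal is an outline with the decisive step missing.
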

We verify that the dual objective functions are actually L-convex. 
\begin{Prop}\label{prop:multiflow_dual}
	Suppose that $\mu$ has an embedding ${\cal E} = (\varGamma, \{F_s\}_{s \in S})$.
	Then $\omega_{{\cal N}, {\cal E}}$ is L-convex on $\varGamma^n$.
\end{Prop}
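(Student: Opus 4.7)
The plan is to decompose $\omega_{{\cal N},{\cal E}}$ as a nonnegative combination of L-convex functions on $\varGamma^n$ and then invoke Lemma~\ref{lem:sum_of_L} (closedness under nonnegative sum) together with the closedness of L-convexity under direct sums. Write
\[
\omega_{{\cal N},{\cal E}}(x) \;=\; \sum_{s \in S} g_s(x) \;+\; \sum_{ij \in E} c(ij)\, h_{ij}(x),
\]
where $g_s(x) := [F_s](x_s)$ and $h_{ij}(x) := d_\varGamma(x_i,x_j)$. Since $c(ij) \geq 0$, it suffices to verify that each $g_s$ and each $h_{ij}$ is L-convex on $\varGamma^n$, and that the common domain is $\varDelta'$-connected.

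For each distance term $h_{ij}$, identify $\varGamma^n \simeq (\varGamma \times \varGamma) \times \prod_{k \neq i,j} \varGamma$. By Theorem~\ref{thm:d_is_L-convex}, $d_\varGamma$ is L-convex on $\varGamma \times \varGamma$; the constant zero function is trivially L-convex on $\prod_{k \neq i,j} \varGamma$; hence $h_{ij}$ is their direct sum and thus L-convex on $\varGamma^n$. For each indicator term $g_s$, the same direct-sum device reduces the problem to showing that $[F_s] : \varGamma \to \overline\RR$ is L-convex on $\varGamma$. By Lemma~\ref{lem:indicator}, it is enough to verify that the normal set $F_s$ is $d_\varGamma$-convex.

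The $\varDelta'$-connectivity of $\dom \omega_{{\cal N},{\cal E}} = \prod_{i \in V} F_i$ (where $F_i := F_s$ if $i = s \in S$ and $F_i := \varGamma$ otherwise) then follows: each factor is $\varDelta'$-connected (by condition N1 for the $F_s$, trivially for $\varGamma$), and by Lemma~\ref{lem:product_subdivision}~(2) the relation $\sqsubseteq$ on $\varGamma^n$ is componentwise, so coordinate-by-coordinate concatenation of $\varDelta'$-paths yields a $\varDelta'$-path in the product.

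The main obstacle is therefore the geometric lemma that, in an oriented frame $\varGamma$, every subset $X$ satisfying the normality conditions N1--N3 is $d_\varGamma$-convex. The intended argument runs via the folder complex $K'(\varGamma)$: given $x,y \in X$ and $z \in I(x,y)$, one selects an $xy$-geodesic passing through $z$ and shows, step by step along this geodesic, that membership in $X$ propagates. At each step one is in one of two local configurations---either a $\sqsubseteq$-edge (handled by N1) or a triangle configuration in the folder sense (handled by N2, and by N3 when two triangles meet along a shared edge). Since in a frame any two vertices $x \sqsubseteq y$ lie at distance at most $2$, these local cases exhaust all possibilities, and iterating yields $z \in X$. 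This verification is in the spirit of the structural results of \cite{HH11folder} on the folder-complex geometry of oriented frames, which is where I would look for the precise statement to cite.
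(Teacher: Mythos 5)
Your decomposition of $\omega_{{\cal N},{\cal E}}$ into distance terms and indicator terms, and the reduction via Lemma~\ref{lem:sum_of_L}, Theorem~\ref{thm:d_is_L-convex}, and direct sums to the single claim that the indicator $[F]$ of a normal set $F$ is L-convex, is exactly the paper's first step and is fine. The gap is in how you handle that remaining claim: you propose to invoke Lemma~\ref{lem:indicator} and therefore need that every normal set is $d_{\varGamma}$-convex. This is false. Take $\varGamma$ to be a $4$-cycle $(x_1,x_2,x_3,x_4)$ with the admissible orientation making $x_1$ the minimum and $x_3$ the maximum of a rank-$2$ Boolean interval, so that $x_1 \sqsubseteq x_3$. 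The set $X=\{x_1,x_3\}$ satisfies N1 (it is a $\sqsubseteq$-pair), N2 (no triangle has two of its three vertices in $X$, since $x_2,x_4\notin X$), and N3 (vacuously), hence is normal; but $I(x_1,x_3)$ is the whole $4$-cycle, so $X$ is not $d_{\varGamma}$-convex. So the ``precise statement'' you hope to cite from \cite{HH11folder} does not exist: $d$-convexity is a sufficient but not necessary condition for an indicator to be L-convex, and normal sets live outside it. (Indeed $[X]$ above \emph{is} L-convex, since the only pairs in $X\cap {\cal F}_{x_1}$ are comparable.)

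What the paper does instead: it first passes to the barycentric subdivision $\varGamma^*$, observing that $\dom [F]^*$ is again normal in the well-oriented graph $\varGamma^*$, so one may assume $\varGamma$ is well-oriented. Then, by Proposition~\ref{prop:L-convex=submodular}, L-convexity of $[F]$ reduces (given N1 for the $\varDelta'$-connectivity) to showing that $[F]$ is submodular on each principal filter ${\cal F}_x$ and ideal ${\cal I}_x$ with $x\in F$, i.e.\ that for $p,q\in {\cal F}_x\cap F$ one has $p\wedge q\in F$ and ${\cal E}(p,q)\subseteq F$. Since $\varGamma$ is a frame, ${\cal F}_x$ has rank at most $2$, and a short case analysis on the ranks of $p$ and $q$ closes the argument: N2 applied to triangles of the form $(x,p,p\vee q)$ handles the joins, and N3 applied to the two triangles $(x,p\wedge q,p)$ and $(x,p\wedge q,q)$ handles the meet when $p\wedge q\neq x$. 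If you replace your $d$-convexity step with this local submodularity check, the rest of your argument goes through.
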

\begin{proof}
	By Lemma~\ref{lem:sum_of_L} and Proposition~\ref{thm:d_is_L-convex}, 
	it suffices to show that the indicator function $[F]$ of any normal set $F$ is L-convex.
	Consider $\varGamma^*$. 
	Observe that $\dom [F]^*$ is normal in well-oriented $\varGamma^*$.
	Thus we can assume that $\varGamma$ is well-oriented.
	Take an arbitrary vertex $x$ in $F$.
	By Proposition~\ref{prop:L-convex=submodular} and N1, 
	it suffices to show that $[F]$ is submodular on ${\cal F}_x$ (and ${\cal I}_{x}$).
	Take $p,q \in {\cal F}_x \cap F$.  
	We prove the claim by showing $p \wedge q \in F$ and ${\cal E}(p,q) \subseteq F$.
	We can assume that $p$ and $q$ are incomparable.
	Notice that the rank of ${\cal F}_x$ is at most $2$.
	Suppose that both $p$ and $q$ have rank $1$.
	Then $p \wedge q = x \in F$. 
	If $p \vee q$ exists (i.e., ${\cal E}(p,q) = \{ p \vee q \}$), 
	then $(x, p, p\vee q)$ is a triangle, 
	and by N2 with $x, p \in F$, 
	we have $p \vee q  \in F$.
    If $p \vee q$ does not exist,   
	then ${\cal E}(p,q) = \{p,q\} \subseteq F$.
	Suppose that $p$ has rank $2$ and $q$ has rank $1$. 
	Then $p \wedge q = x \in F$, 
	and $u \in {\cal E}(p,q) \setminus \{p,q\}$ (if exists)
	is the join of $q$ and the rank 1-element $p' \in [x,p]$; 
	apply N2 to triangle $(x,q,u)$ to obtain $u \in F$.
	Suppose that both $p$ and $q$ have rank $2$.
	Then ${\cal E}(p,q) = \{p,q\} \subseteq F$.
	Suppose that $p \wedge q \neq x (\in F)$.
	Then $(x, p \wedge q, p)$ and $(x, p \wedge q, q)$ are triangle with $x,p,q \in F$.
	By N3, we obtain $p \wedge q \in F$, as required. 
\end{proof}
Papers \cite{HH09JCTB, HH11folder, HH14bounded} contain various examples 
of multiflow combinatorial dualities and constructions of embeddings.
Now they all fall into our theory of L-convexity. 
Other examples of combinatorial multiflow dualities are given.
\begin{Ex}[{Minimum-cost node-demand multiflow problem}]
Suppose that the network ${\cal N}$ has an edge-cost $a: E\to \ZZ_+$ and a node-demand $r:S \to \RR_+$ on terminal set $S$.
A multiflow $f = ({\cal P},\lambda)$ 
is said to be {\em feasible} if for each $s \in S$, 
the sum of $\lambda(P)$ over all paths $P \in {\cal P}$ connecting $s$
is at least $r(s)$.
The problem is to find a feasible multiflow $f$ of the minimum cost $\sum_{e \in E}a(e)f(e)$.
This problem was introduced by 
Fukunaga~\cite{Fukunaga14} in connection with a class of network design problems.

The dual of this problem can be formulated as an optimization over 
the product of {\em subdivided stars}, which is constructed as follows.
For $s \in S$
let $\varGamma_s$ be a path of infinite length and one end vertex $v_s$.
Consider the disjoint union  $\bigcup_{s \in S} \varGamma_s$ 
and identify all $v_s$ into one vertex $0$.
The resulting graph is denoted by $\varGamma$, and the edge-length is defined as $1/2$ uniformly.

Then the minimum value of the problem is equal to the maximum of 
\[
 \sum_{s \in S} r(s) d(x_s, 0)  - [\varGamma_s](x_s)  - \sum_{ij \in E} c(ij) \max \{ d(x_i,x_j) - a(ij), 0 \}
\] 
over all $x = (x_1,x_2,\ldots,x_n) \in \varGamma^n$~\cite{HH14extendable}.
The negative of the dual objective is an L-convex function on $\varGamma^n$ if $\varGamma$ 
is endowed with the zigzag orientation.
In \cite{HH14extendable}, SDA is applied to this objective function, where
each local problem reduces to minimizing a network-representable $k$-submodular function, and 
is efficiently solved by minimum cut computation~\cite{IWY14}.
Combined with domain scaling technique,
we obtain the first combinatorial polynomial time algorithm 
to find a minimum-cost feasible multiflow. 
\end{Ex}
\begin{Ex}[{Node-capacitated free multiflow problem}]\label{ex:node_free_multi}
	Suppose that the network ${\cal N}$ has a node-capacity $b: V \setminus S \to \RR_+$ instead of an edge-capacity $c$, where
	a multiflow $f = ({\cal P}, \lambda)$ should satisfy the node-capacity constraint:
	$
	\sum \{ \lambda(P) \mid  P \in {\cal P}: \mbox{$P$ contains node $i$} \} \leq b(i)$ 
	for $i \in V \setminus S$.
	The node-capacitated free multiflow problem asks to a find a multiflow $f = ({\cal P},\lambda)$ of 
	the maximum total flow-value $\sum_{P \in {\cal P}} \lambda(P)$.
	This problem was considered by Garg, Vazirani, and  Yannakakis~\cite{GVY04} as the dual of an LP-relaxation of the node-multiway cut problem; 
	see Section~\ref{subsec:0ext} below. 
	They showed the (dual) half-integrality, and 
	designed a 2-approximation algorithm for node-multiway cut; see \cite[Section 19.3]{Vazirani}. 
	
	We here reformulate this dual half-integrality according to \cite{HH13tree_shaped,HH15node_multi}.
	Consider a star $\varGamma$ with center $v_0$ 
	and leaf set $\{v_s \mid s \in S\}$ indexed by $S = \{1,2,\ldots,k\}$, 
	where the edge-length is defined as $1/2$ uniformly.
	Consider further the subdivision $\varGamma^*$, 
	where the midpoint between $v_0$ and $v_s$ is denoted by $\bar v_s$.
	Consider points in $\varGamma^* \times \{0,1/4,1/2\}$: 
	\[
	(v_0, 0), (v_0, 1/2), (v_{1}, 0), (v_{2}, 0),\ldots, (v_{k},0),  (\bar v_{1},1/4),  (\bar v_{2},1/4), \ldots, (\bar v_{k},1/4).  
	\]
	A partial order on these points is given  
	by $(v_s,0) \rightarrow (\bar v_s,1/4) \leftarrow (v_0,1/2)$ 
	and $(\bar v_s,1/4) \rightarrow (v_0,0) $ for $s \in S$.
	Let ${\cal S}_{2,k}^+$ denote the resulting modular semilattice, which is a subsemilattice of polar space ${\cal S}_{2,k}$ (Example~\ref{ex:S_kl}).

	The maximum flow-value of a multiflow is equal to the minimum of
	$\sum_{i \in V \setminus S} 2b(i) r_i$ over all 
	$(p_i,r_i) \in {\cal S}_{2,k}^+$ for $i \in V$
	satisfying 
	\begin{equation}\label{eqn:ri+rj}
		r_i + r_j \geq d_{\varGamma^*}(p_i, p_j)  \quad (ij \in E).
	\end{equation}
	This dual objective is viewed as a $(2,k)$-submodular function 
	(Remark~\ref{rem:kl-submo}). 
	An equivalent statement of this fact 
	was presented by Yoichi Iwata at the SAOR seminar in November 9, 2013. 
	
	In \cite{HH15node_multi}, we showed that
	this dual objective is further perturbed into  
	an L-convex function on a Euclidean building so to as each local problem reduces to an easy submodular flow problem.
    Then our SDA yields
	the first combinatorial strongly polynomial time algorithm for 
	the maximum node-capacitated free multiflow problem, 
	which in turn implies the first combinatorial strongly polynomial time
	implementation of Garg-Vazirani-Yannakakis algorithm
	for the node-multiway cut problem.
\end{Ex}

\subsection{Multiway cut and 0-extension}\label{subsec:0ext}

For an edge-capacitated network ${\cal N} = (V, E, c, S)$, 
an {\em (edge-)multiway cut}  is a set $F$ of edges such that 
every $S$-path meets $F$.
The multiway cut problem is to find a multiway cut 
$F$ of the minimum capacity $\sum_{e \in F}c(e)$.
As mentioned in \cite{GK13,IWY14}, 
the multiway cut problem has a natural $k$-submodular relaxation 
(see Example~\ref{ex:k-subrel}) 
that is the dual to the maximum free multiflow problem; see \cite[Example 2.17]{HH14extendable}.

An analogous relation holds in the node-capacitated setting.
Suppose that $b$ is a node-capacity function on $V \setminus S$.
A {\em node-multiway cut} is a subset $C$ of nodes
such that every $S$-path meets $C$.
The node-multiway cut problem is 
to find a node-multiway cut $C$ of the minimum capacity $b(C) := \sum_{i \in V\setminus S} b(i)$.
This problem can be viewed as  
an L-extendable function minimization on ${{\cal S}_{2,k}}^n$.
Recall the notation in Example~\ref{ex:node_free_multi}.
For a node-multiway cut $C$, define $(p,r) = ( (p_i,r_i): i \in V) \in ({\cal S}_{2,k}^+)^n$ 
by $(p_i,r_i) := (v_0, 1/2)$ if $i \in C$,
$(p_i,r_i) := (v_s, 0)$ if $i$ and terminal 
$s$ belong to the same component in the network obtained by deleting $C$,
and $(p_i,r_i) :=(v_{t},0)$ 
for an arbitrary fixed $t \in S$ otherwise.
Then $(p,r)$ satisfies (\ref{eqn:ri+rj}) and $\sum_{i} 2 b(i) r_i = b(C)$,  and
is maximal in ${{\cal S}_{2,k}}^n$.
Conversely, 
from a maximal $(p,r) \in ({\cal S}_{2,k}^+)^n$ satisfying (\ref{eqn:ri+rj}),
we obtain a node-multiway cut $C := \{i \mid (p_i, r_i) = (v_0,1/2)\}$ with $\sum_{i} 2 b(i) r_i = b(C)$.
Thus the objective function of the node-multiway cut problem is 
the restriction of the L-convex function on $({\cal S}_{2,k})^n$ in Example~\ref{ex:node_free_multi} 
to $(K_{2,k})^n$.

We next discuss a generalization of the edge-multiway cut problem.
The {\em minimum 0-extension problem} is: 
Given an input $I$ consisting of 
a number $n$ of variables, undirected graph $\varGamma$, 
nonnegative weights $b_{iv}$ $(i \in \{1,2,\ldots,n\}, v \in \varGamma)$ and $c_{ij}$ $(1 \leq i < j \leq n)$, 
find $x = (x_1,x_2,\ldots, x_n) \in \varGamma^n$ that minimizes the function 
$D_{I}: \varGamma^n \to \RR_+$ defined by
\[
x \mapsto \sum_{i=1}^n \sum_{v \in \varGamma} b_{iv} d_{\varGamma}(x_i, v) + \sum_{1 \leq i < j \leq n} c_{ij} d_{\varGamma}(x_i, x_j). 
\] 
Observe that the multiway cut problem corresponds to $\varGamma = K_k$.
The following complexity dichotomy theorem was
the starting point of our theory.
\begin{Thm}\label{thm:dichotomy}
	\begin{description}
		\item[{\rm (1)}] If $\varGamma$ is orientable modular, 
		then minimum 0-extension problem can be solved in polynomial time~{\rm \cite{HH150ext}}.
		\item[{\rm (2)}] If $\varGamma$ is not orientable modular, 
		then minimum 0-extension problem is NP-hard~{\rm \cite{Kar98a}}.
	\end{description}
\end{Thm}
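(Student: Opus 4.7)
The plan is to prove the two parts separately. Part~(1) admits a constructive proof through the L-convex minimization machinery of Section~\ref{sec:L-convex}, whereas Part~(2) is a reduction-based hardness statement due to Karzanov that I would invoke structurally.

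For Part~(1), first fix any admissible edge-orientation of $\varGamma$, which exists by orientable modularity; then $\varGamma$, and hence $\varGamma^n$ by closure under products, is an oriented modular graph. The core step is to recognize $D_I : \varGamma^n \to \overline{\RR}$ as an L-convex function. The pairwise terms $c_{ij}\, d_\varGamma(x_i, x_j)$ are nonnegative multiples of the distance function on $\varGamma \times \varGamma$, which is L-convex by Theorem~\ref{thm:d_is_L-convex}; and the unary terms $b_{iv}\, d_\varGamma(x_i, v)$ can be handled either directly, as distances to a fixed vertex, or by augmenting the domain with auxiliary variables constrained to $v$ via the indicator of the singleton $\{v\}$ (L-convex by Lemma~\ref{lem:indicator}). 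Nonnegative summation by Lemma~\ref{lem:sum_of_L} then delivers L-convexity of $D_I$. Now invoke SDA (Section~\ref{subsec:basics_L-convex}): each iteration minimizes $D_I$ over ${\cal F}'_x \cup {\cal I}'_x$, a union of products of modular semilattices on which $D_I$ is submodular by Proposition~\ref{prop:locally-submo}, hence solvable in strongly polynomial time by the VCSP tractability result Theorem~\ref{thm:vscp}. The iteration count is bounded by $d^{\varDelta}(x, \mathrm{opt}(D_I)) + 2$ through Theorem~\ref{thm:bound}, and by Lemma~\ref{lem:product_thickening} the thickened distance $d^{\varDelta}$ on $\varGamma^n$ is the coordinate-wise maximum of $d_\varGamma^{\varDelta}$, hence at most $\mathrm{diam}(\varGamma^{\varDelta}) \leq |V(\varGamma)|$. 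Combining the per-iteration cost with this bound yields a polynomial-time overall algorithm.

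For Part~(2), I would appeal to Karzanov's NP-hardness reductions from \cite{Kar98a}. A graph $\varGamma$ fails to be orientable modular exactly when it either violates modularity (so some metric triangle or quadrangle configuration is missing, yielding a forbidden subgraph obstruction) or is modular but admits an induced $K_{3,3}^-$ blocking every admissible orientation. In each case one encodes a known NP-hard problem---most naturally the multiway cut on $K_3$, itself the 0-extension problem on the triangle---by engineering the weights $b_{iv}, c_{ij}$ to concentrate optimal $x_i$ on a small witnessing substructure of $\varGamma$ that simulates the hard instance.

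The main obstacle lies in Part~(2): isolating a clean structural certificate of non-orientable-modularity that enables a uniform NP-hardness reduction is delicate and requires Karzanov's careful case analysis across the various forbidden configurations. Part~(1), by contrast, is essentially plug-and-play once L-convexity of $D_I$ has been recognized, given the SDA framework, the VCSP tractability, and the sharp $d^{\varDelta}$-iteration bound already established in Section~\ref{sec:L-convex}.
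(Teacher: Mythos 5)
This theorem is quoted, not proved, in the paper: part (1) is attributed to \cite{HH150ext} and part (2) to \cite{Kar98a}, and the text immediately afterwards records that the algorithm of \cite{HH150ext} is SDA \emph{with capacity scaling} and is only weakly polynomial. So your route for part (1) --- bounding the number of SDA iterations by $d^{\varDelta}$ via Theorem~\ref{thm:bound} --- is not the cited proof; it is essentially the argument the paper uses later to establish the \emph{stronger} Theorem~\ref{thm:strong} (strong polynomiality). That is a legitimate and in fact better route, but as written it has a genuine gap: Theorem~\ref{thm:bound} is stated only for \emph{well-oriented} graphs, and $\varGamma^n$ with an arbitrary admissible orientation of $\varGamma$ need not be well-oriented (e.g.\ $\vec{\ZZ}$ is not). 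The fix, which is exactly what the proof of Theorem~\ref{thm:strong} and the remark after Theorem~\ref{thm:bound} do, is to pass to the barycentric subdivision $\varGamma^*$ (well-oriented by Lemma~\ref{lem:well-oriented}), run SDA on the exact L-convex relaxation $D_{I^*}$, and recover an optimizer of $D_I$ from a minimizer $[x,y]$ of $D_{I^*}$; one also needs that $\varGamma^*$ has polynomially many vertices, which the paper notes. Your treatment of the unary terms via auxiliary variables pinned by singleton indicators is fine, and the per-iteration reduction to submodular VCSP via Proposition~\ref{prop:locally-submo} and Theorem~\ref{thm:vscp} matches the paper.

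For part (2) both you and the paper simply defer to Karzanov. Your sketch of how the reduction "would go" is not a proof and is somewhat misleading as stated --- non-orientable-modular graphs do not in general contain a substructure that directly simulates multiway cut on $K_3$, and Karzanov's argument proceeds through a different and substantially more involved case analysis --- but since you explicitly invoke \cite{Kar98a} rather than claim to reprove it, this matches the paper's treatment and is acceptable.
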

The polynomial solvability is based on the VCSP-tractability of 
submodular functions and SDA of L-convex functions, 
applied to the following fact: 
\begin{Prop}[\cite{HH150ext}]\label{prop:D(I)}
	If $\varGamma$ is oriented modular, 
	then $D_{I}$ is L-convex on $\varGamma^n$.
\end{Prop}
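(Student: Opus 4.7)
The plan is to decompose $D_I$ as a nonnegative combination of simpler L-convex building blocks and invoke Lemma~\ref{lem:sum_of_L} (closedness under nonnegative sums). Write
\[
D_I(x) \;=\; \sum_{i,v} b_{iv}\, A_{iv}(x) \;+\; \sum_{i<j} c_{ij}\, B_{ij}(x),
\]
where $A_{iv}(x):=d_\varGamma(x_i,v)$ and $B_{ij}(x):=d_\varGamma(x_i,x_j)$, each regarded as a function on $\varGamma^n$. It then suffices to prove that each $A_{iv}$ and each $B_{ij}$ is L-convex on $\varGamma^n$.

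First I treat the pairwise terms $B_{ij}$. By Theorem~\ref{thm:d_is_L-convex}, $d_\varGamma$ is L-convex on $\varGamma\times\varGamma$. The function $B_{ij}$ is obtained from $d_\varGamma$ by forming the direct sum with the constant zero function on the $n-2$ remaining coordinates (up to a permutation of coordinates). Since constant functions are L-convex and the class of L-convex functions is closed under direct sums (cf.\ the paragraph after the definition, via Lemma~\ref{lem:product_subdivision}(2)), $B_{ij}$ is L-convex on $\varGamma^n$.

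The unary terms require more care. The main step is to show that the one-variable function $h_v(x):=d_\varGamma(x,v)$ is L-convex on $\varGamma$; once this is done, $A_{iv}$ is again obtained from $h_v$ by a direct sum with zeros and is therefore L-convex on $\varGamma^n$. I would prove the L-convexity of $h_v$ by a slicing argument: the domain $\dom h_v=\varGamma$ is $\varDelta'$-connected (edges induce covers, hence $\sqsubseteq$-relations). For each $x\in\varGamma$, I need $h_v^*$ to be submodular on the neighborhood semilattice $\mathcal{I}^*_x$. By Lemma~\ref{lem:product_subdivision}(1) one has $(\varGamma\times\varGamma)^*=\varGamma^*\times\varGamma^*$, and hence the neighborhood semilattice at $(x,v)$ decomposes as a product $\mathcal{I}^*_{(x,v)}=\mathcal{I}^*_x\times\mathcal{I}^*_v$. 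Theorem~\ref{thm:d_is_L-convex} guarantees that $d_\varGamma^*$ is submodular on this product. The slice $\mathcal{I}^*_x\times\{\{v\}\}$ is closed under meet and under the fractional join: by Proposition~\ref{prop:frac_join_prod} the fractional join of a product decomposes componentwise, while $\{v\}$ is the minimum element of $\mathcal{I}^*_v$, so any fractional join of $\{v\}$ with itself equals $\{v\}$ with total weight $1$. Restricting the submodularity inequality to this slice yields exactly the submodularity of $h_v^*$ on $\mathcal{I}^*_x$, since $d_\varGamma^*(\,\cdot\,,\{v\})=h_v^*(\,\cdot\,)$. This establishes the L-convexity of $h_v$.

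The anticipated difficulty is precisely this last step: although it is intuitively obvious that ``fixing a coordinate preserves L-convexity,'' the notion of L-convexity here is defined through a rather intricate fractional-join submodularity, and one must verify explicitly that the slice is a subsemilattice closed under the product fractional join so that the product submodularity restricts correctly. Once the slicing lemma for $h_v$ is in place, the overall assembly---nonnegative combination of L-convex pieces---is routine and completes the proof by Lemma~\ref{lem:sum_of_L}.
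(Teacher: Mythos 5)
Your proof is correct. Note that the paper itself gives no proof of this proposition---it is cited from \cite{HH150ext}---so there is nothing to match it against line by line, but your route (decompose $D_I$ into unary and binary distance terms, reduce each to Theorem~\ref{thm:d_is_L-convex} via direct sums with constants, and assemble with Lemma~\ref{lem:sum_of_L}) is the natural one and is essentially how the cited work proceeds. The one step you rightly flag as the crux, the slicing lemma for $h_v=d_\varGamma(\cdot,v)$, does hold, and in fact it is even more immediate than your argument via Proposition~\ref{prop:frac_join_prod} suggests: working directly with the $\sum_{u\in{\cal E}(p,q)}[C(u;p,q)]f(u)$ form of inequality (\ref{eqn:submodular}), one has $I\bigl((p,c),(q,c)\bigr)=I(p,q)\times\{c\}$ with $r\bigl((u,c);(p,c),(q,c)\bigr)=r(u;p,q)$, so $\Conv I$, ${\cal E}$, and the normal cones restrict verbatim and the submodular inequality for the slice is literally the submodular inequality of the product at $\bigl((p,c),(q,c)\bigr)$; no appeal to $\{v\}$ being the minimum of ${\cal I}^*_v$ is needed (any fixed element works, since $\theta(c,c)=c$). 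The remaining identifications you use---$\mathcal{I}^*_{(x,v)}=\mathcal{I}^*_x\times\mathcal{I}^*_v$ from Lemma~\ref{lem:product_subdivision}(1), $d^*_\varGamma(\cdot,\{v\})=h_v^*(\cdot)$ from Theorem~\ref{thm:G->G*}, and $\varDelta'$-connectivity of $\varGamma$ from the fact that each edge spans a two-element Boolean-gated set---are all sound, so the assembly by Lemma~\ref{lem:sum_of_L} (extended to finitely many summands by induction) completes the proof.
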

The algorithm in~\cite{HH150ext} is based on SDA with capacity scaling, and is weakly polynomial.
By using the $l_{\infty}$-bound (Theorem~\ref{thm:bound}), 
we show the strongly polynomial time solvability.
\begin{Thm}\label{thm:strong}
	The minimum 0-extension problem on orientable modular graph $\varGamma$
	can be solved in strongly polynomial time.
\end{Thm}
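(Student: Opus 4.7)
The plan is to run the steepest descent algorithm (SDA) on the L-convex function $D_I$ of Proposition~\ref{prop:D(I)} and combine the VCSP-tractability at each iteration (Theorem~\ref{thm:vscp}) with the sharp iteration bound of Theorem~\ref{thm:bound}. First I would fix an admissible edge-orientation of $\varGamma$; if the resulting oriented graph is not well-oriented, I pass to the barycentric subdivision $\varGamma^*$, which is well-oriented by Lemma~\ref{lem:well-oriented}. Proposition~\ref{prop:L_is_L-extendable} combined with Lemma~\ref{lem:product_subdivision}(1) makes $D_I^*$ an L-convex function on $(\varGamma^*)^n = (\varGamma^n)^*$, and the remark following Theorem~\ref{thm:bound} shows that any minimizer $([x_i,y_i])_i$ of $D_I^*$ yields minimizers $(x_i)_i$ and $(y_i)_i$ of $D_I$. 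Hence I may assume $\varGamma$ is well-oriented at the cost of only a quadratic blow-up in the vertex count; all pairwise distances $d_\varGamma$ are precomputed once in strongly polynomial time.

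For the local step, minimizing $D_I$ over ${\cal F}'_x \cup {\cal I}'_x$ in $\varGamma^n$ splits into two minimizations, each over a product of principal $\sqsubseteq$-filters (resp. ideals) by Lemma~\ref{lem:product_subdivision}(2), hence over a product of complemented modular semilattices by Proposition~\ref{prop:ideal}(2). The objective $D_I$ decomposes into unary terms $\sum_v b_{iv} d_\varGamma(\cdot, v)$ and binary terms $c_{ij} d_\varGamma(\cdot, \cdot)$; applying Proposition~\ref{prop:locally-submo} to the L-convex distance $d_\varGamma$ on $\varGamma\times\varGamma$ (Theorem~\ref{thm:d_is_L-convex}) shows each binary term is submodular on the corresponding filter-product, and restricting to a slice then yields the submodularity of each unary term. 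The resulting VCSP-instance has $O(n^2)$ submodular constraints with tables of size polynomial in $|V(\varGamma)|$ and is solved in strongly polynomial time by Theorem~\ref{thm:vscp}.

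For the iteration count, Theorem~\ref{thm:bound} bounds the number of SDA iterations by $d^\varDelta(x^{(0)}, \opt(D_I)) + 2$, with the hypothesis (F) holding since $\varGamma$ is finite. By Lemma~\ref{lem:product_thickening} we have $(\varGamma^n)^\varDelta = (\varGamma^\varDelta)^{\otimes n}$, and strong-product distances are componentwise maxima, so $d^\varDelta(x^{(0)}, \opt(D_I)) \leq \diam(\varGamma^\varDelta) \leq \diam(\varGamma) \leq |V(\varGamma)|-1$. Thus SDA terminates in $O(|V(\varGamma)|)$ iterations, each strongly polynomial, giving the claimed strongly polynomial running time. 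The main obstacle, conceptually, is the iteration bound itself: the weakly polynomial algorithm of~\cite{HH150ext} resorted to capacity-scaling because no graph-theoretic bound was available, whereas here one directly appeals to Theorem~\ref{thm:bound}, and the only point requiring care is the passage to $\varGamma^*$ to enforce well-orientation so that Theorem~\ref{thm:bound} is applicable.
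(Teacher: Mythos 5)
Your proposal is correct and follows essentially the same route as the paper: pass to the barycentric subdivision $\varGamma^*$ to enforce well-orientation, observe that the lifted function is an exact L-convex relaxation of $D_I$, run SDA with each local step solved as a submodular VCSP via Theorem~\ref{thm:vscp}, and bound the number of iterations by the thickening diameter via Theorem~\ref{thm:bound} and Lemma~\ref{lem:product_thickening}. The only cosmetic difference is that the paper packages the relaxation as a new 0-extension instance $I^*$ on $\varGamma^*$ (with zero weights on new vertices) and invokes Proposition~\ref{prop:D(I)}, whereas you invoke Proposition~\ref{prop:L_is_L-extendable} directly; these yield the same function.
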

\begin{proof}
	Consider the barycentric subdivision $\varGamma^*$, and the minimum 0-extension problem for instance 
	$I^* = (n,\varGamma^*,\{b_{iv}\}, \{c_{ij}\})$, 
	where $b_{iv} := 0$ for $v \in \varGamma^* \setminus \varGamma$. 
	Notice that $\varGamma^*$ has $O(|\varGamma|^2)$ vertices by Proposition~\ref{lem:om_Bgated}, and can be constructed in time polynomial of $|\varGamma|$; see~\cite[Lemma 3.7]{CCHO14}.
	By Proposition~\ref{prop:D(I)}, the objective function $D_{I^*}$ is L-convex.
	By Theorem~\ref{thm:G->G*}, it holds
	$
	D_{I^*} (u) = 
	(D_{I} (x) + D_{I}(y))/2,  
	$
	where $u = ([x_1,y_1],[x_2,y_2],\ldots,[x_n,y_n])$,
	$x = (x_1,x_2,\ldots,x_n)$, and $y = (y_1,y_2,\ldots,y_n)$.
	This means that $D_{I^*}$ is an exact L-convex relaxation　of $D_I$.
	Therefore, from an optimal solution of the relaxation, 
	we can obtain an optimal solution of the original problem.
	
	Apply SDA to solve $I^*$, 
	where each local problem is a submodular VCSP, and
	can be solved in strongly polynomial time by Theorem~\ref{thm:vscp}. 
	Now $\varGamma^*$ is well-oriented. 
	By Theorem~\ref{thm:building} and Lemma~\ref{lem:product_thickening}, 
	the number of iterations is bounded by
	the diameter of $\varGamma^* ( \leq |\varGamma|^2)$. 
	Thus the whole time complexity is polynomial in $n$ and $|\varGamma|$.
\end{proof}

\begin{Rem}
	The minimum 0-extension problem on an swm-graph $G$ is viewed 
	as an L-extendable function minimization on $G^n$. 
	Indeed, an L-convex relaxation is obtained by relaxing $G$ to $G^*$ 
	(Proposition~\ref{prop:d_is_L-extendable}).
	The relaxed problem is polynomially solvable, 
	and is a kind of a half-integral relaxation.
	In \cite[Section 6.9]{CCHO14}, we designed a 2-approximation rounding scheme based on this relaxation.
	This generalizes 
	the classical 2-approximation algorithm~\cite[Algorithm 4.3]{Vazirani} for edge-multiway cut.
\end{Rem}

\section{Proofs}\label{sec:proofs}

We first note structural properties of interval $I(p,q)$ 
of a modular semilattice ${\cal L}$, and behavior of submodular functions on $I(p,q)$. 
\begin{Lem}[{\cite[Lemmas 3.11 and 3.12]{HH150ext}}]\label{lem:E(p,q)}
	Let $u,u' \in {\cal E}(p,q)$.  
	\begin{description}
		\item[{\rm (1)}] If $r(u \wedge p) \geq r(u' \wedge p)$, 
		then $u \wedge p \succeq u' \wedge p$ and $u \wedge u' = (u' \wedge p) \vee (u \wedge q)$.
		\item[{\rm (2)}] $p \vee_L u = p \vee_L q$.
	\end{description}	
\end{Lem}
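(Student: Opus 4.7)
The plan exploits the parameterization of $I(p,q)$ by pairs $(a,b) \in [p \wedge q, p] \times [p \wedge q, q]$ with $a \vee b \in {\cal L}$ (item~(i) above), together with the Pareto-maximality of rank vectors on ${\cal E}(p,q)$. Write $a := u \wedge p$, $a' := u' \wedge p$, $b := u \wedge q$, $b' := u' \wedge q$; these lie in the modular lattices $[p \wedge q, p]$ and $[p \wedge q, q]$. Since $u, u'$ are maximal extreme points of $\Conv I(p,q) \subseteq \RR_+^2$, the vectors $(r(a), r(b))$ and $(r(a'), r(b'))$ are Pareto-maximal, so $r(a) \geq r(a')$ forces $r(b) \leq r(b')$; injectivity of $\tilde u \mapsto (r(\tilde u \wedge p), r(\tilde u \wedge q))$ on ${\cal E}(p,q)$ (item~(iv)) then reduces to the case of strict inequalities, the case $u = u'$ being trivial.

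For part~(1), the two assertions and the symmetric Pareto order $b \preceq b'$ are intertwined and are best established in parallel. Assuming momentarily that $a \succeq a'$ and $b \preceq b'$, one has $a', b \preceq u \wedge u'$, so $a' \vee b$ exists inside the principal ideal ${\cal I}_{u \wedge u'}$, which is a modular lattice. The direct identifications $(u \wedge u') \wedge p = a'$ and $(u \wedge u') \wedge q = b$, together with the rank formula for distances in modular semilattices ($d(x,y) = r(x) + r(y) - 2 r(x \wedge y)$), place $u \wedge u'$ in $I(p,q)$, whence item~(i) gives $u \wedge u' = a' \vee b$. The order relations $a \succeq a'$ and $b \preceq b'$ themselves are proved by contradiction: if $a, a'$ were incomparable in $[p \wedge q, p]$, then $r(a \vee a') > r(a)$, and with $a' \vee b$ available, the three-element join axiom applied to $\{a \vee a', a' \vee b, u\}$ would produce $w := (a \vee a') \vee b \in I(p,q)$ with rank vector strictly dominating $(r(a), r(b))$, contradicting the extremity of $u$. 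The apparent circularity (existence of $a' \vee b$ needs both order relations, which need $a' \vee b$) is broken by a simultaneous induction on rank-sum $r(u) + r(u') - 2r(p \wedge q)$, or equivalently by picking a minimal counterexample in ${\cal E}(p,q) \times {\cal E}(p,q)$.

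For part~(2), I use the Dedekind isomorphism $[a, u] \cong [p \wedge q, b]$, $c \mapsto c \wedge b$ with inverse $y \mapsto a \vee y$, inside the modular lattice $[p \wedge q, u]$. Since $a \preceq p$, one has $p \vee (a \vee y) = p \vee y$, so the maximal $c \in [a, u]$ with $p \vee c \in {\cal L}$ corresponds to the maximal $y \in [p \wedge q, b]$ with $p \vee y \in {\cal L}$. Let $b^* \in [p \wedge q, q]$ be the maximal element with $p \vee b^* \in {\cal L}$, so $p \vee_L q = p \vee b^*$. The claim reduces to $b^* \preceq b$. If instead $b^* \not\preceq b$, then $b \vee b^* \in [p \wedge q, q]$ strictly exceeds $b$ in rank; since $p \vee b^*$ is a common upper bound of $a \preceq p$ and $b^*$, the join $a \vee b^*$ exists, and the three-element join axiom applied to $\{a, b, b^*\}$ then delivers $a \vee b \vee b^* = a \vee (b \vee b^*) \in I(p,q)$ with rank vector $(r(a), r(b \vee b^*))$ strictly Pareto-dominating $(r(a), r(b))$, contradicting the extremity of $u$.

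The main obstacle is the delicate bookkeeping around existence of joins in a modular semilattice. In part~(1), the existence of $a' \vee b$, the order relation $a \succeq a'$, and the symmetric Pareto relation $b \preceq b'$ are mutually dependent; disentangling them requires either a simultaneous argument or a minimal-counterexample induction. This intertwined reasoning, together with the precise computation of $u \wedge u'$, is the content of \cite[Lemmas~3.11 and 3.12]{HH150ext}.
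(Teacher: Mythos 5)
A preliminary remark: the paper does not prove this lemma at all --- it is imported verbatim from \cite[Lemmas 3.11 and 3.12]{HH150ext} --- so there is no in-paper proof to compare against; I can only assess your argument on its own terms. Your part~(2) is correct and essentially complete: the Dedekind transposition $[u\wedge p,u]\cong[p\wedge q,u\wedge q]$, the equivalence of joinability with $p$ across it, and the three-element join axiom applied to $\{u\wedge p,\,u\wedge q,\,b^*\}$ (all three pairwise joins exist, $u\wedge p$ and $b^*$ having the common upper bound $p\vee b^*$) do produce an element of $I(p,q)$ whose rank vector strictly dominates $r(u;p,q)$, contradicting maximality.

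Part~(1), however, has a genuine gap exactly where you flag it, and the circularity is not merely ``apparent''. Write $a=u\wedge p$, $a'=u'\wedge p$, $b=u\wedge q$, $b'=u'\wedge q$. The set of elements of $[p\wedge q,q]$ joinable with $a'$ is down-closed and join-closed in that lattice, hence equals $[p\wedge q,\beta]$ for its maximum $\beta$, and Pareto-maximality of $r(u';p,q)$ forces $\beta=b'$; therefore ``$a'\vee b$ exists'' is \emph{equivalent} to ``$b\preceq b'$'', which is one of the two order relations you are trying to establish. The proposed escape --- induction on $r(u)+r(u')-2r(p\wedge q)$, or a minimal counterexample in ${\cal E}(p,q)\times{\cal E}(p,q)$ --- supplies no reduction mechanism: ${\cal E}(p,q)$ is a fixed set, and nothing in the argument relates a violating pair to another pair of smaller rank-sum, so this is a placeholder rather than a proof. (A non-circular route does exist: the joins $(a\vee a')\vee(b\wedge b')$ and $(a\wedge a')\vee(b\vee b')$ can be shown to exist \emph{without} any comparability assumption, because $b\wedge b'$ is joinable with both $a$ and $a'$ and one can then invoke the three-element axiom; comparing the resulting rank vectors against supporting lines of $\Conv I(p,q)$ at the two maximal extreme points forces $a\vee a'=a$ and $b\vee b'=b'$. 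But that is a different argument from yours.) There is a second circularity in your derivation of $u\wedge u'=a'\vee b$: setting $x=u\wedge u'$, the distance formula gives $d(p,x)+d(x,q)-d(p,q)=2\{r(x)-r(a'\vee b)\}$, so it certifies $x\in I(p,q)$ precisely when $r(x)=r(a'\vee b)$, i.e.\ precisely when the desired conclusion already holds; the identifications $x\wedge p=a'$ and $x\wedge q=b$ alone do not pin $x$ down. Once $a'\preceq a$ and $b\preceq b'$ are known, a correct finish is instead to transpose $[a'\vee b,\,a'\vee b']\cong[(a'\vee b)\wedge b',\,b']$ inside ${\cal I}_{u'}$ and observe that $u\wedge u'$ and $a'\vee b$ both map to $b$.
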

See also Figure~\ref{fig:ConvIpq}.
The following is a slightly sharper version of \cite[Lemma 4.2]{HH150ext}.
\begin{Lem}\label{lem:f(v_i)<f(v_i+1)}
	Let $f: {\cal L} \to \overline\RR$ be a submodular function.
	If $f(p) > f(q)$, 
	then there exists a sequence $(p = u_0,u_1,\ldots,u_k = q)$ 
	in $I(p,q)$ such that $f(p) > f(u_i)$ for $i > 0$
	and $u_i$ and $u_{i+1}$ are comparable for $i \geq 0$.
	In addition, if $f(p \wedge q) > f(q)$, 
	then $u_1 \succeq  p \wedge (p \vee_R q)$.
\end{Lem}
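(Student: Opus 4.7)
The first claim is essentially \cite[Lemma 4.2]{HH150ext}, so the task is to refine its proof to secure the additional conclusion $u_1 \succeq v$ with $v := p \wedge (p \vee_R q)$ under the extra hypothesis $f(p \wedge q) > f(q)$. The plan is a single induction on the semilattice distance $d(p,q) := r(p) + r(q) - 2r(p \wedge q)$, with the base case $d(p,q) \le 1$ handled by the trivial sequence $(p,q)$. In the inductive step, submodularity together with the identity $\sum_{u \in {\cal E}(p,q)} [C(u;p,q)] = 1$ gives
\[
f(p \wedge q) + \sum_{u \in {\cal E}(p,q)} [C(u;p,q)] f(u) \leq f(p) + f(q),
\]
and an averaging argument yields some $u^* \in \{p \wedge q\} \cup {\cal E}(p,q)$ with $f(u^*) < f(p)$, from which a first step $u_1$ comparable to $p$ and with $f(u_1) < f(p)$ can be extracted; the inductive hypothesis applied to $(u_1,q)$, for which $d(u_1,q) < d(p,q)$, completes the chain.

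For the sharper conclusion, the extra hypothesis $f(p \wedge q) > f(q)$ improves the bound to
\[
\sum_{u \in {\cal E}(p,q)} [C(u;p,q)] f(u) \leq f(p) + f(q) - f(p \wedge q) < f(p),
\]
so some $u^* \in {\cal E}(p,q)$ (necessarily distinct from $p \wedge q$) satisfies $f(u^*) < f(p)$. By Lemma~\ref{lem:E(p,q)}(1) applied with $p \vee_R q \in {\cal E}(p,q)$, which maximizes $r(u \wedge p)$ at the value $r(v)$, every extreme point $u \in {\cal E}(p,q)$ satisfies $u \wedge p \preceq v$. Consequently, the naive choice $u_1 := u^* \wedge p$ lies in $[p \wedge q, v]$ and generally fails $u_1 \succeq v$. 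The plan is instead to split into two scenarios: if $f(v) < f(p)$, take $u_1 := v$, which is comparable to $p$ and trivially $\succeq v$; otherwise, produce an upward first step $u_1 \succ p$ in $I(p,q)$ with $f(u_1) < f(p)$, which then automatically satisfies $u_1 \succeq p \succeq v$.

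The main obstacle is to rule out the remaining scenario: $f(v) \geq f(p)$ while no $w \succ p$ in $I(p,q)$ has $f(w) < f(p)$. The plan here is to combine two uses of submodularity. First, working inside the principal ideal ${\cal I}_{p \vee_R q}$, which is a modular lattice, and using $v \wedge q = p \wedge q$ and $v \vee q = p \vee_R q$, standard lattice submodularity gives
\[
f(p \vee_R q) \leq f(v) + f(q) - f(p \wedge q).
\]
Second, applying the semilattice submodularity for the pair $(p,q)$ with the contribution of $p \vee_R q$ isolated among the weighted extreme points, and injecting the hypothetical lower bounds $f(v) \geq f(p)$ and $f(w) \geq f(p)$ for $w \succ p$ in $I(p,q)$, I expect to contradict $f(p \wedge q) > f(q)$. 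The subtle points are the existence of joins inside the non-lattice ${\cal L}$, which must be controlled via principal-ideal modularity together with Lemma~\ref{lem:E(p,q)}(2), and a careful accounting of the weights $[C(u;p,q)]$ at the distinguished extreme points $p \vee_L q$ and $p \vee_R q$.
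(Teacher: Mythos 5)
Your outline for the first claim matches the paper's (both defer to the recursion of \cite[Lemma 4.2]{HH150ext}), but your treatment of the additional conclusion contains a genuine error that derails the rest of the argument. You assert that $p \vee_R q$ \emph{maximizes} $r(u \wedge p)$ over $u \in {\cal E}(p,q)$ at the value $r(v)$, where $v := p \wedge (p \vee_R q)$, and conclude that every $u \in {\cal E}(p,q)$ satisfies $u \wedge p \preceq v$. This is backwards. By construction $(p \vee_R q) \wedge q = q$, so $p \vee_R q$ maximizes the \emph{second} coordinate $r(u \wedge q)$ of $r(u;p,q)$ and therefore has the \emph{minimal} first coordinate $r(u \wedge p)$ among the maximal extreme points; the maximizer of $r(u \wedge p)$ is $p \vee_L q$, with $(p \vee_L q) \wedge p = p$. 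Hence Lemma~\ref{lem:E(p,q)}~(1) gives $u \wedge p \succeq v$ for \emph{every} $u \in {\cal E}(p,q)$, and this is exactly the fact the paper exploits: assuming WLOG $v \succ p \wedge q$ (so $q \notin {\cal E}(p,q)$), the hypothesis $f(q) < f(p \wedge q)$ forces some $u^* \in {\cal E}(p,q) \setminus \{p,q\}$ with $f(u^*) < f(p)$; one then recurses on $(p,u^*)$ and $(u^*,q)$, and since every element of $I(p,u^*)$ lies above $p \wedge u^* \succeq v$, the first step $u_1$ of the resulting chain automatically satisfies $u_1 \succeq v$.

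Because of the reversed inequality you conclude that the natural choice fails and construct an elaborate workaround: a case split on whether $f(v) < f(p)$, and, in the remaining case, a contradiction that you only announce you ``expect'' to obtain by combining lattice submodularity inside ${\cal I}_{p \vee_R q}$ with a weighted accounting at $p \vee_L q$ and $p \vee_R q$. That contradiction is not carried out, so even on its own terms the proposal is incomplete; and the obstacle it is designed to overcome does not exist once Lemma~\ref{lem:E(p,q)}~(1) is applied in the correct direction. The clean statement you need is simply $u_1 \succeq p \wedge u^* \succeq v$, with no case distinction.
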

\begin{proof}[Sketch of proof]
	Suppose $f(p) > f(q)$. By submodular inequality~(\ref{eqn:submodular}), 
	there is $u \in \{p \wedge q \} \cup ( {\cal E}(p,q) \setminus \{p,q\})$ with $f(u) < f(p)$.
	Apply an inductive argument to $p,u$ and to $u,q$, to obtain such a sequence, as in the proof of {\cite[Lemma 4.2]{HH150ext}}.
	Suppose further that $f(p \wedge q) > f(q)$.
	We can assume that $p \wedge (p \vee_R q) \succ p \wedge q$.
	Then $p \vee_R q \succ q$, and $[C(q;p,q)] = 0$.
	 By submodular inequality~(\ref{eqn:submodular}) 
	 with $f(q) < f(p \wedge q)$, 
	 it holds $f(p) > \sum_{u \in {\cal E}(p,q)} [C(u;p,q)] f(u)$.
	 Thus there is $u \in {\cal E}(p,q) \setminus \{p,q\}$ with $f(u) < f(p)$.
	 By Lemma~\ref{lem:E(p,q)}~(1), 
	 $p \wedge u \succeq p \wedge (p \vee_R q)$.
	 By applying the inductive argument to $p,u$ and to $u,q$ as above, 
	 we have the latter part.
\end{proof}

\subsection{Section~\ref{sec:submodular}}
\subsubsection{Proof of Theorem~\ref{thm:frac_join_of_polar_space}}
We first show the claim 
for the case where ${\cal L} = {{\cal S}_2}^n = \{-1,0,1\}^n$.
In this case, $p \vee_L q$ is obtained from $p$ 
by replacing $p_i$ by $q_i$ for each $i$ with $p_i = 0 \neq q_i$ (Example~\ref{ex:op-S_k^n}). 
Therefore $p \vee_L q$ and $p \vee_R q$ 
have the same rank, 
which is equal to the number $N$ of indices $i$ 
with $p_i \neq 0$ or $q_i \neq 0$. 
Each $u \in I(p,q)$ must satisfy 
$u_i = 0$ for each index $i$ with $p_i = q_i = 0$, 
and hence $r(u; p,q)$ belongs to $\{(x,y) \in \RR^2_+ \mid x+ y \leq N\}$.
Consequently, 
it must hold that  $\Conv I(p,q) = \{(x,y) \in \RR^2_+ \mid x+ y \leq N, x \leq r(p), y \leq r(q)\}$, 
${\cal E}(p,q) = \{p,q,p \vee_L q, p \wedge_R q\}$, 
and the fractional join of $p,q$ is equal to $(1/2)p \vee_L q + (1/2) p \vee_R q$.

Next we consider the general case.
Let $p,q \in {\cal L}$. 
Consider a polar frame ${\cal F}$ 
containing chains $p \wedge q, p$ and $p \wedge q, q$. 
We show that $\Conv I(p,q)$ is equal to that considered 
in polar frame ${\cal F}$.
For $u = p' \vee q' \in I(p,q)$ with 
$p' \in [p \wedge q, p]$ and $q' \in [p \wedge q, q]$,
consider a polar frame ${\cal F}'$ 
containing chains $p \wedge q, p', p$ and $p \wedge q, q', q$. 
Then $u = p' \vee q'$ must belong to ${\cal F'}$.
Indeed, consider a polar frame ${\cal F}''$ containing 
$p \wedge q, p',u$ and $p \wedge q, q', u$, 
and consider isomorphism ${\cal F}'' \to {\cal F}'$ 
fixing $p \wedge q, p',q'$.
The image of $u$ must be the join of $p',q'$ and equal to $u$.
Now consider isomorphism $\phi: {\cal F}' \to {\cal F}$ 
fixing $p \wedge q, p,q$, and consider images $\phi(p'), \phi(q')$ and $\phi(u) = \phi(p') \vee \phi(q')$.
Then $r(\phi(p')) = r(p')$ and $r(\phi(q')) = r(q')$ must hold.
Thus the point $r(u;p,q)$ 
belongs to $\Conv I(p,q)$ considered in ${\cal F}$.
Necessarily the left and right joins are equal to the left and right joins in ${\cal F}$, respectively. 
Thus the fractional join is equal to $(1/2)p \vee_L q + (1/2) p \vee_R q$. 
We remark that this argument implies
the following rank equality for polar space.
\begin{equation}\label{eqn:rank_eq}
r(p \vee_L q) = r(p \vee_R q) \quad (p,q \in {\cal L}).
\end{equation}

\subsubsection{Proof of Theorem~\ref{thm:polar_submo}}
For three binary operations $\circ, \circ', \circ''$ on ${\cal L}$, 
let $\circ' \circ \circ''$ denote
the operation defined by $(p,q) \mapsto (p \circ' q) \circ (p \circ'' q)$.
Define projection operations $L$ by $(p,q) \mapsto p$ and $R$ by $(p,q) \mapsto q$. 
Define operations $\wedge_L := L \wedge \vee_R$ and $\wedge_R := R \wedge \vee_L$. 
\begin{Lem}\label{lem:1-6}
	\begin{description}
		\item[{\rm (1)}] $p \sqcup q = p \vee q = p \vee_L q = p \vee_R q $ if $p \vee q$ exists.
		\item[{\rm (2)}] $\vee_L \vee_L \vee_R =  \vee_L $ and $\vee_L \vee_R \vee_R = \vee_R$.
		\item[{\rm (3)}] $\wedge_L \sqcup \wedge_R  = \wedge_L  \vee  \wedge_R = \sqcup$.
		\item[{\rm (4)}] $\wedge_L \wedge \wedge_R = \wedge$.
		\item[{\rm (5)}] $L \sqcup \sqcup = L \vee \sqcup = \vee_L$
		and $R \sqcup \sqcup = R \vee \sqcup  = \vee_R$.
		\item[{\rm (6)}] $L \wedge \sqcup  = \wedge_L$ and $R \wedge \sqcup = \wedge_R$.
	\end{description}
\end{Lem}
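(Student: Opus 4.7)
The plan is to fix $p, q \in {\cal L}$ and reduce each of the six identities to an elementary coordinate-wise check inside a single polar frame. By axiom P1, pick a polar frame ${\cal F}$ containing the chains $p \wedge q \prec p$ and $p \wedge q \prec q$. Repeating the interval argument from the proof of Theorem~\ref{thm:frac_join_of_polar_space} yields the following closure property: whenever $x, y \in {\cal F}$, the interval $I(x,y)$ is contained in ${\cal F}$, and hence $x \wedge y$, $x \vee_L y$, $x \vee_R y$, $x \sqcup y$, and $x \vee y$ (when it exists) all lie in ${\cal F}$ and agree with their counterparts computed inside ${\cal F}$. An induction on composition depth then shows that every element appearing in (1)--(6) belongs to ${\cal F}$.

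Identify ${\cal F}$ with $\{-1,0,1\}^n$. In these coordinates the operations become transparent: $(x \wedge y)_i = x_i$ if $x_i = y_i$ and $0$ otherwise; $(x \vee_L y)_i = x_i$ if $x_i \neq 0$ and $y_i$ otherwise (dually for $\vee_R$); and $(x \sqcup y)_i = x_i \vee y_i$ when $x_i, y_i$ are comparable in ${\cal S}_2$, and $0$ in the conflict case $x_i \neq y_i$ with both nonzero. The derived operations $\wedge_L = L \wedge \vee_R$ and $\wedge_R = R \wedge \vee_L$ inherit similar descriptions. Each identity in (1)--(6) thereby reduces to a single-coordinate check over the nine pairs $(x_i, y_i) \in \{-1,0,1\}^2$.

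The main obstacle is the closure claim in the first paragraph; once it is in hand, the remaining verifications are mechanical. For instance, (1) holds because existence of $p \vee q$ excludes coordinates with $p_i, q_i$ distinct and nonzero, forcing the three joins to agree; (2) holds because $p \vee_L q$ and $p \vee_R q$ share the same nonzero-support, so the outer $\vee_L$ acts as identity on the first argument; and (3)--(6) follow by analogous direct coordinate inspection, encoding the evident relationships among the five operations on $\{-1,0,1\}^n$.
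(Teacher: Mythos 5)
Your overall strategy---reduce everything to a single polar frame ${\cal F}\simeq{{\cal S}_2}^n$ and check the six identities coordinatewise---is genuinely different from the paper's proof, which stays inside ${\cal L}$ and derives (1)--(6) from Lemma~\ref{lem:E(p,q)} together with the unique representation $u=(u\wedge p)\vee(u\wedge q)$ of elements of $I(p,q)$. Your coordinate formulas for $\wedge$, $\vee_L$, $\vee_R$, $\sqcup$ on $\{-1,0,1\}^n$ are correct, and all six coordinatewise verifications do go through. The flaw is in the closure property on which everything rests: the claim that $I(x,y)\subseteq{\cal F}$ for $x,y\in{\cal F}$ is false in general. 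For instance, in the polar space of totally isotropic subspaces of a $4$-dimensional symplectic space over a field with $q\geq 2$ elements, two complementary Lagrangian planes $x,y$ lying in a common frame have $q+3$ rank-two elements in $I(x,y)$ (namely $x$, $y$, and the $q+1$ planes $a\vee b_a$ where $a$ ranges over the lines of $x$ and $b_a$ is the unique line of $y$ with $a+b_a$ totally isotropic), whereas a frame contains only four maximal elements. The ``interval argument'' in the proof of Theorem~\ref{thm:frac_join_of_polar_space} does not give what you assert: it only shows that the \emph{rank vectors} of elements of $I(p,q)$, and hence the extreme points ${\cal E}(p,q)$, are realized inside ${\cal F}$.

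What you actually need, and what is true, is weaker: for $x,y\in{\cal F}$ the elements $x\wedge y$, $x\vee_L y$, $x\vee_R y$, $x\sqcup y$, and $x\vee y$ (when the latter exists) all lie in ${\cal F}$ and coincide with the frame-intrinsic operations. This is fact (0) in the paper's proof of Theorem~\ref{thm:Lovasz_polar_space}; to obtain it for an arbitrary pair $x,y\in{\cal F}$, and not only for the pair $p,q$ used to choose ${\cal F}$, you must invoke P2 to transport a frame containing the chains $x\wedge y\prec x$ and $x\wedge y\prec y$ onto ${\cal F}$, and then use the injectivity of $u\mapsto r(u;x,y)$ on ${\cal E}(x,y)$ to identify the left and right joins with their frame counterparts; closedness under $\wedge$ is part of ${\cal F}$ being a subsemilattice. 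With the closure statement corrected in this form, your induction on composition depth and the coordinatewise checks are sound, and the resulting proof is arguably more transparent than the paper's, at the cost of this frame-compatibility bookkeeping. Note also that (1) needs no frame at all: if $p\vee q$ exists, then $q$ itself is the maximal element of $[p\wedge q,q]$ joinable with $p$, so $p\vee_L q=p\vee_R q=p\sqcup q=p\vee q$ directly from the definitions.
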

\begin{proof}
	(1) is obvious from the definition.
	(2) follows from Lemma~\ref{lem:E(p,q)}~(2) (with $p \vee_L q = u_0$ or $u_1$).
	(3) follows from (1) and Lemma~\ref{lem:E(p,q)}~(2).
	(4) and (6) follow from the definition of $\wedge_L$ and $\wedge_R$.
	(5) follows from 
		$p \sqcup (p \sqcup q) =  p \vee (p \sqcup q) = p \vee (p \wedge_L q) \vee (p \wedge_R q) =  p \vee (p \wedge_R q) = ((p \vee_L q) \wedge p) \vee ((p \vee_L q) \wedge q)  = p \vee_L q$,
	where we use (1) for the first equality, (3) for the second, and  
	the unique representation of elements in $I(p,q)$ for the third.
\end{proof}

We are ready to prove Theorem~\ref{thm:polar_submo}.
Fix arbitrary $p,q \in {\cal L}$. 
For an operation~$\circ$, $f(p \circ q)$ is simply denoted by $f(\circ)$.

(1) $\Rightarrow$ (2). Suppose that $f$ is submodular.
By applying Lemma~\ref{lem:1-6}~(2) 
to \eqref{eqn:submo_veeLveeR} for $(p \vee_L q, p \vee_R q)$, we obtain
$
f(\vee_L) + f(\vee_R) \geq f(\sqcup) + \left\{ f(\vee_L) + f(\vee_R)\right\}/2.
$
In particular we have
$
f(\vee_L) + f(\vee_R) \geq 2 f(\sqcup)$.
Thus we have $
f(L) + f(R)  \geq  f(\wedge) + \left\{ f(\vee_L) + f(\vee_R)\right\}/2  \geq  f(\wedge) + f(\sqcup)$.
Hence $f$ satisfies the inequality in (2).

(2) $\Rightarrow$ (1).
Suppose that $f$ satisfies the inequality in (2).
By applying Lemma~\ref{lem:1-6} to this inequality, we have
$f(\wedge_L) + f(\wedge_R) \geq f(\wedge) + f(\sqcup)$,
$f(L) + f(\sqcup) \geq f(\wedge_L) + f(\vee_L)$, and
$f(R) + f(\sqcup) \geq f(\wedge_R) + f(\vee_R)$.
Adding them, we have $f(L) + f(R) + f(\sqcup) \geq f(\wedge) + f(\vee_L) + f(\vee_R)$.
By applying Lemma~\ref{lem:1-6}~(3) to the inequality 
for $(p \vee_L q, p \vee_R q)$, 
we have $f(\vee_L) + f(\vee_R) \geq f(\sqcup) + f(\sqcup).$
Adding them (with multiplying $1/2$ to the second), 
we obtain \eqref{eqn:submo_veeLveeR} as required.

(3) $\Leftrightarrow$ (2). 
As seen in the proof of Theorem~\ref{thm:frac_join_of_polar_space}, 
for each polar frame ${\cal F}$, 
the left and right join in ${\cal F}$ is equal to that in ${\cal L}$.
Consequently, the pseudo join in ${\cal F}$ is equal to that in ${\cal L}$.
Now the inequality in (2) is nothing 
but the bisubmodularity inequality under ${{\cal S}_2}^n \simeq \{-1,0,1\}^n$.
From this, we see the equivalence (3) $\Leftrightarrow$ (2).

\subsubsection{Proof of Proposition~\ref{prop:frac_join_S_n^2}}

We start with  some notation.
For $0 \leq a \leq b \leq \infty$, 
let ${\rm Cone} (a,b)$ 
be the convex cone in $\RR_+^2$ defined by
$
{\rm Cone} (a,b) := \{(x,y) \in \RR^2_+ \mid a x \leq y, x \geq  y/b \},
$
where we let $1/\infty := 0$.
Then
$
[{\rm Cone}(a,b)] = b/(1+b) - a/(1+a) = 1/(1+a) - 1/(1+b).
$

We next determine the fractional join operation on ${\cal S}_{2} = \{0,+,-\}$ 
with respect to valuation $v_i$.
By $I(+,-) = \{0,+,-\}$,
$\Conv I(+,-)$ is the triangle with vertices 
$v_i(0;+,-) = (0,0),  v_i(+;+,-) = (1,0), v_i(-;+,-) = (0, \alpha_i)$, 
and $\Conv I (-,+)$ is the triangle with vertices 
$v_i(0;+,-) =(0,0), v_i(-;-,+) = (\alpha_i,0), v_i(+;-,+) = (0, 1)$.
In particular, ${\cal E}(+,-) = {\cal E}(-,+) = \{-,+ \}$, 
$C(+; +, -) = {\rm Cone}(0, 1/\alpha_i)$, 
$C(-; +, -) = {\rm Cone}(1/\alpha_i, \infty)$, 
$C(-; -, +) = {\rm Cone}(0, \alpha_i)$, and
$C(+; -, +) = {\rm Cone}(\alpha_i, \infty)$.
If the join $x \vee y$ exists, 
then $C(x \vee y; x,y) = \RR^2_+$, 
and any operation $\theta$ in the fractional join operation satisfies 
$\theta(x,y) = x \vee y$.
Hence $C(\theta) = C(\theta(+,-);+,-) \cap C(\theta(-,+);-,+)$.
The operation $\theta$ assigning $(-,+)$ to $-$ and $(+,-)$ to $-$ 
does not appear in the fractional join operation, 
since $C(-;+,-) \cap C(-;-,+) 
= {\rm Cone}(1/\alpha_i, \infty) \cap {\rm Cone}(0, \alpha_i) = \{0\}$.
The other operations are the left join $\vee_L$, the right join $\vee_R$, and $\sqcup_+$, where the corresponding cones are given by
\[
	C(\vee_L) =  {\rm Cone} (0, \alpha_i), \ C(\vee_R) =  {\rm Cone} (1/\alpha_i, \infty),\  
		C(\sqcup_+)  =  {\rm Cone} (\alpha_i, 1/\alpha_i).
\]
To see this, for example, 
$C(\sqcup_+) = C(+;+,-) \cap C(+;-,+) = {\rm cone} (0, 1/\alpha_i) \cap {\rm cone} (\alpha_i, \infty) = {\rm Cone} (\alpha_i, 1/\alpha_i)$.

By Proposition~\ref{prop:frac_join_prod}, 
the fractional join operation on ${{\cal S}_2}^n$ relative to $v$ is equal to
\[
\sum_{\theta_1,\theta_2,\ldots,\theta_n \in \{ \vee_L, \vee_R, \sqcup_+ \}} 
[C(\theta_1) \cap C(\theta_2) \cap \cdots \cap C(\theta_n)]
(\theta_1,\theta_2,\ldots,\theta_n),
\]
where 
$C(\theta_i)$ is considered under valuation $v_i$ for $i=1,2,\ldots,n$.
If $\theta_i \in \{ \vee_L, \vee_R\}$, $\theta_j = \sqcup_+$, and $i < j$,
then $C(\theta_i) \cap C(\theta_j)$ has no interior point.
If $\theta_i = \vee_L$ and $\theta_j = \vee_R$, 
then $C(\theta_i) \cap C(\theta_j)$ has no interior point.
Thus the fractional join operation equals 
\begin{eqnarray*}
	&& \sum_{i=0}^{n-1} [{\rm Cone}(\alpha_{i}, 1/\alpha_{i}) \cap {\rm Cone}(0, \alpha_{i+1})] (\underbrace{\sqcup_+,\ldots \sqcup_+}_i,\vee_L,\ldots,\vee_L) \\
	&& {}+ \sum_{i=0}^{n-1} [{\rm Cone}(\alpha_{i}, 1/\alpha_{i}) \cap {\rm Cone}(1/\alpha_{i+1}, \infty)] (\underbrace{\sqcup_+,\ldots \sqcup_+}_i,\vee_R,\ldots,\vee_R) \\
	&& {}+ [{\rm Cone}(\alpha_{n}, 1/\alpha_{n})] (\sqcup_+, \sqcup_+,\ldots, \sqcup_+) \\
	&& = \sum_{i=0}^{n-1} [{\rm Cone}(\alpha_i, \alpha_{i+1})] \vee_L^i + 
	\sum_{i=0}^{n-1} [{\rm Cone}(1/\alpha_{i+1}, 1/\alpha_{i})] \vee_R^i + [{\rm Cone}(\alpha_{n}, 1/\alpha_{n})] \sqcup_+.
\end{eqnarray*}
From this, we obtain the desired formula~(\ref{eqn:frac_join_alpha}).

\subsubsection{Proof of Theorem~\ref{thm:alpha_bisub}}

\begin{Lem}\label{lem:vee_ij}
	For $1\leq i, j \leq n$, it holds
	$\vee_L^i \wedge \vee_R^i = \sqcup^i$,  
	$\vee_L^i \vee_L^{j} \vee_R^i = \vee_L^{\max\{i,j\}}$,  
	$\vee_L^i \vee_R^{j} \vee_R^i = \vee_R^{\max\{i,j\}}$, and $\vee_L^i \sqcup^{j} \vee_R^i = \sqcup^{\max\{i,j\}}$.
\end{Lem}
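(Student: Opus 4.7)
The plan is to reduce each of the four identities to a componentwise verification on ${\cal S}_2$. Since $\vee_L^i$, $\vee_R^i$, and $\sqcup^i$ are componentwise extensions (agreeing with $\sqcup_+$ on the first $i$ coordinates and with $\vee_L$, $\vee_R$, or $\sqcup$ on the remaining coordinates), every composite $\circ' \circ \circ''$ appearing in the lemma is again componentwise. It therefore suffices to verify each identity on each coordinate $k$ separately, as an identity of operations on ${\cal S}_2$.

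Fix $k$ and split cases by the position of $k$ relative to $i$ and $j$. When $k \leq i$, both inner operations $\vee_L^i$ and $\vee_R^i$ evaluate to $\sqcup_+$ on the $k$-th coordinate, producing equal inputs $a := p_k \sqcup_+ q_k$ to the outer operation. Since each of $\wedge$, $\vee_L$, $\vee_R$, $\sqcup$, $\sqcup_+$ satisfies $x \star x = x$, the outer operation returns $a$, and this matches the $k$-th component of the right-hand side (which is $\sqcup_+$, since $k \leq i \leq \max\{i,j\}$).

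The nontrivial regime is $k > i$, where the inner operations yield the pair $(p_k \vee_L q_k,\, p_k \vee_R q_k)$. For identity~(1), the outer is $\wedge$, and the required equation reduces to the definition $\sqcup = \vee_L \wedge \vee_R$. For identities~(2) and~(3) in the subregime $k > \max\{i,j\}$, the outer operation is $\vee_L$ or $\vee_R$, and Lemma~\ref{lem:1-6}(2) supplies $\vee_L \vee_L \vee_R = \vee_L$ and $\vee_L \vee_R \vee_R = \vee_R$. The remaining subregime $i < k \leq j$ (for identities~(2) and~(3)) requires the small identity $\vee_L \sqcup_+ \vee_R = \sqcup_+$ on ${\cal S}_2$, which is a direct inspection at the only nontrivial pairs $(+,-)$ and $(-,+)$, using the tables of $\vee_L$, $\vee_R$, $\sqcup_+$.

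Once identities~(1)--(3) are established, identity~(4) follows formally by componentwise substitution of $\sqcup^j = \vee_L^j \wedge \vee_R^j$, which is identity~(1) with $i$ replaced by $j$:
\[
\vee_L^i \sqcup^j \vee_R^i = (\vee_L^i \vee_L^j \vee_R^i) \wedge (\vee_L^i \vee_R^j \vee_R^i) = \vee_L^{\max\{i,j\}} \wedge \vee_R^{\max\{i,j\}} = \sqcup^{\max\{i,j\}}.
\]
No single step is deep; the main delicacy is the hand-check of $\vee_L \sqcup_+ \vee_R = \sqcup_+$ on ${\cal S}_2$, since $\sqcup_+$ is not addressed by Lemma~\ref{lem:1-6}, whose statements all concern $\sqcup$.
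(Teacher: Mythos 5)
Your proof is correct and follows essentially the same route as the paper's: both reduce all four identities to a componentwise check on ${\cal S}_2$, using $\vee_L \wedge \vee_R = \sqcup$, the identities of Lemma~\ref{lem:1-6}~(2), and the extra observation $\vee_L \sqcup_+ \vee_R = \sqcup_+$ together with idempotency on the coordinates where both inner operations are $\sqcup_+$. Your explicit regime-by-regime case split and the formal derivation of the fourth identity from the first three are just a more detailed write-up of the same argument.
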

\begin{proof}
	We can verify these equations 
	by applying $\vee_L \wedge \vee_R = \sqcup$, $\vee_L \vee_L \vee_R = \vee_L$, 
	$\vee_L \vee_R \vee_R = \vee_R$,  
	$\vee_L \sqcup_+ \vee_R = 
	\sqcup_+ \vee_L \sqcup_+ = \sqcup_+ \vee_R \sqcup_+ = \sqcup \sqcup_+ \sqcup = \sqcup_+$
	to each component.
\end{proof}

Let $f$ be a submodular function on ${{\cal S}_2}^n$ (in our sense).
Fix an arbitrary pair $(p,q)$ of elements in ${{\cal S}_2}^n$. 
We use notation $f(\circ) = f(p \circ q)$.
By Proposition~\ref{prop:frac_join_S_n^2} we have
\[
f(L) + f(R) \geq f(\wedge) + \sum_{i=0}^{n-1} \left( \frac{1}{1+ \alpha_i} - \frac{1}{1+ \alpha_{i+1}}\right) \{ f( \vee_L^i) + f( \vee_R^i)\} + \frac{1- \alpha_n}{1+ \alpha_n} f( \sqcup^n).
\] 
For $k=0,1,2,\ldots,n$, let $B_k$ be defined by
\begin{equation}\label{eqn:B_k}
B_k := 
\sum_{i=k}^{n-1} \left( \frac{1}{1+ \alpha_i} - \frac{1}{1+ \alpha_{i+1}}\right) \{ f( \vee_L^i) + f( \vee_R^i)\} + \frac{1- \alpha_n}{1+ \alpha_n} f( \sqcup^n).
\end{equation}
For $k= 0,1,2,\ldots,n$, we are going show, by induction, 
\begin{equation}\label{eqn:goal_1}
f(L) + f(R) \geq f(\wedge) + \sum_{i=0}^{k-1} (\alpha_{i+1} - \alpha_i) f( \sqcup^i )
+ (1+ \alpha_k) B_k.
\end{equation}
This inequality~\eqref{eqn:goal_1} coincides with the submodularity inequality if $k=0$, and
coincides with the desired inequality if $k=n$.

By applying Lemma~\ref{lem:vee_ij} 
to the submodular inequality for $\vee_L^{k}, \vee_R^{k}$, we obtain
\[
f(\vee_L^{k}) + f(\vee_R^{k}) \geq f(\sqcup^{k}) + \left( 1- \frac{1}{1+ \alpha_{k+1}}\right)
\left\{ f(\vee_R^{k}) + f(\vee_L^{k})\right\} + B_{k+1}.
\] 
Hence we obtain
$
f(\vee_L^{k}) + f(\vee_R^{k}) \geq (1+ \alpha_{k+1}) \left\{ f( \sqcup^{k}) + B_{k+1}\right\}$, and
\begin{eqnarray*}
&& (1+ \alpha_k) B_k =  (1+ \alpha_k) \left[ \left( \frac{1}{1+ \alpha_k} - \frac{1}{1+ \alpha_{k+1}}\right)
\{ f(\vee_L^{k}) + f(\vee_R^{k})\} +  B_{k+1} \right]  \nonumber \\
&& \geq (1+ \alpha_k) \left[ \left( \frac{1}{1+ \alpha_k} - \frac{1}{1+ \alpha_{k+1}}\right)
(1+ \alpha_{k+1}) \left\{ f( \sqcup^{k}) + B_{k+1}\right\} +  B_{k+1} \right] \nonumber \\
&& = (\alpha_{k+1} - \alpha_k) f(\sqcup^k) + (1+ \alpha_{k+1}) B_{k+1}. 
\end{eqnarray*}
Substituting this to (\ref{eqn:goal_1}) at $k$, 
we obtain (\ref{eqn:goal_1}) at $k+1$.
Thus $f$ is $\pmb \alpha$-bisubmodular.

Next we consider the converse direction.
Define $\sqcup_L := \sqcup_+ \wedge \vee_L$ and $\sqcup_R  := \sqcup_+ \wedge \vee_R$.
Then we observe 
$
\sqcup  =  \sqcup_L \wedge \sqcup_R$ and 
$\sqcup_+ =  \sqcup_L \sqcup \sqcup_R = \sqcup_L \sqcup_+ \sqcup_R$.
For $i= 0,1,2,\ldots,n-1$,
define operations $\square_L^i$ and $\bigcirc_L^i$ on ${{\cal S}_2}^n$ by
\[
\square_L^i := (\sqcup_+, \ldots, \sqcup_+, \underbrace{\sqcup_L}_i, \sqcup, \ldots, \sqcup),\quad
\bigcirc_L^i :=  (\sqcup_+, \ldots, \sqcup_+, \underbrace{\sqcup_L}_i, \vee_L, \ldots, \vee_L).
\]
Operations $\square_R^i$ and $\bigcirc_R^i$ are defined by replacing $L$ by $R$.
By using $\sqcup \sqcup \vee_L = \sqcup \sqcup_+ \vee_L = \vee_L$, 
$\sqcup_+ \sqcup \sqcup_L = \sqcup_+ \sqcup_+ \sqcup_L = \sqcup_+$, 
$\sqcup_+ \sqcup \vee_L = \sqcup_L$, and $\sqcup_+ \sqcup_+ \vee_L = \sqcup_+$ componentwise,
we have
\begin{eqnarray*}
	&& \square_L^i \wedge \square_R^i = \sqcup^{i-1}, \quad 
	\square_L^i \sqcup^j \square_R^i = \sqcup^i, \quad \sqcup^i \wedge \bigcirc_L^i = \square^i_L,
	\quad 	\sqcup^i \sqcup^j \bigcirc_L^i = \vee_L^i,\\
	&& \vee_L^{i-1} \wedge \vee_L^i = \bigcirc_L^i, \quad 
	\vee_L^{i-1} \sqcup^j \vee_L^i =
	\left\{ \begin{array}{ll}
		\vee_L^i & {\rm if}\ j \geq i, \\
		\bigcirc_L^i & {\rm otherwise}.
	\end{array}\right. 
\end{eqnarray*}
The relations replacing $L$ by $R$ also hold.
Applying these relations to the $\pmb \alpha$-bisubmodularity inequality~(\ref{eqn:alpha_bisub}), we obtain
\begin{eqnarray*}
	&& f(\square_L^i) + f(\square_R^i) \geq f(\sqcup^{i-1}) + f(\sqcup^{i}), \\ 
	&& f(\sqcup^i) + f(\bigcirc_L^i) \geq f(\square^i_L) + f(\vee_L^i),\\
	&& f(\sqcup^i) + f(\bigcirc_R^i) \geq f(\square^i_R) + f(\vee_R^i),\\
	&& f(\vee_L^{i-1}) + f(\vee_L^i) \geq (1 + \alpha_i) f(\bigcirc_L^i) + (1- \alpha_i) f(\vee_L^i),\\
	&& f(\vee_R^{i-1}) + f(\vee_R^i) \geq (1 + \alpha_i) f(\bigcirc_R^i) + (1- \alpha_i) f(\vee_R^i).
\end{eqnarray*}
Adding them with the fourth and the fifth divided by $1+ \alpha_i$, 
we obtain
\begin{equation}\label{eqn:sqcup^i_i-1}
\frac{1}{1+ \alpha_i} \{ f(\vee_L^{i-1}) + f(\vee_R^{i-1}) \} + f(\sqcup^i)  \geq f(\sqcup^{i-1}) 
+ \frac{1}{1+ \alpha_i} \{ f(\vee_L^i) + f(\vee_R^i) \}.
\end{equation}
We are going to show, by reverse induction on $i= n-1,n-2,\ldots,0$,
\begin{equation}\label{eqn:goal_2}
f(\vee_L^i) + f(\vee_R^i) \geq f(\sqcup^i) + 
\left(1 - \frac{1}{1+ \alpha_{i+1}} \right) \left\{f(\vee_L^i) + f(\vee_R^i)\right\} + B_{i+1}.
\end{equation}
Recall (\ref{eqn:B_k}) for $B_i$.
By $\sqcup^{n} = \vee_L^n = \vee_R^n$, the inequality
\eqref{eqn:sqcup^i_i-1} with $i=n$ gives the base case.
Suppose that \eqref{eqn:goal_2} is true for $i > 0$.
Adding \eqref{eqn:sqcup^i_i-1} to \eqref{eqn:goal_2}, 
we obtain \eqref{eqn:goal_2} for $i-1$:
\begin{equation*}
f(\vee_L^{i-1}) + f(\vee_R^{i-1}) \geq f(\sqcup^{i-1}) + \left( 1 - \frac{1}{1+ \alpha_{i}} \right) \{ f(\vee_L^{i-1}) + f(\vee_R^{i-1}) \} + B_{i}.
\end{equation*}
Thus we have
\begin{equation}\label{eqn:B_0}
f(\vee_L) + f(\vee_R) \geq f(\sqcup) + B_{0}.
\end{equation}
From Lemma~\ref{lem:1-6} and the fact that $p \sqcup q = p \sqcup^+ q$ if $p \vee q$ exists,
we see $\sqcup \sqcup^j L = \vee_L$, $\sqcup \sqcup^j R = \vee_R$, 
$\wedge_L \sqcup^j \wedge_R = \sqcup$, 
and obtain
$f(\sqcup) + f( L )  \geq  f(\wedge_L) + f(\vee_L)$, 
$f(\sqcup) + f( R )  \geq  f(\wedge_R) + f(\vee_R)$, and
$f(\wedge_L) + f(\wedge_R)  \geq  f( \wedge ) + f(\sqcup)$.
Adding them, we obtain
\begin{equation}\label{eqn:f(L)+f(R)}
f(L) + f(R) + f(\sqcup) \geq f( \vee_L) + f(\vee_R) + f( \wedge).
\end{equation} 
Adding (\ref{eqn:B_0}) and (\ref{eqn:f(L)+f(R)}), 
we obtain the submodularity inequality in our sense.

\subsubsection{Proof of Theorem~\ref{thm:Lovasz_modular_lattice}}

Let ${\cal L}$ be a modular lattice of rank $n$. The proof uses the following facts:
\begin{description}
	\item[{\rm (1)}] For two maximal chains there is a distributive sublattice of ${\cal L}$ containing them.
	\item[{\rm (2)}] For a distributive sublattice ${\cal D}$ of rank $n$, the orthoscheme subcomplex $K({\cal D})$ is convex in $K({\cal L})$.
	\item[{\rm (3)}] $f:{\cal L} \to \overline\RR$ is submodular if and only if $f$ is submodular on every distributive sublattice of ${\cal L}$.
\end{description}
(1) follows from \cite[Theorem 363]{Gratzer}.
(2) follows from \cite[Lemma 7.13~(4)]{CCHO14}.
The only-if part of (3) is obvious.
The if-part of (3) follows from the fact that
for $p,q \in {\cal L}$
there is a distributive sublattice containing $p, p \wedge q, q, p \vee q$ (by (1)).

Suppose that the Lov\'asz extension $\overline{f}: K({\cal L}) \to \overline{\RR}$ is convex.
For every distributive sublattice ${\cal D}$ (of rank $n$)
the restriction of $\overline{f}$ to 
$K({\cal D}) \subseteq K({\cal L})$ is also convex by (2). 
By Theorem~\ref{thm:Lovasz}, $f$ is submodular on ${\cal D}$.
By (3), $f$ is submodular on ${\cal L}$.
Suppose that $f:{\cal L} \to \overline\RR$ is submodular.
Take arbitrary two points $x,y$ in $K({\cal L})$.
Then $x$ and $y$ are represented as formal convex combinations 
of two maximal chains $C$ and $C'$, respectively.
By (1), we can take a (maximal) distributive sublattice ${\cal D}$
containing $C$ and $C'$.
The orthoscheme subcomplex $K({\cal D})$ contains $x,y$, and a geodesic $[x,y]$ by (2).
By Theorem~\ref{thm:Lovasz},
the Lov\'asz extension $\overline f$ is convex on $K({\cal D})$.
Therefore $\overline f$ satisfies the convexity inequality (\ref{eqn:convexity}) on $[x,y]$.
Consequently $\overline{f}$ is convex on $K({\cal L})$. 
\subsubsection{Proof of Theorem~\ref{thm:Lovasz_polar_space}}
Let ${\cal L}$ be a polar space of rank $n$. Then the following hold.
\begin{description}
	\item[{\rm (0)}] For a polar frame ${\cal F}$,
	the left and right joins in ${\cal F}$ 
	are equal to those in ${\cal L}$.
	\item[{\rm (1)}] For two maximal chains in ${\cal L}$ there is a polar frame ${\cal F}$ containing them.
	\item[{\rm (2)}] For a polar frame ${\cal F}$, the orthoscheme subcomplex $K({\cal F})$ 
	is convex in $K({\cal L})$.
	\item[{\rm (3)}] $f:{\cal L} \to \overline\RR$ is submodular if and only if $f$ is submodular on every polar frame.
\end{description}
We saw (0) in the proof of Theorem~\ref{thm:frac_join_of_polar_space}.
(1) is axiom (P1).
(2) follows from the argument of the proof of \cite[Proposition 7.4]{CCHO14} 
(the existence of nonexpansive retraction from $K({\cal L})$ to $K({\cal F})$).
(3) follows from the combination of (0) and (1).
Now Theorem~\ref{thm:Lovasz_polar_space} is proved in precisely the same way as Theorem~\ref{thm:Lovasz_modular_lattice} above;
replace (maximal) distributive sublattices by polar frames, and Theorem~\ref{thm:Lovasz} by Theorem~\ref{thm:Qi88}.
Notice that submodular functions on a polar frame ${{\cal S}_2}^n$
are exactly bisubmodular functions.

\subsection{Section~\ref{sec:L-convex}}\label{subsec:proof_L-convex}

\subsubsection{Proof of Lemma~\ref{lem:indicator}}
Let $X$ be a $d_{\varGamma}$-convex set in $\varGamma$.
Let $X^*$ denote the set of vertices $[p,q]$ in $\varGamma^*$ with $p,q \in X$.
Then $[X]^* = [X^*]$ holds.
We first show that $X^*$ is $d_{\varGamma^*}$-convex in $\varGamma^*$.
It suffices to show that any common neighbor $[u,v]$ of any distinct $[p,q],[p',q'] \in X^*$
belongs to $X^*$ (by Lemma~\ref{lem:chepoi}).
We can assume $q' \not \preceq q$.
Then $p \leftarrow p' = u, v = q \leftarrow q'$ or
$u = p \leftarrow p', q \leftarrow q' = v$ or
$p = p' = u$ with $v$ being a common neighbor of $q,q'$ or
$q= q' =v$ with $u$ being a common neighbor of $p,p'$.
Then $u,v \in X$ and $[u,v] \in X^*$, 
where 
the last two cases follow from the $d_{\varGamma}$-convexity of $X$.
Next we prove the L-convexity of $[X]$.
The $\varDelta'$-connectivity of $[X]$ follows from 
the connectivity of the subgraph induced by $X$.
It suffices to show that $[X^*]$ is submodular on each
${\cal I}^*_{p}$ that is $d_{\varGamma^*}$-convex (Proposition~\ref{prop:ideal}).
The intersection $[X^*] \cap {\cal I}^*_{p}$ is $d$-convex in the covering graph of ${\cal I}^*_p$, where $d$ is the path-metric of the covering graph of ${\cal I}^*_p$.
Thus $I([p,q],[p,q'])$ of any $[p,q],[p,q'] \in [X^*] \cap {\cal I}^*_{p}$
is contained in $[X^*] \cap {\cal I}^*_{p}$. 
Thus $\{ [p,q] \wedge [p',q'] \} \cup {\cal E}([p,q],[p',q']) \subseteq [X^*] \cap {\cal I}^*_{p}$, and $[X^*]$ is submodular on ${\cal I}^*_{p}$.

\subsubsection{Proof of Proposition~\ref{prop:L-convex=submodular}}

It suffices to show the if-part.
By well-orientedness,
$\varGamma^*$ is the poset of all intervals of $\varGamma$ with reverse inclusion order (Lemma~\ref{lem:om_Bgated}).
In particular, ${\cal I}^*_p$ is the poset of all intervals containing $p$, 
and is isomorphic to ${\cal I}_p \times {\cal F}_p$ by $[q,q'] \mapsto (q, q')$.
By this isomorphism, $g^*$ can be regarded as a function on 
the product ${\cal I}_p \times {\cal F}_p$ of 
two modular semilattices ${\cal I}_p$ and ${\cal F}_p$, 
defined by $g^*(q,q') := (g(q) + g(q'))/2$.
Since $g$ is submodular on ${\cal I}_p$ and on ${\cal F}_p$, 
the direct sum $g^*$ is also submodular.
This means that $g^*$ is submodular on 
every neighborhood semilattice, and hence $g$ is L-convex.

\subsubsection{Proof of Proposition~\ref{prop:L_is_L-extendable}}
We start with a preliminary argument.
Let ${\cal L}$ be a complemented modular lattice 
with covering graph $\varGamma$.
Now $\varGamma$ is thick 
(since every interval of ${\cal L}$ is a complemented modular lattice). 
By Theorem~\ref{thm:dual_polar}, $\varGamma$ is a dual polar space.
Consider the barycentric subdivision $\varGamma^*$ of $\varGamma$, 
which is the poset of all intervals $[p,q]$ of ${\cal L}$ 
with respect to the reverse inclusion order. 
Then $\varGamma^*$ is equal to the polar space corresponding to $\varGamma$.

We use the following explicit formulas of $\wedge$, 
$\vee_L$, $\vee_R$, and $\sqcup$ in  $\varGamma^*$:
\begin{eqnarray}
&& [p,q] \wedge [p',q'] = [p \wedge p', q \vee q'], \label{eqn:interval_meet} \\ 
&& [p,q] \vee_L [p',q'] = [p \vee (q \wedge p'), q \wedge (p \vee q')] = 
 [p',q'] \vee_R [p,q],  \label{eqn:interval_left} \\
&& [p,q] \sqcup [p',q'] = [(p \vee p') \wedge (q \wedge q'), (p \vee p') \vee (q \wedge q')].
\label{eqn:interval_join} 
\end{eqnarray}
Notice that $p \vee (q \wedge p') = q \wedge (p \vee p') \preceq  q \wedge (p \vee q')$ holds 
by modularity, and hence the left and right joins are well-defined intervals.
Also the join $[p,q] \vee [p',q']$ 
is equal to nonempty intersection $[p,q] \cap [p',q']$;
thus the join exists if and only if $p \vee p' \preceq q \wedge q'$. 

It is easy to see (\ref{eqn:interval_meet}). 
To see (\ref{eqn:interval_left}), 
consider a minimal interval $[s,t]$ 
with $[p',q'] \subseteq [s,t] \subseteq [p \wedge p', q \vee q']$ 
and $[s,t] \cap [p,q] \neq \emptyset$.
Then $t \succeq q \wedge t \succeq p \vee s \succeq p$ necessarily holds.
This implies $t \succeq p \vee q'$. Similarly $s \preceq q \wedge p'$.
On the other hand,
$[q \wedge p', p \vee q'] \cap [p,q] \neq \emptyset$ 
since $(q \wedge p') \vee p = (p \vee p') \wedge q \preceq (p \vee q') \wedge q$.
By minimality we have $t = p \vee q'$ and $s = q \wedge p'$.
From this, 
we obtain (\ref{eqn:interval_left}).
The equality (\ref{eqn:interval_join}) is obtained by definition (\ref{eqn:sqcup}) of $\sqcup$ 
with using modular equality $x \vee (y \wedge z) = (x \vee y) \wedge z$ for $x \succeq z$.

Let us start the proof of Proposition~\ref{prop:L_is_L-extendable}.
It suffices to show that $g^*$ is an L-convex relaxation.
We have seen in the proof of Proposition~\ref{prop:L-convex=submodular}  
that $g^*$ is submodular on every neighborhood semilattice, 
and hence on every principal ideal.
By Lemma~\ref{thm:G->G*}, $\varGamma^*$ is well-oriented. 
Therefore it suffices to show that $g^*$ is submodular on every principal filter of $\varGamma^*$.
Take an arbitrary vertex $X$ of $\varGamma^*$ 
that is represented as $X = [u,v]$ for $u \sqsubseteq v$.
The principal filter of $[u,v]$ is the semilattice of all subintervals of $[u,v]$, 
and is equal to the interval poset of the complemented modular lattice $[u,v]$.
Thus the principal filter is a polar space, and
it suffices to show the inequality in 
Theorem~\ref{thm:polar_submo}~(2).
By (\ref{eqn:interval_meet}), (\ref{eqn:interval_join}), and submodularity of $g$ on $[u,v]$, 
we have 
\begin{eqnarray*}
&& g^*([p,q] \wedge [p',q']) + g^*([p,q] \sqcup [p',q']) \\
&&  = g^*([p \wedge p',q \vee q']) + g^*([(p \vee p') \wedge (q \wedge q'), (p \vee p') \vee (q \wedge q')]) \\
&&  = \{g(p \wedge p') + g(q \vee q'))\}/2 +  
 \{g((p \vee p') \wedge (q \wedge q')) + g((p \vee p') \vee (q \wedge q'))\}/2 \\
&& \leq  \{g(p \wedge p') + g(q \vee q'))\}/2 + \{g(p \vee p') + g(q \wedge q')\}/2 \\ && \leq  \{g(p) + g(p') + g(q) + g(q')\}/2 = g^*([p,q]) + g^*([p',q']).
\end{eqnarray*}
Thus $g^*$ is submodular on the principal filter of every interval, and hence $g^*$ is L-convex.
The exactness is immediate from the definition of $g^*$.

\subsubsection{Proof of Lemma~\ref{lem:sum_of_L}}

Let $H$ be an swm-graph.
For vertices $x,y$, let $\lgate x,y \rgate$ denote the minimum $d_H$-gated set containing $x,y$.
For vertices $x,y$, a $\varDelta$-path $(x=x_0,x_1,\ldots,x_m =y)$
is called a {\it normal $\varDelta$-path}
({\em normal Boolean-gated path})
from $x$ to $y$~\cite[Section 6.6]{CCHO14} 
if for every index $i$ with $0 < i < m$ and every Boolean-gated set $B$ 
containing $\lgate x_{i-1},x_i \rgate$
it holds $B \cap \lgate x_i, x_{i+1} \rgate = \{x_i\}$.
%
\begin{Thm}
	[{\cite[Theorem 6.20]{CCHO14}}]\label{thm:normal}
	For vertices $x,y$, 
	a normal $\varDelta$-path from $x$ to $y$ uniquely exists.
\end{Thm}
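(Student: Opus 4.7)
The plan is to induct on the $\varDelta$-distance $d^{\varDelta}(x,y)$. The base case $x=y$ is trivial (the one-vertex path is vacuously normal), and for the inductive step I would pin down the first vertex $x_1$ of any normal $\varDelta$-path from $x$ to $y$ as canonically determined, from which both existence and uniqueness follow.

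For existence, for each Boolean-gated set $B$ containing $x$ let $g_B(y) \in B$ denote the gate of $y$; by gatedness $d(x,y) = d(x, g_B(y)) + d(g_B(y), y)$, so $g_B(y)$ lies strictly closer to $y$ whenever $g_B(y) \ne x$. I choose $B^*$ so as to maximize $d(x, g_B(y))$ over all Boolean-gated $B \ni x$ (such a maximum exists since any Boolean-gated subgraph is dual polar of finite diameter by Theorem~\ref{thm:dual_polar}, and $d(x, g_B(y)) \le d(x,y)$ throughout), set $x_1 := g_{B^*}(y)$, apply the induction hypothesis to $(x_1, y)$ to obtain a normal tail $(x_1, x_2, \ldots, x_m = y)$, and prepend $x$. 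Normality at the junction $x_1$ then follows from maximality: if some Boolean-gated $B'$ contained $\lgate x, x_1 \rgate$ and met $\lgate x_1, x_2 \rgate$ at a vertex $v \ne x_1$, then using closure of Boolean-gated sets under nonempty intersection one could amalgamate $B'$ with $B^*$ to build a Boolean-gated set through $x$ whose gate of $y$ is farther from $x$ than $x_1$, contradicting the choice of $B^*$.

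For uniqueness, suppose $(x, x_1, x_2, \ldots)$ and $(x, x_1', x_2', \ldots)$ are two normal $\varDelta$-paths from $x$ to $y$. Applying the normality condition at the first step of each path to any Boolean-gated set containing both $\lgate x, x_1 \rgate$ and $x_1'$ (and the symmetric role of $x_1$ and $x_1'$), combined with intersection-closure of Boolean-gated sets, forces $\lgate x, x_1 \rgate = \lgate x, x_1' \rgate$; since both $x_1$ and $x_1'$ are the gate of $y$ in this common Boolean-gated set, they coincide. Induction on the tail then concludes.

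The main obstacle I expect is the compatibility of gates with inclusions of Boolean-gated sets, which is needed for the maximality argument to produce the desired contradiction; this rests on the structural fact that Boolean-gated subgraphs are dual polar (Theorem~\ref{thm:dual_polar}) and thus carry well-behaved gate operators that interact compatibly with the ambient graph metric via Chepoi's lemma (Lemma~\ref{lem:chepoi}). A cleaner organization would be to pass to the barycentric subdivision $G^*$, which is a well-oriented modular graph (Lemmas~\ref{lem:om_Bgated} and \ref{lem:well-oriented}): Boolean-gated sets of $G$ then correspond to intervals $[p,q]$ with $p \sqsubseteq q$ in the associated modular poset, and the normal-path condition becomes a statement about canonical factorizations in a modular semilattice, where rank-function arguments make the maximal choice of $x_1$ and its uniqueness essentially automatic.
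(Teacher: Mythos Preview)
The paper does not prove this theorem; it is imported verbatim from \cite[Theorem 6.20]{CCHO14} and used as a black box (notably in the proof of Proposition~\ref{prop:belong}). So there is no in-paper argument to compare your proposal against. That said, the paper does quote the closely related Proposition~\ref{prop:delta-gate} (the $\varDelta$-gate), and the intended construction in \cite{CCHO14} is precisely to iterate the $\varDelta$-gate: $x_1$ is the $\varDelta$-gate of $y$ at $x$, $x_2$ the $\varDelta$-gate of $y$ at $x_1$, and so on. Your inductive scheme is therefore the right shape.

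There is, however, a genuine gap in your execution. You set up the induction on $d^{\varDelta}(x,y)$ but then construct $x_1$ by maximizing the \emph{graph} distance $d(x,g_B(y))$ and argue that $x_1$ is strictly $d$-closer to $y$. This does not by itself give $d^{\varDelta}(x_1,y)<d^{\varDelta}(x,y)$, which is what the induction needs; a vertex can be strictly $d$-closer without being strictly $d^{\varDelta}$-closer. The fix is to work with $d^{\varDelta}$ throughout and take $x_1$ to be the $\varDelta$-gate of Proposition~\ref{prop:delta-gate}: property~(1) there gives the drop in $d^{\varDelta}$, and property~(2) --- that the $\varDelta$-gate lies in $\lgate x,v\rgate$ for every $\varDelta$-neighbor $v$ on a shortest $\varDelta$-path --- is exactly the minimality you need to verify normality at the junction and to force uniqueness of the first step. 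Your uniqueness sketch (``both $x_1$ and $x_1'$ are the gate of $y$ in this common Boolean-gated set'') also tacitly assumes each step of a normal $\varDelta$-path realizes $d^{\varDelta}$-progress toward $y$; this is true, but it is a consequence of Proposition~\ref{prop:delta-gate}~(3), not of the definition of normality, and should be stated.
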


We prove a global convexity property 
of the domain of L-extendable functions. 
\begin{Prop}\label{prop:belong}
	Let $h$ be an L-extendable function on an swm-graph $H$.
	For any $x,y \in \dom h$, the normal $\varDelta$-path from $x$ to $y$ is contained in $\dom h$. 
\end{Prop}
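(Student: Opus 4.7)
The plan is to prove the proposition by induction on the length $m$ of the normal $\Delta$-path $P = (x = x_0, x_1, \ldots, x_m = y)$, using an L-convex extension $g : H^* \to \overline{\RR}$ of $h$. Setting $B_i := \lgate x_{i-1}, x_i \rgate \in H^*$, the lifted sequence $(\{x_0\}, B_1, \{x_1\}, B_2, \ldots, B_m, \{x_m\})$ is a $\Delta'$-path in $H^*$: each $B_i$ contains $x_{i-1}$ and $x_i$, so in the reverse-inclusion order of $\mathcal{B}(H)$ we have $B_i \preceq \{x_{j}\}$ for $j \in \{i-1, i\}$, and well-orientedness of $H^*$ (Lemma~\ref{lem:well-oriented}) upgrades these to $\sqsubseteq$-comparabilities. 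The base cases $m \leq 1$ are immediate; for the inductive step, uniqueness of normal paths (Theorem~\ref{thm:normal}) makes the suffix $(x_1, \ldots, x_m)$ itself a normal $\Delta$-path from $x_1$ to $y$, so it suffices to prove $\{x_1\} \in \dom g$ and iterate.

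To establish $\{x_1\} \in \dom g$, I would work inside the principal filter $\mathcal{F}_{B_1}$ of $B_1$ in $H^*$, which is a polar space by Lemma~\ref{lem:well-oriented} and on which $g$ is submodular by Proposition~\ref{prop:locally-submo}; its maximal elements are the singletons $\{z\}$ with $z \in B_1$, and these include both $\{x_0\}$ and $\{x_1\}$. By the $\Delta'$-connectedness of $\dom g$, choose a $\Delta'$-path $\pi$ in $\dom g$ from $\{x_0\}$ to $\{x_m\}$ and let $w \in \dom g$ be the last vertex of $\pi$ that still lies in $\mathcal{F}_{B_1}$. Then Theorem~\ref{thm:polar_submo} applied to the submodular restriction $g|_{\mathcal{F}_{B_1}}$ and the pair $\{x_0\}, w$ yields $\{x_0\} \wedge w, \{x_0\} \sqcup w \in \dom g$. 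The normality of $P$ --- specifically the transversality condition that every Boolean-gated set properly containing $B_1$ meets $B_2 = \lgate x_1, x_2 \rgate$ only at $\{x_1\}$ --- is the structural input that pins down the pseudo-join $\{x_0\} \sqcup w$, possibly after a short finite iteration, to be exactly $\{x_1\}$.

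The crux and main obstacle is this final identification: choosing $w$ as the last exit of $\pi$ from $\mathcal{F}_{B_1}$ does not by itself guarantee $\{x_0\} \sqcup w = \{x_1\}$. The plan is to run an inner induction that, starting from $w_0 := w$, iteratively produces $w_{j+1} \in \mathcal{F}_{B_1} \cap \dom g$ via submodular combinations (meets and pseudo-joins in successive principal polar filters $\mathcal{F}_{B_i}$) while strictly decreasing a suitable potential --- for instance, the $d^\Delta$-distance inside $B_1$ from $w_j$ to $\{x_1\}$, which is well-behaved because $d^\Delta$-balls are $d$-gated by Lemma~\ref{lem:ball_convex}. Normality of $P$ ensures that the only fixed point of this procedure within $\mathcal{F}_{B_1}$ that is compatible with $\{x_0\}$ is $\{x_1\}$ itself, so the procedure terminates at $\{x_1\} \in \dom g$, closing the outer induction on $m$.
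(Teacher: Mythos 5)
The outer structure of your induction is sound up to the point you yourself flag as the crux, but that crux is a genuine gap, not a deferred technicality. Normality of the path $(x_0,\ldots,x_m)$ is a purely geometric condition relating the consecutive gates $\lgate x_{i-1},x_i\rgate$ and $\lgate x_i,x_{i+1}\rgate$; it carries no information about which elements of ${\cal F}_{B_1}\cap\dom g$ exist. Your vertex $w$ (the last vertex of an arbitrarily chosen $\varDelta'$-path $\pi\subseteq\dom g$ lying in ${\cal F}_{B_1}$) is unconstrained beyond being a point of $\dom g$ in that polar space, so there is no reason for $\{x_0\}\sqcup w$, or any sequence of meets and pseudo-joins seeded by it, to land on $\{x_1\}$ rather than on some other maximal element of ${\cal F}_{B_1}$ lying in $\dom g$. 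The ``inner induction decreasing $d^{\varDelta}(w_j,\{x_1\})$'' is not yet an argument: you never exhibit an operation that provably decreases this potential, and the claim that $\{x_1\}$ is the unique fixed point ``compatible with $\{x_0\}$'' is essentially the statement to be proved. A forward induction along the normal path requires identifying its second vertex inside $\dom g$ from local data, and nothing in the proposal accomplishes that identification.

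The paper sidesteps this with an extremal argument in the spirit of Abrams--Ghrist normal cube paths. Among all $\varDelta$-paths $P=(x=x_0,x_1,\ldots,x_m=y)$ contained in $\dom h$, choose one minimizing $I_P=\sum_{i=1}^m i\cdot d_H(x_{i-1},x_i)$, and show it is normal. If normality fails at $x_i$, then $U:=\lgate x_{i-1},x_{i}\rgate\wedge_R\lgate x_{i},x_{i+1}\rgate\supset\{x_i\}$ lies in $\dom g$ by submodularity, and $x_i':=U\vee_L\{x_{i+1}\}$ is a single vertex of $\dom h$ (a maximal element of the polar space ${\cal F}_{\lgate x_i,x_{i+1}\rgate}$, by the rank equality \eqref{eqn:rank_eq}) that is $\varDelta$-adjacent to both $x_{i-1}$ and $x_{i+1}$; substituting it for $x_i$ strictly decreases $I_P$. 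Uniqueness of normal $\varDelta$-paths (Theorem~\ref{thm:normal}) then forces the minimizer to coincide with the normal path, which lies in $\dom h$ by construction. If you want to rescue a forward induction, this exchange step is the mechanism you are missing: it is what actually produces, from a given path in $\dom h$, a new vertex of $\dom h$ strictly ``closer'' to the normal path.
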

\begin{proof}

The proof is based on the idea of Abram and Ghrist~\cite{AbramGhrist04} 
to find normal cube paths in CAT(0) cube complex.
Let $g: H^* \to \overline{\RR}$ be an L-convex relaxation of $h$.
For $x,y \in \dom h$, 
take a $\varDelta$-path $P = (x = x_0,x_1,\ldots,x_m = y)$ in $\dom h$
such that $I_P := \sum_{i=1}^m i \cdot d_{H} (x_{i-1},x_i)$ is minimum.
We remark that $\lgate x_i, x_{i-1} \rgate$ is also Boolean-gated (\cite[Lemma 6.8]{CCHO14}), 
and $\{ x_i\} \wedge \{ x_{i-1}\} = \lgate x_i, x_{i-1} \rgate$ in ${\cal F}_{\lgate x_i, x_{i-1} \rgate}$. 
By submodularity of $g$ on ${\cal F}_{\lgate x_i, x_{i-1} \rgate}$ 
with $x_i,x_{i-1} \in \dom g$,
it holds $\lgate x_i, x_{i-1} \rgate \in \dom g$.
Also $P$ is normal if and only if $\lgate x_{i-1},x_{i} \rgate \wedge_R \lgate x_{i},x_{i+1} \rgate = \{x_i\}$ 
in ${\cal I}_{\{x_i\}}$ for $i=1,2,\ldots,m-1$.

We show that $P$ is normal.
Suppose not. There is an index $i$ $(0 < i < m)$ 
such that $U := \lgate x_{i-1},x_{i} \rgate \wedge_R \lgate x_{i},x_{i+1} \rgate \supset \{x_i\}$ in ${\cal I}_{\{x_i\}}$.
Then $U$ is contained in $\dom g$
since $U$ is the meet of $\lgate x_{i},x_{i+1} \rgate \in \dom g$
and
$\lgate x_{i-1},x_{i} \rgate \vee_L \lgate x_{i},x_{i+1} \rgate \in \dom g$.
Now $\lgate x_i, x_{i+1} \rgate \supseteq U \supset \{x_i\}$.
Consider $U \vee_L \{x_{i+1}\}$ 
in polar space ${\cal F}_{\lgate x_i, x_{i+1} \rgate}$, 
which consists of a single vertex $x'_{i}$ by (\ref{eqn:rank_eq}).  
Also  $x'_{i}$ belongs to $\dom g$ and to $\dom h$, 
and is different from $x_i$
(by $\{ x'_i\} \wedge \{ x_{i+1}\} \subset \lgate x_i,x_{i+1} \rgate = \{x_i\} \wedge \{x_{i+1}\}$). 
Then 
$x_{i+1}$ and $x'_i$ are $\varDelta$-adjacent 
by $x_{i+1},x'_i \in \lgate x_i, x_{i+1} \rgate$.
Also $x_{i-1}$ and $x'_i$ are $\varDelta$-adjacent 
since $x'_i \in U \subseteq U \vee \lgate x_i,x_{i-1} \rgate \supseteq \lgate x_i,x_{i-1} \rgate \ni x_{i-1}$.
Replace $x_i$ by $x'_i$ in $P$, which is again a $\varDelta$-path in $\dom h$.
We finally show that $I_P$ strictly decreases in this modification.
Since $x'_i \in I_{H^*}(x_{i+1},x_{i})$, 
it holds $d_{H^*}(x_{i+1},x_i) = d_{H^*}(x_{i+1},x'_i) + d_{H^*}(x'_{i},x_{i})$, 
and $d(x_{i+1},x_i) = d(x_{i+1},x'_i) + d(x'_{i},x_{i})$ (by Proposition~\ref{thm:G->G*}).
Also $d(x'_i,x_{i-1}) \leq d(x'_i,x_i) + d(x_{i},x_{i-1})$.
Therefore 
$i d(x_{i-1},x_i) + (i+1) d(x_{i},x_{i+1}) \geq 
i d(x_{i-1},x'_i) + (i+1) d(x'_{i},x_{i+1}) + d(x'_i,x_i) > i d(x_{i-1},x'_i) + (i+1) d(x'_{i},x_{i+1})$. Thus $I_P$ strictly deceases. 
This is a contradiction to the minimality of $I_P$. 
\end{proof}

We are ready to prove Lemma~\ref{lem:sum_of_L}.
Let $g$ and $g'$ be L-convex functions on an oriented modular graph $\varGamma$.
It suffices to show that $\dom g \cap \dom g'$ is $\varDelta'$-connected. 
Now $g$ and $g'$ are also regarded as L-extendable functions on swm-graph $\varGamma$~(Proposition~\ref{prop:L_is_L-extendable}).
For $x,y  \in \dom g \cap \dom g'$, the normal $\varDelta$-path 
$(x = x_0,x_1,\ldots,x_m = y)$ from $x$ to $y$  
is contained in $\dom g \cap \dom g'$ (Proposition~\ref{prop:belong}). 
By submodularity on each interval, $\varDelta'$-path
$(x = x_0, x_0 \wedge x_1, x_1,x_1 \wedge x_2, \ldots,x_m = y)$ 
is also contained in $\dom g \cap \dom g'$.
Thus $\dom g \cap \dom g'$ is $\varDelta'$-connected. 

\subsubsection{Proof of Theorem~\ref{thm:L-optimality}}
We can assume that $\varGamma$ is well-oriented.
Otherwise, consider subdivision $\varGamma^*$ and exact L-convex relaxation $g^*$ on $\varGamma^*$.
If $g(p) = g^*([p,p]) > g^*([p',q'])$ and $p' \sqsubseteq p \sqsubseteq q'$, 
then $g(p) > g(p')$ or $g(p) > g(q')$.
Suppose first that the image of $g$ is discrete.
By the $\varDelta'$-connectivity of $\dom g$ and nonoptimality of $g$, 
there is a $\varDelta'$-path $(p = p_0, p_1,\ldots, p_m)$ 
such that $g(p) > g(p_m)$.
Consider all such paths minimizing $\max_i g(p_i)$;
the existence of such paths is guaranteed by
the discreteness of the image of $g$ and $\max_i g(p_i) \geq g(p)$. 
Among these paths, choose a path 
with the index set $I := \{ i \mid g(p_i)  = \max_j g(p_j)\}$ minimal. 
We show that $I = \{0\}$.
We can choose $i \in I$ such that $g(p_{i-1}) \leq g(p_i) > g(p_{i+1})$.
By the well-orientedness and the minimality, 
it holds $p_{i-1} \prec p_i \succ p_{i+1}$ or 
$p_{i-1} \succ p_i \prec p_{i+1}$.
We may assume that $p_{i-1}, p_{i+1} \in {\cal F}_{p_{i}}$, 
and $p_{i-1}$ and $p_{i+1}$ are incomparable. 
By Lemma~\ref{lem:f(v_i)<f(v_i+1)}, 
there are $p_{i-1} = q_0,q_1,\ldots,q_k = p_{i+1} \in {\cal F}_{p_{i}}$ 
such that $g(p_{i}) > g(q_j)$ for $j=1,2,\ldots,k$ 
and $q_i \prec q_{i+1}$ or $q_i \succ q_{i+1}$.
Replacing $p_i$ by $q_1,q_2,\ldots,q_k$ in the path, 
the resulting $\varDelta'$-path decreases 
$\max_i g(p_i) \geq g(p)$ or $I$. 
This is a contradiction to the minimality.

Next suppose that $\varGamma$ is locally-finite.
Then $\varGamma^{\varDelta}$ is also locally-finite.
Indeed, consider a vertex $x$ and its neighbor $y$ in $\varGamma^{\varDelta}$.
Then $y$ is the join of some atoms (neighbors of $x \wedge y$ in $\varGamma$) 
in ${\cal F}_{x \wedge y}$.
Also $x \wedge y$ is the join of some atoms (neighbors of $x$ in $\varGamma$) in $[x \wedge y,x]$.
Consequently, possible neighbors of $x$ in $\varGamma^{\varDelta}$ are finite.

Consider the ball $B = B^{\varDelta}_r(x)$ for large $r$, 
which is a $d_{\varGamma}$-convex set by Lemma~\ref{lem:ball_convex} 
and is a finite set by the local-finiteness of $\varGamma^{\varDelta}$.
Consider $g + [B]$, which is L-convex by Lemma~\ref{lem:sum_of_L}.
Also $p$ is not optimal for $g + [B]$, and the image of $g + [B]$ is discrete.
Thus this reduces to the case above.

\subsubsection{Proof of Theorem~\ref{thm:bound}}

Let $\varGamma$ be an oriented modular graph.
In this case, 
$\lgate x,y \rgate$ (the smallest $d$-gated set containing $x,y$)
is the smallest $d_{\varGamma}$-convex set containing $x,y$ (by Lemma~\ref{lem:chepoi}).
Consider the thickening $\varGamma^{\varDelta}$.
If vertices $x$ and $y$ are adjacent in $\varGamma^{\varDelta}$, 
then $x$ and $y$ are said to be {\em $\varDelta$-adjacent}, 
and $y$ is called a {\em ${\varDelta}$-neighbor} of $x$.
\begin{Lem}\label{lem:oriented123}
	\begin{description}
		\item[{\rm (1)}] vertices $x,y$ are $\varDelta$-adjacent 
		if and only if both $x \vee y$ and $x \wedge y$ exist and $x \wedge y \sqsubseteq x \vee y$.
		\item[{\rm (2)}] If $x \preceq y$, then $I(x,y) = [x,y] = \lgate x,y \rgate$.
		\item[{\rm (3)}] For $x,y,z,v \in \varGamma$ with $x \preceq z \preceq y$, 
		if $d^{\varDelta}(v,x) = d^\varDelta(v,y) = k$, then $d^\varDelta(v,z) \leq k$.
	\end{description}
\end{Lem}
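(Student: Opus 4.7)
My plan is to prove the three parts in order, drawing mainly on Lemma~\ref{lem:om_Bgated} (Boolean-gated sets in an oriented modular graph $\varGamma$ are exactly intervals $[u,v]$ with $u \sqsubseteq v$), Proposition~\ref{prop:ideal} ($d$-convexity of intervals), and Lemma~\ref{lem:ball_convex} ($d$-gatedness of $d^{\varDelta}$-balls). Since $\varGamma$ is modular, Lemma~\ref{lem:chepoi} also equates $d$-gated and $d$-convex sets.

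For (1), $\varDelta$-adjacency means $x,y$ belong to a common Boolean-gated set $B$. By Lemma~\ref{lem:om_Bgated} we may take $B = [u,v]$ with $u \sqsubseteq v$, which is a complemented modular lattice containing both. Thus the join $x \vee y$ and meet $x \wedge y$ exist in $B$ (and hence in $\varGamma$, since any principal ideal/filter containing $B$ induces the same lattice operations on elements of $B$), and the subinterval $[x \wedge y, x \vee y]$ of a complemented modular lattice is itself complemented modular, giving $x \wedge y \sqsubseteq x \vee y$. Conversely, if $x \vee y$ and $x \wedge y$ exist and $x \wedge y \sqsubseteq x \vee y$, then $[x \wedge y, x \vee y]$ is a complemented modular lattice containing $x,y$, hence Boolean-gated by Lemma~\ref{lem:om_Bgated}, so $x$ and $y$ are $\varDelta$-adjacent.

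For (2), the inclusion $I(x,y) \subseteq [x,y]$ is immediate from Proposition~\ref{prop:ideal}, which says intervals are $d$-convex. For the reverse inclusion, I will use that in the modular lattice $[x,y]$ any maximal chain from $x$ to $y$ has length $r(y) - r(x) = d_\varGamma(x,y)$; concatenating maximal chains from $x$ to $z$ and from $z$ to $y$ gives such a chain through any $z \in [x,y]$, so $z \in I(x,y)$. This yields $[x,y] = I(x,y)$. Since $I(x,y) \subseteq \lgate x,y \rgate$ by definition and $[x,y]$ itself is $d$-convex (hence $d$-gated in the modular setting by Lemma~\ref{lem:chepoi}) and contains $x,y$, minimality of $\lgate x,y \rgate$ gives $\lgate x,y \rgate = [x,y]$.

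For (3), I simply combine (2) with Lemma~\ref{lem:ball_convex}: the ball $B := B^{\varDelta}_k(v)$ is $d$-gated, hence $d$-convex, and contains $x,y$ by hypothesis. By (2), $z \in [x,y] = I(x,y)$, so $z \in B$, i.e.\ $d^{\varDelta}(v,z) \leq k$. The only mild subtlety across the three parts is verifying in (1) that the lattice operations inside the Boolean-gated set $[u,v]$ coincide with the global operations in $\varGamma$, but this follows because $[u,v]$ sits inside the principal ideal ${\cal I}_v$ and the principal filter ${\cal F}_u$, which are (sub)lattices in the semilattice sense by Proposition~\ref{prop:ideal}; no step requires genuinely new machinery beyond what is cited.
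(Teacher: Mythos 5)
Your proposal is correct, and for parts (1) and (3) it follows exactly the route the paper takes: (1) is read off from Lemma~\ref{lem:om_Bgated} (Boolean-gated sets are precisely the intervals $[u,v]$ with $u \sqsubseteq v$), and (3) is the one-line combination of (2) with the $d$-gatedness of the ball $B^{\varDelta}_k(v)$ from Lemma~\ref{lem:ball_convex}. The only place you genuinely diverge is part (2): the paper simply cites Lemmas 4.13 and 4.14 of \cite{HH150ext}, whereas you give a self-contained argument --- $I(x,y) \subseteq [x,y]$ from the $d$-convexity of intervals (Proposition~\ref{prop:ideal}), and $[x,y] \subseteq I(x,y)$ by running a maximal chain of $[x,y]$ through an arbitrary $z$ and using that all maximal chains of a modular lattice have the same length. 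This is a legitimate and arguably preferable substitute, provided you make explicit the one fact it rests on: that $\varGamma$ is graded as a poset with every edge a covering pair, so that $d_{\varGamma}(x,y) = r(y) - r(x)$ for $x \preceq y$ and maximal chains are geodesics. That gradedness is available in the paper (it is invoked in Section~\ref{subsub:orthoscheme}, citing \cite[Theorem 6.2]{CCHO14}), so the gap is only one of attribution, not of substance. Your closing remark about identifying the lattice operations inside a Boolean-gated interval with those of the ambient poset addresses a subtlety the paper's own one-line proof of (1) silently glosses over; your justification via the principal ideal and filter is adequate for the purpose here.
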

\begin{proof}
	(1) follows from Lemma~\ref{lem:om_Bgated}.
	(2) follows from \cite[Lemmas 4.13 and 4.14]{HH150ext}.
	(3). By (2) and Lemma~\ref{lem:ball_convex}, 
	we have $B^{\varDelta}_k(v) \supseteq I(x,y) = [x,y] \ni v$.
\end{proof}

\begin{Prop}[{\cite[Lemma 6.17, Proposition 6.18]{CCHO14}}]\label{prop:delta-gate}
	For distinct vertices $x,y$, there uniquely exists a $\varDelta$-neighbor $u$ of $x$  
	 having the following properties:
	 \begin{description}
	 	\item[{\rm (1)}] $d^{\varDelta}(x,y) = 1 + d^{\varDelta}(u,x)$.
	 	\item[{\rm (2)}] For a $\varDelta$-neighbor $v$ of $x$,  
	 	if $d^{\varDelta}(x,y) = 1 + d^{\varDelta}(v,x)$, then $u \in \lgate x,v \rgate$.
	 	\item[{\rm (3)}] For a $\varDelta$-neighbor $v$ of $x$, 
	    $d^{\varDelta}(v,y) = 1 + d^{\varDelta}(x,y)$ if and only if 
	    $v$ is not $\varDelta$-adjacent to $u$. 
	 \end{description}
\end{Prop}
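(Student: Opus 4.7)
The plan is to construct $u$ as the canonical first $\varDelta$-step of a geodesic from $x$ toward $y$, exploiting the $d$-gatedness of $d^{\varDelta}$-balls together with the complemented modular lattice structure of Boolean-gated sets.

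Set $r := d^{\varDelta}(x,y)$ and consider the ball $B := B^{\varDelta}_{r-1}(y)$, which by Lemma~\ref{lem:ball_convex} is $d$-gated in $\varGamma$. Let $z$ be the $d$-gate of $x$ in $B$; then $d(x,w) = d(x,z) + d(z,w)$ for every $w \in B$, so $z$ lies on a $\varGamma$-shortest path from $x$ to $y$ and $d^{\varDelta}(x,y) = d(x,z) + d^{\varDelta}(z,y)$. Since $x \notin B$ we have $z \neq x$, so $\lgate x,z \rgate$ is nontrivial; by Lemma~\ref{lem:om_Bgated} it equals an interval $[p,q]$ with $p \sqsubseteq q$, which is a complemented modular lattice containing both $x$ and $z$.

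Inside $[p,q]$ I would take $u$ as an atom-cover of $x$ in the covering graph of $[p,q]$ that lies on a $\varGamma$-geodesic from $x$ to $z$; among possibly several candidates, the canonical one is singled out using the polar-space structure of the neighborhood semilattice ${\cal I}^*_{\{x\}}$, which is a polar space by Lemma~\ref{lem:well-oriented}. More precisely, $\lgate x,u\rgate$ is specified as the least element of ${\cal I}^*_{\{x\}}$, above $\{x\}$, that is contained in $\lgate x,z\rgate$ and represents a geodesic first step toward $z$. The construction forces $d(x,u) + d(u,z) = d(x,z)$, which combined with the gate property of $B$ yields the distance-decrease asserted in~(1).

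For~(2), if $v$ is another $\varDelta$-neighbor of $x$ with the same distance-decrease property, then $v \in B$, and an analogous gate-plus-atom-step argument inside $\lgate x,v\rgate$ together with the minimality of $u$ in ${\cal I}^*_{\{x\}}$ forces $\lgate x,u\rgate \preceq \lgate x,v\rgate$ in ${\cal I}^*_{\{x\}}$ via the polar meet formulas analogous to~\eqref{eqn:interval_meet}--\eqref{eqn:interval_join}; this yields $u \in \lgate x,v\rgate$ and simultaneously proves uniqueness of $u$. For~(3), Lemma~\ref{lem:oriented123}(1) characterizes $\varDelta$-adjacency of $u$ and $v$ as the existence of a common Boolean-gated set; if none exists, a first $\varDelta$-step from $v$ cannot merge into any geodesic through $u$, forcing $d^{\varDelta}(v,y) = d^{\varDelta}(x,y) + d(x,v)$. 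The principal obstacle is making the minimality of $u$ rigorously canonical: distinct $\varGamma$-geodesics from $x$ to $z$ inside $[p,q]$ can begin with different atom-covers of $x$, and the polar-space meet of their associated Boolean-gated sets in ${\cal I}^*_{\{x\}}$ must be shown to again have the form $\lgate x,u\rgate$ for a single $\varDelta$-neighbor $u$. This is where axiom P1 applied in the polar space ${\cal I}^*_{\{x\}}$ and the explicit interval operation formulas~\eqref{eqn:interval_meet}--\eqref{eqn:interval_join} carry the real technical content of the proof.
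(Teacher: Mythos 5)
First, a point of reference: the paper does not prove this proposition at all --- it is imported verbatim from \cite[Lemma 6.17, Proposition 6.18]{CCHO14} --- so your proposal has to be judged on its own terms, and it has genuine gaps. The most serious is the claim that $\lgate x,z\rgate$ is a Boolean-gated set, i.e.\ an interval $[p,q]$ with $p\sqsubseteq q$. Lemma~\ref{lem:om_Bgated} characterizes which sets are Boolean-gated; it does not say that the gated hull of an arbitrary pair of vertices is one. Already in a tree (an swm-graph), the gated hull of two vertices at distance $2$ is the path between them, which is not thick and hence not Boolean-gated; so your $[p,q]$ need not exist, and the gate $z$ of $x$ in $B^{\varDelta}_{r-1}(y)$ need not be $\varDelta$-adjacent to $x$. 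Relatedly, the identity $d^{\varDelta}(x,y)=d(x,z)+d^{\varDelta}(z,y)$ is false: gatedness of $B$ decomposes $d$, not $d^{\varDelta}$. In $\ZZ^2$ with $x=(0,0)$, $y=(3,3)$ one gets $z=(1,1)$, $d(x,z)=2$, $d^{\varDelta}(z,y)=2$, but $d^{\varDelta}(x,y)=3$.

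Second, the heart of the statement --- that among all distance-decreasing $\varDelta$-neighbors of $x$ there is a single one $u$ with $u\in\lgate x,v\rgate$ for every other such neighbor $v$ --- is exactly the part you leave open; your closing admission that "the minimality of $u$" is not yet canonical concedes the main point. The polar space ${\cal I}^*_x$ is indeed the right ambient object, but one must actually show that the family of Boolean-gated sets $\lgate x,v\rgate$ over distance-decreasing $\varDelta$-neighbors $v$ is closed under meets there and that the minimum is again of the form $\lgate x,u\rgate$ with $u$ still satisfying (1); neither is addressed, and part (3) is only asserted ("cannot merge into any geodesic" is not an argument, and its "if" direction is not discussed at all). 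As written, the proposal restates the difficulty rather than resolving it; the correct course here is to cite \cite{CCHO14}, as the paper does, or to reproduce the nontrivial swm-graph arguments from that source.
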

This vertex $u$ is called the {\em $\varDelta$-gate} of $y$ at $x$.
To obtain an intuition of the $\varDelta$-gate, 
consider the case of $\varGamma = \ZZ^n$. 
For distinct $x,y \in \ZZ^n$ with $x \leq y$, 
the $\varDelta$-gate of $y$ at $x$ is equal to $x + \sum \{e_i \mid y_i - x_i = \|x - y\|_{\infty} \}$.

Let us start the proof of Theorem~\ref{thm:bound}. 
Suppose that $\varGamma$ is well-oriented.
Let $x = x^0,x^1,\ldots, x^m$ be a sequence generated by 
SDA applied to an L-convex function $g$ and an initial vertex $x$.
\begin{Lem}\label{lem:geodesic}
	Suppose that $g(x) = \min \{  g(y) \mid y \in {\cal F}_{x}\}$ or $g(x) = \min \{  g(y) \mid y \in {\cal I}_{x}\}$.
	For $z \in {\cal F}_{x^k} \cup {\cal I}_{x^k}$, 
	if $g(x^k) > g(z)$, then
	$d^{\varDelta}(x,z) = k+1$.
\end{Lem}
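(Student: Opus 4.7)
The plan is to prove the lemma by induction on $k$, simultaneously establishing (i) $d^{\varDelta}(x, x^k) = k$ and (ii) the desired conclusion $d^{\varDelta}(x, z) = k+1$ for every $z \in {\cal F}_{x^k} \cup {\cal I}_{x^k}$ with $g(z) < g(x^k)$; note that (i) at step $k$ follows from (ii) at step $k-1$ applied to $z = x^k$ (using strict SDA descent), so the two statements reinforce each other. The upper bound $d^{\varDelta}(x, z) \leq k+1$ is immediate from the $\varDelta$-path $x^0, x^1, \ldots, x^k, z$, whose consecutive terms are $\preceq$-comparable (by the SDA prescription in the well-oriented setting where $\sqsubseteq\, =\, \preceq$) and hence $\varDelta$-adjacent by Lemma~\ref{lem:oriented123}(1). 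The base case $k = 0$ exploits the initial minimality hypothesis: a descending neighbor $z$ is forced into the principal ideal/filter on which $x$ is not minimal, making $z$ a proper $\varDelta$-neighbor of $x$ at distance $1$.

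For the inductive step, the crucial tool is Proposition~\ref{prop:delta-gate} on $\varDelta$-gates. Let $u$ denote the $\varDelta$-gate of $x$ at $x^k$, so $u$ is a $\varDelta$-neighbor of $x^k$ with $d^{\varDelta}(x, u) = k - 1$. Applying Proposition~\ref{prop:delta-gate}(2) with the alternate $\varDelta$-neighbor $v = x^{k-1}$ (at distance $k-1$ from $x$ by IH (i)) places $u \in \lgate x^k, x^{k-1} \rgate$, which equals the interval $[x^{k-1}\wedge x^k, x^{k-1}\vee x^k]$ by Lemma~\ref{lem:oriented123}(2). After relabeling so that $x^{k-1} \preceq x^k$, this gives $u \in [x^{k-1}, x^k]$. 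By Proposition~\ref{prop:delta-gate}(3), the lower bound $d^{\varDelta}(x, z) \geq k+1$ reduces to showing that $z$ is not $\varDelta$-adjacent to $u$. Assuming for contradiction that $z$ is $\varDelta$-adjacent to $u$, the aim is to derive $g(z) \geq g(x^k)$.

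A case analysis on how $z$ relates to $x^{k-1}$ dispatches most configurations directly: if $z \succeq x^k$, then $z \in {\cal F}_{x^{k-1}}$ via $x^{k-1} \preceq x^k \preceq z$; if $z \preceq x^k$ with $z$ $\preceq$-comparable to $x^{k-1}$, then $z \in {\cal F}_{x^{k-1}} \cup {\cal I}_{x^{k-1}}$ outright. In each, the SDA minimality of $x^k$ over ${\cal F}_{x^{k-1}} \cup {\cal I}_{x^{k-1}}$ forces $g(z) \geq g(x^k)$, the desired contradiction. The residual case is $z \preceq x^k$ with $z$ and $x^{k-1}$ incomparable; here submodularity of $g$ on the complemented modular lattice ${\cal I}_{x^k}$ (Proposition~\ref{prop:locally-submo}) applied to $(z, x^{k-1})$ yields $g(z) + g(x^{k-1}) \geq g(z \wedge x^{k-1}) + g(z \vee x^{k-1})$, where $z \wedge x^{k-1} \in {\cal I}_{x^{k-1}}$ and $z \vee x^{k-1} \in [x^{k-1}, x^k] \subseteq {\cal F}_{x^{k-1}}$, so both right-hand terms are bounded below by $g(x^k)$.

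The main obstacle is this final case: the inequality so far only yields $g(z) \geq 2 g(x^k) - g(x^{k-1})$, which is too weak under strict SDA descent $g(x^{k-1}) > g(x^k)$. The plan to sharpen it is to invoke a second submodular inequality, this time from the $\varDelta$-adjacency of $z$ to $u$: on the Boolean-gated set containing $\{z, u\}$ one has $g(z) + g(u) \geq g(z \wedge u) + g(z \vee u)$, where $z \vee u \succeq u \succeq x^{k-1}$ lies in ${\cal F}_{x^{k-1}}$ and the modular identity $x^{k-1} \vee (z \wedge u) = (x^{k-1} \vee z) \wedge u$ (valid since $x^{k-1} \preceq u$) lets one relocate $z \wedge u$ relative to ${\cal I}_{x^{k-1}}$. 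Chaining this with the first inequality and the SDA-minimality constraints on the elements lying in $[x^{k-1}, x^k]$ should tighten the bound to $g(z) \geq g(x^k)$. It is this interlocking of submodular inequalities across multiple modular intervals --- rather than a single application on one interval --- that fully exercises the L-convexity of $g$ and constitutes the technical heart of the proof.
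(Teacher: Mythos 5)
Your setup (joint induction on $d^{\varDelta}(x,x^k)=k$ and the conclusion, the trivial upper bound, the reduction via Proposition~\ref{prop:delta-gate}~(3) to showing that $z$ is not $\varDelta$-adjacent to the $\varDelta$-gate $u$ of $x$ at $x^k$, and the localization $u\in[x^{k-1},x^k]$) coincides with the paper's proof. The divergence, and the gap, is in how non-adjacency is established. You try to derive a value contradiction: assume $z$ is $\varDelta$-adjacent to $u$ and deduce $g(z)\geq g(x^k)$. As you yourself note, the single submodular inequality on the pair $(z,x^{k-1})$ only gives $g(z)\geq 2g(x^k)-g(x^{k-1})$, which is strictly weaker than $g(x^k)$ under strict descent. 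The proposed repair --- chaining in $g(z)+g(u)\geq g(z\wedge u)+g(z\vee u)$ --- does not close this: every lower bound available from SDA-minimality is in terms of $g(x^k)$, while $g(u)$ for an interior point $u\in[x^{k-1},x^k]$ admits no useful \emph{upper} bound (already on a rank-$2$ Boolean interval, submodularity permits $g(u)$ to be arbitrarily large), so the chained estimate $g(z)\geq g(z\wedge u)+g(z\vee u)-g(u)$ can fall below $g(x^k)$. The word ``should'' in your last paragraph marks a step that in fact fails. (There are also two smaller slips: the case $z\succeq x^k$ is vacuous because $g$ is already minimal on the relevant side by the alternation of SDA, and the principal ideal/filter of $x^k$ is a modular \emph{semilattice}, so the ordinary join $z\vee x^{k-1}$, resp. meet, need not exist and the submodular inequality must be written with the fractional join.)

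The ingredient you are missing is the one the paper uses to make non-adjacency a \emph{geometric} rather than a value-theoretic fact. From $g(x^{k-1})>g(z)<g(x^k)\leq g(x^{k-1}\wedge z)$ and Lemma~\ref{lem:f(v_i)<f(v_i+1)}, one extracts an auxiliary point $y\in I(x^{k-1},z)$ with $g(y)<g(x^{k-1})$ lying in $[x^{k-1}\wedge(z\vee_L x^{k-1}),\,x^{k-1}]$ (the alternative $y\succ x^{k-1}$ being excluded by the minimality of $g(x^{k-1})$ on its filter). The induction hypothesis applied to $y$ --- not merely to $x^{k-1}$ and $x^k$ --- gives $d^{\varDelta}(x,y)=k$, and comparing with $d^{\varDelta}(x,u)=k-1$ via Lemma~\ref{lem:oriented123}~(3) forces $u\not\preceq y$. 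Setting $y':=x^{k-1}\wedge(z\vee_L x^{k-1})\preceq y$, one gets $y'\vee u\succ y'$, and then the maximality in the definition of $\vee_L$ shows $u$ and $z$ cannot have a join at all; by Lemma~\ref{lem:oriented123}~(1) they are not $\varDelta$-adjacent, with no contradiction on function values ever needed. This conversion of the value inequality $g(y)<g(x^{k-1})$ into the distance statement $d^{\varDelta}(x,y)=k$ via the induction hypothesis is the step your argument has no substitute for.
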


\begin{proof}
	By the well-orientedness of $\varGamma$ and the definition of SDA (with reversed orientation if necessarily)
	we have 
	$
	x = x^0 \succ x^1 \prec x^2 \succ x^3 \prec \cdots. 
	$
	Then it hold $g(x^{i}) = \min \{  g(y) \mid y \in {\cal I}_{x^i}\}$ if $i$ is odd, and 
	$g(x^{i}) = \min \{  g(y) \mid y \in {\cal F}_{x^i}\}$ if $i$ is even.
	We use the induction on $k$.
	Suppose that $k$ is odd.
	Then $x^{k-1}$ and $z$ belong to ${\cal F}_{x^k}$.
	By induction, we have $d^{\varDelta}(x,x^{k-1}) = k-1$ 
	and $d^{\varDelta}(x,x^{k}) = k$.
	By $g(x^{k-1}) > g(z) < g(x^{k}) \leq g(x^{k-1} \wedge z)$  and Lemma~\ref{lem:f(v_i)<f(v_i+1)}, 
	there is $y \in I(x^{k-1},z) \subseteq  {\cal F}_{x^k}$ 
	with $g(y) < g(x^{k-1})$ such that
	$y \in [x^{k-1} \wedge (z \vee_L x^{k-1}), x^{k-1}]$ 
	or $y \succ x^{k-1}$.
	Then $y \succ x^{k-1}$ is impossible by $g(x^{k-1}) = \min \{  g(y) \mid y \in {\cal F}_{x^{k-1}}\}$.
	Thus $y \in [x^{k-1} \wedge z, x^{k-1}]$ holds.
	By induction, $d^{\varDelta}(x,y) = k$ holds.
	Let $h$ be the $\varDelta$-gate of $x$ at $x^{k}$. 
	Then $d^{\varDelta}(x,h) = k-1$ (Proposition~\ref{prop:delta-gate}~(1)).
	By Proposition~\ref{prop:delta-gate}~(2) and Lemma~\ref{lem:oriented123}~(2), 
	we have $\lgate h,x^{k} \rgate \subseteq \lgate x^{k-1}, x^{k} \rgate= [x^{k-1}, x^k]$. 
	In particular, $h$ belongs to $[x^{k-1}, x^k]$.
	Then $h \not \preceq y$ must hold. 
	Otherwise, by Lemma~\ref{lem:oriented123}~(3),
	$d^{\varDelta}(x,x^{k-1}) = d^{\varDelta}(x,h) = k-1$ 
	and $y \in [h, x^{k-1}]$ 
	imply $d^\varDelta(x,y) \leq k-1$, contradicting $d^{\varDelta}(x,y) = k$.
	Consider $y' := x^{k-1} \wedge (z \vee_L x^{k-1})$. 
	Now $y' \preceq y$. Thus $y' \vee h$ is strictly greater than $y'$ 
	(by $h \not \preceq y$).
	Consequently $h$ and $z$ cannot have the join.
	Otherwise, since $y' \vee h$ and $y' \vee z$ exist,
	by definition of modular semilattice,  
	$y' \vee z \vee h$ exists, and  
	is strictly greater than $z \vee_L x^{k-1}$, which 
	contradicts the definition of $\vee_L$.
	By Lemma~\ref{lem:oriented123}, $z$ and $h$ are not $\varDelta$-adjacent.
	By Proposition~\ref{prop:delta-gate}~(3), 
	it holds $d^{\varDelta}(x,z) = k+1$, as required.
	The case of $k$ even is similar.
\end{proof}
The rest of the argument is exactly the same as the proof of \cite[Theorem 2.6]{HH14extendable};
apply the above lemma to L-convex function $g + [B^{\varDelta}_r(x)]$ 
with $r := d^{\varDelta}(x, {\rm opt}(g))$.

\subsubsection{Proof of Theorem~\ref{thm:persistency}}
The proof is exactly the same as the proof of \cite[Theorem 2.10]{HH14bounded} 
for the special case where $\varGamma$ is the product of zigzag-oriented trees.
By Lemma~\ref{lem:geodesic}, \cite[Proposition 2.10]{HH14bounded} holds for our case.
Consequently, the argument in~\cite[Section 2.5.2]{HH14bounded} works by replacing 
$y \sqcup (y \sqcup w)$ with  $y \vee_L w$, $w \sqcup (w \sqcup y)$ with  $y \vee_R w$, and $B^n$ with $H$, respectively.
Since ${\cal F}_z$ is a polar space (Lemma~\ref{lem:well-oriented}), 
the rank of $y \sqcup_R w$ is equal to the rank of $y \sqcup_L w = y$ 
in ${\cal F}_z$ (see the proof of Theorem~\ref{thm:frac_join_of_polar_space}). 
This means that $y \sqcup_R w$ is 
a maximal element in the polar space ${\cal F}_z$, and belongs to $H$.

\subsubsection{Proof of Theorem~\ref{thm:building}}

For a positive integer $m$,
define $\varGamma^{*m}$ by: $\varGamma^{*m} := (\varGamma^{*(m-1)})^*$ if $m \geq 1$ 
and $\varGamma^{*0} := \varGamma$.
For a function $g:\varGamma \to \overline{\RR}$, 
define $g^{*m}: \varGamma^{*m} \to \overline{\RR}$ 
by: $[u,v] \mapsto \{g^{*(m-1)}(u) + g^{*(m-1)}(v)\}/2$ and $g^{*0} := g$.
For an oriented modular graph $\varGamma$ and its subdivision $\varGamma^*$,
$K(\varGamma^*)$ is isometric to $K'(\varGamma)$, 
where the isometry is 
given by $x = \sum_{i} \lambda_i [p_i, q_i] \mapsto \sum_i \lambda_i ( p_i + q_i)/2$~\cite[Proposition 8.7]{CCHO14}.
In particular, $K(\varGamma^{*m})$ is 
a simplicial subdivision of $K'(\varGamma)$.
\begin{Lem}\label{lem:g*m=gbar}
	For a function $g: \varGamma \to \overline{\RR}$, 
	we have $\overline{g^{*m}} = \overline{g}$ for $m=0,1,2,\ldots$.
\end{Lem}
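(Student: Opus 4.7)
The plan is to prove the claim by induction on $m$. The base case $m=0$ is immediate since $g^{*0} = g$. For the inductive step, it suffices to establish the single equality $\overline{g^*} = \overline{g}$ (under the identification of $K(\varGamma^*)$ with its image in $K'(\varGamma) \subseteq K(\varGamma)$) for any oriented modular graph $\varGamma$ and function $g:\varGamma \to \overline{\RR}$. Applying this with $\varGamma$ replaced by $\varGamma^{*(m-1)}$ and $g$ replaced by $g^{*(m-1)}$ yields $\overline{g^{*m}} = \overline{g^{*(m-1)}}$, and the induction hypothesis finishes the chain.

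For the main identity, I would take a point $x$ in an arbitrary simplex of $K(\varGamma^*)$, which by definition has the form $x = \sum_{i=0}^k \lambda_i [p_i, q_i]$ where $[p_0, q_0] \prec [p_1, q_1] \prec \cdots \prec [p_k, q_k]$ is a chain in $\varGamma^*$ and $\sum_i \lambda_i = 1$ with $\lambda_i \geq 0$. Recalling from Lemma~\ref{lem:om_Bgated} that $\varGamma^*$ is the poset of intervals $[p,q]$ with $p \sqsubseteq q$ ordered by \emph{reverse} inclusion, the chain condition translates into $[p_0, q_0] \supseteq [p_1, q_1] \supseteq \cdots \supseteq [p_k, q_k]$, i.e., $p_0 \preceq p_1 \preceq \cdots \preceq p_k \preceq q_k \preceq \cdots \preceq q_1 \preceq q_0$ in $\varGamma$. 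Thus the multiset $\{p_0, p_1, \ldots, p_k, q_k, \ldots, q_1, q_0\}$ forms a chain in $\varGamma$, so the formal combination $\sum_i (\lambda_i/2) p_i + \sum_i (\lambda_i/2) q_i$ is a well-defined point of $K'(\varGamma)$.

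Under the isometry $K(\varGamma^*) \simeq K'(\varGamma)$ cited from \cite[Proposition 8.7]{CCHO14} (given explicitly by $\sum_i \lambda_i [p_i, q_i] \mapsto \sum_i \lambda_i (p_i+q_i)/2$), the point $x$ is identified with $y := \sum_i (\lambda_i/2) p_i + \sum_i (\lambda_i/2) q_i \in K'(\varGamma) \subseteq K(\varGamma)$. Then by the definitions of $g^*$ and the Lov\'asz extension,
\begin{equation*}
\overline{g^*}(x) \;=\; \sum_{i=0}^k \lambda_i \, g^*([p_i,q_i]) \;=\; \sum_{i=0}^k \lambda_i \, \frac{g(p_i)+g(q_i)}{2} \;=\; \sum_{i=0}^k \frac{\lambda_i}{2} g(p_i) + \sum_{i=0}^k \frac{\lambda_i}{2} g(q_i) \;=\; \overline{g}(y),
\end{equation*}
which is exactly $\overline{g^*} = \overline{g}$ under the identification.

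There is no real obstacle here; the proof is essentially a one-line computation once the isometry $K(\varGamma^*) \simeq K'(\varGamma)$ is invoked. The only point that warrants care is checking that the support $\{p_i, q_i\}_i$ really forms a chain in $\varGamma$ (so that $\overline{g}(y)$ is literally given by the piecewise-linear interpolation formula on a single simplex of $K(\varGamma)$), but this follows immediately from the reverse-inclusion convention on $\varGamma^*$.
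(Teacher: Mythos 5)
Your proof is correct and follows essentially the same route as the paper: reduce to the single identity $\overline{g^*}=\overline{g}$ and compute $\overline{g^*}(x)=\sum_i\lambda_i\,(g(p_i)+g(q_i))/2=\overline{g}\bigl(\sum_i\lambda_i(p_i+q_i)/2\bigr)$ via the isometry of \cite[Proposition 8.7]{CCHO14}. The only difference is that you spell out the induction on $m$ and the verification that the $p_i,q_i$ form a chain in $\varGamma$, both of which the paper leaves implicit.
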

\begin{proof}
	It suffices to show that $\overline{g^{*}} = \overline{g}$.
	For $x = \sum_{i} \lambda_i [p_i, q_i] \simeq \sum_{i} \lambda_i (p_i  + q_i)/2$, 
	we have $\overline{g^{*}}(x) = \sum_{i} \lambda_i g^*([p_i, q_i]) 
		= \sum_{i} \lambda_i (g(p_i) + g( q_i))/2 = \overline{g} (\sum_{i} \lambda_i (p_i  + q_i)/2) = \overline{g}(x)$, where $p_i,q_i$ form a chain, and the last equality follows from the definition of $\overline{g}$.
\end{proof}
We first prove the theorem for the case where building $K$ itself is a single apartment.
Namely, $K = K(\check{\ZZ}^n)$ and $\varGamma(K) = \check{\ZZ}^n$.

(1) $\Rightarrow$ (2).
Suppose that $g$ is an L-convex function on $\check{\ZZ}^n$.
By $\varDelta'$-connectivity of $g$, 
the domain of $\overline g$ is connected.
By the Tietze-Nakajima theorem, 
it suffices to show that $\overline{g}$ is locally convex.
Take an arbitrary $x \in \dom \overline{g} \subseteq  K(\check{\ZZ}^n)$.
For sufficiently large $m$, there is a vertex $p$ of $\varGamma^{*m}$ such that
the point $x$ belongs to the interior of 
the subcomplex $K({\cal F}_p)$ of $K(\varGamma^{*m})$ 
(for ${\cal F}_p = {\cal F}_p(\varGamma^{*m})$).
We show that $\overline{g}$ is convex on $K({\cal F}_p)$.
Since $\overline{g}$ is equal to $\overline{g^{*m}}$ 
and $g^{*m}$ is L-convex on $\varGamma^{*m}$ (Proposition~\ref{prop:L_is_L-extendable}), 
$g^{*m}$ is (bi)submodular on ${\cal F}_p \simeq {{\cal S}_2}^n$ (Theorem~\ref{thm:polar_submo}).
By Theorem~\ref{thm:Qi88}, $\overline{g^{*m}} = \overline{g}$ is convex on $K({\cal F}_p)$.
Thus $\overline{g}$ is locally convex, as required.

(2) $\Rightarrow$ (3).
Suppose that $\overline{g}$ is convex on $K(\check{\ZZ}^n) \simeq \RR^n$.
Take integral vectors $x,y \in \dom g \subseteq \ZZ^n$,
Then the midpoint of the geodesic between $x$ and $y$
is equal to $(x + y)/2$, where $+$ and $\cdot /2$ are 
the usual operations in $\RR^n$.
By convexity of $\overline{g}$ with $\overline{g}(x) = g(x)$ 
and $\overline{g}(y) = g(y)$, we have 
$
g(x) + g(y) \geq 2 \overline g ((x+y)/2).
$
Now $(x+y)/2$ is a half-integral vector, 
and is the midpoint of the edge between $\lceil (x+y)/2 \rceil$ and $\lfloor (x+y)/2 \rfloor$ 
in $K(\check{\ZZ}^n)$. 
Here $\lfloor (x+y)/2 \rfloor \sqsubseteq \lceil (x+y)/2 \rceil$. 
Therefore we have $\overline g((x+y)/2) = g(\lceil (x+y)/2 \rceil )/2 + g( \lfloor (x+y)/2 \rfloor )/2$.
Thus we obtain (3).

(3) $\Rightarrow$ (1). 
Suppose that $g$ satisfies (3). 
We first show that $g$ is submodular in every principal ideal and filter.
It suffices to consider the case of the principal filter ${\cal F}_z$ of an even integral vector $z$.
Then ${\cal F}_z = \{ z + \sum_{i=1}^n s_i \mid s_i \in \{-1,0,1\}\}$, 
and is isomorphic to ${{\cal S}_2}^n$ by 
$z + \sum_{i=1}^{n} s_i \mapsto (s_1,s_2,\ldots,s_n) \in {{\cal S}_2}^n$.
Then one can observe 
that for $x,y \in {\cal F}_z$ 
it holds $\lceil (x+y)/2 \rceil = x \sqcup y$ and $\lfloor (x+y)/2 \rfloor = x \wedge y$.
Therefore $g$ is (bi)submodular on ${\cal F}_z$.

We next show the $\varDelta'$-connectivity of $\dom g$.
Take  $x,y \in \dom g$. 
We show by induction on $k := \|x-y\|_{\infty}$.
Suppose that $k \leq 1$. 
In this case, $x$ and $y$ belong to 
the principal ideal or filter of some vertex $z$. 
The inequality in (3) is equal to $g(x) + g(y) \geq g(x \sqcup y) + g(x \wedge y)$.
Then $(x, x \wedge y, y)$ is a $\varDelta'$-path, as required.
Suppose that $k \geq 2$.
Then $\lceil (x+y)/2 \rceil$ and $\lfloor (x+y)/2 \rfloor$ belong to $\dom g$.
Also $\lfloor (x+y)/2 \rfloor \sqsubseteq  \lceil (x+y)/2 \rceil$.
Both $\| x -\lceil (x+y)/2 \rceil \|_{\infty}$ and $\| y - \lfloor (x+y)/2 \rfloor$
are at most $ \|_{\infty} \leq \lceil k/2 \rceil$.
By induction, pairs $(x, \lceil (x+y)/2 \rceil)$ and $(\lfloor (x+y)/2 \rfloor, y)$ are connected by $\varDelta'$-paths, respectively.
Hence $x$ and $y$ are connected by a $\varDelta'$-path.

Next we consider the general $K$.

(1) $\Rightarrow$ (2).
Suppose that $g$ is L-convex.
Take arbitrary $x,y$ in $K$.
Consider the geodesic $[x,y]$ between them.
Take an apartment $\varSigma$ containing $x,y$.
Then the unique geodesic between $x,y$ is contained in $\varSigma$ 
(by convexity of apartments in $K$; see~\cite[Theorem 11.16(4)]{BuildingBook}).
Now $g$ is L-convex on $\varGamma(\varSigma) \simeq \check\ZZ^n$. 
Therefore $\overline{g}$ is convex on $\varSigma$ 
by the above-proved (1) $\Rightarrow$ (2).
From this $\overline{g}$ satisfies (\ref{eqn:convexity}) on $[x,y]$. 
Hence $\overline{g}$ is convex on $K$.

(2) $\Rightarrow$ (3). Since $\overline{g}$ is convex on every apartment, 
$g$ satisfies (3) as above.

(3) $\Rightarrow$ (1). Suppose that $g$ satisfies (3).
The $\varDelta'$-connectivity 
can be shown similarly by taking an apartment containing any two vertices.
Since $\varGamma(K)$ is well-oriented, 
it suffices to show that $g$ is submodular on the principal ideal ${\cal I}_p$ 
and filter ${\cal F}_p$ of every vertex $p$.
Then both ${\cal I}_p$ and ${\cal F}_p$ 
are polar spaces; compare definitions of polar space and Euclidean building.
We can assume that $p$ has label $0$.
It suffices to show that $g$ is submodular on ${\cal I}_p$.
Take any $x,y$ in ${\cal I}_p$, and take an apartment $\varSigma$ containing $\{x,p\}$ and $\{y,p\}$.
The intersection of ${\cal I}_p$ and (the vertex set of) $\varSigma$
forms a polar frame in the polar space ${\cal I}_p$.
Thus the vertex set of $\varSigma$ is identified with $\ZZ^n$
and ${\cal I}_p$ is identified with $\{-1,0,1\}^n \subseteq \ZZ^n$.
Under this identification, 
it is easy to see that $\lfloor (x+y)/2 \rfloor = x \wedge y$ and $\lceil (x+y)/2 \rceil = x \sqcup y$.
Therefore (3) coincides 
with the submodularity inequality in Theorem~\ref{thm:polar_submo}.

\subsubsection{Proof of Proposition~\ref{prop:oriented_tree}}
We can assume that each $T_i$ is a tree without leaves.

(1) $\Rightarrow$ (2). 
Suppose that $g$ is L-convex.
Consider the barycentric subdivision 
$\varGamma^* = T_1^{*} \times T_2^* \times \cdots \times T_n^*$.
Then $K (\varGamma^*)$ is a Euclidean building, 
where apartments are subcomplexes $K(P_1 \times P_2 \times \cdots \times P_n)$
for infinite paths $P_i \subseteq T^*_i$ $(i=1,2,\ldots,n)$.
Therefore, by Proposition~\ref{prop:L_is_L-extendable}, 
$g^*$ is L-convex on the Euclidean building.
By Theorem~\ref{thm:building}, 
the Lov\'asz extension $\overline{g^*}: K(\varGamma^*) \to \overline{\RR}$ is convex.
Now $K(\varGamma^*)$ is isometric to $K'(\varGamma)$, 
and $\overline{g} = \overline{g^*}$ (Lemma~\ref{lem:g*m=gbar}).
Hence the Lov\'asz extension $\overline{g}: K'(\varGamma) \to \overline{\RR}$ is convex.

(2) $\Rightarrow$ (3).
Take two vertices $x,y \in \varGamma$.
Since $\overline{g}$ is convex, we have
$
\overline{g}(x) + \overline{g}(y) \geq 2 \overline{g}( (x+y)/2).
$
Since $\overline{g}( (x+y)/2) = g^*( [\lfloor (x+y)/2 \rfloor,  \lceil (x+y)/2 \rceil]) 
= \{g(\lfloor (x+y)/2 \rfloor) + g( \lceil (x+y)/2 \rceil)\}/2$, 
we obtain (3), as required.

(3) $\Rightarrow$ (1).
We start with a preliminary argument.
Let $T$ be an oriented tree.
For vertices $p,q$ in $T$, let $p \circ q$ and $p \bullet q$
denote $\lceil (p+q)/2 \rceil$ and $\lfloor (p+q)/2 \rfloor$, respectively.
Let $T^*$ be the subdivision of $T$.
Take an arbitrary vertex $p$ of $T$. 
Consider neighborhood semilattice ${\cal I}^*_p \subseteq T^*$.
Take two $x,y \in {\cal I}^*_p$.
Suppose that $x = [\underline{x}, \overline{x}]$ and $y = [\underline{y}, \overline{y}]$
for $\underline{x}, \overline{x}, \underline{y}, \overline{y} \in T$.
By case-by-case analysis, it holds
\begin{equation}\label{eqn:circ-bullet}
 [\overline{x} \circ \overline{y}  \bullet \underline{x} \bullet \underline{y},\  \overline{x} \circ \overline{y} \circ \underline{x} \bullet \underline{y} ] = x \sqcup y, 
\quad [\overline{x} \bullet \overline{y}  
\bullet \underline{x} \circ \underline{y},\  \overline{x} \bullet \overline{y} \circ \underline{x} \circ \underline{y} ] = x \wedge y,
\end{equation}
where $a \diamond b \diamond' c \diamond'' d$ means  $(a \diamond b) \diamond' (c \diamond'' d)$ 
for $\diamond, \diamond', \diamond'' \in \{ \circ, \bullet \}$.
For example, if $\overline{x} \to \underline{x} = p = \overline{y} \to \underline{y}$,
then one can see (\ref{eqn:circ-bullet}) from $x \sqcup y = x \wedge y = [p,p]$, 
$\overline{x} \circ \overline{y} = \overline{x}$,
$\underline{x} \bullet \underline{y} = \underline{y}$, and
$\overline{x} \bullet \overline{y} = \underline{x} \circ \underline{y} 
= \overline{x} \bullet \underline{y} = \overline{x} \circ \underline{y} = p$.

Suppose that $g$ satisfies (3).
Again the $\varDelta'$-connectivity can be shown in a similar way 
as in the proof of Theorem~\ref{thm:building} (3) $\Rightarrow$ (1).
Take any vertex $p = (p_1,p_2,\ldots, p_n)$.
We show that $g^*$ is submodular on 
${\cal I}^*_p(\varGamma) = {\cal I}^*_{p_1}(T_1) \times  {\cal I}^*_{p_2}(T_2) \times \cdots \times {\cal I}^*_{p_n}(T_n)$.
Take any  $x,y \in {\cal I}^*_p$.
Suppose that 
$x_i = [\underline{x_i}, \overline{x_i}]$ and $y_i = [\underline{y_i}, \overline{y_i}]$ for $i=1,2,\ldots,n$.
Let $\overline{x} = (\overline{x_1}, \overline{x_2},\ldots,\overline{x_n})$,
$\underline{x} = (\underline{x_1}, \underline{x_2},\ldots,\underline{x_n})$,
$\overline{y} = (\overline{y_1}, \overline{y_2},\ldots,\overline{y_n})$, and
$\underline{y} = (\underline{y_1}, \underline{y_2},\ldots,\underline{y_n})$,
Then we have
\begin{eqnarray}
&&2(g^{*}(x) + g^*(y)) = g(\overline{x}) + g(\underline{x}) + g(\overline{y}) + g(\underline{y}) \nonumber \\
&& \geq g(\overline{x} \circ \overline{y}) 
+ g(\overline{x} \bullet \overline{y}) 
+ g(\underline{x} \circ \underline{y}) 
+ g(\underline{x} \bullet \underline{y}) \nonumber \\
&& \geq g( \overline{x} \circ \overline{y}  \bullet \underline{x} \bullet \underline{y}) 
+ g(\overline{x} \circ \overline{y}  \circ  \underline{x}  \bullet \underline{y} )
+ g(\overline{x} \bullet \overline{y} 
\bullet \underline{x} \circ \underline{y} )
+ g(\overline{x} \bullet \overline{y}  \circ \underline{x} \circ \underline{y}  ) \nonumber \\
&& = 2(g^{*}(x \sqcup y) + g^*(x \wedge y)), \nonumber
\end{eqnarray}
where we apply (\ref{eqn:circ-bullet}) to the last equality (componentwise).
Thus $g^*$ is submodular on ${\cal I}_p^*$, and $g$ is L-convex on $\varGamma$.

\section*{Acknowledgments}
The author thanks Kazuo Murota for helpful comments improving the presentation, and
Yuni Iwamasa for careful reading.
The work was partially supported by JSPS KAKENHI Grant Numbers 25280004, 26330023, 26280004.

\end{document}